\newcounter{Cond1}
\newcounter{Cond2}
\newcounter{Cond3}
\newcounter{Cond5}
\definecolor{Maroon}{HTML}{ad2231}
\definecolor{webgreen}{HTML}{008000}
\newtheorem{theorem}{Theorem}[section]
\newtheorem{corollary}[theorem]{Corollary}
\newtheorem{proposition}[theorem]{Proposition}
\newtheorem{lemma}[theorem]{Lemma}
\newtheorem{remark}[theorem]{Remark}
\newtheorem{definition}[theorem]{Definition}
\theoremstyle{definition}
\newtheorem{example}[theorem]{Example}
\newmdtheoremenv[linewidth=1pt, innertopmargin=1ex]{Condition}{Assumption}[section]
\newcommand{\indep}{\perp\nolinebreak\!\!\!\perp}
\numberwithin{equation}{section}
\begin{document}
\title{Central limit theorem for supercritical Crump-Mode-Jagers processes counted with non-individual random characteristics}
\author{Gabriel Berzunza Ojeda\footnote{E-mail: \href{mailto:Gabriel.Berzunza-Ojeda@liverpool.ac.uk}{Gabriel.Berzunza-Ojeda@liverpool.ac.uk}} \,\, and \,\, Harlan Connor\footnote{E-mail: \href{h.c.g.connor@liverpool.ac.uk}{h.c.g.connor@liverpool.ac.uk}}  \\ \vspace*{10mm}
{\small Department of Mathematical Sciences, University of Liverpool, United Kingdom} } 
\maketitle

\vspace{0.1in}

\begin{abstract} 
Consider a supercritical Crump-Mode-Jagers process $(\mathcal{Z}_{t}^{\varphi})_{t \geq 0}$ counted with a random characteristic $\varphi$ that depends on an individual's life and their descendant process up to a fixed generation. Under second moment assumptions, we establish a central limit theorem for $\mathcal{Z}_{t}^{\varphi}$ as $t \rightarrow \infty$. Our result extends the recent work of Iksanov, Kolesko, and Meiners \cite{Iksanov2021} by relaxing their assumption of independent characteristics across individuals. We further demonstrate the applicability of our results to the study of fringe trees in several important random tree families, thereby providing insights into questions raised by Holmgren and Janson \cite{Holmgren2017}.
\end{abstract}

\noindent {\sc Key words and phrases}: Asymptotic fluctuations, central limit theorem, fringe trees, general Crump-Mode-Jagers branching process, Nerman's martingale.

\noindent {\sc MSC 2020 Subject Classifications}: 60J80; 60F05; 60J85.

%\tableofcontents

%%%%%%%%%%%%%%%%%%%%%%%%%%%%%%%%%%%%%%%%%%%%%%%%%%%%%%%%%%%%%%%%%%%%%%%%%%%
\section{Introduction}
%%%%%%%%%%%%%%%%%%%%%%%%%%%%%%%%%%%%%%%%%%%%%%%%%%%%%%%%%%%%%%%%%%%%%%%%%%%

The general Crump-Mode-Jagers branching process (CMJ-branching process) begins at time $0$ with a single ancestor. This ancestor lives for a random lifespan and produces offspring at times determined by a reproduction point process on $[0,\infty)$. Each subsequent individual evolves independently according to the same dynamics. This framework encompasses classical models such as
Bienaym\'e–Galton–Watson processes, Yule processes, continuous-time Markov processes, Sevastyanov processes, and Bellman–Harris processes (see \cite{Jagers1975} for historical details).

Our focus is on {\em random characteristics} assigned to individuals---random
elements of a suitable function space. Furthermore, by assigning a random characteristic to each individual, we study the process that sums the contributions of all individuals at a given time, where each individual's contribution is weighted by their random characteristic.
(This process is known in the literature as ``the general branching process counted with a random characteristic''.). This process captures features such as age, lifespan, or other individual-level metrics. A formal description is provided in Section \ref{CMJclassic}. Our study establishes asymptotic normality of this process in greater generality than that found in Iksanov et al.\ \cite{Iksanov2021}, facilitating the analysis of quantities such as the total number of births, the number of living individuals, the number of individuals with living descendants, the number of living individuals below a certain age, or the number of ``families'' consisting of a fixed number of individuals.

Laws of large numbers for supercritical CMJ-branching processes were established by Nerman \cite{Nerman1981} for the single-type, non-lattice case, with the lattice case treated by Gatzouras \cite{Dimitris2000}. Central limit theorems had previously been obtained
for related models: Athreya \cite{Athreya1969, Athreya1969II} and Janson \cite{Janson2004I} for specific multitype Markov branching processes with a finite type space, while Asmussen and Hering \cite[Section VIII.3]{Asmussen1983} for more general multitype Markov branching processes.  However, these results either do not directly apply to the
general single-type case or their assumptions hold only in special situations. Additionally, Janson and Neininger \cite{Svante2008} established a
central limit theorem for Kolmogorov's conservative fragmentation model, which
can be interpreted as a specific CMJ-branching process (see Example \ref{Example2}).

Janson \cite{Svante2018} studied asymptotic fluctuations for single-type
supercritical CMJ-processes in the lattice case. More recently, Iksanov et al.\ \cite{Iksanov2021} (see also \cite{Iksanov2021II})  proved a general central limit theorem for
single-type supercritical CMJ-processes, under the key assumption that characteristics are independent across individuals. Limit theorems for general branching processes counted with {\em individual characteristics} were also studied by Jagers and Nerman \cite{Jagers1984II}, though their conditions are often more difficult to verify. 

In this paper, we consider {\em general characteristics}, which may depend not
only on an individual's own life but also on their descendant processes up to a
fixed generation. This extends the case of {\em individual characteristics} and
allows us to remove the independence assumption of \cite{Iksanov2021II}.
Our main result, Theorem \ref{Theo2}, applies to single-type CMJ-processes
counted with characteristics depending on both an individual's life and their descendancy.

One motivation comes from biology, where CMJ-processes with
characteristics are widely employed for population modelling. Metrics that depend on an
individual's descendancy can yield new insights (see \cite[Section 7]{Nerman1981}).
Another motivation arises from random fractals: Charmoy, Croydon, and Hambly
\cite{Charmoy2017} studied CMJ-processes to analyse fluctuations in the
eigenvalue counting function, though under more restrictive assumptions than we
employ here.

A primary application motivating this work is the study of {\em fringe trees}
in random trees. Holmgren and Janson \cite{Holmgren2017} observed that many
families of random trees can be constructed from the genealogy of CMJ-branching
processes, and that their fringe trees can be analysed using
non-individual characteristics. They established laws of large numbers for the
number of fringe trees of a given type but left asymptotic normality as an open
problem. In Section \ref{Applicattions}, we address some of the questions
raised in \cite[Section 14]{Holmgren2017}. \\

The paper is organized as follows. In Section \ref{CMJclassic}, we
formally introduce the CMJ-branching process, the notion of a characteristic,
and the general branching process counted with a random characteristic.
Section \ref{mainresults} presents the main results (\S \ref{MainSec}),
which rely on a number of natural assumptions on the distribution of the
reproduction point process (\S \ref{AssumpCMJ}) and on the characteristics
themselves (\S \ref{AssumpCMJCharac}).
The main result is stated as Theorem \ref{Theo2}, followed by
Corollary \ref{corollary5}, which provides a formulation useful for applications.
Before proceeding to the proofs, we illustrate in Section \ref{Applicattions}
the applications to fringe trees--the primary motivation for this work.
Section \ref{Preliminaries} collects auxiliary and preliminary results,
together with relevant facts about the process.
The proofs are divided into two sections:
Section \ref{SecCLTcentred} establishes Theorem \ref{Theo1} for centred processes,
laying the groundwork for the general case,
which is fully developed in Section \ref{ProofMainTheandCoro},
where the detailed proofs of Theorem \ref{Theo2}
and Corollary \ref{corollary5} are presented.

\paragraph{Notation.} We use the following notation throughout the paper. Let $\mathbb{Z}$ denote the set of all integers,  $\mathbb{N} \coloneqq \{1, 2, \dots, \}$, $\mathbb{N}_{0} \coloneqq \{0, 1, \dots, \}$, and $\mathbb{R}_{+} = [0, \infty)$  the set of all nonnegative real numbers. The symbol $\xrightarrow[]{\rm d}$ indicates convergence in distribution, and $\xrightarrow[]{\rm p}$ indicates convergence in probability. For random variables $\mathcal{X}$ and $\mathcal{Y}$, we write $\mathcal{X} \stackrel{a.s.}{=} \mathcal{Y}$ if $\mathcal{X} = \mathcal{Y}$ almost surely.

%%%%%%%%%%%%%%%%%%%%%%%%%%%%%%%%%%%%%%%%%%%%%%%%%%%%%%%%%%%%%%%%%%%%%%%%%%%
\section{The CMJ-branching process and characteristics}
\label{CMJclassic}
%%%%%%%%%%%%%%%%%%%%%%%%%%%%%%%%%%%%%%%%%%%%%%%%%%%%%%%%%%%%%%%%%%%%%%%%%%%

In this section, we introduce the general Crump-Mode-Jagers branching process (CMJ-branching process) following \cite{Jagers1975, JagersN1984, Jagers1989}. The process starts with a single individual, the ancestor, born at time $0$. The ancestor produces offspring born at the points of a birth point process $\xi$ on the Borel sets of $[0, \infty)$ and is almost surely finite on compact sets (thus, $\xi$ has almost surely countable support with no accumulation points); see for instance \cite[Chapter 12]{Kallenberg2002} for background. We can thus arbitrarily enumerate the elements of its support by a sequence of real numbers, which is finite if and only if $\xi([0, \infty))< \infty$ almost surely, and, therefore, write $\xi=\sum_{i=1}^{N} \delta_{X_{i}}$ where $N \coloneqq \xi([0, \infty))$. If $N=\infty$ then the sum does not include the index $N$--as per the usual
convention. It is convenient to choose the sequence in a way that
$i < j \Rightarrow X_i \le X_j$. A concrete sequence is given by the formula $X_i = \inf \{t \ge 0:\, \xi([0, t]) \ge i\}$, where $i$ ranges over $\mathbb{N}$. Here and throughout the paper, we set $\inf \emptyset \coloneqq \infty$. %If $1 \le N < \infty$ then $i$ ranges over $1,\dots,N$. If $N=\infty$ then $i$ ranges over $\mathbb N$. 
If $N=0$ then $\xi=0$ and the sequence is not defined at all. Note that $\xi(\{t\}) < \infty$ almost surely, for all $t \ge 0$, and that $\xi(\{0\})$ may be positive almost surely. %We refer to the $X_i$ as the points of $\xi$. 
%If all points are distinct then $\xi$ is called simple.
The interpretation of $\xi$ is that its points represent the births of all children of the ancestor: the $i$-th child is born at time $X_i$. (Usually, one also assumes that the ancestor has a random lifetime,  $\zeta$, which is a random variable with values in $[0, \infty]$ and may be dependent on $\xi$.)

As each child may reproduce, it is necessary to define the set of names for all potential individuals in the evolving population. Individuals will be indexed by $\mathcal{I}=\bigcup_{n \in \mathbb{N}_{0}} \mathbb{N}^{n}$ according to their genealogy. Here, $\mathbb{N}^{0}\coloneqq \{\varnothing\}$ is the singleton set containing only the empty tuple $\varnothing$. It is the name of the ancestor. We use the usual Ulam-Harris notation: we abbreviate a tuple $u=\left(u_{1}, \ldots, u_{n}\right) \in \mathbb{N}^{n}$ by $u_{1} \cdots u_{n}$ and refer to $n$ as the length or generation or height of $u$; we write $|u|=n$. In this context, any $u= u_{1} \cdots u_{n} \in \mathcal{I}$ is called (potential) individual. If $v=v_{1} \cdots v_{m} \in \mathcal{I}$, then $u v$ is short for $u_{1} \cdots u_{n} v_{1} \cdots v_{m}$. For $u \in \mathcal{I}$ and $i \in \mathbb{N}$, the individuals $u i$ will be called children of $u$. Conversely, $u$ will be called parent of $u i$. 
More generally, $w$ will be called descendant of $u$ (short: $u \preceq w$ ) iff $u v=w$ for some $v \in \mathcal{I}$. Conversely, $u$ will be called an ancestor/progenitor of $w$. We write $u \prec w$ if $u \preceq w$ and $u \neq w$. Often, we shall refer to $\mathbb{N}^{n}$ as the (potential) $n^{\text{th}}$ generation $\left(n \in \mathbb{N}_{0}\right)$. With these notations, we have $|u|=n$ iff $u \in \mathbb{N}^{n}$ iff $u$ is an $n^{\text {th }}$ generation (potential) individual. For $u \in \mathcal{I}$ and $A \subseteq \mathcal{I}$, define $u A \coloneqq \{u v: v \in A\}$. In particular, $u \mathcal{I}$ denotes the subtree of $\mathcal{I}$ rooted at $u$. \\

For each $u \in \mathcal{I}$, one assigns an independent copy $(\xi_{u}, \zeta_{u})$ of the pair $(\xi, \zeta)$ that determines the birth times of $u$'s offspring relative to its time of birth, and the duration of its life. By the {\L}omnicki-Ulam theorem (see e.g., \cite[Corollary 6.18]{Kallenberg2002}) such random variables exists. Quantities derived from $\left(\xi_{u}, \zeta_{u}\right)$ are indexed by $u$. For instance, $\xi_{u} = \sum_{i=1}^{N_{u}} \delta_{X_{u,i}}$, where $N_{u} = \xi_{u}([0,\infty))$ is the number of offspring of $u$. 
%and $X_{u, k}$ is the difference between the birth-time of the $k^{\text {th}}$ child of $u$ and $u$ itself, etc. 
The birth-times $S(u)$ for $u \in \mathcal{I}$ are defined recursively. We set $S(\varnothing)\coloneqq 0$ and, for $n \in \mathbb{N}_{0}$ and $u \in \mathbb{N}^{n}$ such that $S(u)<\infty$, $S(u i) \coloneqq S(u)+X_{u, i}$ for  $i \in \mathbb{N}$. Otherwise, if $S(u)=\infty$, the individual $u \in \mathbb{N}^{n}$ is never born and we let $S(u i) \coloneqq \infty$ for $i \in \mathbb{N}$. %The time of death of individual $u$ is $S(u)+\zeta_{u}$. An individual $u$ is alive at time $t \geq 0$ if is born, but not yet dead at time $t$, i.e., if $S(u) \leq t < S(u) + \zeta_{u}$. 
We will omit the individual lifetime, $\zeta_{u}$, as it is generally not relevant to our analysis. However, there may also be other random variables associated with the individuals.

We now construct the canonical space for the CMJ-branching process. For $u \in \mathcal{I}$, let $(\Omega_{u}, \mathcal{F}_{u}, P_{u})$ be a copy of a given probability space $(\Omega_{\varnothing}, \mathcal{F}_{\varnothing}, P_{\varnothing})$, the {\sl life space} of the ancestor. An element $\omega \in \Omega_{u}$ is a possible life career for individual $u$ and any property of interest of $u$. In particular, the birth point process $\xi$ and the life span $\zeta$ are measurable functions defined on $\left(\Omega_{\varnothing}, \mathcal{F}_{\varnothing}\right)$. From the life space, we construct the {\sl population space}:
\begin{align} \label{eq100}
(\Omega, \mathcal{F}, \mathbb{P}) \coloneqq \left(\bigtimes_{u \in \mathcal{I}} \Omega_{u}, \bigotimes_{u \in \mathcal{I}} \mathcal{F}_{u}, \bigotimes_{u \in \mathcal{I}} P_{u}\right).
\end{align}
\noindent For $u \in \mathcal{I}$, we write $\pi_{u}$ for the projection $\pi_{u}: \times_{v \in \mathcal{I}} \Omega_{v} \rightarrow \Omega_{u}$ and $\theta_{u}$ for the shift operator $\theta_{u}\left((\omega_{v})_{v \in \mathcal{I}}\right)=(\omega_{u v})_{v \in \mathcal{I}}$, for $(\omega_{v})_{v \in \mathcal{I}} \in \Omega$. To formally lift a function $\eta$ defined on the life space $\Omega_{u}$ to the population space, we define $\eta_{u} \coloneqq  \eta \circ \pi_{u}$. In particular, $\xi_{u}=\xi \circ \pi_{u}$ and $\zeta_{u}=\zeta \circ \pi_{u}$. In slight abuse of notation, if $\eta$ is defined on the life space of the ancestor, when working on the population space, we write $\eta$ instead of $\eta_{\varnothing} = \eta \circ \pi_{\varnothing}$. For instance, we sometimes write $\mathbb{P}(\eta \leq t)$ for $P_{u}(\eta \leq t)=\mathbb{P}\left(\eta_{\varnothing} \leq t\right)$. Occasionally, random variables independent of $\mathcal{F}$ appear. In such instances, we extend the probability space $(\Omega, \mathcal{F}, \mathbb{P})$ as necessary. \\

A {\sl random characteristic} $\phi$, see e.g.\ \cite{Jagers1975, JagersN1984, Jagers1989}, is a random process on $(\Omega_{\varnothing}, \mathcal{F}_{\varnothing}, P_{\varnothing})$ that, in this work, is assumed to take values in the Skorokhod space of c\`adl\`ag  functions (see e.g., \cite[Chapter VI, Section 1]{Jacod2003}). The characteristic $\phi$ may also be viewed as a stochastic process $\phi: \Omega_{\varnothing} \times \mathbb{R} \rightarrow \mathbb{R},(\omega, t) \mapsto \varphi(\omega, t)$ with c\`adl\`ag  paths. Define $\phi_{u}=\phi \circ \pi_{u}$. By product measurability, $\phi_{u}(t-S(u))$ is a random variable. Note that, for a given $u \in \mathcal{I}, \phi_{u}$ is independent of $S(u)$. However, $\phi_{u}$ and $S(v)$ can be dependent, when $u$ is an ancestor of $v$. The general branching process $\mathcal{Z}^{\phi}=(\mathcal{Z}_{t}^{\phi})_{t \in \mathbb{R}}$ counted with characteristic $\phi$ is defined by
\begin{align} \label{eq5}
\mathcal{Z}_{t}^{\phi} \coloneqq \sum_{u \in \mathcal{I}} \phi_{u}(t-S(u)), \quad \text{for} \quad t \in \mathbb{R}.
\end{align}
\noindent Here, we use the convention $\phi(-\infty) \coloneqq 0$ and so the above sum involves only terms associated with individuals that are eventually born. In the special case $\phi(t) \coloneqq \mathbf{1}_{[0, \infty)}(t)$, for $t \in \mathbb{R}$, 
\begin{align} \label{eq15}
\mathcal{Z}_{t} \coloneqq \mathcal{Z}_{t}^{\mathbf{1}_{[0, \infty)}} = \sum_{u \in \mathcal{I}} \mathbf{1}_{[0, \infty)}(t-S(u)), \quad \text{for} \quad t \in \mathbb{R},
\end{align}
\noindent is the total number of births up to and including time $t$. Another example is given in \eqref{eq181}-\eqref{eq16}.

We have introduced the classical setting, that is, the general branching processes counted with random characteristics, where the characteristic associated with each individual is independent of the life histories of all other individuals. In particular, this corresponds to the setting considered in \cite{Iksanov2021}. In this work, we are interested in general branching processes counted with random characteristics that depend not only on an individual's own life but also on their whole descendant processes. 

To be precise, we consider a random characteristic $\varphi$ on $(\Omega, \mathcal{F}, \mathbb{P})$ taking values in the Skorokhod space of c\`adl\`ag  functions. Again, $\varphi$ may also be viewed as a stochastic process $\varphi: \Omega \times \mathbb{R} \rightarrow \mathbb{R}, (\omega, t) \mapsto \varphi(\omega, t)$ with c\`adl\`ag  paths. Define $\varphi_{u}=\varphi \circ \theta_{u}$, where $\theta_{u}$ is the shift operator. Note that $\varphi_{u}(t-S(u))$ is a random variable and that, for given $u \in \mathcal{I}, \varphi_{u}$ is independent of $S(u)$. However, $\varphi_{u}$ and $S(v)$ can be dependent, when $u$ is an ancestor of $v$.  Indeed, if $u, v \in \mathcal{I}$ are such that $u \not \in v\mathcal{I}$ and $v \not \in u\mathcal{I}$, then $\varphi_{u}$ and $\varphi_{v}$ are independent, otherwise can be dependent. The general branching process $\mathcal{Z}^{\varphi}=(\mathcal{Z}_{t}^{\varphi})_{t \in \mathbb{R}}$ counted with characteristic $\varphi$ is defined as in \eqref{eq5} with $\phi$ replaced by $\varphi$.  Again, we use the convention $\varphi(-\infty) \coloneqq 0$. 

We sometimes make the distinction between characteristics in the classical setting (i.e., characteristics independent of the life histories of different individuals) and characteristics that may depend not only on an individual's own life but also on its whole descendant processes. Referring to the former as {\sl individual characteristics} and the second as {\sl descendant characteristics}. The set-up of descendent characteristics have a wide range of possible applications; see e.g.\ \cite[Section 7]{Nerman1981}, \cite[Section 2]{JagersN1984}, \cite{Holmgren2017} or Section \ref{Applicattions}. 

Note that we consider individual or descendant characteristics that can be real-valued and need not vanish on the negative half-line. Thus, if $\varphi$ is a characteristic (individual or descendant) the series $\mathcal{Z}^{\varphi}$ is not necessarily well-defined. Conditions, ensuring the finiteness of $\mathcal{Z}^{\varphi}$ are provided in, for instance, \cite[Theorem 6.1]{JagersN1984} (and \cite[Proposition 2.2]{Iksanov2021} for individual characteristics only). In fact, the existence of a Malthusian parameter (assumptions \ref{A1}, \ref{A3} and \ref{A4} below and assumed throughout this paper) ensures that $\mathcal{Z}_{t}$ is finite almost surely; see Remark \ref{Remark3}. In particular, if the characteristic $\varphi$ vanishes on the negative half-line (i.e., $\varphi(t)= 0$, for $t<0$), then $\mathcal{Z}_{t}^{\varphi}$ is almost surely finite, since it has only finitely many non-zero summands almost surely. In this work, we will provide conditions on the characteristic $\varphi$ under which the general branching process $\mathcal{Z}^{\varphi}$ is well-defined; see Proposition \ref{Proposition3}.

While the individual characteristic, say $\phi$, is formally defined on the ancestor's life space $(\Omega_{\varnothing}, \mathcal{F}_{\varnothing}, P_{\varnothing})$, descendant characteristics are defined on the population space $(\Omega, \mathcal{F}, \mathbb{P})$. The mapping $\phi \circ \pi_{\varnothing}$ yields a descendant characteristic corresponding to $\phi$. For simplicity, we will use the same letter to represent both the individual characteristic and its corresponding descendant characteristic, writing for instance $\phi$ instead of $\phi \circ \pi_{\varnothing}$. This simplification is justified by the fact that $\phi \circ \pi_{\varnothing} \circ  \theta_{u} = \phi \circ \pi_{u}$, for $u \in \mathcal{I}$. Therefore, when results or remarks are applicable to both individual and descendant characteristics, we will simply use the term ``characteristic''. Moreover, we only consider c\`adl\`ag  characteristics in this work, as mentioned before.

%%%%%%%%%%%%%%%%%%%%%%%%%%%%%%%%%%%%%%%%%%%%%%%%%%%%%%%%%%%%%%%%%%%%%%%%%%%
\section{The main results}
\label{mainresults}
%%%%%%%%%%%%%%%%%%%%%%%%%%%%%%%%%%%%%%%%%%%%%%%%%%%%%%%%%%%%%%%%%%%%%%%%%%%

%%%%%%%%%%%%%%%%%%%%%%%%%%%%%%%%%%%%%%%%%%%%%%%%%%%%%%%%%%%%%%%%%%%%%%%%%%%
\subsection{Assumptions on the birth point process} \label{AssumpCMJ}
%%%%%%%%%%%%%%%%%%%%%%%%%%%%%%%%%%%%%%%%%%%%%%%%%%%%%%%%%%%%%%%%%%%%%%%%%%%

We now introduce the key assumptions concerning the birth point process that drives the CMJ-branching process, and which will be used throughout this work. Let $\mu(\cdot) \coloneqq \mathbb{E}[\xi(\cdot)] = \mathbb{E}[\xi_{\varnothing}(\cdot)]$ be the intensity measure of the point process $\xi$ and denote by $\hat{\mu}$ its Laplace transform, i.e.,
\begin{align} \label{eq74} 
\hat{\mu}(z) \coloneqq \int_{[0, \infty)} e^{-zs} \mu({\rm d} s) = \mathbb{E}\left[ \sum_{i=1}^{N} e^{-z X_{i}} \right], \quad \text{for all} \quad  z \in \mathbb{C},
\end{align}
\noindent for which the above integral converges absolutely. 

\begin{Condition}
In this paper, we differentiate between the lattice and the non-lattice case. We say that $\xi$ is {\sl lattice} if $\mu([0, \infty) \setminus m \mathbb{N}_{0})=0$ for some $m >0$, and we say that $\xi$ is {\sl non-lattice}, otherwise. Moreover, without loss of generality, we assume that in the lattice case the span is $1$, that is, $\mu([0, \infty) \setminus \mathbb{N}_{0}) > 0$ and $\mu([0, \infty) \setminus m \mathbb{N}_{0})=0$ for all $m > 1$. We set $\mathbb{G} \coloneqq \mathbb{Z}$ in the lattice case and $\mathbb{G} \coloneqq \mathbb{R}$ in the non-lattice case. Let $\ell({\rm d} x)$ represent the counting measure on $\mathbb{Z}$, in the lattice case, and the Lebesgue measure on $\mathbb{R}$, in the non-lattice case.
\end{Condition}

We make the following assumptions:
\begin{enumerate}[label=(\textbf{A.\arabic*})]
\item $\mu(\{0\}) =  \mathbb{E}[\xi(\{0\})] < 1$. (This rules out a rather trivial case with explosions already at the start.)\label{A1}
\item $\mathbb{E}[N] = \mathbb{E}[\mu([0, \infty))] > 1$. (This is known as the {\sl supercritical regime}. In this case, the underlying branching process survives with positive probability.) \label{A3}
\item There exists a real number $\alpha$ (the {\sl Malthusian parameter}) such that $\hat{\mu}(\alpha) =1$. (Indeed, by \ref{A3}, $\alpha >0$.) \label{A4}
\item There exists $\vartheta \in (0, \alpha/2)$ such that $\mathbb{E} \left[  \left(\sum_{i=1}^{N} e^{-\vartheta X_{i}} \right)^{2} \right]  < \infty$.  \label{A7}
\setcounter{Cond1}{\value{enumi}}
\end{enumerate}

Observe that, for $t, \theta \in \mathbb{R}_{+}$,
\begin{align} \label{eq45}
\xi([0,t]) = \int_{[0,t]} \xi({\rm d} x) \leq  \int_{[0,t]} e^{\theta x} e^{-\theta x} \xi({\rm d} x) \leq e^{\theta t} \int_{[0,\infty)} e^{-\theta x} \xi({\rm d} x).
\end{align}

\begin{remark} \label{Remark3}
If \ref{A3} and \ref{A4} hold, then, by \eqref{eq45} (with $\theta = \alpha$), $\mu([0,t]) \leq e^{\alpha t} < \infty$, for all $t\in \mathbb{R}_{+}$. Moreover, if additionally \ref{A1} holds, then $\mathbb{E}[\mathcal{Z}_{t}] < \infty$, for all $t\in \mathbb{R}$; see Section \ref{ExpSection} below or \cite[Theorem 6.2.2]{Jagers1975}. In particular, since $\mathcal{Z}_{t} \geq 0$, we can conclude that $\mathcal{Z}_{t} < \infty$ almost surely.
\end{remark}

For $\theta \in \mathbb{R}_{+}$, we set
\begin{align} \label{eq125}
\hat{\xi}(\theta)  \coloneqq \int_{0}^{\infty} e^{-\theta x} \xi({\rm d} x) =  \sum_{i=1}^{N} e^{-\theta X_{i}}.
\end{align}
\noindent In particular, by \ref{A7}, $\mathbb{E}[(\hat{\xi}(\vartheta))^{2}] < \infty$. Observe also that \ref{A7} and Cauchy–Schwarz's inequality imply that 
\begin{align} \label{eq31}
\text{there exists} \quad \vartheta \in (0, \alpha/2) \quad \text{such that} \quad \hat{\mu}(\vartheta) = \mathbb{E} \left[\sum_{i=1}^{N} e^{-\vartheta X_{i}} \right]  < \infty.
\end{align}
\noindent In particular, \ref{A1} and \ref{A7} imply that 
\begin{align}  \label{eq3}
\beta \coloneqq \int_{[0,\infty)} x e^{-\alpha x} \mu({\rm d}x) \in (0,\infty).
\end{align} 

\noindent It is important to point-out that assumptions \ref{A1}-\ref{A7} imply \cite[assumptions (A1)-(A3)]{Iksanov2021}. Specifically, assumption \ref{A7}, which imposes a finite second moment condition on the point process $\xi$, is stronger than \cite[assumptions (A3)]{Iksanov2021}. We adopt this stronger condition for technical reasons. Furthermore, as highlighted in \cite[Remark 2.1]{Iksanov2021}, assumption \ref{A7} is often easier to verify when applicable and corresponds to \cite[assumption (A6)]{Svante2018}. Janson \cite{Svante2018} investigated the fluctuations of supercritical general branching processes counted with individual characteristics within the lattice setting.

Suppose that \ref{A1}-\ref{A4} hold. The existence of the Malthusian parameter $\alpha >0$ allows us to define a non-negative martingale, called {\sl Nerman's martingale} \cite{Nerman1981}. Consider the individual characteristic,
\begin{align} \label{eq181}
\phi(t) = \mathbf{1}_{[0, \infty)}(t) e^{\alpha t} \int_{(t, \infty)} e^{-\alpha x} \xi({\rm d} x) , \quad \text{for} \quad t \in \mathbb{R}.
\end{align}
\noindent Then
\begin{align} \label{eq16}
W_{t} \coloneqq e^{-\alpha t} \mathcal{Z}_{t}^{\phi} = e^{- \alpha t} \sum_{u \in \mathcal{I}} \phi_{u}(t-S(u)) = \sum_{u \in \mathcal{C}_{t}} e^{-\alpha S(u)}, \quad \text{for} \quad t \in \mathbb{R},
\end{align}
\noindent where 
\begin{align} 
\mathcal{C}_{t} \coloneqq \{v\in \mathcal{I} \setminus \{ \varnothing \} : S(pr(v)) < \infty  \, \, \text{and} \, \, S(pr(v)) \leq t < S(v) \}
\end{align}
\noindent is the coming generation at time $t \in \mathbb{R}$ and $pr(v)$ denotes the parent of $v \in \mathcal{I} \setminus \{ \varnothing \}$. Here, $\mathcal{C}_{t} = \emptyset$, for $t \in (-\infty, 0)$. The process $(W_{t})_{t \geq 0}$ is a martingale. In particular, there exists a random variable $W < \infty$ almost surely such that $W_{t}  \rightarrow  W$, almost surely, as $t \rightarrow \infty$. Moreover, if $\mathbb{E}[\hat{\xi}(\alpha) \log_{+} \hat{\xi}(\alpha)] < \infty$, then \cite[Corollary 3.3]{Nerman1981} implies that $W_{t}  \rightarrow  W$, in $L^{1}(\Omega, \mathcal{F}, \mathbb{P})$, as $t \rightarrow \infty$. Thus, $\mathbb{E}[W]=1$ and $W > 0$ almost surely (see e.g., \cite[Proposition 1.1]{Nerman1981}). Note that \ref{A7} implies $\mathbb{E}[\hat{\xi}(\alpha) \log_{+} \hat{\xi}(\alpha)] < \infty$. Consequently, in some of our results below, $\mathbb{E}[\hat{\xi}(\alpha) \log_{+} \hat{\xi}(\alpha)] < \infty$ will be implicitly satisfied when \ref{A7} holds.

Nerman's martingale is essential for understanding the law of large numbers of CMJ-branching processes; see e.g., \cite{Nerman1981, JagersN1984}. Iksanov et al.\ \cite{Iksanov2021} recently introduced a family of martingales, namely Nerman's martingales with complex parameters. These martingales hold significant implications for the central limit theorem related to CMJ-branching processes counted with individual characteristics, and this will also be the case when considering descendant characteristics, as will be shown later.

Denote by $\Lambda$ the set of roots to the equation $\hat{\mu}(\lambda)-1=0$ such that ${\rm Re}(\lambda)>\frac{\alpha}{2}$ and by $\partial \Lambda$ we denote the set of roots on the critical line ${\rm Re}(\lambda)=\frac{\alpha}{2}$. In the lattice case, $\hat{\mu}(\cdot)$ is $2 \pi \mathrm{i}$-periodic, and we denote by $\Lambda$ the set of roots to the equation $\hat{\mu}(\lambda)-1=0$ such that ${\rm Re}(\lambda)>\frac{\alpha}{2}$ and $\mathrm{Im}(\lambda) \in (-\pi, \pi]$. Similarly, in the lattice case, $\partial \Lambda$ denotes the set of roots on the critical line ${\rm Re}(\lambda)=\frac{\alpha}{2}$ such that $\mathrm{Im}(\lambda) \in (-\pi, \pi]$. In both cases, we set $\Lambda_{\geq} \coloneqq \Lambda \cup \partial \Lambda$. Note that $\alpha \in \Lambda$ (recall \ref{A4}) and that every other element $\lambda  \in \Lambda_{\geq}$, $\lambda \neq \alpha$ satisfies ${\rm Re}(\lambda) \in\left[\frac{\alpha}{2}, \alpha\right)$ and ${\rm Im}(\lambda) \neq 0$. Further, $\lambda = \theta + \mathrm{i} \eta \in \Lambda_{\geq}$ implies that the complex conjugate $\bar{\lambda} = \theta - \mathrm{i} \eta \in \Lambda_{\geq}$ except if $\eta = \pi$ in the lattice case. As in \cite{Iksanov2021}, 
we also consider the following assumption:
\begin{enumerate}[label=(\textbf{A.\arabic*})]
\setcounter{enumi}{\value{Cond1}}
\item The set of roots $\Lambda_{\geq}$ is finite. \label{A8}
\end{enumerate}

For each root $\lambda \in \Lambda_{\geq}$, we denote its multiplicity by $k(\lambda) \in \mathbb{N}$. Then, for any $i=0, \dots, k(\lambda)-1$, we can define
\begin{align}
W_{t}^{(i)}(\lambda) \coloneqq (-1)^{i} \sum_{u \in \mathcal{C}_{t}} S(u)^{i} e^{-\lambda S(u)}, \quad \text{for} \quad t \in \mathbb{R}.
\end{align}
\noindent The Malthusian parameter $\alpha>0$ in \ref{A4} is a root of multiplicity $1$ and gives rise to Nerman's martingale $(W_{t})_{t \geq 0}=(W_{t}^{(0)}(\alpha))_{t \geq 0}$. Moreover, if \ref{A1}-\ref{A7} hold, then \cite[Theorem 2.7]{Iksanov2021} (and \cite[Lemma 5.4]{Iksanov2021}) implies that, for any $\lambda \in \Lambda$ and $i= 0, \dots, k(\lambda)-1$, the process $(W_{t}^{(i)}(\lambda))_{t \geq 0}$ is a martingale and there is a random variable $W^{(i)}(\lambda) \in L^{2}(\Omega, \mathcal{F}, \mathbb{P})$ such that
\begin{align}
W_{t}^{(i)}(\lambda) \rightarrow W^{(i)}(\lambda) \quad \text { almost surely and in } L^{2}(\Omega, \mathcal{F}, \mathbb{P}), \text { as } t \rightarrow \infty.
\end{align}
\noindent Moreover, $\mathbb{E}[W^{(i)}(\lambda)] = \delta_{0i}$, for $i= 0, \dots, k(\lambda)-1$, where $\delta_{0i}$ denotes the Kronecker delta function.

%%%%%%%%%%%%%%%%%%%%%%%%%%%%%%%%%%%%%%%%%%%%%%%%%%%%%%%%%%%%%%%%%%%%%%%%%%%
\subsection{Assumptions on the random characteristics} \label{AssumpCMJCharac}
%%%%%%%%%%%%%%%%%%%%%%%%%%%%%%%%%%%%%%%%%%%%%%%%%%%%%%%%%%%%%%%%%%%%%%%%%%%

We continue with assumptions regarding the random characteristics. 

If $\varphi$ is a non-negative or integrable characteristic (the later meaning $\varphi(t) \in L^{1}(\Omega, \mathcal{F}, \mathbb{P})$ for every $t \in \mathbb{R}$), then we write $\mathbb{E}[\varphi]$ for the function that maps $t \in \mathbb{R} \mapsto \mathbb{E}[\varphi](t) \coloneqq \mathbb{E}[\varphi(t)]$. If $\varphi$ is an integrable characteristic, then we write ${\rm Var}[\varphi]$ for the function that maps $t \in \mathbb{R} \mapsto {\rm Var}[\varphi](t) \coloneqq {\rm Var}[\varphi(t)] = \mathbb{E}[(\varphi(t)-\mathbb{E}[\varphi(t)])^{2}]$. 

\begin{enumerate}[label=(\textbf{C.\arabic*})]
\item $\varphi(t) \in L^{1}(\Omega, \mathcal{F}, \mathbb{P})$ for every $t \in \mathbb{R}$ and $t \in \mathbb{R} \mapsto \mathbb{E}[\varphi](t) e^{-\alpha t}$ is directly Riemann integrable. \label{C1}

\item $\varphi(t) \in L^{2}(\Omega, \mathcal{F}, \mathbb{P})$ for every $t \in \mathbb{R}$ and $t \in \mathbb{R} \mapsto {\rm Var}[\varphi](t) e^{-\alpha t}$ is directly Riemann integrable. \label{C2}

\item For any $t \in \mathbb{R}$ there exists $\varepsilon >0$ such that
$(|\varphi(s)|^{2})_{|s-t| \leq \varepsilon}$ is uniformly integrable.  \label{C3}
\setcounter{Cond2}{\value{enumi}}
\end{enumerate}

Assumptions \ref{C1}-\ref{C3} corresponds to \cite[assumptions (A4)-(A6)]{Iksanov2021}. Note that \ref{C2} implies that $\varphi(t) \in L^{1}(\Omega, \mathcal{F}, \mathbb{P})$ for every $t \in \mathbb{R}$. If $\varphi$ is deterministic and real-valued, then it satisfies \ref{C2}. Moreover, \ref{C3} also holds because $\varphi$ is c\`adl\`ag and thus it is locally bounded. On the other hand, if a random characteristic $\varphi$ satisfies \ref{C3}, then, by \cite[Proposition 4.12]{Kallenberg2002}, both the expectation function $\mathbb{E}[\varphi]$ and the variance function ${\rm Var}[\varphi]$ are c\`adl\`ag. Thus, the functions $\mathbb{E}[\varphi]$ and ${\rm Var}[\varphi]$ are deterministic (individual) characteristics. 

In the lattice case, if $t \in \mathbb{Z}$, then $t-S(u) \in \mathbb{Z}$, for all individuals $u \in \mathcal{I}$ with $S(u) < \infty$. Hence, $\mathcal{Z}^{\varphi}_{t}$ depends only on $\varphi_{u}(s)$ for $s \in \mathbb{Z}$, making $\varphi$ irrelevant on $\mathbb{R} \setminus \mathbb{Z}$. Therefore, as in \cite{Iksanov2021}, we always assume the following in the lattice case.
\begin{Condition}
In the lattice case, we assume that $\varphi$ has paths that are constant on intervals of the form $[m, m+1)$, for $m \in \mathbb{Z}$. 
\end{Condition}

Under the preceding assumption, \ref{C3} becomes redundant in the lattice setting. It simplifies to $\varphi(s) \in L^{2}(\Omega, \mathcal{F}, \mathbb{P})$, for $s \in \mathbb{Z}$, which is already implied by condition \ref{C2}. This is because uniform integrability is automatically satisfied in this setting. \\

We continue by stating a proposition that gives sufficient conditions for the general branching process $\mathcal{Z}^{\varphi}$ to be well-defined. We first introduce the notion of $h$-dependent descendant characteristic.  Subsequently, and following \cite{Iksanov2021}, we also introduce the concept of an admissible ordering of $\mathcal{I}$. 

\begin{definition}
Define the sub-$\sigma$-algebras $(\mathcal{A}^{(n)})_{n \geq 0}$ of $\mathcal{F}$ by letting, for each $n \in \mathbb{N}_{0}$,
\begin{align} \label{eq83} 
\mathcal{A}^{(n)} \coloneqq \sigma( \pi_{v}: v \in \mathcal{I} \, \, \text{such that} \, \, |v| \leq  n).
\end{align}
\noindent We say that a characteristic $\varphi: \Omega \times \mathbb{R} \rightarrow \mathbb{R}$ is an {\sl $h$-dependent descendant characteristic}, for $h \in \mathbb{N}_{0}$, if the map $(\omega, t) \mapsto \varphi(\omega, t)$ is measurable with respect the product $\sigma$-algebra $\mathcal{B}(\mathbb{R})\times \mathcal{A}^{(h)}$, where $\mathcal{B}(\mathbb{R})$ is the Borel $\sigma$-algebra.
\end{definition}

An $h$-dependent descendant characteristic depends solely on an individual's life and their descendants up to generation $h$. An individual characteristic may be considered a $0$-dependent descendant characteristic. Let $\mathcal{A}^{(-1)}$ be the trivial $\sigma$-algebra. Note that a deterministic characteristic is always measurable with respect to the product $\sigma$-algebra $\mathcal{B}(\mathbb{R})\times \mathcal{A}^{(-1)}$. In particular, it is measurable with respect to $\mathcal{B}(\mathbb{R})\times \mathcal{A}^{(0)}$. Hereafter, deterministic characteristics will be considered $0$-dependent descendant characteristics, which means we treat them as individual characteristics.

Given a characteristic $\varphi$ with $\varphi(t) \in L^{1}(\Omega, \mathcal{F}, \mathbb{P})$ for all $t \in \mathbb{R}$, define for $k \in \mathbb{N}_{0}$,
\begin{align} \label{eq27}
\varphi^{(k)}(t) \coloneqq \mathbb{E}[\varphi(t) \mid \mathcal{A}^{(k-1)}], \quad \text{for} \quad t \in \mathbb{R}.  
\end{align}
\noindent Clearly, the function $\varphi^{(k)}$ is well-defined. Furthermore, if \ref{C3} holds, then $\varphi^{(k)}$ has almost surely c\`adl\`ag  paths. (This follows from the definition of conditional expectation and \cite[Remark 1, Section 6.6, p.\ 190]{Resnick2014}.) In particular, for $t \in \mathbb{R}$, $\varphi^{(0)}(t) \stackrel{a.s.}{=} \mathbb{E}[\varphi(t)] = \mathbb{E}[\varphi](t)$, that is, $\varphi^{(0)}$ is a (deterministic) $0$-dependent descendant characteristic. Moreover, if $\varphi$ is an $h$-dependent descendant characteristic, then, for $t \in \mathbb{R}$, $\varphi^{(k)}(t) \stackrel{a.s.}{=} \varphi(t)$,   for $k \geq h+1$. Indeed, for $k \in \mathbb{N}$, $\varphi^{(k)}$ is a $(k-1)$-dependent descendant characteristic. 

For fixed $h \in \mathbb{N}_{0}$, define the (random) function,
\begin{align} \label{eq53}
\chi^{(\varphi, h)}(t) \coloneqq  \sum_{0 \leq |u| \leq h} (\varphi_{u}^{(h+1-|u|)}(t-S(u)) - \varphi_{u}^{(h-|u|)}(t-S(u))), \quad \text{for} \quad  t \in \mathbb{R},
% \chi^{(\varphi, h)}(t) \coloneqq  \sum_{i =0}^{h}\sum_{|u| = i} (\varphi_{u}^{(h+1-i)}(t-S(u)) - \varphi_{u}^{(h-i)}(t-S(u))), \quad \text{for} \quad  t \in \mathbb{R},
\end{align}
\noindent where the sum $\sum_{0 \leq |u| \leq h}$ is over all vertices $u \in \mathcal{I}$ with height $|u| \leq h$. If $\varphi$ is an $h$-dependent descendant characteristic, Lemma \ref{lemma4} implies that, under our assumptions \ref{A1}-\ref{A7} and \ref{C1}-\ref{C2}, the series \eqref{eq53} converges absolutely almost surely (see also Remark \ref{Remark5}). In particular, Lemma \ref{lemma6} \ref{lemma6Pro1} below shows that $|\chi^{(\varphi, h)}(t)|< \infty$, for every $t \in \mathbb{R}$, almost surely. Moreover, if \ref{C4} below also hold, then $\chi^{(\varphi, h)}$ almost surely has c\`adl\`ag  paths (see Lemma \ref{lemma6} \ref{lemma6Pro1b}). The characteristic $\chi^{(\varphi, h)}$ is of significant relevance to our analysis and is central to our main result, Theorem \ref{Theo2}. \\ 

We call a sequence $u_{1}, u_{2}, \ldots \in \mathcal{I}$ an {\sl admissible ordering} of $\mathcal{I}$ if $\mathcal{I}_{n} \coloneqq \left\{u_{1}, \ldots, u_{n}\right\}$ is a subtree of the Ulam-Harris tree $\mathcal{I}$ of cardinality $n \in \mathbb{N}$ such that $u_{1} = \varnothing$, and $\mathcal{I}=\bigcup_{n \in \mathbb{N}} \mathcal{I}_{n}$. Admissible orderings exist (an example can found in \cite{Iksanov2021}). Recall that a series $\sum_{n \in \mathbb{N}} x_{n}$ in a Banach space $(X,\|\cdot\|)$ is said to {\sl converge unconditionally} if, for any $\varepsilon>0$, there is a finite $I \subseteq \mathbb{N}$ such that $\left\|\sum_{n \in J} x_{n}\right\|<\varepsilon$ for any $J \subseteq \mathbb{N} \backslash I$ (an equivalent definition is that the series converges for any rearrangement); see e.g., \cite{Hildebrandt1940}. 

\begin{proposition} \label{Proposition3}
Suppose that \ref{A1}-\ref{A7} hold and that $\varphi$ is an $h$-dependent descendant characteristic, for some $h \in \mathbb{N}_{0}$,  satisfying \ref{C1}-\ref{C3}. Then, for every $t \in \mathbb{R}$, 
\begin{align}
\mathcal{Z}^{\varphi}_{t} \coloneqq \sum_{u \in \mathcal{I}} \varphi_{u}(t- S(u))
\end{align}
\noindent converges unconditionally in $L^{1}(\Omega, \mathcal{F}, \mathbb{P})$ and almost surely over every admissible ordering of $\mathcal{I}$. 
\end{proposition}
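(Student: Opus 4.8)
My plan is to reduce the statement to the individual-characteristic case settled in Iksanov et al.\ via a finite telescoping decomposition of $\varphi$ along the conditioning filtration $(\mathcal{A}^{(k)})$, treating the centred pieces by an $L^2$-orthogonality argument. First I would exploit that $\varphi$ is $h$-dependent: by the remarks preceding the proposition, $\varphi^{(h+1)} \overset{a.s.}{=}\varphi$ and $\varphi^{(0)} \overset{a.s.}{=}\mathbb{E}[\varphi]$, so setting $\Delta_k \coloneqq \varphi^{(k)}-\varphi^{(k-1)}$ for $1 \le k \le h+1$ yields the a.s.\ identity $\varphi = \mathbb{E}[\varphi] + \sum_{k=1}^{h+1}\Delta_k$, and hence $\mathcal{Z}^{\varphi}_{t} = \mathcal{Z}^{\mathbb{E}[\varphi]}_{t} + \sum_{k=1}^{h+1}\mathcal{Z}^{\Delta_k}_{t}$. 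Since the sum over $k$ is finite, it suffices to prove the asserted convergence for each summand separately.

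For the deterministic characteristic $\mathbb{E}[\varphi]$ I would simply observe that it is a deterministic (hence $0$-dependent) individual characteristic which, by \ref{C1} and the remarks following \ref{C3}, satisfies \ref{C1}--\ref{C3}; thus $\mathcal{Z}^{\mathbb{E}[\varphi]}_{t}$ converges unconditionally in $L^{1}$ and almost surely over every admissible ordering by the individual-characteristic result \cite[Proposition 2.2]{Iksanov2021}.

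For each centred piece $\Delta_k$ with $k \ge 1$, note that $\Delta_k$ is a $(k-1)$-dependent descendant characteristic with $\mathbb{E}[\Delta_k(t)\mid \mathcal{A}^{(k-2)}]\overset{a.s.}{=}0$, and $\Delta_k(t)\in L^{2}$ by \ref{C2}. Writing $Y_u\coloneqq(\Delta_k)_u(t-S(u))$, the heart of the argument is pairwise orthogonality: for $u\ne v$ one has $\mathbb{E}[Y_uY_v]=0$. Indeed $Y_u$ is $\mathcal{A}^{(|u|+k-1)}$-measurable and, using independence of $(\Delta_k)_u$ from $S(u)$, satisfies $\mathbb{E}[Y_u\mid \mathcal{A}^{(|u|+k-2)}]\overset{a.s.}{=}0$; when $|u|\ne|v|$ one conditions on the deeper of the two algebras, and when $|u|=|v|$ one conditions on $\mathcal{A}^{(|u|+k-2)}$ and uses that the generation-$(|u|+k-1)$ descendant families of $u$ and $v$ are disjoint, hence built from independent life spaces. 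For the second moments, the independence of $(\Delta_k)_u$ from $S(u)$ and the orthogonal (Doob) decomposition $\sum_{j}\mathbb{E}[\Delta_j(s)^2]=\mathrm{Var}[\varphi](s)$ give $\mathbb{E}[Y_u^2]=\mathbb{E}[\Delta_k(t-S(u))^2]\le \mathbb{E}[\mathrm{Var}[\varphi](t-S(u))]$, whence $\sum_u\mathbb{E}[Y_u^2]\le \mathbb{E}[\mathcal{Z}^{\mathrm{Var}[\varphi]}_{t}]<\infty$, the finiteness following from \ref{C2} and the expectation estimate for the non-negative individual characteristic $\mathrm{Var}[\varphi]$. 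Orthogonality together with summable second moments yields unconditional convergence in $L^{2}$, hence in $L^{1}$.

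For almost sure convergence along a fixed admissible ordering $(u_n)$ I would realise the partial sums $S_n\coloneqq\sum_{j\le n}Y_{u_j}$ as an $L^2$-bounded martingale. The filtration I would use is $\mathcal{G}_n\coloneqq\bigvee_{j\le n}\mathcal{H}_{u_j}\vee\mathcal{R}_{n+1}$, where $\mathcal{H}_u$ carries the full information determining $Y_u$ (its descendants up to generation $k-1$ and its birth time) and $\mathcal{R}_{n+1}$ carries only the information centring $Y_{u_{n+1}}$ (descendants up to generation $k-2$ and birth time); one checks readily that this is increasing and adapted. The decisive combinatorial point, which I expect to be the main obstacle to state cleanly, is that admissibility forces all ancestors of $u_{n+1}$ to be explored before step $n+1$ while no previously explored $u_j$ is a strict descendant of $u_{n+1}$; consequently the ``fresh'' generation-$(|u_{n+1}|+k-1)$ randomness responsible for $Y_{u_{n+1}}$ failing to be centred lives on life spaces disjoint from, hence independent of, $\mathcal{G}_n$. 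This gives $\mathbb{E}[Y_{u_{n+1}}\mid\mathcal{G}_n]\overset{a.s.}{=}0$, so $(S_n,\mathcal{G}_n)$ is a martingale, $L^2$-bounded by the preceding second-moment bound. Martingale convergence then yields almost sure (and $L^2$) convergence along $(u_n)$, and since the ordering was arbitrary, almost surely over every admissible ordering. Reassembling the finitely many pieces completes the proof.
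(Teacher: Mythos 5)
Your proposal is correct and follows essentially the same route as the paper: the same telescoping decomposition $\varphi=\mathbb{E}[\varphi]+\sum_{k=1}^{h+1}(\varphi^{(k)}-\varphi^{(k-1)})$, the first-moment formula \eqref{eq109} for the deterministic part, and, for each centred piece, an $L^{2}$-bounded martingale along the admissible ordering with uncorrelated increments dominated by $\mathbb{E}[\mathcal{Z}_{t}^{{\rm Var}[\varphi]}]<\infty$ (this is exactly Lemma \ref{GeneralLemma}). The only cosmetic differences are that you augment the filtration with the centring information $\mathcal{R}_{n+1}$ so the increments are exactly $\varphi_{u}^{(k)}-\varphi_{u}^{(k-1)}$ with no boundary correction at $u_{1}$, and your second-moment bound uses the exact orthogonal decomposition of ${\rm Var}[\varphi]$ rather than the cruder constant in \eqref{eq22}.
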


Proposition \ref{Proposition3} extends \cite[Proposition 2.2]{Iksanov2021}, which applies only to individual characteristics. When $|\varphi|$ satisfies \ref{C1}, \cite[condition (6.1)]{JagersN1984} holds, and in particular, by \cite[Theorem 6.1]{JagersN1984}, $\mathbb{E}[|\mathcal{Z}_{t}^{\varphi}|] < \infty$, for all $t \in \mathbb{R}$; see also Section \ref{ExpSection} below. Consequently, $\mathcal{Z}_{t}^{\varphi}$ converges unconditionally in $L^{1}(\Omega, \mathcal{F}, \mathbb{P})$ and almost surely over every admissible ordering of $\mathcal{I}$. However, this work considers cases where $|\varphi|$ does not satisfy \ref{C1}, necessitating Proposition \ref{Proposition3}. The proof is given in Section \ref{ProofProposition}. 

\begin{remark} \label{Remark4}
Suppose that \ref{A1}-\ref{A7} hold and that $\varphi$ is a characteristic satisfying \ref{C1}-\ref{C3}. Clearly, the expectation function $\mathbb{E}[\varphi]$ (a $0$-dependent descendant characteristic) satisfies \ref{C1}-\ref{C3}. Proposition \ref{Proposition3} then implies that for every $t \in \mathbb{R}$, the series $\mathcal{Z}^{\mathbb{E}[\varphi]}_{t}$ converges unconditionally in $L^{1}(\Omega, \mathcal{F}, \mathbb{P})$ and almost surely over every admissible ordering of $\mathcal{I}$. 
\end{remark}

\begin{remark}{(\cite[Remark 2.5]{Iksanov2021})} \label{Remark1}
Let $\varphi$ and $\phi$ be random characteristics satisfying \ref{C1}. Then, any linear combination of $\varphi$ and $\phi$ also satisfies \ref{C1}. If they additionally satisfy \ref{C3}, then so does any linear combination. Furthermore, by \ref{C3} and \cite[Proposition 4.12]{Kallenberg2002}, the expectation and variance functions of any such linear combination are c\`adl\`ag, implying local boundedness and almost everywhere continuity. Thus, if $\varphi$ and $\phi$ satisfy both \ref{C2} and \ref{C3}, then any linear combination of $\varphi$ and $\phi$ also satisfies \ref{C2}. This follows from the inequality,
\begin{align} \label{eq108}
\mathrm{Var}(a\varphi(t) + b\phi(t)) \leq 2 a^{2} \mathrm{Var}(\varphi(t))  + 2b^{2} \mathrm{Var}(\phi(t)), \quad \text{for} \quad  a,b,t \in \mathbb{R},
\end{align}
\noindent and for example, (a slightly extended version of) \cite[Remark 3.10.5, p.\ 237]{Sidney1992} (or \cite[Proposition 4.1, Chapter V]{Soren2003}), which states that a locally Riemann integrable function dominated by a directly Riemann integrable function is itself directly Riemann integrable.
\end{remark}

Finally, we require a couple of technical assumptions.
\begin{enumerate}[label=(\textbf{C.\arabic*})]
\setcounter{enumi}{\value{Cond2}}
\item Let $\varphi$ be an $h$-dependent descendant characteristic, for some $h \in \mathbb{N}_{0}$. For any $t \in \mathbb{R}$ there exists $\varepsilon >0$ such that  \label{C4}
\begin{align} \label{eq107}
\mathbb{E} \left[ \sum_{i=0}^{h} \sum_{|u|=i} \sup_{|s-t|\leq \varepsilon}|\varphi_{u}(s-S(u))\mathbf{1}_{(-\infty,0)}(s-S(u))| \right] < \infty. 
\end{align}

\item Let $\varphi$ be an $h$-dependent descendant characteristic, for some $h \in \mathbb{N}_{0}$. For any $t \in \mathbb{R}$ there exists $\varepsilon >0$ such that \label{C5}
\begin{align}
\left( \sum_{i=0}^{h} \left(\sum_{|u|=i} |\varphi_{u}(s-S(u))| \right)^{2} \right)_{|s-t| \leq \varepsilon} \quad \text{is uniformly integrable}.
\end{align}
\end{enumerate}

These last assumptions \ref{C4}-\ref{C5} are essentially only needed to guarantee that $t \in \mathbb{R} \mapsto \chi^{(\varphi, h)}(t)$ and $t \in \mathbb{R} \mapsto \mathbb{E}[(\chi^{(\varphi, h)}(t))^{2}]$ are c\`adl\`ag; see Lemma \ref{lemma6} and its proof. 

Observe that if $\varphi$ is a $0$-dependent descendant characteristic (i.e., an individual characteristic), then clearly assumption \ref{C5} reduces to assumption \ref{C3}. For individual characteristics, assumption \ref{C4} is, in fact, not needed. This can be verified by a close inspection of our arguments, particularly in the proof of Lemma \ref{lemma6} \ref{lemma6Pro1b}-\ref{lemma6Pro4}, where it is used; see Remark \ref{Remark8}. Unfortunately, we were unable to prove that perhaps a combination of our previous assumptions implies \ref{C4} or \ref{C5}. Thus, we leave this as an open problem.

On the other hand, assumption \ref{C5} is clearly satisfied for the class of characteristics that vanish on the negative half-line (i.e., $\varphi(t)= 0$, for $t<0$). This class of characteristics has been of particular interest in previous works on CMJ-branching processes (see e.g.\ \cite{Nerman1981} or \cite{JagersN1984}) and is perhaps the most relevant for applications (see also \cite{Jagers1975}, \cite{Jagers1989}, \cite{Dimitris2000}, \cite{Holmgren2017} and \cite{Svante2018}). 

\begin{remark} \label{Remark6} 
Let $\varphi$ and $\phi$ be two $h$-dependent descendant characteristics, for some $h \in \mathbb{N}_{0}$, satisfying \ref{C4}. Then, any linear combination of $\varphi$ and $\phi$ also satisfies \ref{C4}. Similarly, if $\varphi$ and $\phi$ satisfy \ref{C5}, then any linear combination of them also satisfies \ref{C5}. 
\end{remark}

The following proposition provides sufficient conditions under which assumptions \ref{C1}-\ref{C5} hold; see Appendix \ref{Append1} for the proof. Let $\vartheta \in (0, \alpha/2)$ be as defined in \ref{A7}. 

\begin{lemma} \label{lemma12}
Let $\varphi$ be a random characteristic. 
\begin{enumerate}[label=(\textbf{\roman*})]
\item If there exist $\delta_{1} \in [0, \alpha)$ and $\delta_{2} \in  (\alpha, \infty)$ such that $\sup_{t\in \mathbb{R}} e^{-(\delta_{1} t \wedge \delta_{2} t)} |\varphi(t)| \in L^{1}(\Omega, \mathcal{F}, \mathbb{P})$, then $\varphi$ satisfies \ref{C1}. \label{lemma12Pro3}

\item If there exist $\delta_{1} \in [0, \alpha/2)$ and $\delta_{2} \in  (\alpha/2, \infty)$ such that $\sup_{t\in \mathbb{R}} e^{-(\delta_{1} t \wedge \delta_{2} t)} |\varphi(t)| \in L^{2}(\Omega, \mathcal{F}, \mathbb{P})$, then $\varphi$ satisfies \ref{C2}-\ref{C3}. \label{lemma12Pro4}
\setcounter{Cond5}{\value{enumi}}
\end{enumerate}

\noindent Suppose further that that assumptions \ref{A1}-\ref{A7} holds. 
\begin{enumerate}[label=(\textbf{\roman*})]
\setcounter{enumi}{\value{Cond5}}
\item If there exists $\delta_{2} \in  [\vartheta, \infty)$ such that $\sup_{t\in (-\infty, 0)} e^{-\delta_{2} t} |\varphi(t)| \in L^{1}(\Omega, \mathcal{F}, \mathbb{P})$, then, for all $h \in \mathbb{N}_{0}$ and $\varepsilon >0$, \label{lemma12Pro2}
\begin{align}  \label{eq158}
\mathbb{E} \left[ \sum_{i=0}^{h} \sum_{|u|=i} \sup_{|s-t|\leq \varepsilon}|\varphi_{u}(s-S(u))\mathbf{1}_{(-\infty,0)}(s-S(u))| \right] < \infty. 
\end{align}

\item If there exist $\delta_{1} \in [\vartheta, \alpha]$ and $\delta_{2} \in  [\delta_{1}, \infty)$ such that $\sup_{t\in \mathbb{R}} e^{-(\delta_{1} t \wedge \delta_{2} t)} |\varphi(t)| \in L^{2}(\Omega, \mathcal{F}, \mathbb{P})$, then, for all $h \in \mathbb{N}_{0}$ and $\varepsilon >0$,  \label{lemma12Pro1}
\begin{align} \label{eq155} 
\left(\sum_{i=0}^{h} \left( \sum_{|u|=i} |\varphi_{u}(t-S(u))| \right)^{2}\right)_{|s-t|\leq \varepsilon} \quad \text{is uniformly integrable}.
\end{align}
\end{enumerate}
\end{lemma}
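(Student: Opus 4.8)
The plan is to prove the four parts by reducing each claim to a moment computation involving the intensity measure $\mu$ and its exponential moments, which are controlled by assumptions \ref{A4} and \ref{A7}. The recurring technical object is the expected sum over generation $i$ of a weighted characteristic evaluated at $t - S(u)$; the standard tool is the many-to-one identity, which states that for a non-negative (or integrable) function $g$,
\[
\mathbb{E}\Bigl[\sum_{|u|=i} g(S(u))\Bigr] = \int_{[0,\infty)} g(s)\, \mu^{*i}({\rm d}s),
\]
where $\mu^{*i}$ is the $i$-fold convolution of $\mu$. Since \ref{A4} gives $\hat\mu(\alpha)=1$, the measure $e^{-\alpha s}\mu({\rm d}s)$ is a probability measure, and \ref{A7} supplies a sub-Malthusian exponential moment at $\vartheta<\alpha/2$, making $\widehat{\mu^{*i}}(\theta)=\hat\mu(\theta)^i$ finite for $\theta\in[\vartheta,\alpha]$ (by convexity of $\hat\mu$ and \eqref{eq31}). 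These are the only two analytic facts I expect to need.

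First I would prove parts \ref{lemma12Pro3} and \ref{lemma12Pro4}, which are purely deterministic-integral statements about a single characteristic and do not use the branching structure. For \ref{lemma12Pro3}, the hypothesis $\sup_t e^{-(\delta_1 t\wedge\delta_2 t)}|\varphi(t)|\in L^1$ gives, after taking expectations, a deterministic envelope $C e^{\delta_1 t\wedge\delta_2 t}$ for $|\mathbb{E}[\varphi](t)|$; multiplying by $e^{-\alpha t}$ yields a function bounded by $C(e^{(\delta_1-\alpha)t}\wedge e^{(\delta_2-\alpha)t})$, which decays exponentially as $t\to\pm\infty$ because $\delta_1<\alpha<\delta_2$. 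A function so dominated is directly Riemann integrable by the domination criterion cited in Remark \ref{Remark1} (the extended \cite[Remark 3.10.5]{Sidney1992}), giving \ref{C1}. Part \ref{lemma12Pro4} is the same argument run at the level of second moments: the $L^2$ envelope controls $\mathbb{E}[\varphi(t)^2]$ and hence ${\rm Var}[\varphi](t)$ by an exponential bound with rates $\delta_1<\alpha/2<\delta_2$, so $e^{-\alpha t}{\rm Var}[\varphi](t)$ is dominated by an integrable decaying function and \ref{C2} follows; uniform integrability in \ref{C3} is immediate from the uniform $L^2$ envelope via the de la Vallée-Poussin / local-domination argument.

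Next I would treat \ref{lemma12Pro2}, the bound \eqref{eq158} underlying \ref{C4}. Here I would fix $t,\varepsilon$, bound the supremum over $|s-t|\le\varepsilon$ of $|\varphi_u(s-S(u))\mathbf{1}_{(-\infty,0)}(s-S(u))|$ by $e^{\delta_2(t+\varepsilon)}\cdot\sup_{r<0}e^{-\delta_2 r}|\varphi(r)|\cdot e^{-\delta_2 S(u)}$ on the event $\{S(u)>t-\varepsilon\}$ (using that the indicator forces the argument negative, so $-(s-S(u))=S(u)-s\ge S(u)-t-\varepsilon$, and $\delta_2\ge\vartheta>0$); crucially $\varphi_u$ is independent of $S(u)$, so after taking expectations the random envelope factors as $\mathbb{E}[\sup_{r<0}e^{-\delta_2 r}|\varphi(r)|]<\infty$ (finite by hypothesis) times $\mathbb{E}[\sum_{|u|=i}e^{-\delta_2 S(u)}]=\hat\mu(\delta_2)^i$. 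Summing the finite geometric-type sum over $i=0,\dots,h$ gives \eqref{eq158}. The only subtlety is making the exponential envelope for the $\sup$ uniform in $s$ over the $\varepsilon$-window, which is where I peel off the factor $e^{\delta_2(t+\varepsilon)}$.

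Finally, for \ref{lemma12Pro1} (uniform integrability, hence \ref{C5}) I expect the main obstacle: unlike the preceding parts this involves a \emph{squared} inner sum $\bigl(\sum_{|u|=i}|\varphi_u(s-S(u))|\bigr)^2$, whose expansion produces cross terms $|\varphi_u|\,|\varphi_v|$ over distinct $u,v$ in the same generation. The strategy is to dominate the whole family by a single integrable random variable (Vallée-Poussin-style): bound $\sup_{|s-t|\le\varepsilon}|\varphi_u(s-S(u))|$ by $e^{\delta_1(t+\varepsilon)}\,M_u\,e^{-\delta_1 S(u)}\vee\big(\text{the }\delta_2\text{ branch}\big)$ with $M_u\coloneqq\sup_t e^{-(\delta_1 t\wedge\delta_2 t)}|\varphi_u(t)|\in L^2$, then show $\mathbb{E}\bigl[\sum_{i=0}^h(\sum_{|u|=i}M_u e^{-\delta_1 S(u)})^2\bigr]<\infty$. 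Expanding the square and splitting according to the most recent common ancestor $w=u\wedge v$, the branching independence lets the contribution of the subtrees below $w$ factor; the cross terms are then controlled by $\mathbb{E}[M^2]\,\hat\mu(2\delta_1)^{\,\cdot}$-type quantities, and here $\delta_1\ge\vartheta$ with $2\delta_1$ possibly reaching $2\alpha$ is exactly why \ref{A7}'s second-moment control of $\hat\xi$ at the sub-Malthusian rate (equivalently finiteness of $\mathbb{E}[(\sum_i e^{-\vartheta X_i})^2]$ and, via $\hat\mu(\alpha)=1$, of the relevant convolutions) is indispensable. Once the $L^2$ domination is established, uniform integrability of the family in \eqref{eq155} follows by standard domination, completing \ref{C5}.
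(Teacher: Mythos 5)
Your proposal is correct and follows essentially the same route as the paper: the same envelope random variable $C_{u}^{\varphi}=\sup_{t}e^{-(\delta_{1}t\wedge\delta_{2}t)}|\varphi_{u}(t)|$, its independence from $S(u)$, and the exponential moments $\mathbb{E}[\sum_{|u|=i}e^{-\theta S(u)}]=\hat{\mu}(\theta)^{i}$ supplied by \ref{A7} drive all four parts. The only cosmetic differences are that for \ref{lemma12Pro3}--\ref{lemma12Pro4} the paper works with the windowed supremum $\varphi^{*}(t)=\sup_{|s-t|\leq1}|\varphi(s)|$ and invokes \cite[Proposition 2.6]{Iksanov2021} (which packages the local Riemann integrability you leave implicit), and for \ref{lemma12Pro1} the paper dispenses with your most-recent-common-ancestor decomposition of the cross terms by noting that $C_{u}^{\varphi}$ and $C_{v}^{\varphi}$ are independent for distinct same-generation $u,v$ and then citing the $L^{2}$-boundedness \eqref{eq152} of the Biggins martingale for $\theta\in[\vartheta,\alpha]$.
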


%%%%%%%%%%%%%%%%%%%%%%%%%%%%%%%%%%%%%%%%%%%%%%%%%%%%%%%%%%%%%%%%%%%%%%%%%%%
\subsection{Main results} \label{MainSec}
%%%%%%%%%%%%%%%%%%%%%%%%%%%%%%%%%%%%%%%%%%%%%%%%%%%%%%%%%%%%%%%%%%%%%%%%%%%

This section presents our main result. However, additional technical details and  one more assumption is required before stating our main theorem (Theorem \ref{Theo2} below) in its full generality. Following \cite{Iksanov2021}, we also impose the following technical condition.

\begin{enumerate}[label=(\textbf{E.\arabic*})]
\item \label{E1} Let $\varphi$ be a characteristic that is either non-negative or satisfies $\varphi(t) \in L^{1}(\Omega, \mathcal{F}, \mathbb{P})$ for every $t \in \mathbb{R}$. The mean $m_{t}^{\mathbb{E}[\varphi]} \coloneqq \mathbb{E}[\mathcal{Z}_{t}^{\mathbb{E}[\varphi]}]$ admits the expansion, 
\begin{align}
m_{t}^{\mathbb{E}[\varphi]}=\mathbf{1}_{[0, \infty)}(t) \sum_{\lambda \in \Lambda_{\geq}} e^{\lambda t} \sum_{i=0}^{k(\lambda)-1} a_{\lambda, i} t^{i}+r(t), \quad \text{for} \quad t \in \mathbb{G},
\end{align}
\noindent for some constant $a_{\lambda, i} \in \mathbb{C}$ and a function $r: \mathbb{G} \rightarrow \mathbb{C}$ satisfying $|r(t)| \leq C e^{\alpha t / 2} /(1+t^{2})$ for all $t \in \mathbb{G}$ and some finite constant $C \geq 0$. 
\end{enumerate}

Assumption \ref{E1} is analogous to \cite[assumption (2.20)]{Iksanov2021}, but we require the expansion for $m_{t}^{\mathbb{E}[\varphi]}$ instead of $m_{t}^{\varphi}$. This modification is intentional and not a limitation. A careful examination of the proofs in \cite{Iksanov2021} reveals that the expansion of $m_{t}^{\mathbb{E}[\varphi]}$ is the essential condition. Moreover, if \ref{C1} is satisfied, then $m_{t}^{|\mathbb{E}[\varphi]|} <\infty$, which ensures $m_{t}^{\mathbb{E}[\varphi]} <\infty$ for all $t \in \mathbb{R}$ (see \eqref{eq109} in Section \ref{ExpSection}). However, neither \ref{C1} nor \ref{C2} guarantees the finiteness of $m_{t}^{\varphi}<\infty$, making the assumption about a potentially undefined quantity somewhat inconsistent. It is also worth noting from \cite[Remark 2.16]{Iksanov2021} that $r(t)$ is real for every $t \in \mathbb{G}$.

In \cite{Iksanov2021}, sufficient conditions for \ref{E1} are given. Specifically, \cite[Lemma 7.1]{Iksanov2021} covers the lattice case ($\mathbb{G} = \mathbb{Z}$) and \cite[Lemma 7.6]{Iksanov2021} the non-lattice case ($\mathbb{G} = \mathbb{R}$). Indeed, the proofs of these lemmas establish the expansion for $m_{t}^{\mathbb{E}[\varphi]}$ rather than $m_{t}^{\varphi}$. In Lemma \ref{lemma13}, we provide a different set of sufficient conditions for \ref{E1} when the characteristic function vanishes on the negative half-line. As highlighted in \cite{Iksanov2021}, there are examples where the previously mentioned results do not apply. In such cases, directly verifying \ref{E1} might be more straightforward. Therefore, following the approach in \cite{Iksanov2021}, we present our main result, Theorem \ref{Theo2}, for general branching processes counted with a characteristic $\varphi$ that satisfies \ref{E1}. \\

For $t \in \mathbb{R}$, define 
\begin{align} \label{eq84} 
H_{\Lambda}(t) \coloneqq \sum_{\lambda \in \Lambda} e^{\lambda t} \sum_{i=0}^{k(\lambda)-1} \sum_{j=0}^{i} a_{\lambda, i}\binom{i}{j} t^{j} W^{(i-j)}(\lambda)
\end{align}
\noindent and 
\begin{align} \label{eq86}
H_{\partial \Lambda}(t) \coloneqq \sum_{\lambda \in \partial \Lambda} e^{\lambda t} \sum_{i=0}^{k(\lambda)-1} a_{\lambda, i} t^{i},
\end{align}
\noindent with $H_{\partial \Lambda}(t) = 0$, if $\partial \Lambda = \emptyset$.  We set $H(t) \coloneqq H_{\Lambda}(t)+H_{\partial \Lambda}(t)$, for $t \in \mathbb{R}$. Since $\mathbb{E}[W^{(i)}(\lambda)] = \delta_{0i}$, for $\lambda \in \Lambda$ and $i= 0, \dots, k(\lambda)-1$, note that
\begin{align} \label{eq87} 
\mathbb{E}[H_{\Lambda}(t)] = \sum_{\lambda \in \Lambda} e^{\lambda t} \sum_{i=0}^{k(\lambda)-1}  a_{\lambda, i} t^{i}.
\end{align}
\noindent Further, for any $\lambda \in \partial \Lambda$ and $i=0, \ldots, k(\lambda)-1$, we define the random variable 
\begin{align} \label{eq119}
R_{\lambda, i} \coloneqq \sum_{j=i}^{k(\lambda)-1} a_{\lambda, j}\binom{j}{i} \sum_{k=1}^{N}\left(-X_{k}\right)^{j-i} e^{-\lambda X_{k}}.
\end{align}
\noindent Since \ref{A7} implies \cite[assumption (A3)]{Iksanov2021} (see \cite[Remark 2.1]{Iksanov2021}), then $R_{\lambda, i} \in L^{2}(\Omega, \mathcal{F}, \mathbb{P})$ for all $\lambda \in \partial \Lambda$ and $i=0, \ldots, k(\lambda)-1$. Thus, we may define
\begin{align} \label{eq120} 
\rho_{i}^{2}\coloneqq \sum_{\substack{\lambda \in \partial \Lambda: \\ k(\lambda)>i}} {\rm Var}(R_{\lambda, i}),
\end{align}
\noindent where ${\rm Var}[R_{\lambda, i}]=\mathbb{E}[|R_{\lambda, l}- \mathbb{E}[R_{\lambda, l}]|^{2}]$. %(If $\mathcal{Y}$ is a complex-valued random variable with finite mean, we set $\mathrm{Var}(\mathcal{Y}) \coloneqq \mathbb{E}[|\mathcal{Y} - \mathbb{E}[\mathcal{Y}]|^{2}]$.)

For a sequence of random variables $(\mathcal{X}_{n})_{n \in \mathbb{N}_{0}}$ defined on $(\Omega, \mathcal{F}, \mathbb{P})$ that converges in distribution to some random variable $\mathcal{X}$, we say that the convergence is stable if additionally for all continuity points $x$ of the distribution function of $\mathcal{X}$ and all $E\in \mathcal{F}$, the limit $\lim \mathbb{P}(\{ \mathcal{X} \leq x\} \cap E)$ exists (see e.g., \cite[p. 56]{Hall1980}). An alternative characterization of stable convergence can be found in \cite[Condition (B')]{Aldous1978}. We use $\xrightarrow[]{\rm st}$ for stable convergence.

For a measure $\nu$ (possibly random) on the Borel $\sigma$-algebra $\mathcal{B}(\mathbb{R})$ and a measurable function $f: \mathbb{R} \rightarrow \mathbb{R}$, the Lebesgue-Stieltjes convolution of $\nu$ and $f$ is denoted by $f \ast \nu$, i.e., for $t \in \mathbb{R}$, $f \ast \nu(t) = \int_{(-\infty,\infty)} f(t-x)  \nu( {\rm d} x)$ whenever the integral is well-defined.

\begin{theorem} \label{Theo2} 
Suppose that assumptions \ref{A1}-\ref{A8} hold and let 
$\varphi$ be an $h$-dependent descendant characteristic, for some $h \in \mathbb{N}_{0}$, satisfying \ref{C1}-\ref{C5}. Suppose also that $m_{t}^{\mathbb{E}[\varphi]}$ satisfies \ref{E1}. Let $n \coloneqq \max\{i \in \mathbb{N}_{0}: \rho_{i} >0\}$ with $n = -1$ if the set is empty; in that case we set $\rho_{-1} \coloneqq 0$. Then, there exists a finite constant $\sigma \geq 0$ such that, with
\begin{align} \label{eq164} 
a_{t}^{2} \coloneqq \sigma^{2} + \frac{\rho_{n}^{2}}{2n+1}t^{2n +1}, \quad t >0,
\end{align}
\noindent it holds:
\begin{enumerate}[label=(\textbf{\roman*})]
\item if $\sigma^{2} = \rho_{n}^{2} = 0$, then there exists an individual characteristic $\phi$ such that 
\begin{align} \label{eq68}
t \in \mathbb{G}\cap \mathbb{R}_{+} \mapsto H(t) +  \sum_{0\leq |u| \leq h-1} (\varphi_{u}^{(h-|u|)}(t-S(u)) - \phi_{u}(t-S(u)))  + \sum_{|u| = h} r(t - S(u))
\end{align}
\noindent is a version of $t \in \mathbb{G}\cap \mathbb{R}_{+} \mapsto \mathcal{Z}^{\varphi}_{t}$. (If $h=0$, the sum $\sum_{0\leq |u| \leq h-1}$ in \eqref{eq68} is defined to be $0$.) \label{Theo2Claim1}
\item if $\sigma^{2}>0$ or  $\rho_{n}^{2} > 0$, then \label{Theo2Claim2}
\begin{align} \label{eq165} 
a_{t}^{-1} e^{-\frac{\alpha}{2}t} \left(\mathcal{Z}_{t}^{\varphi} - H(t) \right) \xrightarrow[]{\rm st} \left( \frac{W}{\beta} \right)^{1/2} \mathcal{N},
\end{align}
\noindent as $t \rightarrow \infty$, $t \in \mathbb{G}$,  where $\beta \in (0, \infty)$ is defined in \eqref{eq3}, $W$ is the limit of Nerman's martingale and $\mathcal{N}$ is a standard normal random variable independent of $\mathcal{F}$ (and thus of $W$).
\end{enumerate}

If $n =-1$, the constant $\sigma$ can be explicitly computed, specifically:
\begin{align} \label{eq121}
\sigma^{2} \coloneqq  \int_{\mathbb{G}} \mathbb{E}[(\chi^{(\varphi + (m^{\mathbb{E}[\varphi]} - \mathbb{E}[H_{\Lambda}])\ast \xi,h)}(s))^{2}] e^{-\alpha s}  \ell({\rm d} s),
\end{align}
\noindent where $\mathbb{E}[H_{\Lambda}]$ denotes the function $t \in \mathbb{R} \mapsto \mathbb{E}[H_{\Lambda}(t)]$. 
\end{theorem}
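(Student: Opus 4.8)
The plan is to reduce the theorem to the central-limit framework already established for the centred process in Theorem \ref{Theo1} by means of the decomposition induced by the conditional-expectation telescoping in \eqref{eq53}. The key identity to exploit is that, for an $h$-dependent descendant characteristic, $\varphi = \varphi^{(h+1)}$ almost surely, so writing
\begin{align} \label{eq:telescope}
\varphi = \varphi^{(0)} + \sum_{k=0}^{h}\bigl(\varphi^{(k+1)} - \varphi^{(k)}\bigr)
\end{align}
splits $\varphi$ into its deterministic mean part $\varphi^{(0)} = \mathbb{E}[\varphi]$ and a sum of martingale-difference-type increments. Summing over the tree and using the branching property, I expect the contribution of the increment layers to organise exactly into the characteristic $\chi^{(\varphi,h)}$ of \eqref{eq53} convolved appropriately with the reproduction measure $\xi$, while the mean part $\mathbb{E}[\varphi]$ produces, via \ref{E1} and the definitions \eqref{eq84}--\eqref{eq87}, the deterministic-plus-martingale-limit term $H(t)$. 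The first main step is therefore to prove the exact decomposition $\mathcal{Z}^{\varphi}_t - H(t)$ equals (a version of) $\mathcal{Z}^{\psi}_t$ for a centred characteristic $\psi$ built from $\chi^{(\varphi,h)}$ together with the remainder-driven correction $(m^{\mathbb{E}[\varphi]} - \mathbb{E}[H_\Lambda])\ast \xi$ appearing in \eqref{eq121}; the boundary terms $\sum_{|u|=h} r(t-S(u))$ and the layer-$k$ corrections in \eqref{eq68} are the bookkeeping that makes this exact rather than merely asymptotic.

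Next I would invoke Theorem \ref{Theo1} for the centred process $\mathcal{Z}^{\psi}$. The dichotomy in the statement is governed by whether the asymptotic variance is dominated by a genuinely fluctuating part. The quantity $\rho_i^2$ in \eqref{eq120} captures the contribution of the boundary roots $\partial\Lambda$ on the critical line $\mathrm{Re}(\lambda)=\alpha/2$, whose oscillatory terms are responsible for the possibly-growing polynomial factor $t^{2n+1}$ in $a_t^2$; the constant $\sigma^2$ in \eqref{eq121} captures the ``bulk'' variance coming from the interior roots and from $\chi^{(\varphi,h)}$. When both vanish (case \ref{Theo2Claim1}) there is no fluctuation left and the decomposition \eqref{eq:telescope} collapses to the deterministic identity \eqref{eq68}, where $\phi$ is chosen so that $\mathcal{Z}^{\phi}$ realises the interior-root part $H_\Lambda$ exactly. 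When at least one is positive (case \ref{Theo2Claim2}), the martingale central limit theorem underlying Theorem \ref{Theo1}, together with the stable convergence of Nerman's martingale $W_t \to W$ and the $L^2$-convergence of the complex martingales $W^{(i)}_t(\lambda)$ recorded in Section \ref{AssumpCMJ}, delivers the mixed-normal limit with the random variance $W/\beta$ and the conditionally independent normal factor $\mathcal{N}$.

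For the explicit formula \eqref{eq121} in the case $n=-1$, I would identify $\sigma^2$ as $\lim_{t\to\infty} e^{-\alpha t}\,\mathrm{Var}(\mathcal{Z}^{\psi}_t)$ for the centred characteristic $\psi = \chi^{(\varphi + (m^{\mathbb{E}[\varphi]}-\mathbb{E}[H_\Lambda])\ast\xi,\,h)}$, and then apply the general variance-asymptotics formula for CMJ-processes counted with a centred characteristic: under the renewal-theoretic scaling this limit equals $\int_{\mathbb{G}}\mathbb{E}[(\chi(s))^2]\,e^{-\alpha s}\,\ell(\mathrm{d}s)$, where direct Riemann integrability (guaranteed by \ref{C2} together with the c\`adl\`ag regularity from \ref{C4}--\ref{C5} and Lemma \ref{lemma6}) makes the key renewal theorem applicable. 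The verification that $\chi^{(\varphi,h)}$ and its second-moment function are directly Riemann integrable, so that the renewal theorem applies cleanly, is the technical heart here.

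I expect the main obstacle to be establishing the exact decomposition in the first step rather than the limit theorem itself: one must verify that the telescoping \eqref{eq:telescope}, summed over the infinite tree, rearranges into $\chi^{(\varphi,h)}$ convolved with $\xi$ \emph{with matching conditional-expectation layers}, and that the interchange of the infinite summation with the conditional expectations is justified. This requires the unconditional $L^1$ and almost-sure convergence of $\mathcal{Z}^{\varphi}$ over admissible orderings from Proposition \ref{Proposition3}, controlled via the second-moment assumption \ref{A7} and the directly-Riemann-integrable bounds of \ref{C1}--\ref{C2}; handling the boundary-root remainder term $r(t)$ and verifying that $\mathbb{E}[H_\Lambda]$ exactly cancels the leading part of $m^{\mathbb{E}[\varphi]}$ so that only the $r$-driven correction survives in \eqref{eq121} will demand careful attention to the precise constants $a_{\lambda,i}$ and the multiplicities $k(\lambda)$.
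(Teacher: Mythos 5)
Your overall architecture --- centre the process, reduce to the martingale CLT of Theorem \ref{Theo1}, and extract the mean via \ref{E1} --- matches the paper's, and for the case $n=-1$ your sketch is essentially on target: the paper subtracts the individual characteristics $\psi_{\Lambda}$, $\psi_{\partial\Lambda}$ built from $\phi_{\lambda}$, $\chi_{\lambda}$ of Section \ref{SecMatrixCharac}, applies the intermediate Theorem \ref{Theo3} (slowly growing mean) to $\varrho-\phi_{\partial\Lambda}$ whose mean is exactly the remainder $r$, and then uses Remark \ref{Remark2} to reduce $\chi^{(\varrho-\phi_{\partial\Lambda}+r\ast\xi,\,h)}$ to $\chi^{(\varphi+(m^{\mathbb{E}[\varphi]}-\mathbb{E}[H_{\Lambda}])\ast\xi,\,h)}$, which is precisely your ``remainder-driven correction''.

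There is, however, a genuine gap in case \ref{Theo2Claim2} when $\rho_{n}^{2}>0$. Theorem \ref{Theo1} produces a limit under the fixed normalization $e^{-\alpha t/2}$ with a \emph{finite} limiting variance; it cannot generate the polynomially growing normalization $a_{t}\sim \rho_{n}t^{n+1/2}$ in \eqref{eq164}. That growth comes from the roots on the critical line $\mathrm{Re}(\lambda)=\alpha/2$, for which the martingales $W^{(i)}_{t}(\lambda)$ do \emph{not} converge --- they are defined and convergent only for $\lambda\in\Lambda$, strictly above the line, and enter the statement only through $H_{\Lambda}$. In the paper this contribution is isolated into a separate process $\mathcal{Z}^{\chi}_{t}$, with $\chi$ the individual characteristic of \cite[(6.26)]{Iksanov2021} built from the variables $R_{\lambda,i}$ of \eqref{eq119}, and is analysed by a different argument yielding $\bigl(\rho_{n}^{2}t^{2n+1}e^{\alpha t}/(2n+1)\bigr)^{-1/2}\mathcal{Z}^{\chi}_{t}\xrightarrow{\rm st}(W/\beta)^{1/2}\mathcal{N}$; your appeal to the $L^{2}$-convergence of the $W^{(i)}_{t}(\lambda)$ cannot substitute for this step. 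A second, smaller imprecision: $\mathcal{Z}^{\varphi}_{t}-H(t)$ is not literally $\mathcal{Z}^{\psi}_{t}$ for a centred characteristic, because $H_{\Lambda}(t)$ contains the limit random variables $W^{(i-j)}(\lambda)$. The exact (finite-$t$) identity requires the compensating characteristic $\chi_{\lambda}(t)=\mathbf{1}_{(-\infty,0)}(t)\exp(\lambda,t,k)Y$ of \eqref{eq78}, so that $\mathcal{Z}^{\psi_{\Lambda}}_{t}$ reproduces $H_{\Lambda}(t)$ exactly rather than only asymptotically; without it the ``bookkeeping'' you defer does not close.
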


Theorem \ref{Theo2} extends \cite[Theorem 2.8]{Iksanov2021} to descendant characteristics. Indeed, recall that the individual characteristics considered in \cite{Iksanov2021} are $0$-dependent descendant characteristics. Moreover, if $\varphi$ is a $0$-dependent descendant characteristic and $n =-1$ in Theorem \ref{Theo2}, then by \eqref{eq53} with $h=0$, we obtain
\begin{align}
\sigma^{2} = \int_{\mathbb{G}} \mathrm{Var}(\varphi(s) + (m^{\mathbb{E}[\varphi]} - \mathbb{E}[H_{\Lambda}])\ast \xi(s)) e^{-\alpha s}  \ell({\rm d} s),
\end{align}
\noindent which is the same constant as in \cite[Theorem 2.8]{Iksanov2021}. Moreover, if $\sigma^{2} = \rho_{n}^{2} = 0$, then from Theorem \ref{Theo2} \ref{Theo2Claim1} it follows that $t \in \mathbb{G}\cap \mathbb{R}_{+} \mapsto H(t) + r(t)$ is a version of $t \in \mathbb{G}\cap \mathbb{R}_{+} \mapsto \mathcal{Z}^{\varphi}_{t}$; see also  \cite[Theorem 2.15 (i)]{Iksanov2021}. We also point out that the individual characteristic $\phi$ in Theorem \ref{Theo2} \ref{Theo2Claim1} can be given explicitly; its expression can be found in the proof of Theorem \ref{Theo2} (see \eqref{eq131}), which we omitted here to avoid introducing further notation.

As a corollary to Theorem \ref{Theo2}, we present a simplified version that holds under some additional assumptions. To this end, we require the introduction of additional notation. For a measure $\eta$ on the Borel $\sigma$-algebra $\mathcal{B}(\mathbb{R})$ and $n \in \mathbb{N}$, $\eta^{\ast n}$ denotes the $n$-fold convolution of $\eta$, and $\eta^{\ast 0}$ the Dirac measure at $0$. Let $\eta$ be a probability measure on $\mathbb{R}$. We say that $\eta$ is {\sl spread-out} if for some $k \geq 1$, $\eta^{\ast k}$ has a non-zero absolutely continuous component (i.e.\ it has a density w.r.t.\ Lebesgue measure); see for e.g.\ \cite[Section 1, Chapter VII]{Soren2003}. 

Define the measure $\mu_{\alpha}$ on $\mathbb{R}$ by 
\begin{align} \label{eq153}
\mu_{\alpha}({\rm d}x) \coloneqq  e^{-\alpha x} \mu({\rm d}x),
\end{align}
\noindent where $\alpha >0$ is the Malthusian parameter defined in $\ref{A4}$. In particular,  $\mu_{\alpha}$ is a probability measure concentrated on $\mathbb{R}_{+}$, as $\mu_{\alpha}((-\infty,0)) =0$ and $\mu_{\alpha}([0,\infty)) =1$. For $k \in \mathbb{N}_{0} \cup \{\infty\}$, define the measure
\begin{align} \label{eq62}
\nu^{(k)}({\rm d} x) \coloneqq  \sum_{i=0}^{k}\mu^{\ast i}({\rm d} x).
\end{align}
\noindent Note that $\nu^{(\infty)}$ is a renewal measure that may be defective or excessive. Moreover, $\nu^{(\infty)}([0,t])<\infty$ (and thus, $\nu^{(k)}([0,t])<\infty$, for all $k \in \mathbb{N}_{0}$) for all $t \in \mathbb{R}_{+}$, if $\mu(\{ 0\}) < 1$ and $\mu([0,t]) < \infty$; see for e.g.\ \cite[Lemma 5.2.1, p.\ 158]{Jagers1975}. Our assumptions \ref{A1}-\ref{A4} imply the latter two conditions (see Remark \ref{Remark3}). Note also that 
\begin{align} \label{eq123}
e^{-\alpha t} \nu^{(\infty)}({\rm d}x) = \sum_{i=0}^{\infty} \mu_{\alpha}^{\ast i}({\rm d}x).
\end{align}
\noindent Recall from \eqref{eq3} that $\beta \coloneqq \int_{[0,\infty)} x e^{-\alpha x} \mu({\rm d}x) \in (0,\infty)$ (by \ref{A1}-\ref{A7}). 

If $\mu_{\alpha}$ is spread-out, then by \cite[Theorem 1.1, Chapter VII]{Soren2003}, there exists a decomposition of the renewal measure $e^{-\alpha t} \nu^{(\infty)}({\rm d}x) = \nu^{(\infty)}_{\alpha, 1}({\rm d}x) + \nu^{(\infty)}_{\alpha, 2}({\rm d}x)$, where $\nu^{(\infty)}_{\alpha, 1}$ and $\nu^{(\infty)}_{\alpha, 2}$ are non-negative measures on $[0, \infty)$, $\nu^{(\infty)}_{\alpha, 2}$ is a finite measure and $\nu^{(\infty)}_{\alpha, 1}$ has a bounded continuous density w.r.t.\ Lebesgue measure $f^{(\infty)}_{\alpha, 1}: [0, \infty) \rightarrow [0, \infty)$ (i.e., $\nu^{(\infty)}_{\alpha, 1}({\rm d}x) = f^{(\infty)}_{\alpha, 1}(x) {\rm d} x$) satisfying $f^{(\infty)}_{\alpha, 1}(x) \rightarrow \beta^{-1}$, as $x \rightarrow \infty$. This  decomposition is known as the {\sl Stone's decomposition} of the renewal measure $e^{-\alpha t} \nu^{(\infty)}({\rm d}x)$. We consider the following  condition.
\begin{enumerate}[label=(\textbf{S.\arabic*})]
\item \label{S1} 
If $\mu_{\alpha}$ is spread-out, there exists a Stone's decomposition of its associated renewal measure $e^{-\alpha x} \nu^{(\infty)}({\rm d}x)$ such that, for some $\varepsilon \in (\alpha/2, \alpha)$, 
\begin{align}
\int_{[0, \infty)} e^{\varepsilon x} \nu^{(\infty)}_{\alpha, 2}({\rm d}x) < \infty \quad \text{and} \quad f^{(\infty)}_{\alpha, 1}(x) = \beta^{-1} + g(x),
\end{align}
\noindent for $x \in [0,\infty)$, where $g: [0, \infty) \rightarrow \mathbb{R}$ is a function satisfying $|g(t)| \leq C e^{-\varepsilon t}$ for all $t \in [0, \infty)$ and some finite constant $C \geq 0$. 
\end{enumerate}

\begin{corollary} \label{corollary5} 
Suppose that assumptions \ref{A1}-\ref{A8} hold and that $\mu_{\alpha}$ is spread-out such that there exists a Stone's decomposition of its associated renewal measure $e^{-\alpha x} \nu^{(\infty)}({\rm d}x)$ that satisfies \ref{S1}. Let $\varphi$ be any non-negative $h$-dependent descendant characteristic that vanishes on the negative half-line (i.e., $\varphi(t)= 0$, for $t<0$) and satisfies \ref{C2}-\ref{C3} and \ref{C5}. Suppose further that 
\begin{align} \label{eq129}
\mathbb{E}[\varphi(t)] \leq C e^{(\alpha-\delta) t} \mathbf{1}_{[0,\infty)}(t), \quad \text{for every} \, \, t \in \mathbb{R},
\end{align}
\noindent for some $\delta \in (\varepsilon, \alpha)$ and finite constant $C > 0$, where $\varepsilon \in (\alpha/2, \alpha)$ is as in \ref{S1}. Then, there exists $\sigma \geq 0$ so that, for $a_{\alpha} \coloneqq \beta^{-1} \int_{0}^{\infty} \mathbb{E}[\varphi(x)] e^{- \alpha x} {\rm d} x$,
\begin{align}
e^{-\frac{\alpha}{2}t} \left(\mathcal{Z}_{t}^{\varphi} - a_{\alpha} e^{\alpha t} W \right) \xrightarrow[]{\rm st} \sigma \left( \frac{W}{\beta} \right)^{1/2} \mathcal{N}, \quad \text{as} \quad t \rightarrow \infty, \, t \in \mathbb{G},
\end{align}
\noindent where $\mathcal{N}$ is a standard normal random variable independent of $W$. The constant $\sigma$ can be explicitly computed; see \eqref{eq121} (and in particular, $\mathbb{E}[H_{\Lambda}(t)] = a_{\alpha} e^{\alpha t}$, for $t \in \mathbb{R}$).
\end{corollary}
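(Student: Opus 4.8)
The plan is to obtain Corollary~\ref{corollary5} as a specialisation of Theorem~\ref{Theo2}: I would check that the stated $\varphi$ meets all hypotheses of the theorem and then simplify the limiting objects $H$, $R_{\lambda,i}$, $\rho_i$, $n$ and $\sigma$ in the resulting special case. Assumptions \ref{A1}--\ref{A8}, together with \ref{C2}--\ref{C3} and \ref{C5}, are given, so the real work splits into three pieces: verifying \ref{C1} and \ref{C4}; establishing the mean expansion \ref{E1} for $m_{t}^{\mathbb{E}[\varphi]}$, which is where the spread-out hypothesis and the Stone decomposition \ref{S1} are used; and reading off the conclusion once only the Malthusian root $\alpha$ survives.

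For the first piece I would argue as follows. Condition \ref{C2} already gives $\varphi(t)\in L^{1}(\Omega,\mathcal{F},\mathbb{P})$, and by \ref{C3} together with \cite[Proposition 4.12]{Kallenberg2002} the expectation function $\mathbb{E}[\varphi]$ is c\`adl\`ag, hence locally Riemann integrable. The growth bound \eqref{eq129} then yields $0\le \mathbb{E}[\varphi](t)e^{-\alpha t}\le C e^{-\delta t}\mathbf{1}_{[0,\infty)}(t)$ with $\delta\in(\varepsilon,\alpha)$, and since the right-hand side is directly Riemann integrable, the domination principle quoted in Remark~\ref{Remark1} gives direct Riemann integrability of $t\mapsto\mathbb{E}[\varphi](t)e^{-\alpha t}$, i.e.\ \ref{C1}. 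Assumption \ref{C4} is automatic here: because $\varphi$ vanishes on the negative half-line, every summand $\varphi_{u}(s-S(u))\mathbf{1}_{(-\infty,0)}(s-S(u))$ in \eqref{eq107} is identically zero, so the expectation there equals $0$. (The same vanishing makes \ref{C5} automatic, although it is assumed in the statement anyway.)

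The second piece is the crux. Writing $\psi\coloneqq\mathbb{E}[\varphi]$, I would use the renewal representation of the mean, $m_{t}^{\psi}=\sum_{n\ge 0}\psi\ast\mu^{\ast n}(t)=\psi\ast\nu^{(\infty)}(t)$ (see Section~\ref{ExpSection}), set $\tilde\psi(s)\coloneqq\psi(s)e^{-\alpha s}$, and rewrite via \eqref{eq123} as $e^{-\alpha t}m_{t}^{\psi}=\tilde\psi\ast\big(e^{-\alpha\cdot}\nu^{(\infty)}\big)(t)$. Inserting the Stone decomposition from \ref{S1}, with $f^{(\infty)}_{\alpha,1}=\beta^{-1}+g$, $|g(x)|\le Ce^{-\varepsilon x}$ and $\int e^{\varepsilon x}\nu^{(\infty)}_{\alpha,2}({\rm d}x)<\infty$, the leading term is $\beta^{-1}\int_{-\infty}^{t}\tilde\psi(s)\,{\rm d}s\to\beta^{-1}\int_{0}^{\infty}\tilde\psi(s)\,{\rm d}s=a_{\alpha}$, so that $m_{t}^{\psi}=a_{\alpha}e^{\alpha t}+r(t)$ with
\[
r(t)=e^{\alpha t}\Big(-\beta^{-1}\int_{t}^{\infty}\tilde\psi(s)\,{\rm d}s+\int\tilde\psi(t-x)g(x)\,{\rm d}x+\int\tilde\psi(t-x)\,\nu^{(\infty)}_{\alpha,2}({\rm d}x)\Big).
\]
The main obstacle is bounding $r$ by $Ce^{\alpha t/2}/(1+t^{2})$. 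The decisive input is that \eqref{eq129} gives $\tilde\psi(s)\le Ce^{-\delta s}\mathbf{1}_{[0,\infty)}(s)$ with $\delta>\varepsilon>\alpha/2$: the tail term is $O(e^{-\delta t})$; the $g$-term is $O(e^{-\varepsilon t})$ after the elementary estimate $\int_{0}^{t}e^{-\delta(t-x)}e^{-\varepsilon x}\,{\rm d}x\le C'e^{-\varepsilon t}$; and for the $\nu^{(\infty)}_{\alpha,2}$-term I would use $e^{\delta x}\le e^{(\delta-\varepsilon)t}e^{\varepsilon x}$ on $[0,t]$ to get $\int_{[0,t]}e^{-\delta(t-x)}\nu^{(\infty)}_{\alpha,2}({\rm d}x)\le e^{-\varepsilon t}\int e^{\varepsilon x}\nu^{(\infty)}_{\alpha,2}({\rm d}x)=O(e^{-\varepsilon t})$. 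Since $\alpha-\delta<\alpha/2$ and $\alpha-\varepsilon<\alpha/2$, each contribution to $e^{\alpha t}\cdot(\text{error})$ is $O\big(e^{(\alpha/2-\kappa)t}\big)$ with $\kappa=\varepsilon-\alpha/2>0$, hence dominated by $Ce^{\alpha t/2}/(1+t^{2})$. This establishes \ref{E1} with $a_{\alpha,0}=a_{\alpha}$ and every other coefficient $a_{\lambda,i}=0$; the point is that the growth exponent $\delta>\alpha/2$ is precisely what suppresses all roots $\lambda\in\Lambda_{\geq}$, $\lambda\neq\alpha$.

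Finally I would carry out the simplification. Since $a_{\lambda,i}=0$ for every $\lambda\neq\alpha$, \eqref{eq87} gives $\mathbb{E}[H_{\Lambda}(t)]=a_{\alpha}e^{\alpha t}$, \eqref{eq86} gives $H_{\partial\Lambda}\equiv 0$ (as $\alpha\notin\partial\Lambda$), and $H(t)=H_{\Lambda}(t)=a_{\alpha}e^{\alpha t}W^{(0)}(\alpha)=a_{\alpha}e^{\alpha t}W$. Likewise every $R_{\lambda,i}$ in \eqref{eq119} vanishes, so $\rho_{i}^{2}=0$ for all $i$, whence $n=-1$ and $a_{t}^{2}\equiv\sigma^{2}$. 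Theorem~\ref{Theo2}\,\ref{Theo2Claim2} then yields, when $\sigma>0$, that $\sigma^{-1}e^{-\frac{\alpha}{2}t}\big(\mathcal{Z}_{t}^{\varphi}-a_{\alpha}e^{\alpha t}W\big)\xrightarrow[]{\rm st}(W/\beta)^{1/2}\mathcal{N}$, which is the assertion; and when $\sigma=0$ the degenerate case \ref{Theo2Claim1} applies and the scaled remainder tends to $0$, matching convergence to $\sigma(W/\beta)^{1/2}\mathcal{N}=0$. The explicit value of $\sigma^{2}$ is then read from \eqref{eq121}, using the identity $\mathbb{E}[H_{\Lambda}(t)]=a_{\alpha}e^{\alpha t}$ just obtained.
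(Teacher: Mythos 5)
Your proposal is correct and follows essentially the same route as the paper: verify \ref{C1} from \eqref{eq129} via direct Riemann integrability, note that \ref{C4} is trivial since $\varphi$ vanishes on the negative half-line, establish \ref{E1} through the Stone decomposition under \ref{S1} (your inline computation of $r(t)$ and its $O(e^{(\alpha-\varepsilon)t})$ bound is exactly the content of the paper's Lemma \ref{lemma13}), and then specialise Theorem \ref{Theo2} with $a_{\alpha,0}=a_\alpha$, all other coefficients zero, $n=-1$ and $H(t)=a_\alpha e^{\alpha t}W$. Your explicit remark on the degenerate case $\sigma=0$ is a small point the paper glosses over, but otherwise the arguments coincide.
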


In the setting of Theorem \ref{Theo2}, we also make the following remarks. 

\begin{remark}
Recall that $\mathcal{Z}_{t}$, defined in \eqref{eq15}, denotes the total number of births up to and including time $t \in \mathbb{R}$. Then, Nerman's law of large numbers (\cite[Theorem 6.1]{JagersN1984} or \cite[Theorem 3.2]{Dimitris2000}) implies that
\begin{align}
e^{-\alpha t} \mathcal{Z}_{t} \rightarrow \frac{W}{\beta} \int_{[0, \infty)} e^{- \alpha x} \ell({\rm d} x) = \frac{c_{\alpha}}{\beta} W, \quad \text{almots surely as} \quad t \rightarrow \infty, \, t \in \mathbb{G},
\end{align}
\noindent where $c_{\alpha} = (1-e^{-\alpha})^{-1}$ in the lattice case and $c_{\alpha} = \alpha^{-1}$ in the non-lattice case. Let $\varphi$ be an $h$-dependent descendant characteristic (for some $h \in \mathbb{N}_{0}$) satisfying the assumptions in Theorem \ref{Theo2}. Then, the stable convergence \eqref{eq165} in Theorem \ref{Theo2} yields
\begin{align} \label{eq166}
a^{-1}_{t} \left( \frac{c_{\alpha}}{\mathcal{Z}_{t}} \right)^{1/2} (\mathcal{Z}_{t}^{\varphi} -H(t))  \xrightarrow[]{\rm d} \mathcal{N}, \quad \text{as} \quad t \rightarrow \infty, \, t \in \mathbb{G},
\end{align}
\noindent conditionally on the event that the underlying CMJ-branching process survives, where $a_{t}$ is defined in \eqref{eq165}. We recall that \ref{A3} implies the CMJ-branching process survives with positive probability. The convergence in distribution \eqref{eq166} follows similarly to that of \cite[(2.26)]{Iksanov2021}. Thus, the details are left to the reader.
\end{remark}

\begin{remark}
Suppose that assumptions \ref{A1}-\ref{A8} hold. For $d \in \mathbb{N}$ and $i =1, \dots, d$,  let $\varphi_{i}$ be an $h_{i}$-dependent descendant characteristic for some $h_{i} \in \mathbb{N}_{0}$. Set $h = \max\{h_{1}, \dots, h_{d} \}$. Then, the characteristics $\varphi_{1}, \dots, \varphi_{d}$ are $h$-dependent descendant characteristics. Suppose that they satisfy \ref{C1}-\ref{C5}. Suppose also that each $m_{t}^{\mathbb{E}[\varphi_{i}]}$ satisfies \ref{E1} (with potentially different coefficients and remainder function). Then, applying the Cram\'er-Wold device and Theorem \ref{Theo2}, we obtain the joint convergence in distribution, up to renormalization, of the vector  $(\mathcal{Z}_{t}^{\varphi_{1}}, \dots, \mathcal{Z}_{t}^{\varphi_{d}})$, as $t \rightarrow \infty$, $t \in \mathbb{G}$. (It is straightforward to see that the characteristic function corresponding to any linear combination of $\varphi_{1}, \dots, \varphi_{d}$ satisfies the assumptions of Theorem \ref{Theo2}.) 

On the other hand, in the specific setting of Theorem \ref{Theo2} and up to renormalization, one can show that  $(\mathcal{Z}_{t-s}^{\varphi} -H(t-s))_{s \in \mathbb{R}}$ converges in the sense of finite-dimensional distributions to a centred Gaussian process. We leave the precise details to the interested reader (see also \cite[Corollary 2.20]{Iksanov2021} for the case of individual characteristics). 
\end{remark}

Let us outline the approach to prove Theorem \ref{Theo2}. Although the approach may appear similar to that used by Iksanov et al. \cite{Iksanov2021}, there are several key differences and technicalities that need to be addressed. A main ingredient in the proof of our central result is Theorem \ref{Theo1}, a central limit theorem for general branching processes counted with centred characteristics. This theorem is analogous to \cite[Theorem 6.3]{Iksanov2021}, which is valid only for individual characteristics. The proof of Theorem \ref{Theo1} (and similarly that of \cite[Theorem 6.3]{Iksanov2021}) relies on the application of the well-known central limit theorem for square-integrable martingales. However, our more general setting of $h$-dependent descendant characteristics introduces a more intricate dependency structure due to the lack of independence among characteristics, unlike the setting considered in \cite{Iksanov2021}.  Therefore, we have to construct a different martingale structure to handle these dependencies and must be more meticulous with the estimations needed to verify the conditions of the martingale central limit theorem. Once Theorem \ref{Theo1} is established, the subsequent steps of the proof are analogous to those in \cite[Theorem 2.15]{Iksanov2021}. However, a non-trivial aspect is ensuring that certain characteristics, also appearing in \cite{Iksanov2021}, satisfy our technical conditions \ref{C4} and \ref{C5}.

We should also point out that the general case of descendant characteristics, allowing dependencies on the entire descendant process, introduces considerable complexities. In this paper, we consider $h$-dependent descendant characteristics, a framework that allows for some dependency between individuals across different generations while crucially maintaining independence between individuals separated by more than $h$ generations. This specific independence property is essential for defining a suitable martingale, which is the key ingredient for proving Theorem \ref{Theo1}. If, however, characteristics were to depend on the entire descendant process, our martingale approach would no longer be viable. In a way, our dependency structure is reminiscent of the framework for a central limit theorem for $h$-dependent variables. Therefore, we leave the general case of descendant characteristics open for future research.

%%%%%%%%%%%%%%%%%%%%%%%%%%%%%%%%%%%%%%%%%%%%%%%%%%%%%%%%%%%%%%%%%%%%%%%
\section{Application to fringe trees}  \label{Applicattions}
%%%%%%%%%%%%%%%%%%%%%%%%%%%%%%%%%%%%%%%%%%%%%%%%%%%%%%%%%%%%%%%%%%%%%%%

In this section, we apply our results to study the asymptotic normality of the number of fringe trees of a specific type within family trees generated by Crump-Mode-Jagers branching processes. In particular, we address one of the open questions posed in \cite[Section 14]{Holmgren2017}.

Let $\mathbb{T}$ be the set of all finite plane rooted trees (also called
ordered rooted trees); see e.g., \cite{Drmota2009}. Denote the size, i.e.\ the number of vertices, of a tree $T \in \mathbb{T}$ by $|T|$. Recall that the depth $h(v)$ of a vertex $v \in T$ is its distance from the root of $T$. The {\sl height} $h_{T}$ of $T$ is defined as $h_{T} \coloneqq \max_{v \in T} h(v)$, the maximum depth of a vertex. For $T \in \mathbb{T}$ and a vertex $v \in T$, let $T_{v}$ be the subtree of $T$ rooted at $v$ consisting of $v$ and all its descendants. We call $T_{v}$ a {\sl fringe subtree} of $T$. We regard $T_v$ as an element of $\mathbb{T}$. We write $T \simeq T^{\prime}$ when the trees $T$ and $T^{\prime}$ in $\mathbb{T}$ are isomorphic rooted trees.  Here, we consider trees to be isomorphic if there exists an isomorphism that preserves the root and the order of children. Let, for $T, T^{\prime} \in \mathbb{T}$, 
\begin{align}
N_{T^{\prime}}(T) \coloneqq  |\{v \in T: T_{v} \simeq T^{\prime}\}| = \sum_{v \in T} \mathbf{1}_{\{T_{v} \simeq T^{\prime}\}},
\end{align}
\noindent i.e., the number of fringe subtrees of $T$ that are equal (i.e., isomorphic to) to $T^{\prime}$. 

The {\sl family tree} of all individuals ever born in the CMJ-branching process is denoted by 
\begin{align}
\mathcal{T}\coloneqq \{u \in \mathcal{I}: S(u)<\infty\}. 
\end{align} 
\noindent This is a (generally infinite) tree obtained from the branching process by ignoring the time structure; in other words, it has the individuals as vertices, with the initial individual as the root, and the children of a vertex in the tree are the same as the children in the branching process. For $u \in \mathcal{I}$ and $t \in \mathbb{R}_{+}$, define the subtree of $\mathcal{T}$,
\begin{align}
\mathcal{T}_{t}^{u} \coloneqq \{v \in u\mathcal{I}:  S(v) - S(u) \leq t\}.
\end{align}
In particular, $\mathcal{T}_{t} \coloneqq \mathcal{T}_{t}^{\varnothing}$ is the subtree consisting of all individuals born up to and including time $t$. Note that the number of vertices $|\mathcal{T}_{t}| = \mathcal{Z}_{t}$. Under assumptions \ref{A1}-\ref{A4}, Remark \ref{Remark3} implies that $\mathcal{Z}_{t} < \infty$ is almost surely finite for every $t \in \mathbb{R}$. The tree $\mathcal{T}_{t}$ is an unordered tree. We then add an ordering of the children of each individual by taking them in order of birth (which is the standard custom). 

The subtree $\mathcal{T}^{u}_{t-S(u)}$ is the fringe subtree associated to the individual $u$ born before time $t$ relative to the tree $\mathcal{T}_{t}$ (i.e., $\mathcal{T}^{u}_{t-S(u)}$ is the subtree of $\mathcal{T}_{t}$ rooted at $u$ consisting of $u$ and all its descendants born up to and including time $t$). Naturally, we also view the subtree $\mathcal{T}^{u}_{t-S(u)}$ of $\mathcal{T}_{t}$ as an ordered rooted tree. For a finite rooted tree $T \in \mathbb{T}$, define the random (descendent) characteristic, 
\begin{align} 
\varphi^{T}(t) \coloneqq \mathbf{1}_{\{\mathcal{T}_{t} \simeq T \}} \mathbf{1}_{[0, \infty)}(t), \quad \text{for} \quad t \in \mathbb{R}.
\end{align}
\noindent That is, $\varphi^{T}(t)$, for $t \in \mathbb{R}_{+}$, is the indicator of $\mathcal{T}_{t}$ being isomorphic to $T$. Note that, for $u \in \mathcal{I}$ and $t \in \mathbb{R}_{+}$, $\varphi^{T}_{u}(t) = \mathbf{1}_{\{\mathcal{T}^{u}_{t} \simeq T \}}$. Then, 
\begin{align} 
N_{T}(\mathcal{T}_{t}) = \mathcal{Z}_{t}^{\varphi^{T}} = \sum_{u \in \mathcal{I}} \varphi_{u}^{T}(t-S(u)), \quad \text{for} \quad t \in \mathbb{R},
\end{align}
\noindent where $\mathcal{Z}_{t}^{\varphi^{T}} = (\mathcal{Z}_{t}^{\varphi^{T}})_{t \in \mathbb{R}}$ is the general branching process counted with characteristic $\varphi^{T}$. Since the characteristic $\varphi^{T}$ is non-negative, the series above always makes sense, but it may be infinite. On the other hand, $\varphi^{T}(t) = 0$, for $t <0$ and $|\varphi^{T}(t)| \leq \mathbf{1}_{[0, \infty)}(t)$, for $t \in \mathbb{R}$. Therefore, if \ref{A1}-\ref{A4} hold, then $N_{T}(\mathcal{T}_{t}) \leq  \mathcal{Z}_{t} < \infty$ almost surely for every $t \in \mathbb{R}$. Furthermore, $\mathbb{E}[N_{T}(\mathcal{T}_{t})] \leq \mathbb{E}[\mathcal{Z}_{t}] < \infty$, for all $t \in \mathbb{R}$ as noted in Remark \ref{Remark3}.

Theorem \ref{Theo2} provides a solution to \cite[Problem 14.1]{Holmgren2017}. To be precise, observe that, for $T \in \mathbb{T}$, the characteristic $\varphi^{T}$ is an $h_{T}$-dependent descendant characteristic. Furthermore, under assumptions \ref{A1}-\ref{A7}, Lemma \ref{lemma12} implies that $\varphi^{T}$ satisfies \ref{C1}-\ref{C5}. Therefore, assuming \ref{A8} and \ref{E1},  Theorem \ref{Theo2} establishes that, up to a renormalization, $N_{T}(\mathcal{T}_{t})$, the number of fringe trees equal to $T$, is asymptotically normal as $t \rightarrow \infty$, $t \in \mathbb{G}$.

On the other hand, recall the probability measure $\mu_{\alpha}$ on $\mathbb{R}$ defined in \eqref{eq153}. If $\mu_{\alpha}$ is spread-out and the conditions of Corollary \ref{corollary5} are satisfied, then it follows that, up to a renormalization, $N_{T}(\mathcal{T}_{t})$ is asymptotically normal as $t \rightarrow \infty$, $t \in \mathbb{G}$. 

We now present specific examples of Crump-Mode-Jagers branching processes associated with several known families of random trees, for which our assumptions hold.

\begin{example}[supercritical Galton-Watson process]
Consider a random variable $N$ taking values in $\mathbb{N}_{0}$ with $m \coloneqq \mathbb{E}[N] \in (1, \infty)$ and $\mathbb{E}[N^{2}] < \infty$. Let $\xi(\cdot) = N \delta_{1}(\cdot)$. Therefore, the branching process is a supercritical Galton-Watson process, where  $\xi([0, \infty)) = N$ represents the number of offspring per individual. In this case, the intensity measure $\mu$ of $\xi$ is lattice with span $1$. In particular, for $t \in \mathbb{N}_{0}$, the family tree $\mathcal{T}_{t}$ of the general branching process corresponds to the family tree of a supercritical Galton-Watson process up to generation $t$. Clearly, $\xi$ satisfies \ref{A1}-\ref{A3}. Further,
\begin{align}
\hat{\mu}(\lambda)  = me^{-\lambda}, \quad  \text{for} \quad \lambda \in \mathbb{C},
\end{align}
\noindent and thus, \ref{A4} holds with $\alpha = \log m$. Furthermore, $\beta=1$. Note that
\begin{align} \label{eq162}
\mathbb{E} \left[  \left(\sum_{i=1}^{N} e^{-\vartheta} \right)^{2} \right] = e^{2 \vartheta} \mathbb{E}[N^{2}] < \infty, \quad  \text{for all} \quad \vartheta \in \mathbb{R},
\end{align}
\noindent that is, \ref{A7} holds. The equation $\hat{\mu}(\lambda) -1=0$ has only one root in the strip $\mathrm{Im}(\lambda) \in (-\pi, \pi]$ and thus \ref{A8} also holds with $\Lambda_{\geq} = \{ \alpha\}$. 

Set $G_{\mu}(z) \coloneqq \sum_{k =0}^{\infty} \mu(\{k\}) z^{k} = m z$, for $z \in \mathbb{C}$. Then, for any for $\vartheta \in (0,\alpha/2)$ such that \eqref{eq162} holds, and for any $\theta \in (\vartheta, \alpha/2)$, there are not roots to $G_{\mu}(z) - 1=0$ in $\{ z \in \mathbb{C}: e^{-\alpha/2} < |z| \leq e^{-\theta} \}$. Since the characteristic $\varphi^{T}$, for  $T \in \mathbb{T}$, satisfies \cite[condition (7.3)]{Iksanov2021},  \cite[Lemma 7.1]{Iksanov2021} implies that, for $t \in \mathbb{Z}$,
\begin{align}
m_{t}^{\mathbb{E}[\varphi^{T}]} = \mathbf{1}_{\mathbb{N}_{0}}(t) e^{\alpha t} b_{\alpha,0}\sum_{k=0}^{\infty} \mathbb{E}[\varphi^{T}(k)] e^{-\alpha k}  + O(e^{\theta t} \wedge e^{\gamma t}), \quad \text{as} \, \, t \rightarrow \pm \infty,
\end{align}
\noindent for some constants $b_{\alpha,0} \in \mathbb{R}$ and $\gamma \in (\alpha/2, \alpha)$. From \eqref{eq66} below (or \cite[Proposition 7.9]{Iksanov2021}), we know that $b_{\alpha,0} = \beta^{-1}=1$. Thus, for $t \in \mathbb{Z}$,
\begin{align}
m_{t}^{\mathbb{E}[\varphi^{T}]} = \mathbf{1}_{\mathbb{N}_{0}}(t) m^{ t}  \sum_{k=0}^{\infty} \mathbb{E}[\varphi^{T}(k)] m^{- k}  + O(e^{\theta t} \wedge e^{\gamma t}), \quad \text{as} \, \, t \rightarrow \pm \infty,
\end{align}
\noindent that is, $\varphi^{T}$ satisfies \ref{E1}. 
\end{example}

\begin{example}[conservative fragmentation process] \label{Example2}
We consider the fragmentation process introduced by Kolmogorov (see \cite{Kolmogoroff1941}, \cite[Chapter 1]{Bertoin2006}, and \cite[Section 1.6]{Bertoin2006} for further details and references). Let $b \geq 2$ an integer, and $(V_{1}, \dots, V_{b})$ be a random vector with $V_{i} \in [0,1]$, for $i = 1, \dots b$, such that $\sum_{i=1}^{b}V_{i} =1$. Assume $V_{i} < 1$ almost surely, allowing $V_{i} =0$ (but note that, a.s., $0<V_{i} < 1$ for at least one $i$). The distribution of the random vector $(V_{1}, \dots, V_{b})$ is commonly referred to as the {\sl dislocation law}. 

Starting with an object of mass $x \geq 1$ break it into $b$ pieces with masses $V_{1}x, \dots, V_{b}x$. This process is repeated recursively for each piece with mass $\geq 1$, using independent copies of the random vector $(V_{1}, \dots, V_{b})$ at each step. The process terminates almost surely after a finite number of steps, yielding a finite set of fragments with masses $<1$. The fragments with mass $\geq 1$ formed during this process are treated as the internal vertices of a random tree, the {\sl fragmentation tree} $\mathcal{T}_{x}^{\rm Frag}$, while the final fragments with mass $< 1$ are considered external vertices. Holmgren and Janson \cite[Section 9]{Holmgren2017} noted that the fragmentation tree $\mathcal{T}_{x}^{\rm Frag}$ coincides with the family tree $\mathcal{T}_{\log(x)}$ of the general branching process driven by the point process $\xi$. 

Let $\xi(\cdot) = \sum_{i=1}^{b} \mathbf{1}_{\{V_{i}>0 \}} \delta_{-\log V_{i}}(\cdot)$. Note that $\xi$ satisfies \ref{A1}-\ref{A3}. In particular, 
\begin{align}
\hat{\mu}(\lambda) =  \sum_{i=1}^{b} \mathbb{E}[V_{i}^{\lambda}], \quad  \text{for} \quad \mathrm{Re}(\lambda) > 0.
\end{align}
\noindent As pointed out in \cite[Section 1]{Svante2008}, $\hat{\mu}(\cdot)$ is bounded and analytic in the open right half-plane $\{ \lambda \in \mathbb{C}: \mathrm{Re}(\lambda) > 0\}$. Clearly, \ref{A4} holds with $\alpha = 1$ and \ref{A7} is satisfied. We also have that $\beta = \sum_{i=1}^{b} \mathbb{E}[V_{i}\log(1/V_{i})] \in (0, \infty)$. In the lattice case, \cite[Lemma 7.1]{Iksanov2021} provides sufficient conditions for $\varphi^{T}$ to satisfy condition \ref{E1}. In the non-lattice case, for instance, if $\mu_{\alpha}$ is spread-out and the conditions of Lemma \ref{lemma13} are satisfied, then $\varphi^{T}$ satisfies \ref{E1}. 
\end{example}

\begin{example} \label{Example1} %[linear preferential attachment] 
Suppose that $\xi$ is a Poisson point process on $[0, \infty)$ with intensity measure $\mu({\rm d} x)= a e^{bx} {\rm d} x$, for $a >0$ and $b \in \{-1, 0, 1\}$. If $b=-1$ or $b=1$, we additionally require $a >1$. Thus, $\xi$ satisfies \ref{A1}-\ref{A3}. Observe that
\begin{align}
\hat{\mu}(\lambda) = \int_{0}^{\infty} e^{-\lambda x} a e^{bx} {\rm d} x =\frac{a}{\lambda -b}, \quad  \text{for} \quad \mathrm{Re}(\lambda) > b.
\end{align}
\noindent Then, \ref{A4} holds with $\alpha = a+b$. In particular, we have that $\beta = a^{-1}$. Note that
\begin{align}
\int_{0}^{\infty} e^{-\vartheta x} a e^{bx} {\rm d} x < \infty \quad \text{and} \quad \int_{0}^{\infty} e^{-2\vartheta x} a e^{bx} {\rm d} x < \infty, \quad  \text{for all} \quad \vartheta \in \mathbb{R} \, \, \, \text{such that} \, \, \, \vartheta > b. 
\end{align}
\noindent Thus, by Campbell's theorem, \ref{A7} is satisfied. Moreover, it is not difficult to see that $\Lambda_{\geq} = \{ \alpha\}$, and thus \ref{A8} also holds. 

On the other hand, by \eqref{eq153}, $\mu_{\alpha}({\rm d} x) = a e^{-ax} {\rm d} x$, i.e., $\mu_{\alpha}$ is the distribution of an exponential random variable with parameter $a$. In particular, by \eqref{eq62} and \eqref{eq123}, $e^{-\alpha x}\nu^{(\infty)}({\rm d} x) = a{\rm d} x$; see \cite[Example 3.3.1]{Sidney1992}. Clearly, $t \in \mathbb{R} \mapsto \mathbb{E}[\varphi^{T}](t)e^{-\alpha t}$ is directly Riemann integrable. Recall that $\varphi^{T}$ is non-negative and that $\varphi^{T}(t) = 0$ for $t < 0$. Then, by \eqref{eq109} and \eqref{eq123}, we know that, for $t \in \mathbb{R}$, 
\begin{align}  
m_{t}^{\mathbb{E}[\varphi^{T}]} & = \mathbf{1}_{[0,\infty)}(t) e^{\alpha t} \int_{[0,\infty)} \mathbb{E}[\varphi^{T}](t-x) e^{-\alpha(t-x)} e^{-\alpha x}\nu^{(\infty)}({\rm d} x)\nonumber \\
& = \mathbf{1}_{[0,\infty)}(t) a e^{(a+b)t}  \int_{0}^{t} \mathbb{E}[\varphi^{T}(x)] e^{-(a+b)x} {\rm d} x.
\end{align}
\noindent Note that, for $\theta \in (0, (a+b)/2)$, $\mathbb{E}[\varphi^{T}(t)] \leq e^{\theta x}$, for $t \in [0, \infty)$. Hence,
\begin{align}  
m_{t}^{\mathbb{E}[\varphi^{T}]} & = \mathbf{1}_{[0,\infty)}(t) a e^{(a+b)t}  \int_{0}^{\infty} \mathbb{E}[\varphi^{T}(x)] e^{-(a+b)x} {\rm d} x  + \mathbf{1}_{[0,\infty)}(t) \frac{a e^{\theta t}}{a+b-\theta}.
\end{align}
\noindent Thus, $\varphi^{T}$ satisfies \ref{E1}. 

One could also use Lemma \ref{lemma13} to show that $\varphi^{T}$ satisfies \ref{E1}. To see this, note that the renewal measure $e^{-\alpha x}\nu^{(\infty)}({\rm d} x)$ admits the Stone's decomposition $e^{-\alpha t} \nu^{(\infty)}({\rm d}x) = \nu^{(\infty)}_{\alpha, 1}({\rm d}x) + \nu^{(\infty)}_{\alpha, 2}({\rm d}x)$, where $\nu^{(\infty)}_{\alpha, 1}({\rm d}x) = (a - a e^{-\theta x}){\rm d}x$ and $\nu^{(\infty)}_{\alpha, 2}({\rm d}x) = a e^{-\theta x} {\rm d}x$, for some $\theta \in ((a+b)/2, (a+b))$. This decomposition satisfies condition \ref{S1} for any $\varepsilon \in ((a+b)/2, \theta)$. Furthermore, $\varphi^{T}$clearly satisfies \eqref{eq124} since $\varphi^{T}(t) = 0$, for $t <0$ and $|\varphi^{T}(t)| \leq \mathbf{1}_{[0, \infty)}(t)$, for $t \in \mathbb{R}$. 
\end{example}

The CMJ-branching process in Example \ref{Example1}, particularly its family tree, is connected to well-known models of random trees with linear preferential attachment, as noted in \cite[Example 6.4]{Holmgren2017}. For instance: the case $b=0$ corresponds to the (non-preferential) random recursive tree \cite[Example 6.1]{Holmgren2017}; the case $b = 1$ corresponds to positive linear preferential attachment \cite[Example 6.6]{Holmgren2017}. The case $b=-1$ and $a=m$ (an integer) corresponds to the $m$-ary increasing tree \cite[Example 6.7]{Holmgren2017}, with $b=-1$ and $a=2$ specifically corresponding to the binary search tree \cite[Example 6.2]{Holmgren2017}.

More precisely, random trees with a specific number of vertices, or trees exhibiting other defined properties, can be constructed as family trees of the CMJ-branching processes when stopped at an appropriate time, as we will explain next; see \cite[Section 5]{Holmgren2017}. Let $\psi$ be an individual characteristic, which we call the {\sl weight}, satisfying $\psi(t) =0$ for $t < 0$. Define 
\begin{align} 
\tau^{\psi}(n) \coloneq \inf\{ t \geq 0: \mathcal{Z}_{t}^{\psi} \geq n\}
\end{align}
\noindent as the first time the total weight of the general branching process counted with characteristic $\psi$ is at least $n \in \mathbb{R}_{+}$. (As usual, we define $\inf \emptyset \coloneq \infty$.) We exclude the trivial case when $\psi(t) = 0$ for all $t \in \mathbb{R}$ almost surely (which would give $\tau^{\psi}(n) = \infty$ a.s.). Define 
\begin{align} 
T^{\psi}_{n} \coloneqq \mathcal{T}_{\tau^{\psi}(n)}
\end{align}
\noindent the family tree of the general branching process at the time the total weight reaches $n$ (provided this ever happens). Random trees $T^{\psi}_{n}$ defined in this way, for some CMJ-branching process, were the main object of study in \cite{Holmgren2017}. If the general branching process satisfies conditions \cite[(A1)-(A5)]{Holmgren2017} (particularly, $\mu$ is non-lattice) and $\psi$ satisfies condition \cite[(A6)]{Holmgren2017}, then $T^{\psi}_{n}$ is indeed well-defined; see \cite[Theorem 5.12]{Holmgren2017}. 

Setting $\psi(t) = \mathbf{1}_{[0, \infty)}(t)$, for $t \in \mathbb{R}$, yields $\mathcal{Z}^{\psi}_{t} = \mathcal{Z}_{t}$ (refer to \eqref{eq15}), and $T_{n} \coloneqq T^{\psi}_{n}$ is the family tree of the
branching process stopped when it has $n$ or more vertices. Furthermore, if $\mu$ is absolutely continuous with respect to Lebesgue measure, then almost surely no two individuals (vertices) are born at the same time, implying that $T_{n}$ has exactly $n$ vertices. In particular, this applies to the general branching process in Example \ref{Example1}. Depending on the parameters $a$ and $b$ defined there, $T_{n}$ can be a linear preferential attachment trees, including random recursive trees, $m$-ary increasing trees, or binary search trees, as discussed previously. Other examples of random tree classes constructible in this way include $m$-ary search trees, general preferential attachment trees, and Median-of-$(2\ell + 1)$ binary search trees; for further details, we refer to \cite{Holmgren2017}.

Holmgren and Janson in \cite[Theorem 5.14]{Holmgren2017} established a law of large numbers for $N_{T}(T_{n}^{\psi})$, for $T \in \mathbb{T}$, under mild conditions, as $n \rightarrow  \infty$. In \cite[Problem 14.2]{Holmgren2017}, they posed the open question of proving the asymptotic normality of $N_{T}(T_{n}^{\psi})$ (up to renormalization) as $n \rightarrow  \infty$. While this work does not fully address this problem, our main result, Theorem \ref{Theo2}, represents a first step in that direction. We intend to investigate and resolve \cite[Problem 14.2]{Holmgren2017} in future work.

It is worth highlighting that asymptotic normality for fringe tree counts has been shown for a wide range of random tree models by many researchers. See, for example, \cite{Devroye1991}, \cite{Flajolet1997}, \cite{Devroye2002}, \cite{Chang2010}, \cite{Feng2010}, \cite{Fuchs2012}, \cite{Mohan2014}, \cite{Holmgren2015}, \cite{Janson2016}, \cite{Holmgren2017}, \cite{Svante2022}, \cite{Berzunza2023}. Thus, providing an answer to \cite[Problem 14.2]{Holmgren2017} is certainly of interest

Finally, motivated by \cite[Theorem 5.14]{Holmgren2017}, we note that more generally, one could consider a property $\mathcal{P}$ defined as any subset of $\mathbb{T}$. For example, $\mathcal{P} = \{T^{\prime} \in \mathbb{T}: T^{\prime} \simeq T  \}$. Then, by defining the characteristic $\varphi^{\mathcal{P}}(t) \coloneqq \mathbf{1}_{\{\mathcal{T}_{t} \in \mathcal{P} \}} \mathbf{1}_{[0, \infty)}(t)$, for $t \in \mathbb{R}$, $\mathcal{Z}_{t}^{\varphi^{\mathcal{P}}}$ counts the number of individuals (vertices) in $\mathcal{T}_{t}$ satisfying property $\mathcal{P}$. If $\mathcal{P}$ only considers trees of a given height $h \in \mathbb{N}_{0}$, then $\varphi^{\mathcal{P}}$ is an $h$-dependent descendant characteristic. Thus, our main result, Theorem \ref{Theo2}, might potentially apply, provided our assumptions are satisfied. For example, this general framework could be used to study the number of protected vertices in $\mathcal{T}_{t}$, defined as vertices that are neither a leaf nor the parent of a leaf. We refer to \cite[Section 10]{Holmgren2017} and the references therein for further discussion of this specific example.

%%%%%%%%%%%%%%%%%%%%%%%%%%%%%%%%%%%%%%%%%%%%%%%%%%%%%%%%%%%%%%%%%%%%%%%
\section{Auxiliary results} \label{Preliminaries}
%%%%%%%%%%%%%%%%%%%%%%%%%%%%%%%%%%%%%%%%%%%%%%%%%%%%%%%%%%%%%%%%%%%%%%%

In this section, we establish some essential auxiliary results. In Sections \ref{ExpSection} and \ref{BigMart}, we recall some known facts about the expectation of the CMJ-branching process and the so-called Biggins martingale. In Section \ref{CondExpect}, we gather some technical identities and estimations involving conditional expectations that will be used frequently in the subsequent proofs. Section \ref{ProofProposition} contains the proof of Proposition \ref{Proposition3} and as well as other key asymptotic results for the proof of our main result. Finally, in Section \ref{MoreAsymptotic}, we establish some crucial properties of the characteristic $\chi^{(\varphi, h)}$.

%%%%%%%%%%%%%%%%%%%%%%%%%%%%%%%%%%%%%%%%%%%%%%%%%%%%%%%%%%%%%%%%%%%%%%%
\subsection{The expectation of the general branching process}  \label{ExpSection}
%%%%%%%%%%%%%%%%%%%%%%%%%%%%%%%%%%%%%%%%%%%%%%%%%%%%%%%%%%%%%%%%%%%%%%%

This section gathers and presents facts from the literature on the first moment of general branching processes, counted with a (descendant or individual)  characteristic that satisfies suitable assumptions, and its associated asymptotic behaviour.

Recall that $\mu$ is the intensity measure of the birth point process $\xi$. Suppose that \ref{A1}-\ref{A4} holds and let $\nu^{(\infty)}$ be the renewal measure defined in \eqref{eq62}. Let $\varphi$ be a characteristic such that $|\varphi|$ satisfies \ref{C1}. Note that \cite[Theorem 3.1]{JagersN1984} can be extended to descendant characteristics satisfying the above, as discussed in the proof of \cite[Theorem 6.1]{JagersN1984} (see also \cite[Sections 4.1 and 4.2]{Iksanov2021} for a similar computation). Thus, for $t \in \mathbb{R}$,
\begin{align} \label{eq109}
m_{t}^{\varphi}= \mathbb{E}[\mathcal{Z}^{\varphi}_{t}] = \int_{[0,\infty)} \mathbb{E}[\varphi](t-x) \nu^{(\infty)}({\rm d}x) = \mathbb{E}[\varphi] \ast \nu^{(\infty)}(t),
\end{align}
\noindent  If assumptions \ref{A1}-\ref{A4} are satisfied and $t \in \mathbb{R} \mapsto \mathbb{E}[|\varphi|](t) e^{-\alpha t}$ is directly Riemann integrable, it follows that $m_{t}^{\varphi} < \infty$. 

Let $\alpha>0$ be the Malthusian parameter defined in \ref{A4}. Recall from \eqref{eq123} that $e^{-\alpha}\nu^{(\infty)}({\rm d}x) = \sum_{i=0}^{\infty} \mu_{\alpha}^{\ast i}({\rm d}x)$ is the renewal measure associated with the probability measure $\mu_{\alpha}({\rm d}x) =  e^{-\alpha x} \mu({\rm d}x)$. Assume that $\beta \coloneqq \int_{[0,\infty)} x e^{-\alpha x} \mu({\rm d}x) \in (0,\infty)$ (this hold whenever \ref{A1}-\ref{A7} are satisfied). Then, by the key renewal theorem,
\begin{align} \label{eq66}
\lim_{\substack{t \rightarrow \infty \\ t \in \mathbb{G}}} e^{-\alpha t} m_{t}^{\varphi} = \lim_{\substack{t \rightarrow \infty \\ t \in \mathbb{G}}} \int_{[0,\infty)} \mathbb{E}[\varphi](t-x) e^{-\alpha(t-x)} \sum_{i=0}^{\infty} \mu_{\alpha}^{\ast i}({\rm d}x) = \frac{1}{\beta} \int_{\mathbb{G}} \mathbb{E}[\varphi](x) e^{-\alpha x} \ell({\rm d} x);
\end{align}
\noindent see e.g., \cite[Theorem 4.2]{Athreya1978}, in the non-lattice case and \cite[Theorem 2.5.3]{Alsmeyer1991} in the lattice (and non-lattice) case. 

%%%%%%%%%%%%%%%%%%%%%%%%%%%%%%%%%%%%%%%%%%%%%%%%%%%%%%%%%%%%%%%%%%%%%%%
\subsection{Biggins' martingale}   \label{BigMart}
%%%%%%%%%%%%%%%%%%%%%%%%%%%%%%%%%%%%%%%%%%%%%%%%%%%%%%%%%%%%%%%%%%%%%%%

Suppose that \ref{A1}-\ref{A7} hold. In particular, \ref{A7} (see also \eqref{eq31}) implies that $\hat{\mu}(\theta) < \infty$ for all $\theta \geq \vartheta$, where $\vartheta \in (0, \alpha/2)$ is as in \ref{A7}. Nerman's martingale \eqref{eq16} relates to the so-called {\sl Biggins martingale} $(Z_{n}^{(\theta)})_{n \in \mathbb{N}_{0}}$, where
\begin{align} 
Z_{n}^{(\theta)} = (\hat{\mu}(\theta))^{-n}\sum_{|u|=n} e^{-\theta S(u)}, \quad \text{for} \quad n \in \mathbb{N}_{0}.
\end{align}
\noindent where the sum $\sum_{|u| = n}$ is over all vertices $u \in \mathcal{I}$ with height $|u| = n$. In particular, for $\theta \geq \vartheta$ and $n \in \mathbb{N}_{0}$,
\begin{align} \label{eq156}
\mathbb{E} \left[\sum_{|u|=n} e^{-\theta S(u)} \right] < \infty.
\end{align}
\noindent Since the Biggins martingale martingale is non-negative, it converges a.s.\ to a finite limit $Z^{(\theta)} \geq 0$ with $\mathbb{E}[Z^{(\theta)}] \leq 1$. Furthermore, \cite[Theorem 3.3]{Dimitris2000} establishes that if $\mu$ is concentrated on $(0,\infty)$, then $W = Z^{(\alpha)}$ a.s., where $W$ is the limit of Nerman's martingale. If \ref{A1}-\ref{A7} hold, then $\frac{\hat{\mu}(2\theta)}{(\hat{\mu}(\theta))^{2}}  < 1$, for $\theta \in [\vartheta, \alpha]$, because $\hat{\mu}(\theta) \geq 1$. Then, by \cite[Theorem 1]{Biggins1992} or \cite[Theorem 2.1]{Quansheng2000}, it holds that, for $\theta \in [\vartheta, \alpha]$, $Z_{n}^{(\theta)} \rightarrow Z^{(\theta)}$, as $n \rightarrow \infty$, in $L^{2}(\Omega, \mathcal{F}, \mathbb{P})$. In particular, $\mathbb{E}[Z^{(\theta)}]=1$ and ${\rm Var}(Z^{(\theta)})<\infty$. Moreover, for $\theta \in [\vartheta, \alpha]$ and $n \in \mathbb{N}_{0}$,
\begin{align} \label{eq152}
\mathbb{E} \left[ \left(\sum_{|u|=n} e^{-\theta S(u)}\right)^{2}  \right] < \infty.
\end{align}

%%%%%%%%%%%%%%%%%%%%%%%%%%%%%%%%%%%%%%%%%%%%%%%%%%%%%%%%%%%%%%%%%%%%%%%
\subsection{Conditional expectations}  \label{CondExpect}
%%%%%%%%%%%%%%%%%%%%%%%%%%%%%%%%%%%%%%%%%%%%%%%%%%%%%%%%%%%%%%%%%%%%%%%

In this section, we present a series of preliminary computations involving conditional expectations, which will be referenced and applied in subsequent sections of this work. For $n \in\mathbb{N}_{0}$, define 
\begin{align} \label{eq97}
\mathcal{I}^{(n)} \coloneqq \{ u \in \mathcal{I}: |u| \leq n \},
\end{align}
\noindent that is, $\mathcal{I}^{(n)}$ contains all vertices up to generation $n$. We let $\mathcal{I}^{(-1)}$ be the empty set $\emptyset$. Define also the sub-$\sigma$-algebras $(\mathcal{A}^{(n)})_{n \geq -1}$ of  $\mathcal{F}$ by letting $\mathcal{A}^{(-1)}$ be the trivial $\sigma$-algebra and for $n \in \mathbb{N}_{0}$,
\begin{align} \label{eq135} 
\mathcal{A}^{(n)} \coloneqq \sigma(\pi_{u}: u \in \mathcal{I}^{(n)}),
\end{align}
\noindent where $\pi_{u}: \Omega \rightarrow  \Omega_{u}$ is the projection onto the life space of individual $u \in \mathcal{I}$.  Note that, for $n \in \mathbb{N}_{0}$, $\mathcal{A}^{(n)}$ is exactly the $\sigma$-algebra defined in \eqref{eq83}. 

\begin{lemma} \label{lemma3}
Let $\varphi$ be a characteristic satisfying \ref{C1}-\ref{C2}. Then, for every  $t \in \mathbb{R}$ and $u \in \mathcal{I}$, $\varphi_{u}(t-S(u)) \in L^{2}(\Omega, \mathcal{F}, \mathbb{P})$. 
\end{lemma}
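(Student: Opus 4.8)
The plan is to prove that for every $t \in \mathbb{R}$ and $u \in \mathcal{I}$, the random variable $\varphi_{u}(t - S(u))$ belongs to $L^{2}(\Omega, \mathcal{F}, \mathbb{P})$. The key structural fact I would exploit is stated in the excerpt: for a given $u \in \mathcal{I}$, the shifted characteristic $\varphi_{u} = \varphi \circ \theta_{u}$ is independent of the birth time $S(u)$. This independence lets me condition on $S(u)$ and reduce the computation to second moments of the characteristic evaluated at deterministic time points, which are controlled by assumption \ref{C2}.

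First I would fix $t \in \mathbb{R}$ and $u \in \mathcal{I}$ and write, using the independence of $\varphi_{u}$ and $S(u)$ together with the fact that $\varphi_{u}$ is distributed as $\varphi$ under $\mathbb{P}$,
\begin{align}
\mathbb{E}[\varphi_{u}(t - S(u))^{2}] = \mathbb{E}\Big[ \, \mathbb{E}\big[ \varphi(t - s)^{2} \big]\big|_{s = S(u)} \, \Big] = \int_{[0,\infty]} g(s) \, \mathbb{P}(S(u) \in {\rm d}s),
\end{align}
where $g(s) \coloneqq \mathbb{E}[\varphi(t-s)^{2}]$ and we use the convention $\varphi(-\infty) = 0$ so that the contribution from $\{S(u) = \infty\}$ vanishes. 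The inner conditional expectation is justified by the product structure \eqref{eq100} of the population space and product measurability of $\varphi$. The task then reduces to showing that the integral on the right is finite.

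Next I would control $g(s)$. By assumption \ref{C2}, $\varphi(r) \in L^{2}(\Omega, \mathcal{F}, \mathbb{P})$ for every $r \in \mathbb{R}$, so $g(s) = \mathbb{E}[\varphi(t-s)^{2}] = {\rm Var}[\varphi](t-s) + \mathbb{E}[\varphi](t-s)^{2}$ is finite for each individual $s$; the decomposition into variance and squared mean is legitimate since \ref{C2} implies $\varphi(t-s) \in L^{1}$ as well. To pass from finiteness at each point to finiteness of the integral I would use that the range of integration is effectively bounded below: since $S(u) \geq 0$ almost surely and the integrand only matters where $t - s$ is finite, one may restrict attention to $s \in [0, \infty)$, and then exploit the directly Riemann integrable (hence locally bounded) decay of $r \mapsto {\rm Var}[\varphi](r) e^{-\alpha r}$ from \ref{C2}, which forces $g(t-\cdot)$ to grow at most like $e^{\alpha s}$ up to the contribution of the mean. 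Combining this polynomial-in-$e^{\alpha s}$ bound on $g$ with the exponential moment control on the law of $S(u)$ --- available because $\mathbb{E}[\sum_{|v|=|u|} e^{-\theta S(v)}] < \infty$ for $\theta \in [\vartheta, \alpha]$ by \eqref{eq156}, which in particular bounds $\mathbb{E}[e^{-\alpha S(u)}]$ for the single individual $u$ --- yields the finiteness of the integral.

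The main obstacle I anticipate is reconciling the growth of $g(s)$ as $s \to -\infty$ (equivalently, as $t - s \to +\infty$) with the integrability of the law of $S(u)$; the direct Riemann integrability in \ref{C2} only gives decay of the \emph{weighted} function ${\rm Var}[\varphi](r)e^{-\alpha r}$, so I must be careful to extract the correct exponential bound ${\rm Var}[\varphi](r) = O(e^{\alpha r})$ and handle the squared-mean term $\mathbb{E}[\varphi](r)^{2}$ separately via \ref{C1}, whose weighting $\mathbb{E}[\varphi](r)e^{-\alpha r}$ is also directly Riemann integrable and therefore bounded. A cleaner alternative, which I would likely prefer to avoid the asymptotic bookkeeping, is to observe that for the \emph{single} fixed individual $u$ the birth time $S(u)$ takes values in $[0,\infty]$ with $g(t - S(u))$ bounded on the almost-sure event $\{S(u) \geq s_{0}\}$ for suitable $s_{0}$, reducing the problem to the behaviour of $g$ on a half-line where the weighted-integrability assumptions directly apply. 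Either route delivers the claimed $L^{2}$-membership.
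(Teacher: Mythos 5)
Your proposal is correct and follows essentially the same route as the paper: condition on the birth time using the independence of $\varphi_{u}$ and $S(u)$, bound $\mathbb{E}[\varphi(r)^{2}] = \mathrm{Var}[\varphi](r) + (\mathbb{E}[\varphi](r))^{2} \leq C(e^{\alpha r} + e^{2\alpha r})$ via the boundedness of the directly Riemann integrable functions in \ref{C1}--\ref{C2}, and conclude using $S(u) \geq 0$. The only remark is that your appeal to \eqref{eq156} for exponential moments of $S(u)$ is superfluous (and would import \ref{A7}, which the lemma does not assume): since $t - S(u) \leq t$ almost surely, the bound on $\mathbb{E}[\varphi^{2}](t-S(u))$ is already a deterministic constant, exactly as your ``cleaner alternative'' at the end observes.
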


\begin{proof}
Note that $S(u)$ is $\mathcal{A}^{(|u|-1)}$-measurable and $\varphi_{u}$ is independent of $\mathcal{A}^{(|u|-1)}$. Next, recall that the conditional expectation of a non-negative random variable is well-defined, even if the variable itself is not integrable (see e.g., \cite[Definition 1, Section 7, Chapter II]{Shiryaev1996}). Then, 
\begin{align}  \label{eq30}
\mathbb{E}[ (\varphi_{u}(t-S(u)))^{2} \mid \mathcal{A}^{(|u|-1)}] \stackrel{a.s.}{=} \mathbb{E}[\varphi^{2}](t-S(u)).
\end{align}
\noindent  This can be formally proven using a standard approximation argument, which involves proving the identity for non-negative simple measurable functions and then extending it to non-negative measurable functions via the monotone convergence theorem (see for e.g., the argument used in the proof of \cite[(10.17), p.\ 350]{Resnick2014}). By \ref{C2}, $\varphi(t) \in  L^{2}(\Omega, \mathcal{F}, \mathbb{P})$, for $t \in \mathbb{R}$. Then, by \ref{C1}-\ref{C2}, there exists a constant $C>0$ such that 
\begin{align} \label{eq35}
\mathbb{E}[(\varphi(t))^{2}] =  {\rm Var}(\varphi(t)) + (\mathbb{E}[\varphi(t)])^{2} \leq C(e^{\alpha t} + e^{2 \alpha t}).
\end{align}
\noindent  Finally, \eqref{eq30}, \eqref{eq35} and properties of the conditional expectation (see e.g., \cite[4.\ in Section 7, Chapter II]{Shiryaev1996}) imply that there exists a constant $C>0$ such that 
\begin{align} 
\mathbb{E}[ (\varphi_{u}(t-S(u)))^{2}] \leq C\mathbb{E}[e^{ \alpha \max(t-S(u), 2(t-S(u)))} ] < \infty.
\end{align}
\noindent This concludes our proof.
\end{proof}

For each $u \in \mathcal{I}$ and $n \in \mathbb{N}_{0}$, define 
\begin{align} \label{eq134} 
\mathcal{I}^{(n)}_{u} \coloneqq u\mathcal{I}^{(n)},
\end{align}
\noindent that is, $\mathcal{I}^{(n)}_{u}$ contains all descendants (including $u$ itself) at distance $n$ from $u$. (Here, ``distance'' refers to the graph distance within the tree $\mathcal{I}$).  We let $\mathcal{I}_{u}^{(-1)}$ be the empty set $\emptyset$.  Define also the sub-$\sigma$-algebras $(\mathcal{A}^{(n)}_{u})_{n \geq -1}$ of $\mathcal{F}$ by letting $\mathcal{A}^{(-1)}_{u}$ be the trivial $\sigma$-algebra and for $n \in \mathbb{N}_{0}$,
\begin{align}  \label{eq4}
\mathcal{A}^{(n)}_{u} \coloneqq \sigma(\pi_{u}: u \in \mathcal{I}^{(n)}_{u}).
\end{align}

Let $\varphi^{(k)}$, for $k \in \mathbb{N}_{0}$, be the characteristic defined in \eqref{eq27}. Recall also that $\theta_{u}$ is the shift operator, that is, $\theta_{u}\left((\omega_{v})_{v \in \mathcal{I}}\right)=(\omega_{u v})_{v \in \mathcal{I}}$, for $(\omega_{v})_{v \in \mathcal{I}} \in \Omega$. Then, $\varphi^{(k)}_{u}=\varphi^{(k)}\circ \theta_{u}$. 

\begin{lemma} \label{lemma11}
Let $\varphi$ be a characteristic satisfying \ref{C1}. Then, for every  $t \in \mathbb{R}$, $u \in \mathcal{I}$ and $k \in \mathbb{N}_{0}$,
\begin{align}
\varphi_{u}^{(k)}(t) \stackrel{a.s.}{=} \mathbb{E}[\varphi_{u}(t) \mid \mathcal{A}_{u}^{(k-1)}].
\end{align}
\end{lemma}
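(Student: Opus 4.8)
The plan is to deduce the identity from the general substitution rule for conditional expectation under a measure-preserving transformation, applied to the shift $\theta_{u}$. First I would record the elementary fact that, for any measure-preserving map $T \colon (\Omega, \mathcal{F}, \mathbb{P}) \to (\Omega, \mathcal{F}, \mathbb{P})$, any sub-$\sigma$-algebra $\mathcal{G} \subseteq \mathcal{F}$, and any $X \in L^{1}(\Omega, \mathcal{F}, \mathbb{P})$,
\begin{align*}
\mathbb{E}[X \mid \mathcal{G}] \circ T \stackrel{a.s.}{=} \mathbb{E}[X \circ T \mid T^{-1}\mathcal{G}].
\end{align*}
This is verified in one line: the left-hand side is $T^{-1}\mathcal{G}$-measurable, and for every $G \in \mathcal{G}$ the change-of-variables formula $\int f \circ T \, {\rm d}\mathbb{P} = \int f \, {\rm d}(\mathbb{P} \circ T^{-1}) = \int f \, {\rm d}\mathbb{P}$ gives $\int_{T^{-1}G} (\mathbb{E}[X\mid\mathcal{G}]\circ T)\, {\rm d}\mathbb{P} = \int_{G}\mathbb{E}[X\mid\mathcal{G}]\, {\rm d}\mathbb{P} = \int_{G} X\, {\rm d}\mathbb{P} = \int_{T^{-1}G}(X\circ T)\, {\rm d}\mathbb{P}$; since $\{T^{-1}G : G \in \mathcal{G}\} = T^{-1}\mathcal{G}$, the defining property of conditional expectation yields the claim.

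Next I would check that the shift satisfies the hypotheses with $T = \theta_{u}$. Because $\mathbb{P} = \bigotimes_{v \in \mathcal{I}} P_{v}$ has identical factors $P_{v} \cong P_{\varnothing}$ and the map $v \mapsto uv$ is injective on $\mathcal{I}$, the preimage under $\theta_{u}$ of any cylinder set depending on coordinates $v_{1}, \dots, v_{m}$ is the cylinder depending on $uv_{1}, \dots, uv_{m}$ with the same probability; hence $\mathbb{P} \circ \theta_{u}^{-1} = \mathbb{P}$, i.e.\ $\theta_{u}$ is measure preserving for the product structure \eqref{eq100}. In particular $\mathbb{E}[|\varphi_{u}(t)|] = \mathbb{E}[|\varphi(t)|] < \infty$ by \ref{C1}, so $\varphi_{u}(t) \in L^{1}(\Omega, \mathcal{F}, \mathbb{P})$ and every conditional expectation below is well-defined. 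The remaining ingredient is the identification $\theta_{u}^{-1}\mathcal{A}^{(k-1)} = \mathcal{A}^{(k-1)}_{u}$: from the coordinate identity $\pi_{v} \circ \theta_{u} = \pi_{uv}$ one obtains
\begin{align*}
\theta_{u}^{-1}\mathcal{A}^{(k-1)} = \sigma(\pi_{v} \circ \theta_{u} : |v| \leq k-1) = \sigma(\pi_{uv} : |v| \leq k-1) = \sigma(\pi_{w} : w \in \mathcal{I}^{(k-1)}_{u}) = \mathcal{A}^{(k-1)}_{u},
\end{align*}
with both sides reducing to the trivial $\sigma$-algebra when $k = 0$.

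Finally I would apply the substitution rule with $T = \theta_{u}$, $\mathcal{G} = \mathcal{A}^{(k-1)}$ and $X = \varphi(t)$ for each fixed $t \in \mathbb{R}$. Using $\varphi^{(k)}_{u} = \varphi^{(k)} \circ \theta_{u}$, $\varphi_{u} = \varphi \circ \theta_{u}$, the definition $\varphi^{(k)}(t) = \mathbb{E}[\varphi(t) \mid \mathcal{A}^{(k-1)}]$ from \eqref{eq27}, and the $\sigma$-algebra identification above, the conclusion reads exactly $\varphi^{(k)}_{u}(t) \stackrel{a.s.}{=} \mathbb{E}[\varphi_{u}(t) \mid \mathcal{A}^{(k-1)}_{u}]$, as required. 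I do not expect a genuine obstacle here: the whole content is the measure-preservation of $\theta_{u}$ and the identity $\pi_{v} \circ \theta_{u} = \pi_{uv}$, both immediate from the product construction. The only points deserving care are the integrability of $\varphi_{u}(t)$ (needed for the conditional expectation to exist, supplied by \ref{C1} together with measure-preservation) and the observation that the exceptional null set may depend on $t$, which is harmless since the statement is asserted separately for each fixed $t$.
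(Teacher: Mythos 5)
Your proof is correct and follows essentially the same route as the paper: both arguments rest on the measure preservation $\mathbb{P}\circ\theta_{u}^{-1}=\mathbb{P}$ of the shift on the product space, the coordinate identity $\pi_{v}\circ\theta_{u}=\pi_{uv}$ identifying $\theta_{u}^{-1}\mathcal{A}^{(k-1)}$ with $\mathcal{A}_{u}^{(k-1)}$, and the change-of-variables verification of the defining property of conditional expectation. The only difference is presentational — you package the computation as a general substitution rule for measure-preserving maps, while the paper carries out the same verification directly for sets $A\in\mathcal{A}_{u}^{(k-1)}$.
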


\begin{proof}
For $u \in \mathcal{I}$ and $n \in \mathbb{N}_{0}$, let $\pi_{u}^{(n)}: \Omega \rightarrow \bigtimes_{v \in \mathcal{I}_{u}^{(n)}} \Omega_{v}$ be the projection map. In particular, $\mathcal{A}_{u}^{(n)} = \sigma(\pi_{u}^{(n)})$. Note also that, by \eqref{eq135}, $\mathcal{A}_{\varnothing}^{(n)} = \mathcal{A}^{(n)}$. Since the population space $(\Omega, \mathcal{F}, \mathbb{P})$ in \eqref{eq100} is defined as the product of identical probability spaces, we have that for $A \in \mathcal{A}_{u}^{(n)}$ there exists $A^{\prime} \in \mathcal{A}^{(n)}$ such that $\theta^{-1}_{u}(A^{\prime}) = A$. 

Assume that $u \neq \varnothing$ and $k > 0$; otherwise, the claim holds trivially. Let  $A \in \mathcal{A}_{u}^{(k-1)}$ and  $A^{\prime} \in \mathcal{A}^{(k-1)}$ such that $\theta^{-1}_{u}(A^{\prime}) = A$. Observe that
\begin{align}
\mathbb{E}[\mathbf{1}_{A} \varphi_{u}^{(k)}(t)] = \int_{\Omega} \mathbf{1}_{A}(\omega) \varphi_{u}^{(k)}(\omega, t) \mathbb{P}({\rm d} \omega) = \int_{\Omega} \mathbf{1}_{\theta_{u}^{-1}(A^{\prime})}(\omega) \varphi^{(k)} \circ \theta_{u}(\omega, t) \mathbb{P}({\rm d} \omega).
\end{align}
\noindent By substitution (see e.g., \cite[Lemma 1.22]{Kallenberg2002}),
\begin{align}
\mathbb{E}[\mathbf{1}_{A} \varphi_{u}^{(k)}(t)]  = \int_{\Omega} \mathbf{1}_{A^{\prime}}(\omega) \varphi^{(k)}(\omega, t) \mathbb{P} \circ \theta_{u}^{-1}({\rm d} \omega). 
\end{align}
\noindent Since the population space $(\Omega, \mathcal{F}, \mathbb{P})$ is defined as the product of identical probability spaces, we have that shifts preserve the probability measure, that is, $\mathbb{P} \circ \theta_{u}^{-1} = \mathbb{P}$. Hence, by \eqref{eq27},
\begin{align}
\mathbb{E}[\mathbf{1}_{A} \varphi_{u}^{(k)}(t)]  = \int_{\Omega} \mathbf{1}_{A^{\prime}}(\omega) \varphi^{(k)}(\omega, t) \mathbb{P}({\rm d} \omega) = \mathbb{E}[\mathbf{1}_{A^{\prime}} \varphi^{(k)}(t) ] = \mathbb{E}[\mathbf{1}_{A^{\prime}} \mathbb{E}[\varphi(t) \mid \mathcal{A}^{(k-1)}] ].
\end{align}
\noindent Since $A^{\prime} \in \mathcal{A}^{(k-1)}$, we have that
\begin{align}
\mathbb{E}[\mathbf{1}_{A} \varphi_{u}^{(k)}(t)]  = \mathbb{E}[\mathbf{1}_{A^{\prime}} \varphi(t) ] = \int_{\Omega} \mathbf{1}_{A^{\prime}}(\omega) \varphi(\omega, t) \mathbb{P} ({\rm d} \omega). 
\end{align}
\noindent Moreover, recall that $\mathbb{P} \circ \theta_{u}^{-1} = \mathbb{P}$. Then,  by substitution (see e.g., \cite[Lemma 1.22]{Kallenberg2002}),
\begin{align}
\mathbb{E}[\mathbf{1}_{A} \varphi_{u}^{(k)}(t)]  = \int_{\Omega} \mathbf{1}_{A} (\omega) \varphi \circ \theta_{u}(\omega, t) \mathbb{P} ({\rm d} \omega) = \mathbb{E}[\mathbf{1}_{A}\mathbb{E}[\varphi_{u}(t) \mid \mathcal{A}_{u}^{(k-1)}]]. 
\end{align}
\noindent This proves our claim.
\end{proof}

Set 
\begin{align}
\mathcal{P}_{u} \coloneqq \{ v\in \mathcal{I}: v \prec u \}.  
\end{align}
\noindent If $u =\varnothing$, we define $\mathcal{P}_{\varnothing}$ as the empty set $\emptyset$. Note that $\mathcal{P}_{u}$ consists of all strict ancestors of $u$. For $n \geq -1$, define the sub-$\sigma$-algebra of $\mathcal{F}$
\begin{align} \label{eq11}
\mathcal{D}_{u}^{(n)} \coloneqq \sigma(\pi_{v}: v\in \mathcal{P}_{u} \cup \mathcal{I}_{u}^{(n)}).
\end{align}
\noindent If $u =\varnothing$ and $n=-1$, we define $\mathcal{D}_{\varnothing}^{(-1)}$ as the trivial $\sigma$-algebra. 
\begin{lemma} \label{lemma9}
Let $\varphi$ be a characteristic satisfying \ref{C1}-\ref{C2}. For $u \in \mathcal{I}$ and $n \geq -1$, let $\mathcal{J}_{u} \subseteq \mathcal{I}$ such that $\mathcal{P}_{u} \subseteq \mathcal{J}_{u}$ and  $\mathcal{J}_{u} \cap u\mathcal{I} = \emptyset$. Define the sub-$\sigma$-algebra of $\mathcal{F}$
\begin{align} \label{eq42}
\mathcal{C}_{u}^{(n)} \coloneqq \sigma(\pi_{v}: v\in \mathcal{J}_{u} \cup \mathcal{I}_{u}^{(n)}).
\end{align}
\noindent If $u =\varnothing$ and $n=-1$, we define $\mathcal{C}_{\varnothing}^{(-1)}$ as the trivial $\sigma$-algebra. Then, for $t \in \mathbb{R}$, $u \in \mathcal{I}$ and $k \in \mathbb{N}_{0}$, 
\begin{align} \label{eq43} 
\varphi_{u}^{(k)}(t-S(u)) \stackrel{a.s.}{=} \mathbb{E}[\varphi_{u}(t-S(u)) \mid \mathcal{C}_{u}^{(k-1)}] \stackrel{a.s.}{=}  \mathbb{E}[\varphi_{u}(t-S(u)) \mid \mathcal{D}_{u}^{(k-1)}]. 
\end{align}
\end{lemma}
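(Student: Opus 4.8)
The plan is to reduce everything to Lemma \ref{lemma11}, which treats a deterministic time argument, by exploiting the product structure \eqref{eq100} of $(\Omega, \mathcal{F}, \mathbb{P})$ together with the genealogical location of the relevant individuals. By Lemma \ref{lemma3} and \ref{C1}-\ref{C2}, the random variable $Y \coloneqq \varphi_{u}(t-S(u))$ lies in $L^{2}(\Omega, \mathcal{F}, \mathbb{P}) \subseteq L^{1}(\Omega, \mathcal{F}, \mathbb{P})$, so all conditional expectations below are well-defined. Write $\mathcal{B} \coloneqq \sigma(\pi_{v}: v \in \mathcal{P}_{u})$ for the $\sigma$-algebra generated by the life spaces of the strict ancestors of $u$. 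Since $S(u)$ is built recursively from the birth displacements along the ancestral path from $\varnothing$ to $u$, it is $\mathcal{B}$-measurable. Because $\mathcal{P}_{u}$, $\mathcal{I}_{u}^{(k-1)}$ and $\mathcal{J}_{u} \setminus \mathcal{P}_{u}$ index pairwise disjoint families of coordinates of the product space (recall $\mathcal{P}_{u} \subseteq \mathcal{J}_{u}$, $\mathcal{J}_{u} \cap u\mathcal{I} = \emptyset$ and $\mathcal{I}_{u}^{(k-1)} \subseteq u\mathcal{I}$), the $\sigma$-algebras $\mathcal{B}$, $\mathcal{A}_{u}^{(k-1)}$ and $\mathcal{G}_{2} \coloneqq \sigma(\pi_{v}: v \in \mathcal{J}_{u} \setminus \mathcal{P}_{u})$ are mutually independent, and one has the decompositions $\mathcal{D}_{u}^{(k-1)} = \mathcal{B} \vee \mathcal{A}_{u}^{(k-1)}$ and $\mathcal{C}_{u}^{(k-1)} = \mathcal{D}_{u}^{(k-1)} \vee \mathcal{G}_{2}$.

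I would first establish the rightmost equality, $\varphi_{u}^{(k)}(t-S(u)) \stackrel{a.s.}{=} \mathbb{E}[Y \mid \mathcal{D}_{u}^{(k-1)}]$. Measurability is immediate: the map $s \mapsto \varphi_{u}^{(k)}(s)$ is $\mathcal{A}_{u}^{(k-1)}$-measurable, $S(u)$ is $\mathcal{B}$-measurable, and both $\mathcal{A}_{u}^{(k-1)}, \mathcal{B} \subseteq \mathcal{D}_{u}^{(k-1)}$, so by product measurability $\varphi_{u}^{(k)}(t-S(u))$ is $\mathcal{D}_{u}^{(k-1)}$-measurable. The heuristic behind the defining identity is to \emph{freeze} the ancestral information: for each fixed $s$, since $\varphi_{u}(s)$ is $\sigma(\pi_{v}: v \in u\mathcal{I})$-measurable and hence $\mathcal{B} \indep \sigma(\varphi_{u}(s)) \vee \mathcal{A}_{u}^{(k-1)}$, dropping the independent factor $\mathcal{B}$ gives $\mathbb{E}[\varphi_{u}(s) \mid \mathcal{D}_{u}^{(k-1)}] \stackrel{a.s.}{=} \mathbb{E}[\varphi_{u}(s) \mid \mathcal{A}_{u}^{(k-1)}] \stackrel{a.s.}{=} \varphi_{u}^{(k)}(s)$ by Lemma \ref{lemma11}; evaluating at the $\mathcal{D}_{u}^{(k-1)}$-measurable value $s = t-S(u)$ yields the claim. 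The rigorous form of this evaluation is the \emph{main obstacle}: one verifies $\mathbb{E}[\varphi_{u}(t-S(u)) \mid \mathcal{D}_{u}^{(k-1)}] \stackrel{a.s.}{=} \varphi_{u}^{(k)}(t-S(u))$ directly by testing against indicators $\mathbf{1}_{A}$, $A \in \mathcal{D}_{u}^{(k-1)}$, and reducing to Lemma \ref{lemma11} through the standard approximation scheme (first for simple functions of $S(u)$, then passing to the limit by dominated convergence using the $L^{2}$-bound of Lemma \ref{lemma3}), exactly as in the derivation of \eqref{eq30} and as in \cite[(10.17), p.\ 350]{Resnick2014}.

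Finally I would obtain the middle equality $\mathbb{E}[Y \mid \mathcal{C}_{u}^{(k-1)}] \stackrel{a.s.}{=} \mathbb{E}[Y \mid \mathcal{D}_{u}^{(k-1)}]$ by discarding the redundant information contained in $\mathcal{G}_{2}$. The variable $Y = \varphi_{u}(t-S(u))$ is measurable with respect to $\sigma(\pi_{v}: v \in \mathcal{P}_{u} \cup u\mathcal{I})$, and this $\sigma$-algebra contains $\mathcal{D}_{u}^{(k-1)}$ while being independent of $\mathcal{G}_{2}$, since $\mathcal{J}_{u} \setminus \mathcal{P}_{u}$ is disjoint from $\mathcal{P}_{u} \cup u\mathcal{I}$. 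Hence $\mathcal{G}_{2} \indep \sigma(Y) \vee \mathcal{D}_{u}^{(k-1)}$, and the standard property that an independent $\sigma$-algebra may be dropped from the conditioning (see e.g.\ \cite[Chapter 6]{Kallenberg2002}) gives $\mathbb{E}[Y \mid \mathcal{D}_{u}^{(k-1)} \vee \mathcal{G}_{2}] \stackrel{a.s.}{=} \mathbb{E}[Y \mid \mathcal{D}_{u}^{(k-1)}]$, that is, $\mathbb{E}[Y \mid \mathcal{C}_{u}^{(k-1)}] \stackrel{a.s.}{=} \mathbb{E}[Y \mid \mathcal{D}_{u}^{(k-1)}]$. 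Chaining the two established equalities then proves all three expressions in \eqref{eq43} agree almost surely, completing the argument.
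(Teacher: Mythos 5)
Your proof is correct and rests on essentially the same ingredients as the paper's: Lemma \ref{lemma11}, the substitution of the ancestrally measurable time $S(u)$ via the standard approximation scheme, and the product-space independence that allows coordinate $\sigma$-algebras outside $\mathcal{P}_{u}\cup u\mathcal{I}$ to be dropped from the conditioning. The only difference is organizational: you establish the $\mathcal{D}_{u}^{(k-1)}$-identity first and then pass to $\mathcal{C}_{u}^{(k-1)}$ by adjoining the independent $\sigma$-algebra $\mathcal{G}_{2}$, whereas the paper proves the $\mathcal{C}_{u}^{(k-1)}$-identity first (reducing $\mathcal{C}_{u}^{(k-1)}$ to $\mathcal{A}_{u}^{(k-1)}$ via \cite[Proposition 6.6]{Kallenberg2002}) and then deduces the $\mathcal{D}_{u}^{(k-1)}$-version from the tower property.
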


\begin{proof}
By Lemma \ref{lemma3}, $\varphi_{u}(t-S(u)) \in L^{2}(\Omega, \mathcal{F}, \mathbb{P})$, for every $u \in \mathcal{I}$. Thus, the conditional expectations in \eqref{eq43} are well-defined.

Since $\mathcal{P}_{u} \subseteq \mathcal{J}_{u}$, we have that $\mathcal{D}_{u}^{(k-1)} \subseteq \mathcal{C}_{u}^{(k-1)}$. Then, the second equality in \eqref{eq43} is a direct consequence of the first equality and properties of conditional expectation. Thus, it remains to prove the  first equality in \eqref{eq43}. 

Assume that $u \neq \varnothing$ and $k >0$; otherwise, the claim holds trivially. Recall that for any sub-$\sigma$-algebras $\mathcal{B}_{1}, \mathcal{B}_{2}, \mathcal{B}_{3}$ of $\mathcal{F}$, we say that $\mathcal{B}_{1}$ and $\mathcal{B}_{2}$ are conditionally independent given $\mathcal{B}_{3}$ (see for e.g., \cite[page 109]{Kallenberg2002}), denoted by $\mathcal{B}_{1} \indep_{\mathcal{B}_{3}} \mathcal{B}_{2}$,  if 
\begin{align}
\mathbb{P}(B_{1} \cap B_{2}  \mid \mathcal{B}_{3}) \stackrel{a.s.}{=}  \mathbb{P}(B_{1} \mid \mathcal{B}_{3}) \mathbb{P}(B_{2} \mid \mathcal{B}_{3}), \quad \text{for all $B_{1} \in \mathcal{B}_{1}$ and $B_{2} \in \mathcal{B}_{2}$}.
\end{align}

Let $\mathcal{A}_{u}^{(\infty)} \coloneqq \sigma(\pi_{v}: v \in u\mathcal{I})$. Observe that $S(u)$ is $\mathcal{C}^{(k-1)}_{u}$-measurable, while $\varphi_{u}$ is $\mathcal{B}(\mathbb{R})\times \mathcal{A}_{u}^{(\infty)}$-measurable. Define
\begin{align} \label{eq137}
g_{\varphi_{u}}(t) \coloneqq \mathbb{E}[\varphi_{u}(t) \mid \mathcal{C}^{(k-1)}_{u}], \quad \text{for} \quad t \in \mathbb{R}. 
\end{align}
\noindent Clearly, $\mathcal{A}^{(\infty)}_{u} \indep_{\mathcal{C}^{( k-1)}_{u}} \mathcal{C}^{(k-1)}_{u}$. Then, by properties of the conditional expectation,
\begin{align} \label{eq173}
\mathbb{E}[\varphi_{u}(t-S(u)) \mid \mathcal{C}^{(k-1)}_{u}]  \stackrel{a.s.}{=}  \mathbb{E}[g_{\varphi_{u}}(t-S(u)) \mid \mathcal{C}^{(k-1)}_{u}] \stackrel{a.s.}{=}  g_{\varphi_{u}}(t-S(u)). 
\end{align}
\noindent To complete the proof of the first equality in \eqref{eq43}, it remains to prove that $g_{\varphi_{u}}(t) \stackrel{a.s.}{=}  \varphi_{u}^{(k)}(t)$. 

Define the $\sigma$-algebra $\mathcal{B}_{u}^{(k-1)} \coloneqq \sigma(\pi_{v}: v \in \mathcal{J}_{u})$. Note that, by \eqref{eq4} and \eqref{eq42},  
$\mathcal{C}^{(k-1)}_{u} =  \mathcal{A}^{(k-1)}_{u} \vee \mathcal{B}_{u}^{(k-1)}$. Here $\mathcal{A}^{(k-1)}_{u} \vee \mathcal{B}_{u}^{(k-1)}$ denotes the smallest $\sigma$-algebra containing $\mathcal{A}^{(k-1)}_{u}$ and $\mathcal{B}_{u}^{(k-1)}$. By construction (see Section \ref{CMJclassic}), we also have that $\mathcal{A}^{(k-1)}_{u}$ and $\mathcal{B}_{u}^{(k-1)}$ are independent. Moreover, $\mathcal{B}_{u}^{(k-1)} \indep_{\mathcal{A}_{u}^{(k-1)}} \mathcal{A}_{u}^{(\infty)}$. Then, by \cite[Proposition 6.6]{Kallenberg2002}, 
\begin{align}  \label{eq138}
\mathbb{E}[\varphi_{u}(t) \mid \mathcal{C}^{(k-1)}_{u}] = \mathbb{E}[\varphi_{u}(t) \mid \mathcal{A}^{(k-1)}_{u} \vee \mathcal{B}_{u}^{(k-1)}] \stackrel{a.s.}{=}  \mathbb{E}[\varphi_{u}(t) \mid\mathcal{A}^{(k-1)}_{u}].
\end{align}
\noindent Therefore, \eqref{eq137}, \eqref{eq138} and Lemma \ref{lemma11} imply $g_{\varphi_{u}}(t) \stackrel{a.s.}{=}  \varphi_{u}^{(k)}(t)$. This finishes the proof of the first equality in \eqref{eq43}. 
\end{proof}

The following corollaries are a consequence of Lemmas \ref{lemma3}, \ref{lemma11} and \ref{lemma9}. 

\begin{corollary} \label{corollary1}
Let $\varphi$ be a characteristic satisfying \ref{C1}-\ref{C2}. Then, for $t \in \mathbb{R}$ and $k\in \mathbb{N}_{0}$, we have the following:
\begin{enumerate}[label=(\textbf{\roman*})]
\item For every $u \in \mathcal{I}$, $\varphi_{u}^{(k)}(t-S(u)) \stackrel{a.s.}{=} \mathbb{E}[\varphi_{u}(t-S(u)) \mid \mathcal{A}^{(|u|+ k-1)}]$. \label{corollary1Pro6}

\item For every $u,v \in \mathcal{I}$ such that $u \neq v$ and $|u| = |v|$,  \label{corollary1Pro6b}
\begin{align}
\varphi_{u}^{(k)}(t-S(u)) \varphi_{v}^{(k)}(t-S(v)) \stackrel{a.s.}{=} \mathbb{E}[\varphi_{u}(t-S(u)) \varphi_{v}(t-S(v)) \mid \mathcal{A}^{(|u|+ k-1)}].
\end{align}

\item For every $u \in \mathcal{I}$,  $\varphi^{(k)}_{u}(t-S(u)) \in L^{2}(\Omega, \mathcal{F}, \mathbb{P})$. \label{corollary1Pro1}

\item For every $u \in \mathcal{I}$, there exists a constant $C>0$ such that, almost surely, \label{corollary1Pro2}
\begin{align}
\mathbb{E}[ (\varphi_{u}^{(k)}(t-S(u)))^{2} \mid \mathcal{A}^{(|u|-1)}] \leq C
e^{\alpha \max(t-S(u), 2(t-S(u)))}. 
\end{align} 

\item For every $u,v \in \mathcal{I}$ such that $u \neq v$ and $|u| = |v|$ there exists a constant $C>0$ such that, almost surely, \label{corollary1Pro3}
\begin{align}
& \mathbb{E}[ |\varphi_{u}^{(k)}(t-S(u))| |\varphi_{v}^{(k)}(t-S(v))| \mid \mathcal{A}^{(|u|-1)}]  \leq C \prod_{w=u,v}e^{ \frac{\alpha}{2} \max(t-S(w), 2(t-S(w)))}.
\end{align} 

\item For every $u \in \mathcal{I}$, almost surely, \label{corollary1Pro4}
\begin{align}
\mathbb{E}[ (\varphi_{u}^{(k)}(t-S(u)) - \varphi_{u}^{(0)}(t-S(u)))^{2} \mid \mathcal{A}^{(|u|-1)} ] \leq 2 {\rm Var}[\varphi](t-S(u)). 
\end{align}

\item For every $u,v \in \mathcal{I}$ such that $u \neq v$ and $|u| = |v|$, \label{corollary1Pro5}
\begin{align}
\mathbb{E}[ (\varphi_{u}^{(k)}(t-S(u)) - \varphi_{u}^{(0)}(t-S(u))) (\varphi_{v}^{(k)}(t-S(v)) - \varphi_{v}^{(0)}(t-S(v))) \mid \mathcal{A}^{(|u|-1)} ] \stackrel{a.s.}{=} 0. 
\end{align}
\end{enumerate}
\end{corollary}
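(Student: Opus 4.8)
The plan is to derive all seven statements from a single master identity, namely \ref{corollary1Pro6}, and then to bootstrap the quadratic estimates \ref{corollary1Pro1}--\ref{corollary1Pro4} and the cross-term cancellation \ref{corollary1Pro5} from it together with the product structure \eqref{eq100} of the population space. So the first step is to prove \ref{corollary1Pro6}. The point is that $\mathbb{E}[\varphi_{u}(t-S(u)) \mid \mathcal{A}^{(|u|+k-1)}]$ is exactly the instance of Lemma \ref{lemma9} obtained from a judicious choice of the intermediate family $\mathcal{J}_{u}$. Concretely, I would take $\mathcal{J}_{u} \coloneqq \{v \in \mathcal{I} : |v| \le |u|+k-1, \ v \notin u\mathcal{I}\}$. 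Every strict ancestor of $u$ lies in a generation $<|u| \le |u|+k-1$ and is not a descendant of $u$, so $\mathcal{P}_{u} \subseteq \mathcal{J}_{u}$ and $\mathcal{J}_{u} \cap u\mathcal{I} = \emptyset$; thus $\mathcal{J}_{u}$ is admissible for Lemma \ref{lemma9}. Moreover, the descendants of $u$ of generation at most $|u|+k-1$ are precisely $\mathcal{I}_{u}^{(k-1)} = u\mathcal{I}^{(k-1)}$, so $\mathcal{J}_{u} \cup \mathcal{I}_{u}^{(k-1)}$ is the set of all vertices of generation at most $|u|+k-1$, whence $\mathcal{C}_{u}^{(k-1)} = \mathcal{A}^{(|u|+k-1)}$. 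Lemma \ref{lemma9} then yields $\varphi_{u}^{(k)}(t-S(u)) \stackrel{a.s.}{=} \mathbb{E}[\varphi_{u}(t-S(u)) \mid \mathcal{A}^{(|u|+k-1)}]$, which is \ref{corollary1Pro6}. Part \ref{corollary1Pro1} is then immediate, since by Lemma \ref{lemma3} the inner random variable is in $L^{2}$ and conditional expectation is an $L^{2}$-contraction.

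Parts \ref{corollary1Pro2}, \ref{corollary1Pro3} and \ref{corollary1Pro4} should follow by combining \ref{corollary1Pro6} with conditional Jensen, the tower property, and the computation already carried out in the proof of Lemma \ref{lemma3}. For \ref{corollary1Pro2}, conditional Jensen gives $(\varphi_{u}^{(k)}(t-S(u)))^{2} \le \mathbb{E}[(\varphi_{u}(t-S(u)))^{2} \mid \mathcal{A}^{(|u|+k-1)}]$; taking $\mathbb{E}[\,\cdot \mid \mathcal{A}^{(|u|-1)}]$ and applying the tower property reduces the right side to $\mathbb{E}[(\varphi_{u}(t-S(u)))^{2} \mid \mathcal{A}^{(|u|-1)}]$, which equals $\mathbb{E}[\varphi^{2}](t-S(u))$ by \eqref{eq30} and is dominated by $Ce^{\alpha \max(t-S(u),2(t-S(u)))}$ via \eqref{eq35}. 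Part \ref{corollary1Pro3} is then the conditional Cauchy--Schwarz inequality applied to the pair $(\varphi_{u}^{(k)}(t-S(u)), \varphi_{v}^{(k)}(t-S(v)))$, followed by \ref{corollary1Pro2} for each of $u$ and $v$ (recall that $\mathcal{A}^{(|v|-1)} = \mathcal{A}^{(|u|-1)}$ since $|u|=|v|$). For \ref{corollary1Pro4}, I would use that $\varphi_{u}^{(0)}(t-S(u)) = \mathbb{E}[\varphi](t-S(u))$ is $\mathcal{A}^{(|u|-1)}$-measurable to write $\varphi_{u}^{(k)}(t-S(u)) - \varphi_{u}^{(0)}(t-S(u)) = \mathbb{E}[\varphi_{u}(t-S(u)) - \mathbb{E}[\varphi](t-S(u)) \mid \mathcal{A}^{(|u|+k-1)}]$, apply conditional Jensen and the tower property as before, and invoke the conditional analogue of \eqref{eq30} for the centred characteristic $(\varphi - \mathbb{E}[\varphi])^{2}$; this gives the sharp bound $\mathrm{Var}[\varphi](t-S(u))$, which is in particular $\le 2\,\mathrm{Var}[\varphi](t-S(u))$.

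The genuinely probabilistic content lies in the factorization \ref{corollary1Pro6b} and the cancellation \ref{corollary1Pro5}, and this is where I expect the main obstacle. Both rest on the fact that for $u \neq v$ with $|u|=|v|$ neither vertex is a descendant of the other, so the subtrees $u\mathcal{I}$ and $v\mathcal{I}$ are disjoint; by the product structure \eqref{eq100} the tail families $\sigma(\pi_{w} : w \in u\mathcal{I})$ and $\sigma(\pi_{w} : w \in v\mathcal{I})$ are conditionally independent given $\mathcal{A}^{(|u|+k-1)}$, while $S(u)$ and $S(v)$ are already $\mathcal{A}^{(|u|-1)}$-measurable. For \ref{corollary1Pro6b} I would apply \ref{corollary1Pro6} to each factor and then verify, via this conditional independence (using \cite[Proposition 6.6]{Kallenberg2002}, exactly as in the proof of Lemma \ref{lemma9}), that $\varphi_{u}(t-S(u))$ and $\varphi_{v}(t-S(v))$ are conditionally independent given $\mathcal{A}^{(|u|+k-1)}$; the product of the two conditional expectations is then the conditional expectation of the product. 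The delicate point throughout is keeping careful track of which generations each $\sigma$-algebra actually resolves, so that the claimed independence is genuine and not destroyed by conditioning on information that already couples the two subtrees.

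Finally, for \ref{corollary1Pro5} the cleanest route is to expand the product $(\varphi_{u}^{(k)}(t-S(u))-\varphi_{u}^{(0)}(t-S(u)))(\varphi_{v}^{(k)}(t-S(v))-\varphi_{v}^{(0)}(t-S(v)))$ into four terms and take $\mathbb{E}[\,\cdot \mid \mathcal{A}^{(|u|-1)}]$. The leading term is handled by \ref{corollary1Pro6b} together with the tower property, which turns it into $\mathbb{E}[\varphi_{u}(t-S(u))\varphi_{v}(t-S(v)) \mid \mathcal{A}^{(|u|-1)}] = \mathbb{E}[\varphi](t-S(u))\,\mathbb{E}[\varphi](t-S(v))$, using that $\varphi_{u}$ and $\varphi_{v}$ are independent of each other and of $\mathcal{A}^{(|u|-1)}$. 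Each of the three remaining terms reduces to the same product, because $\varphi_{u}^{(0)}(t-S(u)) = \mathbb{E}[\varphi](t-S(u))$ and $\varphi_{v}^{(0)}(t-S(v)) = \mathbb{E}[\varphi](t-S(v))$ are $\mathcal{A}^{(|u|-1)}$-measurable and $\mathbb{E}[\varphi_{u}^{(k)}(t-S(u)) \mid \mathcal{A}^{(|u|-1)}] = \varphi_{u}^{(0)}(t-S(u))$ (by \ref{corollary1Pro6} and the tower property). The four contributions then cancel in pairs, giving \ref{corollary1Pro5}. Once \ref{corollary1Pro6} and the disjoint-subtree conditional independence are in place, the remaining manipulations are routine.
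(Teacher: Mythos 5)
Your proposal is correct and, in outline, matches the paper's own proof: both derive everything from part \ref{corollary1Pro6}, obtained from Lemma \ref{lemma9} with the same choice of $\mathcal{J}_{u}$ (your set coincides with the paper's $\mathcal{I}^{(|u|+k-1)}\setminus\mathcal{I}_{u}^{(k-1)}$), and then run conditional Jensen, the tower property, the identity \eqref{eq30} and the bound \eqref{eq35}. You deviate in two places, both legitimately. For \ref{corollary1Pro6b} the paper does not invoke conditional independence of the subtrees directly; it first writes $\varphi_{v}^{(k)}(t-S(v))$ as $\mathbb{E}[\varphi_{v}(t-S(v))\mid\mathcal{D}_{v}^{(k-1)}]$, pulls this inside the conditioning on $\mathcal{A}^{(|u|+k-1)}$, and then re-expresses it via Lemma \ref{lemma9} with the enlarged family $\mathcal{J}_{v}=(\mathcal{I}^{(|u|+k-1)}\cup u\mathcal{I})\setminus\mathcal{I}_{v}^{(k-1)}$, chosen precisely so that $\varphi_{u}(t-S(u))$ becomes $\mathcal{C}_{v}^{(k-1)}$-measurable and can be absorbed into the inner conditional expectation. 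Your route through conditional independence of $\sigma(\pi_{w}:w\in u\mathcal{I})$ and $\sigma(\pi_{w}:w\in v\mathcal{I})$ given $\mathcal{A}^{(|u|+k-1)}$ reaches the same identity and is arguably more transparent; just record that the integrability of the product needed for the factorization comes from Lemma \ref{lemma3} and Cauchy--Schwarz. For \ref{corollary1Pro3} the paper uses \ref{corollary1Pro6b} together with Jensen applied to the product, whereas your conditional Cauchy--Schwarz argument combined with \ref{corollary1Pro2} bypasses \ref{corollary1Pro6b} entirely and is slightly shorter; and in \ref{corollary1Pro4} you obtain the sharp constant $1$ where the paper settles for $2$. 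None of these differences affects correctness, and your treatment of \ref{corollary1Pro5} by expanding into four terms and cancelling in pairs is exactly what the paper's terse final paragraph intends.
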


\begin{proof}
First, we prove \ref{corollary1Pro6}. Note that \ref{corollary1Pro6} follows from Lemma \ref{lemma9}. Specifically, by setting $\mathcal{J}_{u} = \mathcal{I}^{(|u|+ k-1)} \setminus \mathcal{I}^{(k-1)}_{u}$ and thus, $\mathcal{C}^{(k-1)}_{u} = \mathcal{A}^{(|u|+ k-1)}$ (observe that $\mathcal{I}^{(k-1)}_{u} \subseteq \mathcal{I}^{(|u|+ k-1)}$ and $\mathcal{J}_{u} \cap u\mathcal{I} = \emptyset$). 

Next, we prove \ref{corollary1Pro6b}. By \ref{corollary1Pro6} and Lemma \ref{lemma9}, we have that
\begin{align}
\varphi_{u}^{(k)}(t-S(u)) \varphi_{v}^{(k)}(t-S(v)) \stackrel{a.s.}{=} \mathbb{E}[\varphi_{u}(t-S(u)) \mid \mathcal{A}^{(|u|+ k-1)}] \mathbb{E}[\varphi_{u}(t-S(u)) \mid \mathcal{D}^{(k-1)}_{v}].
\end{align}
\noindent Note that, by \eqref{eq135} and \eqref{eq11}, $\mathcal{D}^{(k-1)}_{v} \subseteq \mathcal{A}^{(|u|+ k-1)}$ because $|u|=|v|$ and $u \neq v$. Then, by Lemma \ref{lemma9},
\begin{align}
\varphi_{u}^{(k)}(t-S(u)) \varphi_{v}^{(k)}(t-S(v)) & \stackrel{a.s.}{=} \mathbb{E}[\varphi_{u}(t-S(u))  \mathbb{E}[\varphi_{u}(t-S(u)) \mid \mathcal{D}^{(k-1)}_{v}] \mid \mathcal{A}^{(|u|+ k-1)}] \nonumber \\
& \stackrel{a.s.}{=} \mathbb{E}[\varphi_{u}(t-S(u))  \varphi_{v}^{(k)}(t-S(v)) \mid \mathcal{A}^{(|u|+ k-1)}].
\end{align}
\noindent Set $\mathcal{J}_{v} = (\mathcal{I}^{(|u|+ k-1)} \cup u\mathcal{I}) \setminus \mathcal{I}_{v}^{(k-1)}$. Observe that $\mathcal{I}^{(k-1)}_{v} \subseteq \mathcal{I}^{(|u|+ k-1)} \cup u\mathcal{I}$ and $\mathcal{J}_{v} \cap v\mathcal{I} = \emptyset$. Then, Lemma \ref{lemma9} implies that
\begin{align}
\varphi_{u}^{(k)}(t-S(u)) \varphi_{v}^{(k)}(t-S(v)) & \stackrel{a.s.}{=} 
\mathbb{E}[\varphi_{u}(t-S(u))  \mathbb{E}[\varphi_{v}(t-S(v)) \mid \mathcal{C}_{v}^{(k-1)}] \mid \mathcal{A}^{(|u|+ k-1)}].
\end{align}
\noindent Observe that $\varphi_{u}(t-S(u))$ is $\mathcal{C}_{v}^{(k-1)}$-measurable. Hence
\begin{align} \label{eq174}
\varphi_{u}^{(k)}(t-S(u)) \varphi_{v}^{(k)}(t-S(v)) & \stackrel{a.s.}{=} 
\mathbb{E}[ \mathbb{E}[ \varphi_{u}(t-S(u)) \varphi_{v}(t-S(v)) \mid \mathcal{C}_{v}^{(k-1)}] \mid \mathcal{A}^{(|u|+ k-1)}].
\end{align}
\noindent Finally, by \eqref{eq135} and \eqref{eq42}, $\mathcal{A}^{(|u|+ k-1)} \subseteq \mathcal{C}_{v}^{(k-1)}$. Therefore, \ref{corollary1Pro6b} follows from \eqref{eq174}.

We prove \ref{corollary1Pro1}. By \ref{corollary1Pro6} and Jensen’s inequality, we that, almost surely,
\begin{align} \label{eq44}
(\varphi_{u}^{(k)}(t-S(u)))^{2} \leq \mathbb{E}[(\varphi_{u}(t-S(u)))^{2} \mid \mathcal{A}^{(|u|+ k-1)}].
\end{align}
\noindent Thus, \ref{corollary1Pro1} follows from Lemma \ref{lemma3} and the above inequality.

Next, we prove \ref{corollary1Pro2}. Note that $S(u)$ is $\mathcal{A}^{(|u|-1)}$-measurable and  $\varphi_{u}$ is independent of $\mathcal{A}^{(|u|-1)}$. Moreover, $\mathcal{A}^{(|u|-1)}\subseteq\mathcal{A}^{(|u|+k-1)}$. Then, by \eqref{eq44}, we have that, almost surely,
\begin{align} \label{eq37}
\mathbb{E}[ (\varphi_{u}^{(k)}(t-S(u)) )^{2} \mid \mathcal{A}^{(|u|-1)}] \leq \mathbb{E}[(\varphi_{u})^{2}](t-S(u)).
\end{align}
\noindent Thus, \ref{corollary1Pro2} follows from \eqref{eq35} and \eqref{eq37}. 

We prove \ref{corollary1Pro3}. Consider $u,v \in \mathcal{I}$ such that $u \neq v$ and $|u| = |v|$. Then, by \ref{corollary1Pro6}, \ref{corollary1Pro6b} and the triangle inequality, we that, almost surely,
\begin{align} \label{eq47}
|\varphi_{u}^{(k)}(t-S(u))| |\varphi_{v}^{(k)}(t-S(v))| & \leq \mathbb{E}[|\varphi_{u}(t-S(u)) \varphi_{v}(t-S(v))| \mid \mathcal{A}^{(|u|+ k-1)}].
\end{align}
\noindent Again, observe that $S(u)$ and $S(v)$ are $\mathcal{A}^{(|u|-1)}$-measurable, while  $\varphi_{u}$ and $\varphi_{v}$ are independent of $\mathcal{A}^{(|u|-1)}$.  Moreover, $\mathcal{A}^{(|u|-1)}\subseteq\mathcal{A}^{(|u|+k-1)}$. Then, by \eqref{eq47} and Jensen’s inequality, we have that, almost surely,
\begin{align} \label{eq46}
(\mathbb{E}[ |\varphi_{u}^{(k)}(t-S(u))| |\varphi_{v}^{(k)}(t-S(v))| \mid \mathcal{A}^{(|u|-1)}])^{2} & \leq \mathbb{E}[(\varphi_{u}(t-S(u)))^{2} (\varphi_{v}(t-S(v)))^{2} \mid \mathcal{A}^{(|u|-1)}]  \nonumber \\
&  = \mathbb{E}[(\varphi_{u})^{2}](t-S(u)) \mathbb{E}[(\varphi_{v})^{2}](t-S(v)).
\end{align}
\noindent Thus, \ref{corollary1Pro3} follows from \eqref{eq35} and \eqref{eq46}.

We prove \ref{corollary1Pro4}. Note that, by \ref{corollary1Pro6},  $\varphi_{u}^{(0)}(t-S(u))\stackrel{a.s.}{=} \mathbb{E}[\varphi_{u}(t-S(u)) \mid \mathcal{A}^{(|u|-1)}]$. Then, by \ref{corollary1Pro6} and Jensen’s inequality, we that, almost surely,
\begin{align} \label{eq172}
(\varphi_{u}^{(k)}(t-S(u)) - \varphi_{u}^{(0)}(t-S(u)))^{2} \leq \mathbb{E}[(\varphi_{u}(t-S(u)) - \mathbb{E}[\varphi_{u}(t-S(u)) \mid \mathcal{A}^{(|u|-1)}])^{2} \mid \mathcal{A}^{(|u|+ k-1)}].
\end{align}
\noindent Recall that $S(u)$ is $\mathcal{A}^{(|u|-1)}$-measurable, whereas $\varphi_{u}$ is independent of $\mathcal{A}^{(|u|-1)}$.  Moreover, $\mathcal{A}^{(|u|-1)}\subseteq\mathcal{A}^{(|u|+k-1)}$. Then, by \eqref{eq172} and Jensen’s inequality, we have that, almost surely,
\begin{align} \label{eq19}
& \mathbb{E}[ (\varphi_{u}^{(k)}(t-S(u)) - \varphi_{u}^{(0)}(t-S(u)))^{2} \mid \mathcal{A}^{(|u|-1)} ] \nonumber \\
& \quad \quad \quad  \leq  \mathbb{E}[ (\varphi_{u})^{2}](t-S(u)) - 2(\mathbb{E}[\varphi_{u}](t-S(u)))^{2} + \mathbb{E}[ (\varphi_{u})^{2}](t-S(u)) \nonumber \\
& \quad \quad \quad = 2( \mathbb{E}[ \varphi^{2}](t-S(u)) - (\mathbb{E}[\varphi](t-S(u)))^{2}),
\end{align}
\noindent  which implies \ref{corollary1Pro4}. 

Finally, we prove \ref{corollary1Pro5}. As observed in the proof of  \ref{corollary1Pro3}, for $u,v \in \mathcal{I}$ such that $u \neq v$ and $|u| = |v|$, $S(u)$ and $S(v)$ are $\mathcal{A}^{(|u|-1)}$-measurable, while  $\varphi_{u}$ and $\varphi_{v}$ are independent of $\mathcal{A}^{(|u|-1)}$. Moreover, $\varphi_{u}$ and $\varphi_{v}$ are independent. The preceding observations, together with \ref{corollary1Pro6}, \ref{corollary1Pro6b} and  properties of the conditional expectation, proves \ref{corollary1Pro5}. 
\end{proof}

Let $(u_{n})_{n \in \mathbb{N}}$ be any admissible ordering of $\mathcal{I}$. Recall that $u_{1} = \varnothing$. For each $n \in \mathbb{N}$ and $k \in \mathbb{N}_{0}$, we set
\begin{align} \label{eq126} 
\mathcal{I}_{n} \coloneqq \left\{u_{1}, \ldots, u_{n}\right\} \quad \text{and} \quad  \mathcal{I}^{(k)}_{n} \coloneqq \bigcup_{i=1}^{n} \mathcal{I}^{(k)}_{u_{i}},
\end{align}
\noindent For each $k \in  \mathbb{N}_{0}$, define also a collection of sub-$\sigma$-algebras $(\mathcal{G}_{n}^{(k)})_{n \in  \mathbb{N}_{0}}$ of $\mathcal{F}$ by letting $\mathcal{G}_{0}^{(k)}$ be the trivial $\sigma$-algebra and for $n \in \mathbb{N}$,
\begin{align} \label{eq2}
\mathcal{G}_{n}^{(k)} & \coloneqq \sigma(\pi_{u}: u \in \mathcal{I}^{(k)}_{n})
= \sigma( \pi_{v}: \exists \, \, u \in \mathcal{I}_{n} \, \, \text{such that} \, \, v \in u\mathcal{I} \, \, \text{and} \, \, |v| - |u| \leq k ).
\end{align}

\begin{corollary} \label{LemmaCondExpII} 
Let $(u_{n})_{n \in \mathbb{N}}$ be any admissible ordering of $\mathcal{I}$ and let $\varphi$ be an $h$-dependent descendant characteristic, for some $h \in \mathbb{N}_{0}$, satisfying \ref{C1}-\ref{C2}. Then, for $t \in \mathbb{R}$ and $k=0, \dots, h+1$, 
\begin{align} \label{eq39} 
\mathbb{E}[\varphi_{u_{1}}(t-S(u_{1})) \mid \mathcal{G}^{(k)}_{0}] \stackrel{a.s.}{=}  \varphi^{(0)}_{u_{1}}(t-S(u_{1})),
\end{align} 
\noindent while for $i \geq 2$,
\begin{align} \label{eq38}
\mathbb{E}[\varphi_{u_{i}}(t-S(u_{i})) \mid \mathcal{G}^{(k)}_{i-1}] \stackrel{a.s.}{=}  \varphi^{(k)}_{u_{i}}(t-S(u_{i})).
\end{align}  
\end{corollary}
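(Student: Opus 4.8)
The plan is to derive both identities from Lemma \ref{lemma9}, by selecting for each index $i$ a family $\mathcal{J}_{u_i}$ for which the conditioning $\sigma$-algebra $\mathcal{C}^{(k-1)}_{u_i}$ of that lemma coincides with $\mathcal{G}^{(k)}_{i-1}$. First I would dispose of \eqref{eq39}, which is immediate: since $u_1 = \varnothing$ we have $S(u_1) = 0$, $\varphi_{u_1} = \varphi$, and $\mathcal{G}^{(k)}_0$ is trivial, so $\mathbb{E}[\varphi_{u_1}(t-S(u_1)) \mid \mathcal{G}^{(k)}_0] = \mathbb{E}[\varphi(t)] = \varphi^{(0)}(t) = \varphi^{(0)}_{u_1}(t-S(u_1))$, using that $\varphi^{(0)}(t) = \mathbb{E}[\varphi(t)]$ is deterministic and that $\varphi(t) \in L^1$ by \ref{C1}.

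For \eqref{eq38}, fix $i \ge 2$ and $k \in \{0, \dots, h+1\}$. The backbone is the observation that $\mathcal{I}_i = \mathcal{I}_{i-1} \cup \{u_i\}$ is a subtree of $\mathcal{I}$, hence $\mathcal{I}_{i-1}$ is prefix-closed and contains $pr(u_i)$; consequently $\mathcal{P}_{u_i} \subseteq \mathcal{I}_{i-1}$ and no vertex of $\mathcal{I}_{i-1}$ is a descendant of $u_i$. The central step I would carry out is to identify precisely the descendants of $u_i$ that are measurable with respect to $\mathcal{G}^{(k)}_{i-1}$, namely to show that $\mathcal{I}^{(k)}_{i-1} \cap u_i\mathcal{I} = \mathcal{I}^{(k-1)}_{u_i}$. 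For the inclusion ``$\subseteq$'': if $v \in u_i\mathcal{I}$ belongs to $u_j\mathcal{I}^{(k)}$ for some $j \le i-1$, then $u_j \preceq v$ and $u_j \in \mathcal{I}_{i-1}$, so $u_j$ cannot be a strict descendant of $u_i$ and must therefore be a strict ancestor, giving $|u_i| - |u_j| \ge 1$ and hence $|v| - |u_i| \le (|v| - |u_j|) - 1 \le k - 1$. For ``$\supseteq$'': any $v \in u_i\mathcal{I}^{(k-1)}$ satisfies $|v| - |pr(u_i)| \le k$ with $pr(u_i) \in \mathcal{I}_{i-1}$, so $v \in \mathcal{I}^{(k)}_{i-1}$.

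Given this, I would set $\mathcal{J}_{u_i} \coloneqq \mathcal{I}^{(k)}_{i-1} \setminus u_i\mathcal{I}$, so that $\mathcal{I}^{(k)}_{i-1}$ decomposes disjointly as $\mathcal{J}_{u_i} \cup \mathcal{I}^{(k-1)}_{u_i}$. Both hypotheses of Lemma \ref{lemma9} hold: $\mathcal{J}_{u_i} \cap u_i\mathcal{I} = \emptyset$ by construction, and $\mathcal{P}_{u_i} \subseteq \mathcal{J}_{u_i}$ because $\mathcal{P}_{u_i} \subseteq \mathcal{I}_{i-1} \subseteq \mathcal{I}^{(k)}_{i-1}$ while $\mathcal{P}_{u_i} \cap u_i\mathcal{I} = \emptyset$. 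The $\sigma$-algebra $\mathcal{C}^{(k-1)}_{u_i} = \sigma(\pi_v : v \in \mathcal{J}_{u_i} \cup \mathcal{I}^{(k-1)}_{u_i})$ from \eqref{eq42} is then exactly $\sigma(\pi_v : v \in \mathcal{I}^{(k)}_{i-1}) = \mathcal{G}^{(k)}_{i-1}$, and Lemma \ref{lemma9} (which is valid for every $k \in \mathbb{N}_0$, i.e.\ $n = k-1 \ge -1$, thus covering the full range $k = 0, \dots, h+1$, including the degenerate case $k=0$ where the intersection is empty) delivers $\mathbb{E}[\varphi_{u_i}(t-S(u_i)) \mid \mathcal{G}^{(k)}_{i-1}] \stackrel{a.s.}{=} \varphi^{(k)}_{u_i}(t-S(u_i))$.

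I expect the only genuinely delicate point to be the combinatorial identity $\mathcal{I}^{(k)}_{i-1} \cap u_i\mathcal{I} = \mathcal{I}^{(k-1)}_{u_i}$: the drop from $k$ to $k-1$ encodes that the deepest ancestor of $u_i$ already present in $\mathcal{I}_{i-1}$ is merely its parent, so exactly one generation of ``reach'' into the subtree $u_i\mathcal{I}$ is lost. Once this is in place, the remainder is a routine verification of the hypotheses of Lemma \ref{lemma9} together with the translation of generator sets into the $\sigma$-algebras $\mathcal{C}^{(k-1)}_{u_i}$ and $\mathcal{G}^{(k)}_{i-1}$.
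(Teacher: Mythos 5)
Your proposal is correct and follows essentially the same route as the paper: both reduce the claim to Lemma \ref{lemma9} by choosing $\mathcal{J}_{u_i}$ so that $\mathcal{C}^{(k-1)}_{u_i}=\mathcal{G}^{(k)}_{i-1}$, and your set $\mathcal{J}_{u_i}=\mathcal{I}^{(k)}_{i-1}\setminus u_i\mathcal{I}$ coincides with the paper's $\mathcal{I}^{(k)}_{i-1}\setminus\mathcal{I}^{(k-1)}_{u_i}$ precisely because of the identity $\mathcal{I}^{(k)}_{i-1}\cap u_i\mathcal{I}=\mathcal{I}^{(k-1)}_{u_i}$ that you verify. The only difference is that you spell out this combinatorial step, which the paper leaves as a one-line ``observe''.
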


\begin{proof}
Note that \eqref{eq39} directly follows from Lemma \ref{lemma9} because $\mathcal{G}^{(k)}_{0} = \mathcal{C}^{(-1)}_{\varnothing}$. On the other hand, \eqref{eq38} follows also from Lemma \ref{lemma9}. Specifically, by setting $\mathcal{J}_{u_{i}} = \mathcal{I}^{(k)}_{i-1} \setminus \mathcal{I}^{(k-1)}_{u_{i}}$ and thus, $\mathcal{C}^{(k-1)}_{u_{i}} = \mathcal{G}^{(k)}_{i-1}$ (observe that $\mathcal{I}^{(k-1)}_{u_{i}} \subseteq \mathcal{I}^{(k)}_{i-1}$ and $\mathcal{J}_{u_{i}} \cap u_{i}\mathcal{I} = \emptyset$).
\end{proof}

\begin{corollary} \label{corollary2}
Let $(u_{n})_{n \in \mathbb{N}}$ be any admissible ordering of $\mathcal{I}$ and let $\varphi$ be an $h$-dependent descendant characteristic, for some $h \in \mathbb{N}_{0}$, satisfying \ref{C1}-\ref{C2}. Then, for every fixed $t \in \mathbb{R}$, we have that, for $i \geq 2$ and $u \in \mathcal{I}$ such that $u_{i} \preceq u$ and $|u|-|u_{i}|\leq h$,
\begin{align}
\mathbb{E}[\varphi_{u}^{(h+k+|u_{i}| -|u|)}(t-S(u)) \mid \mathcal{G}_{i-1}^{(h)} ] \stackrel{a.s.}{=}  \varphi_{u}^{(h+|u_{i}| -|u|)}(t-S(u)), \quad  \text{for} \quad k=0,1.
\end{align}
\end{corollary}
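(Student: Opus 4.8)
The plan is to fix $t \in \mathbb{R}$, $i \geq 2$ and $u$ with $u_i \preceq u$, and to abbreviate $j \coloneqq |u| - |u_i| \in \{0,1,\dots,h\}$, so that the two instances to be proved read $\mathbb{E}[\varphi_u^{(h-j)}(t-S(u)) \mid \mathcal{G}_{i-1}^{(h)}] \stackrel{a.s.}{=} \varphi_u^{(h-j)}(t-S(u))$ (the case $k=0$) and $\mathbb{E}[\varphi_u^{(h+1-j)}(t-S(u)) \mid \mathcal{G}_{i-1}^{(h)}] \stackrel{a.s.}{=} \varphi_u^{(h-j)}(t-S(u))$ (the case $k=1$). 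All conditional expectations are well-defined since $\varphi_u(t-S(u)) \in L^2(\Omega,\mathcal{F},\mathbb{P})$ by Lemma \ref{lemma3} under \ref{C1}--\ref{C2}. Both identities will be obtained by recognising $\mathcal{G}_{i-1}^{(h)}$ as a $\sigma$-algebra of the type $\mathcal{C}_u^{(\cdot)}$ appearing in Lemma \ref{lemma9} and invoking that lemma together with the tower property.

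The heart of the argument is a purely combinatorial claim about the admissible ordering, which I would isolate first. Writing $p \coloneqq \mathrm{pr}(u_i)$ for the parent of $u_i$, the defining property of an admissible ordering (that $\mathcal{I}_{i-1} = \{u_1,\dots,u_{i-1}\}$ is an ancestor-closed subtree containing $\varnothing$, while $u_i \notin \mathcal{I}_{i-1}$) yields two facts: $p \in \mathcal{I}_{i-1}$, and no $u_\ell$ with $\ell \leq i-1$ is a descendant of $u_i$ (otherwise ancestor-closedness would force $u_i \in \mathcal{I}_{i-1}$). From these I would deduce the three set inclusions that drive everything: (i) $\mathcal{P}_u \subseteq \mathcal{I}_{i-1}^{(h)}$ and (ii) $\mathcal{I}_u^{(h-j-1)} \subseteq \mathcal{I}_{i-1}^{(h)}$, both obtained by using $p$ as the witnessing ancestor together with the distance bound $|w|-|p| \leq h$; and the crucial reverse inclusion (iii) $\mathcal{I}_{i-1}^{(h)} \cap u\mathcal{I} \subseteq \mathcal{I}_u^{(h-j-1)}$. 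For (iii), if $w \in u\mathcal{I}$ is witnessed by some $u_\ell$ (i.e.\ $u_\ell \preceq w$, $|w|-|u_\ell| \leq h$), then $u_\ell$ and $u$ are comparable; the second combinatorial fact rules out $u \prec u_\ell$, so $u_\ell \preceq u$, and since $u_\ell \neq u_i$ and $u_\ell$ is not a descendant of $u_i$ one gets $u_\ell \prec u_i$, whence $|u|-|u_\ell| \geq j+1$ and therefore $|w|-|u| \leq h-(j+1)$, i.e.\ $w \in \mathcal{I}_u^{(h-j-1)}$.

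Granting (i)--(iii), the case $k=0$ follows at once, as does the auxiliary identity $\mathbb{E}[\varphi_u(t-S(u)) \mid \mathcal{G}_{i-1}^{(h)}] \stackrel{a.s.}{=} \varphi_u^{(h-j)}(t-S(u))$ needed below. Indeed, (ii) and (iii) show that $\mathcal{I}_{i-1}^{(h)}$ is the disjoint union of $\mathcal{J}_u \coloneqq \mathcal{I}_{i-1}^{(h)} \setminus \mathcal{I}_u^{(h-j-1)}$ and $\mathcal{I}_u^{(h-j-1)}$, while (i) and (iii) give $\mathcal{P}_u \subseteq \mathcal{J}_u$ and $\mathcal{J}_u \cap u\mathcal{I} = \emptyset$; hence $\mathcal{G}_{i-1}^{(h)} = \mathcal{C}_u^{(h-j-1)}$ in the notation of Lemma \ref{lemma9} (with $n = h-j-1 \geq -1$), and that lemma gives $\mathbb{E}[\varphi_u(t-S(u)) \mid \mathcal{G}_{i-1}^{(h)}] \stackrel{a.s.}{=} \varphi_u^{(h-j)}(t-S(u))$. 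In particular $\varphi_u^{(h-j)}(t-S(u))$ is $\mathcal{G}_{i-1}^{(h)}$-measurable, which is exactly the $k=0$ claim. For $k=1$ I would instead take $\mathcal{J}_u' \coloneqq \mathcal{I}_{i-1}^{(h)} \setminus \mathcal{I}_u^{(h-j)}$; the same inclusions (with (iii) leaving even more room, since $\mathcal{I}_u^{(h-j-1)} \subseteq \mathcal{I}_u^{(h-j)}$) give $\mathcal{P}_u \subseteq \mathcal{J}_u'$ and $\mathcal{J}_u' \cap u\mathcal{I} = \emptyset$, while $\mathcal{J}_u' \cup \mathcal{I}_u^{(h-j)} \supseteq \mathcal{I}_{i-1}^{(h)}$, so the associated $\sigma$-algebra $\mathcal{C}_u^{(h-j)}$ of Lemma \ref{lemma9} satisfies $\mathcal{G}_{i-1}^{(h)} \subseteq \mathcal{C}_u^{(h-j)}$. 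Lemma \ref{lemma9} (now with $n=h-j$) gives $\varphi_u^{(h+1-j)}(t-S(u)) \stackrel{a.s.}{=} \mathbb{E}[\varphi_u(t-S(u)) \mid \mathcal{C}_u^{(h-j)}]$, and the tower property then yields $\mathbb{E}[\varphi_u^{(h+1-j)}(t-S(u)) \mid \mathcal{G}_{i-1}^{(h)}] \stackrel{a.s.}{=} \mathbb{E}[\varphi_u(t-S(u)) \mid \mathcal{G}_{i-1}^{(h)}]$, which by the auxiliary identity equals $\varphi_u^{(h-j)}(t-S(u))$, closing the case $k=1$.

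I expect the main obstacle to be the combinatorial claim, and in particular the distance bookkeeping behind $|u|-|u_\ell| \geq j+1$: it is precisely there that one must argue, via ancestor-closedness of the admissible subtree $\mathcal{I}_{i-1}$, that the only ancestors of $w \in u\mathcal{I}$ available as witnesses in $\mathcal{I}_{i-1}$ are strict ancestors of $u_i$, forcing the gain of at least $j+1$ in depth. Once (i)--(iii) are in place, everything reduces to a mechanical identification of $\sigma$-algebras and an application of Lemma \ref{lemma9} and the tower property.
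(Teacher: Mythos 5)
Your proposal is correct and follows essentially the same route as the paper: both identify $\mathcal{G}_{i-1}^{(h)}$ with a $\sigma$-algebra of the form $\mathcal{C}_u^{(\cdot)}$ from Lemma \ref{lemma9} via a suitable $\mathcal{J}_u$, establish $\mathcal{G}_{i-1}^{(h)} \subseteq \mathcal{C}_u^{(h+k+|u_i|-|u|-1)}$, and conclude with the tower property and the identification $\mathcal{G}_{i-1}^{(h)} = \mathcal{C}_u^{(h+|u_i|-|u|-1)}$. The only difference is cosmetic: the paper treats $k=0,1$ with a single $\mathcal{J}_u$ and merely asserts the combinatorial inclusions ("observe that $\mathcal{J}_u \cap u\mathcal{I} = \emptyset$"), whereas you spell out the admissibility argument behind them, which is a welcome addition rather than a deviation.
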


\begin{proof}
Note that, by \eqref{eq134} and \eqref{eq126}, $\mathcal{I}^{(h+|u_{i}| -|u|-1)}_{u} \subseteq \mathcal{I}^{(h)}_{i-1}$. Set $\mathcal{J}_{u} = \mathcal{I}^{(h)}_{i-1} \setminus \mathcal{I}^{(h+|u_{i}| -|u|-1)}_{u}$ and observe that $\mathcal{J}_{u} \cap u\mathcal{I} = \emptyset$. Then, by Lemma \ref{lemma9},
\begin{align}
\mathbb{E}[\varphi_{u}^{(h+k+|u_{i}| -|u|)}(t-S(u)) \mid \mathcal{G}_{i-1}^{(h)} ] \stackrel{a.s.}{=} \mathbb{E}[ \mathbb{E}[\varphi_{u}(t-S(u)) \mid \mathcal{C}_{u}^{(h+k+|u_{i}| -|u|-1)} ] \mid \mathcal{G}_{i-1}^{(h)} ].
\end{align}
\noindent Since $\mathcal{I}^{(h)}_{i-1} \subseteq \mathcal{J}_{u} \cup \mathcal{I}^{(h+k+|u_{i}| -|u|-1)}_{u}$, it follows from \eqref{eq42} and \eqref{eq2} that $\mathcal{G}_{i-1}^{(h)} \subseteq \mathcal{C}_{u}^{(h+k+|u_{i}| -|u|-1)}$. Thus, 
\begin{align} \label{eq1}
\mathbb{E}[\varphi_{u}^{(h+k+|u_{i}| -|u|)}(t-S(u)) \mid \mathcal{G}_{i-1}^{(h)} ] \stackrel{a.s.}{=} \mathbb{E}[ \varphi_{u}(t-S(u))  \mid \mathcal{G}_{i-1}^{(h)} ].
\end{align}
\noindent Observe that $\mathcal{G}_{i-1}^{(h)} = \mathcal{C}_{u}^{(h+|u_{i}| -|u|-1)}$. Then, our claim follows from \eqref{eq1} and Lemma \ref{lemma9}. 
\end{proof}

We will conclude this section with a useful remark. 

\begin{remark} \label{Remark7}
Let $\varphi$ be a characteristic satisfying \ref{C1}-\ref{C2}. For $t \in \mathbb{R}$, $u \in \mathcal{I}$ and $k \in \mathbb{N}_{0}$, it will be convenient to identify $\varphi_{u}^{(k)}(t-S(u))$ with its $\mathcal{D}_{u}^{(k-1)}$-measurable version, $\mathbb{E}[\varphi_{u}(t-S(u)) \mid \mathcal{D}_{u}^{(k-1)}]$ (recall Lemma \ref{lemma9}). For simplicity, we will denote this  $\mathcal{D}_{u}^{(k-1)}$-measurable version as $\varphi_{u}^{(k)}(t-S(u))$ as well; the context should prevent any ambiguity.

In the context of Corollary \ref{LemmaCondExpII}, $\varphi^{(0)}_{u_{1}}(t-S(u_{1}))$ is $\mathcal{G}_{0}^{(k)}$-measurable because $\mathcal{D}_{u_{1}}^{(-1)}=\mathcal{G}_{0}^{(k)}$. Similarly, for $i \geq 2$, $\varphi^{(k)}_{u_{i}}(t-S(u_{i}))$ is $\mathcal{G}_{i-1}^{(k)}$-measurable because $\mathcal{D}_{u_{i}}^{(k-1)} \subseteq \mathcal{G}_{i-1}^{(k)}$.

Let $(u_{n})_{n \in \mathbb{N}}$ be any admissible ordering of $\mathcal{I}$ and let $\varphi$ be an $h$-dependent descendant characteristic, for some $h \in \mathbb{N}_{0}$, satisfying \ref{C1}-\ref{C2}. For $i \geq 1$, let $u \in \mathcal{I}$ be such that $u_{i} \preceq u$ and $|u|-|u_{i}|\leq h$. Note that, by \eqref{eq134} and \eqref{eq126}, $\mathcal{I}^{(h+k+|u_{i}| -|u|-1)}_{u} \subseteq \mathcal{I}^{(h)}_{i}$, for $k=0,1$. Set $\mathcal{J}_{u} = \mathcal{I}^{(h)}_{i} \setminus \mathcal{I}^{(h+k+|u_{i}| -|u|-1)}_{u}$ and observe that $\mathcal{J}_{u} \cap u\mathcal{I} = \emptyset$. Then, by \eqref{eq11} and \eqref{eq2}, $\mathcal{D}_{u}^{(h+k+|u_{i}| -|u|-1)} \subseteq \mathcal{G}_{i}^{(h)}$. Therefore, $\varphi_{u}^{(h+k+|u_{i}| -|u|)}(t-S(u))$ is $\mathcal{G}_{i}^{(h)}$-measurable.
\end{remark}

%%%%%%%%%%%%%%%%%%%%%%%%%%%%%%%%%%%%%%%%%%%%%%%%%%%%%%%%%%%%%%%%%%%%%%%
 \subsection{Proof of Proposition \ref{Proposition3}} \label{ProofProposition}
%%%%%%%%%%%%%%%%%%%%%%%%%%%%%%%%%%%%%%%%%%%%%%%%%%%%%%%%%%%%%%%%%%%%%%%

In this section, we prove Proposition \ref{Proposition3}. Beforehand, we establish the following result, analogous to \cite[Lemma 4.1]{Iksanov2021}. Note that, for $k \in \mathbb{N}_{0}$, the collection of sub-$\sigma$-algebras $(\mathcal{G}_{n}^{(k)})_{n \in  \mathbb{N}_{0}}$ of $\mathcal{F}$ defined in \eqref{eq2} forms a filtration. 

\begin{lemma} \label{GeneralLemma}
Suppose that assumptions \ref{A1}-\ref{A7} hold and let $\varphi$ be an $h$-dependent descendant characteristic, for some $h \in \mathbb{N}_{0}$, satisfying \ref{C1}-\ref{C3}. Let $(u_{n})_{n \in \mathbb{N}}$ be any admissible ordering of $\mathcal{I}$. Fix $t \in \mathbb{R}$ and for $k=1,\dots h+1$, define $M_{0}^{(k)}(t) \coloneqq 0$, and for $n \in \mathbb{N}$,
\begin{align} \label{eq21}
M_{n}^{(k)}(t) & \coloneqq \sum_{i=1}^{n} (\varphi_{u_{i}}^{(k)}(t-S(u_{i})) - \mathbb{E}[\varphi_{u_{i}}^{(k)}(t-S(u_{i})) \mid \mathcal{G}^{(k-1)}_{i-1}]).
\end{align}
\noindent Then:
\begin{enumerate}[label=(\textbf{\roman*})]
\item $(M_{n}^{(k)}(t))_{n \in \mathbb{N}_{0}}$ is a well-defined centred martingale with respect to $(\mathcal{G}_{n}^{(k-1)})_{n \in \mathbb{N}_{0}}$. Moreover, for $n \in \mathbb{N}$, \label{Pro1}
\begin{align} 
M^{(k)}_{n}(t) &  \stackrel{a.s.}{=}  (\varphi_{u_{1}}^{(k-1)}(t) - \varphi_{u_{1}}^{(0)}(t)) + \sum_{i=1}^{n} (\varphi_{u_{i}}^{(k)}(t-S(u_{i})) - \varphi_{u_{i}}^{(k-1)}(t-S(u_{i}))).  \label{eq26}
\end{align}

\item The increments of $(M_{n}^{(k)}(t))_{n \in \mathbb{N}_{0}}$ are uncorrelated. \label{Pro2}
\item $(M_{n}^{(k)}(t))_{n \in \mathbb{N}_{0}}$ is bounded in $L^{2}(\Omega, \mathcal{F}, \mathbb{P})$, that is, $\sup_{n \in \mathbb{N}_{0}} \mathbb{E}[(M_{n}^{(k)}(t))^{2}] < \infty$. \label{Pro3}
\item The series \label{Pro4}
\begin{align} \label{eq25}
M^{(k)}(t) \coloneqq (\varphi_{\varnothing}^{(k-1)}(t) - \varphi_{\varnothing}^{(0)}(t)) + \sum_{u \in \mathcal{I}} (\varphi_{u}^{(k)}(t-S(u)) - \varphi_{u}^{(k-1)}(t-S(u))).
\end{align}
\noindent converges unconditionally in $L^{2}(\Omega, \mathcal{F}, \mathbb{P})$ and it is also the almost sure limit  of $(M_{n}^{(k)}(t))_{n \in \mathbb{N}_{0}}$, as $n \rightarrow \infty$. Furthermore, for any deterministic sequence $(\mathcal{I}_{n})_{n \in \mathbb{N}}$ such that $\mathcal{I}_{n} \uparrow \mathcal{I}$, as $n \rightarrow \infty$, we have that 
\begin{align}
M^{(k)}_{\mathcal{I}_{n}}(t) \coloneqq (\varphi_{\varnothing}^{(k-1)}(t) - \varphi_{\varnothing}^{(0)}(t)) + \sum_{u \in \mathcal{I}_{n}} (\varphi_{u_{i}}^{(k)}(t-S(u_{i})) - \varphi_{u_{i}}^{(k-1)}(t-S(u_{i})))
\end{align}
\noindent converges in $L^{2}(\Omega, \mathcal{F}, \mathbb{P})$ towards $M^{(k)}(t)$, as $n \rightarrow \infty$.
\end{enumerate}
\end{lemma}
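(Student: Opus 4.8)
The plan is to realise $(M_n^{(k)}(t))_n$ as a Doob-type martingale attached to the admissible enumeration and to reduce every assertion to the conditional-expectation identities of Section~\ref{CondExpect}.

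\emph{Well-definedness, the martingale property, and \ref{Pro1}.} First I would record that, by Lemma~\ref{lemma3} and Corollary~\ref{corollary1}\,\ref{corollary1Pro1}, each summand $\varphi_{u_i}^{(k)}(t-S(u_i))$ lies in $L^2(\Omega,\mathcal{F},\mathbb{P})\subseteq L^1(\Omega,\mathcal{F},\mathbb{P})$, so the increments and their conditional means are well defined. Measurability follows from Remark~\ref{Remark7}: since $(u_n)$ is admissible, every ancestor of $u_i$ precedes it, whence $\mathcal{D}_{u_i}^{(k-1)}\subseteq\mathcal{G}_i^{(k-1)}$ and $M_n^{(k)}(t)$ is $\mathcal{G}_n^{(k-1)}$-measurable; the $i$-th increment has vanishing $\mathcal{G}_{i-1}^{(k-1)}$-conditional mean by construction, and $\mathbb{E}[M_n^{(k)}(t)]=0$ because $\mathcal{G}_0^{(k-1)}$ is trivial. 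The real work, which I expect to be the main obstacle, is to identify $\mathbb{E}[\varphi_{u_i}^{(k)}(t-S(u_i))\mid\mathcal{G}_{i-1}^{(k-1)}]$. The key bookkeeping observation is that, by admissibility, the only descendants of $u_i$ visible to $\mathcal{G}_{i-1}^{(k-1)}$ are those within distance $k-2$, i.e. $\mathcal{I}^{(k-1)}_{i-1}\cap u_i\mathcal{I}=\mathcal{I}^{(k-2)}_{u_i}$; hence, setting $\mathcal{J}_{u_i}:=\mathcal{I}^{(k-1)}_{i-1}\setminus\mathcal{I}^{(k-2)}_{u_i}$, one has $\mathcal{P}_{u_i}\subseteq\mathcal{J}_{u_i}$, $\mathcal{J}_{u_i}\cap u_i\mathcal{I}=\emptyset$ and $\mathcal{G}_{i-1}^{(k-1)}=\mathcal{C}_{u_i}^{(k-2)}$ in the notation of Lemma~\ref{lemma9}. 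Applying Lemma~\ref{lemma9} at levels $k-1$ and $k-2$ (with this common $\mathcal{J}_{u_i}$), together with the tower property over $\mathcal{C}_{u_i}^{(k-2)}\subseteq\mathcal{C}_{u_i}^{(k-1)}$, yields $\mathbb{E}[\varphi_{u_i}^{(k)}(t-S(u_i))\mid\mathcal{G}_{i-1}^{(k-1)}]\stackrel{a.s.}{=}\varphi_{u_i}^{(k-1)}(t-S(u_i))$ for $i\ge2$, while for $i=1$ triviality of $\mathcal{G}_0^{(k-1)}$ gives $\mathbb{E}[\varphi_{\varnothing}^{(k)}(t)]=\mathbb{E}[\varphi(t)]=\varphi_{\varnothing}^{(0)}(t)$. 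Substituting into \eqref{eq21} and telescoping produces exactly \eqref{eq26}.

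\emph{Orthogonality \ref{Pro2} and boundedness \ref{Pro3}.} Claim \ref{Pro2} is the standard orthogonality of martingale increments: for $i<j$ the $i$-th increment $d_i$ is $\mathcal{G}_{j-1}^{(k-1)}$-measurable and $\mathbb{E}[d_j\mid\mathcal{G}_{j-1}^{(k-1)}]=0$, so $\mathbb{E}[d_id_j]=0$; since each $d_i\in L^2$ and $\mathbb{E}[d_i]=0$, the increments are uncorrelated. For \ref{Pro3}, orthogonality gives $\mathbb{E}[(M_n^{(k)}(t))^2]=\sum_{i=1}^n\mathbb{E}[d_i^2]$, so it suffices to control $\sum_i\mathbb{E}[d_i^2]$. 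By \ref{Pro1}, for $i\ge2$ the increment $d_i=\varphi_{u_i}^{(k)}(t-S(u_i))-\varphi_{u_i}^{(k-1)}(t-S(u_i))$ is one increment of the Doob martingale $m\mapsto\varphi_{u_i}^{(m)}(t-S(u_i))$, hence orthogonal to $\varphi_{u_i}^{(k-1)}-\varphi_{u_i}^{(0)}$, so that $\mathbb{E}[d_i^2]\le\mathbb{E}[(\varphi_{u_i}^{(k)}(t-S(u_i))-\varphi_{u_i}^{(0)}(t-S(u_i)))^2]\le2\,\mathbb{E}[\mathrm{Var}[\varphi](t-S(u_i))]$ by Corollary~\ref{corollary1}\,\ref{corollary1Pro4}; the $i=1$ term is bounded the same way. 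Summing over $i$ (equivalently over $u\in\mathcal{I}$) gives $\sum_i\mathbb{E}[d_i^2]\le2\,m_t^{\mathrm{Var}[\varphi]}$, which is finite by \eqref{eq109} since $\mathrm{Var}[\varphi]$ is a deterministic characteristic with $t\mapsto\mathrm{Var}[\varphi](t)e^{-\alpha t}$ directly Riemann integrable by \ref{C2}.

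\emph{Convergence \ref{Pro4}.} Being $L^2$-bounded, the martingale $(M_n^{(k)}(t))_n$ converges almost surely and in $L^2$, and the telescoping identity \eqref{eq26} identifies the limit with \eqref{eq25}. For unconditional convergence I would note that the intrinsic increments $d_u:=\varphi_u^{(k)}(t-S(u))-\varphi_u^{(k-1)}(t-S(u))$ for $u\neq\varnothing$ (and $d_\varnothing:=\varphi_\varnothing^{(k)}(t)-\varphi_\varnothing^{(0)}(t)$ absorbing the leading term) do not depend on the chosen ordering, and that any two distinct $d_u,d_{u'}$ are orthogonal: choosing any admissible ordering containing both, they occur as increments of the corresponding martingale. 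Thus $\{d_u\}_{u\in\mathcal{I}}$ is an orthogonal family with $\sum_u\|d_u\|_{L^2}^2<\infty$ by \ref{Pro3}, and $\|\sum_{u\in J}d_u\|_{L^2}^2=\sum_{u\in J}\|d_u\|_{L^2}^2$ is arbitrarily small off a large finite set, which is precisely unconditional $L^2$-convergence and pins the common limit to $M^{(k)}(t)$. Finally, for deterministic $\mathcal{I}_n\uparrow\mathcal{I}$, once $\varnothing\in\mathcal{I}_n$ we have $M^{(k)}(t)-M^{(k)}_{\mathcal{I}_n}(t)=\sum_{u\notin\mathcal{I}_n}d_u$, whose squared $L^2$-norm is $\sum_{u\notin\mathcal{I}_n}\|d_u\|_{L^2}^2\to0$, giving the asserted $L^2$-convergence.
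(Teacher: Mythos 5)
Your proposal is correct and follows essentially the same route as the paper: identify $\mathbb{E}[\varphi_{u_i}^{(k)}(t-S(u_i))\mid\mathcal{G}_{i-1}^{(k-1)}]$ with $\varphi_{u_i}^{(k-1)}(t-S(u_i))$ via Lemma \ref{lemma9} (the paper routes this through Corollary \ref{LemmaCondExpII} and the tower property, which amounts to the same choice of $\mathcal{J}_{u_i}$), telescope, use orthogonality of the increments, and bound the quadratic variation by $m_t^{{\rm Var}[\varphi]}<\infty$. Your observation that $\varphi_{u_i}^{(k)}-\varphi_{u_i}^{(k-1)}$ is itself a Doob-martingale increment in $m\mapsto\varphi_{u_i}^{(m)}$ gives a marginally sharper constant ($2$ versus the paper's $8$) but does not change the argument.
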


\begin{proof}
We start with the proof of \ref{Pro1}.  Recall that $u_{1} = \varnothing$ and $S(u_{1}) = 0$. By \eqref{eq27} and \eqref{eq2},
\begin{align} \label{eq36}
\mathbb{E}[\varphi_{u_{1}}^{(k)}(t) \mid \mathcal{G}^{(k-1)}_{0}] \stackrel{a.s.}{=} \mathbb{E}[\varphi_{u_{1}}^{(k)}(t) ] = \mathbb{E}[\varphi(t)] \stackrel{a.s.}{=} \varphi_{u_{1}}^{(0)}(t).
\end{align} 
\noindent On the other hand, by \eqref{eq2}, note that, for every $i \geq 2$, $\mathcal{G}^{(k-1)}_{i-1} \subseteq \mathcal{G}^{(k)}_{i-1}$. Then, by \eqref{eq38} in Corollary \ref{LemmaCondExpII},
\begin{align} \label{eq14}
\mathbb{E}[\varphi_{u_{i}}^{(k)}(t-S(u_{i})) \mid \mathcal{G}^{(k-1)}_{i-1}] & \stackrel{a.s.}{=}  \mathbb{E}[\mathbb{E}[\varphi_{u_{i}}(t-S(u_{i})) \mid  \mathcal{G}^{(k)}_{i-1}] \mid  \mathcal{G}^{(k-1)}_{i-1}] \nonumber \\
& \stackrel{a.s.}{=}  \mathbb{E}[\varphi^{(k)}_{u_{i}}(t-S(u_{i})) \mid  \mathcal{G}^{(k-1)}_{i-1}] \nonumber \\
& \stackrel{a.s.}{=} \varphi_{u_{i}}^{(k-1)}(t-S(u_{i})).
\end{align} 
\noindent Then, by \eqref{eq36}, \eqref{eq14} and Corollary \ref{corollary1} \ref{corollary1Pro1}, $M_{n}^{(k)}(t) \in L^{2}(\Omega, \mathcal{F}, \mathbb{P})$. Clearly, $(M_{n}^{(k)}(t))_{n \in \mathbb{N}_{0}}$ is adapted to $(\mathcal{G}_{n}^{(k-1)})_{n \in \mathbb{N}_{0}}$ (recall Remark \ref{Remark7}). The martingale property follows immediately from the definition of $M_{n}^{(k)}(t)$. Clearly,  $(M_{n}^{(k)}(t))_{n \in \mathbb{N}_{0}}$ is centred. Finally, \eqref{eq36} and \eqref{eq14} imply \eqref{eq26}, which concludes with the proof of \ref{Pro1}. 

We prove \ref{Pro2}. By the Cauchy–Schwarz inequality, for $i,j \in \mathbb{N}$,
\begin{align}
(\varphi_{u_{i}}^{(k)}(t-S(u_{i})) - \mathbb{E}[\varphi_{u_{i}}^{(k)}(t-S(u_{i})) \mid \mathcal{G}^{(k-1)}_{i-1}])(\varphi_{u_{j}}^{(k)}(t-S(u_{j})) - \mathbb{E}[\varphi_{u_{j}}^{(k)}(t-S(u_{j})) \mid \mathcal{G}^{(k-1)}_{j-1}])
\end{align}
\noindent is in $L^{1}(\Omega, \mathcal{F}, \mathbb{P})$. For $i,j \in \mathbb{N}$ such that $i < j$ and $t \in \mathbb{R}$, we have that $\varphi_{u_{i}}^{(k)}(t-S(u_{i}))$ and $\mathbb{E}[\varphi_{u_{i}}^{(k)}(t-S(u_{i})) \mid \mathcal{G}^{(k-1)}_{i-1}]$ are $\mathcal{G}_{j-1}^{(k-1)}$-measurable (recall Remark \ref{Remark7}). Then, \ref{Pro2} follows by properties of the conditional expectation.

We continue with the proof of \ref{Pro3}. Note that $M_{0}^{(k)}
(t) = 0$. Moreover, by \ref{Pro2}, 
\begin{align}  \label{eq48}
\mathbb{E}[(M_{n}^{(k)}(t))^{2}]  & = \sum_{i =1}^{n} \mathbb{E}[(\varphi_{u_{i}}^{(k)}(t-S(u_{i})) - \mathbb{E}[\varphi_{u_{i}}^{(k)}(t-S(u_{i})) \mid \mathcal{G}^{(k-1)}_{i-1}])^{2}].
\end{align}
\noindent Note that,  by \eqref{eq27}, \eqref{eq2}, \eqref{eq36} and Jensen’s inequality,
\begin{align} \label{eq49}
\mathbb{E}[(\varphi_{u_{1}}^{(k)}(t) - \mathbb{E}[\varphi_{u_{1}}^{(k)}(t) \mid \mathcal{G}^{(k-1)}_{0}])^{2}] & = \mathbb{E}[(\mathbb{E}[\varphi(t) \mid \mathcal{G}^{(k)}]  - \mathbb{E}[\varphi(t)])^{2}] \nonumber \\
& \leq \mathbb{E}[\mathbb{E}[(\varphi(t) - \mathbb{E}[\varphi(t)])^{2} \mid \mathcal{G}^{(k)}]] \nonumber \\
& = {\rm Var}(\varphi(t)).
\end{align}
\noindent Recall that $(x+y)^{2} \leq 2 (x^{2}+y^{2})$, for $x,y \in \mathbb{R}$. Then, for $i \geq 2$, by \eqref{eq14} and Corollary \ref{corollary1} \ref{corollary1Pro4},
\begin{align} \label{eq51}
& \mathbb{E}[(\varphi_{u_{i}}^{(k)}(t-S(u_{i})) - \mathbb{E}[\varphi_{u_{i}}^{(k)}(t-S(u_{i})) \mid \mathcal{G}^{(k-1)}_{i-1}])^{2}] \nonumber \\
& \quad \quad  = \mathbb{E}[ (\varphi_{u_{i}}^{(k)}(t-S(u_{i})) - \varphi_{u_{i}}^{(k-1)}(t-S(u_{i}))^{2}] \nonumber \\
& \quad \quad \leq 2 \mathbb{E}[ (\varphi_{u_{i}}^{(k)}(t-S(u_{i})) - \varphi_{u_{i}}^{(0)}(t-S(u_{i})) )^{2}] + 2 \mathbb{E}[ ( \varphi_{u_{i}}^{(k-1)}(t-S(u_{i})) -\varphi_{u_{i}}^{(0)}(t-S(u_{i}))  )^{2}]\nonumber \\
& \quad \quad \leq 8\mathbb{E}[ {\rm Var}[\varphi](t-S(u_{i}))].
\end{align}
\noindent Therefore, by combining \eqref{eq48}, \eqref{eq49} and \eqref{eq51}, we conclude that, for $n \in \mathbb{N}$,
\begin{align} \label{eq22}
\mathbb{E}[(M_{n}^{(k)}(t))^{2}]  \leq 8 \mathbb{E}\left[ \sum_{i =1}^{n} {\rm Var}[\varphi](t-S(u_{i})) \right] \leq 8\mathbb{E}[\mathcal{Z}_{t}^{{\rm Var}[\varphi]}]<\infty.
\end{align}
\noindent Recall that by \ref{C3} the variance function ${\rm Var}[\varphi]$ is c\`adl\`ag. Furthermore, \ref{C2} implies that ${\rm Var}[\varphi]$ satisfies \ref{C1}, which in turn, by \eqref{eq109}, ensures $\mathbb{E}[\mathcal{Z}_{t}^{{\rm Var}[\varphi]}]<\infty$, for $t \in \mathbb{R}$. Therefore, \eqref{eq22} implies \ref{Pro3}.  

Finally, we prove \ref{Pro4}. By \ref{Pro3} and well-known results in martingale theory, the martingale $(M_{n}^{(k)}(t))_{n \in \mathbb{N}_{0}}$ converges in $L^{2}(\Omega, \mathcal{F}, \mathbb{P})$ and almost surely. Then, for any subset $\mathcal{J} \subseteq \mathbb{N}$, finite or infinite, since the martingale increments are uncorrelated, we deduce that
\begin{align} \label{eq12}
\mathbb{E} \left[ \left( \sum_{i \in \mathcal{J}} (\varphi_{u_{i}}^{(k)}(t-S(u_{i})) - \mathbb{E}[\varphi_{u_{i}}^{(k)}(t-S(u_{i})) \mid \mathcal{G}^{(k-1)}_{i-1}]) \right)^{2} \right]  \leq 4 \mathbb{E} \left[  \sum_{i \in \mathcal{J}} {\rm Var}[\varphi](t-S(u_{i}))  \right].
\end{align}
\noindent Note also that for every $u \in \mathcal{I}$, there exists $i \in \mathbb{N}$ such that $u_{i}=u$. From this, \eqref{eq12} and the Cauchy criterion, on the one hand, we deduce the unconditional convergence in $L^{2}(\Omega, \mathcal{F}, \mathbb{P})$ of the series in \eqref{eq25}, thereby justifying to write $M^{(k)}(t)$ for the limit of $(M_{n}^{(k)}(t))_{n \in \mathbb{N}_{0}}$. On the other hand, we also conclude the convergence of $M^{(k)}_{\mathcal{I}_{n}}(t)$ to $M^{(k)}(t)$ in $L^{2}(\Omega, \mathcal{F}, \mathbb{P})$. This finishes the proof of \ref{Pro4}.
\end{proof}

We are now ready to prove Proposition \ref{Proposition3}.

\begin{proof}[Proof of Proposition \ref{Proposition3}]
By \ref{C1}, $\mathbb{E}[\varphi](t)$ is finite for every $t \in \mathbb{R}$, then, for every $u \in \mathcal{I}$,
\begin{align}
\varphi_{u}(t-S(u)) = \mathbb{E}[\varphi](t-S(u)) + (\varphi_{u}(t-S(u)) -\mathbb{E}[\varphi](t-S(u))). 
\end{align}
\noindent To conclude, it suffices to prove that both series
\begin{align} \label{eq20}
\sum_{u \in \mathcal{I}} \mathbb{E}[\varphi](t-S(u)) \quad \text{and} \quad \sum_{u \in \mathcal{I}} (\varphi_{u}(t-S(u)) -\mathbb{E}[\varphi](t-S(u)))
\end{align}
\noindent converge unconditionally in $L^{1}(\Omega, \mathcal{F}, \mathbb{P})$ and almost surely over every admissible ordering of $\mathcal{I}$. For the first series, note that by \ref{C1}, the function $t \in \mathbb{R} \rightarrow |\mathbb{E}[\varphi](t)| e^{- \alpha t}$ is directly Riemann integrable. Then, by \eqref{eq109} (recall also Section \ref{ExpSection}), the first series converges  unconditionally in $L^{1}(\Omega, \mathcal{F}, \mathbb{P})$ and absolutely almost surely over every admissible ordering of $\mathcal{I}$. 

Next, observe that, for $u \in \mathcal{I}$,
\begin{align}
\varphi_{u}(t-S(u)) -\mathbb{E}[\varphi](t-S(u)) = \sum_{k=1}^{h+1} (\varphi_{u}^{(k)}(t-S(u)) -\varphi_{u}^{(k-1)}(t-S(u))).
\end{align}
\noindent Then, by \eqref{eq25},
\begin{align} \label{eq23} 
\sum_{u \in \mathcal{I}} (\varphi_{u}(t-S(u)) -\mathbb{E}[\varphi](t-S(u))) = \sum_{k=1}^{h+1} M^{(k)}(t) - \sum_{k=1}^{h+1} (\varphi_{\varnothing}^{(k-1)}(t) - \varphi_{\varnothing}^{(0)}(t)).
\end{align}
\noindent Therefore, Corollary \ref{corollary1} \ref{corollary1Pro1} and Lemma \ref{GeneralLemma} \ref{Pro4} imply that the second series in \eqref{eq20} converges unconditionally in $L^{2}(\Omega, \mathcal{F}, \mathbb{P})$ and almost surely over every admissible ordering of $\mathcal{I}$.
\end{proof}

%%%%%%%%%%%%%%%%%%%%%%%%%%%%%%%%%%%%%%%%%%%%%%%%%%%%%%%%%%%%%%%%%%%%%%%
\subsection{Properties of the characteristic $\chi^{(\varphi, h)}$}  \label{MoreAsymptotic}
%%%%%%%%%%%%%%%%%%%%%%%%%%%%%%%%%%%%%%%%%%%%%%%%%%%%%%%%%%%%%%%%%%%%%%%

We dedicate this section to establishing some properties of the random function $\chi^{(\varphi, h)}$ defined in \eqref{eq53}. In particular, we will prove that it is a well-defined characteristic. We begin with an auxiliary result. Recall the definition of $\varphi^{(k)}$, for $k \in \mathbb{N}_{0}$, in \eqref{eq27}. 

\begin{lemma} \label{lemma4}
Suppose that assumptions \ref{A1}-\ref{A7} hold and let $\varphi$ be an $h$-dependent descendant characteristic, for some $h \in \mathbb{N}_{0}$, satisfying \ref{C1}-\ref{C2}. Then, we have the following:
\begin{enumerate}[label=(\textbf{\roman*})]
\item \label{lemma4Pro1} For every $t \in \mathbb{R}$, $n \in \mathbb{N}_{0}$ and $k=0, \dots, h+1$, 
\begin{align} \label{eq99}
\sum_{|u|=n} |\varphi_{u}^{(k)}(t-S(u))| \in L^{2}(\Omega, \mathcal{F}, \mathbb{P}). 
\end{align}

\item \label{lemma4Pro2} Suppose that $\varphi$ also satisfies \ref{C5} and let $\varepsilon >0$ be as defined in \ref{C5}. Then, for any $t \in \mathbb{R}$, $k=0, \dots, h+1$ and $i=0, \dots, h$, 
\begin{align} \label{eq139}
\left( \left(\sum_{|u|=i} |\varphi_{u}^{(k)}(s-S(u))| \right)^{2} \right)_{|s-t| \leq \varepsilon} \quad \text{is uniformly integrable}.
\end{align}

\item \label{lemma4Pro3}  Suppose that $\varphi$ also satisfies \ref{C4}. Then, for $i=0, \dots, h$ and $k=0, \dots, h+1$, 
\begin{align}
t \in \mathbb{R} \mapsto  \sum_{|u|=i} \varphi_{u}^{(k)}(t-S(u))
\end{align}
\noindent almost surely has c\`adl\`ag  paths. 
\end{enumerate}
\end{lemma}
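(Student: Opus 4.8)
The plan is to handle the three parts in turn, reducing everything to the second-moment estimates for the Biggins martingale, \eqref{eq156} and \eqref{eq152}, together with the conditional-expectation bounds collected in Corollary \ref{corollary1}. For part \ref{lemma4Pro1}, I would expand the square and condition on $\mathcal{A}^{(n-1)}$. Writing $\big(\sum_{|u|=n}|\varphi_u^{(k)}(t-S(u))|\big)^2$ as the diagonal sum of $(\varphi_u^{(k)}(t-S(u)))^2$ plus the off-diagonal products, I apply Corollary \ref{corollary1}\ref{corollary1Pro2} to the former and Corollary \ref{corollary1}\ref{corollary1Pro3} to the latter. With $a=t-S(u)$ one has the elementary bounds $e^{\alpha\max(a,2a)}\le e^{\alpha a}+e^{2\alpha a}$ and $e^{(\alpha/2)\max(a,2a)}\le e^{(\alpha/2)a}+e^{\alpha a}$, so the conditional second moment is dominated by $C\sum_{|u|=n}\big(e^{\alpha(t-S(u))}+e^{2\alpha(t-S(u))}\big)$ plus $C\big(\sum_{|u|=n}(e^{(\alpha/2)(t-S(u))}+e^{\alpha(t-S(u))})\big)^2$. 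Taking expectations, the diagonal contribution is finite by \eqref{eq156} (valid for any exponent $\ge\vartheta$, in particular $\alpha$ and $2\alpha$), and the squared contribution is finite by \eqref{eq152} and Cauchy--Schwarz, since both exponents $\alpha/2$ and $\alpha$ lie in $[\vartheta,\alpha]$ (recall $\vartheta\in(0,\alpha/2)$). This gives $\sum_{|u|=n}|\varphi_u^{(k)}(t-S(u))|\in L^2$.

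For part \ref{lemma4Pro2}, the crucial point is that, by Corollary \ref{corollary1}\ref{corollary1Pro6}, every term $\varphi_u^{(k)}(s-S(u))=\mathbb{E}[\varphi_u(s-S(u))\mid\mathcal{A}^{(i+k-1)}]$ with $|u|=i$ is a conditional expectation with respect to the \emph{same} $\sigma$-algebra $\mathcal{A}^{(i+k-1)}$. Hence, by the triangle inequality together with conditional Jensen, $\big(\sum_{|u|=i}|\varphi_u^{(k)}(s-S(u))|\big)^2\le \mathbb{E}\big[\big(\sum_{|u|=i}|\varphi_u(s-S(u))|\big)^2\mid\mathcal{A}^{(i+k-1)}\big]$. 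Assumption \ref{C5} makes $\big(\big(\sum_{|u|=i}|\varphi_u(s-S(u))|\big)^2\big)_{|s-t|\le\varepsilon}$ uniformly integrable (a single non-negative summand of a uniformly integrable sum of non-negative terms is itself uniformly integrable), and conditional expectation preserves uniform integrability (by de la Vallée-Poussin and conditional Jensen). Thus the dominating family on the right is uniformly integrable, and since the left-hand family is non-negative and dominated by it, it is uniformly integrable as well; the integrability needed for this to make sense is exactly part \ref{lemma4Pro1}.

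For part \ref{lemma4Pro3}, I would fix $t_0$, take the $\varepsilon>0$ furnished by \ref{C4}, and on $J=[t_0-\varepsilon,t_0+\varepsilon]$ split the sum according to whether $S(u)\le t_0+\varepsilon$ or $S(u)>t_0+\varepsilon$. The first index set is almost surely finite (Remark \ref{Remark3}), so its contribution is a finite sum of the càdlàg paths $s\mapsto\varphi_u^{(k)}(s-S(u))$ (càdlàg under \ref{C3}, as noted after \eqref{eq27}) and is therefore càdlàg. For the tail, every $s\in J$ gives $s-S(u)<0$, so the summand equals $\varphi_u^{(k)}(s-S(u))\mathbf{1}_{(-\infty,0)}(s-S(u))$; using Corollary \ref{corollary1}\ref{corollary1Pro6} and that $S(u)$ is $\mathcal{A}^{(|u|+k-1)}$-measurable, one obtains for each fixed $s$ the pointwise bound $|\varphi_u^{(k)}(s-S(u))|\,\mathbf{1}_{(-\infty,0)}(s-S(u))\le \mathbb{E}[\Psi_u\mid\mathcal{A}^{(|u|+k-1)}]$, where $\Psi_u:=\sup_{s\in J}\big(|\varphi_u(s-S(u))|\,\mathbf{1}_{(-\infty,0)}(s-S(u))\big)$ does not depend on $s$. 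Because the dominating bound is $s$-independent, I can take the supremum over a countable dense set of $s$ (legitimate by right-continuity) without interchanging $\sup$ and conditional expectation, then sum over $|u|=i$ and apply the tower property and \ref{C4} to get $\mathbb{E}\big[\sum_{|u|=i}\sup_{s\in J}\big(|\varphi_u^{(k)}(s-S(u))|\,\mathbf{1}_{(-\infty,0)}(s-S(u))\big)\big]<\infty$. Hence the tail series converges uniformly on $J$ almost surely, so its limit is càdlàg; combined with the finite part this gives the càdlàg property on a neighbourhood of $t_0$, and a standard covering argument over a countable set of base points upgrades it to càdlàg on all of $\mathbb{R}$.

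I expect part \ref{lemma4Pro3} to be the main obstacle. The delicate points are transferring \ref{C4} from $\varphi$ to $\varphi^{(k)}$ and the uniform convergence of an infinite series of càdlàg functions; the trick that makes it go through is that the dominating random variable $\mathbb{E}[\Psi_u\mid\mathcal{A}^{(|u|+k-1)}]$ is constant in $s$, which lets me bypass any $\sup$/conditional-expectation interchange and reduce the supremum over $J$ to a countable supremum via right-continuity. I would also take care with the global patching, fixing the almost-sure null set by running the covering argument over a countable family of base points $t_0$.
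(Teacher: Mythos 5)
Your proposal is correct and follows essentially the same route as the paper: part \ref{lemma4Pro1} via conditioning on $\mathcal{A}^{(n-1)}$ and the bounds of Corollary \ref{corollary1} \ref{corollary1Pro2}--\ref{corollary1Pro3} together with the Biggins-martingale moment bounds \eqref{eq156}/\eqref{eq152}; part \ref{lemma4Pro2} via the conditional-Jensen domination by $\mathbb{E}\bigl[\bigl(\sum_{|u|=i}|\varphi_u(s-S(u))|\bigr)^2\mid\mathcal{A}^{(i+k-1)}\bigr]$ and preservation of uniform integrability under conditional expectation (the paper's Lemma \ref{lemma14}); and part \ref{lemma4Pro3} via the split into an a.s.\ finite ``born'' part and a negative-argument tail controlled by \ref{C4} and the Weierstrass M-test. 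Your treatment of part \ref{lemma4Pro3} is in fact slightly more careful than the paper's, since the $s$-independent dominating variable $\mathbb{E}[\Psi_u\mid\mathcal{A}^{(|u|+k-1)}]$ together with the countable-dense-set reduction explicitly justifies the interchange of the supremum with the conditional expectation that the paper only alludes to.
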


\begin{proof}
First, we prove \ref{lemma4Pro1}. Let $(\mathcal{A}^{(n)})_{n \geq -1}$ be the collection $\sigma$-algebras  defined in \eqref{eq135}. For $n \in\mathbb{N}_{0}$ and $k=0, \dots, h+1$, the triangle inequality, Corollary \ref{corollary1} \ref{corollary1Pro2}-\ref{corollary1Pro3} and \eqref{eq152} imply that there exists a constant $C>0$ such that
\begin{align} \label{eq50} 
\mathbb{E} \left[ \left( \sum_{ |u|=n} |\varphi_{u}^{(k)}(t-S(u))| \right)^{2} \right] & \leq  \mathbb{E} \left[  \sum_{|v|=n} \sum_{|u|=n} |\varphi_{u}^{(k)}(t-S(u))| |\varphi_{v}^{(k)}(t-S(v))|  \right] \nonumber \\ 
& = \mathbb{E} \left[\sum_{ |v|=n} \sum_{ |u|=n} \mathbb{E} \left[ |\varphi_{u}^{(k)}(t-S(u))| |\varphi_{v}^{(k)}(t-S(v))|  \mid \mathcal{A}^{(n-1)} \right] \right] \nonumber \\ 
&  \leq  C \mathbb{E} \left[\sum_{ |v|=n} \sum_{ |u|=n} \prod_{w=u,v} e^{\frac{\alpha}{2} \max(t-S(w), 2(t-S(w)))} \right] \nonumber \\
&  \leq Ce^{\alpha\max(t,t/2)} \mathbb{E} \left[ \left( \sum_{|u|=n} e^{-\frac{\alpha}{2} S(u)} \right)^{2} \right] < \infty. 
% & = C e^{\alpha\max(t,t/2)} \mathbb{E}[Z^{2}_{n}] < \infty. 
\end{align}
\noindent Therefore, \eqref{eq50} implies \ref{lemma4Pro1}. 

Next, we prove \ref{lemma4Pro2}. Consider $i= 0, \dots, h$ and $k=0, \dots, h+1$. By \cite[Theorem 2 (f), Section 7, Chapter II]{Shiryaev1996} and Corollary \ref{corollary1} \ref{corollary1Pro6}, we have that almost surely,
\begin{align} \label{eq140}
\mathbb{E} \left[ \sum_{|u|=i} |\varphi_{u}(t-S(u))| \mid \mathcal{A}^{(i+k-1)} \right]  =  \sum_{|u|=i} \mathbb{E} [ |\varphi_{u}(t-S(u))| \mid \mathcal{A}^{(i+k-1)}]  \geq \sum_{|u|=i}   |\varphi_{u}^{(k)}(t-S(u))|.
\end{align}
\noindent Thus, by \eqref{eq140} and Jensen's inequality, we have that almost surely,
\begin{align}
\left( \sum_{|u|=i}  | \varphi_{u}^{(k)}(t-S(u))| \right)^{2}  \leq  \mathbb{E} \left[ \left(\sum_{|u|=i} |\varphi_{u}(t-S(u))| \right)^{2}  \, \, \Big | \mathcal{A}^{(i+k-1)} \right].
\end{align}
\noindent Therefore, \ref{lemma4Pro2} follows from \ref{C5} and Lemma \ref{lemma14}.

Finally, we prove \ref{lemma4Pro3}. Consider $i=0, \dots, h$ and $k=0, \dots, h+1$. Recall that, by \ref{C3}, $\varphi^{(k)}$ has almost surely c\`adl\`ag paths. Then, 
\begin{align} \label{eq179}
t \in \mathbb{R} \mapsto  \sum_{|u|=i} \varphi_{u}^{(k)}(t-S(u)) \mathbf{1}_{[0, \infty)}(t-S(u))
\end{align}
\noindent has almost surely c\`adl\`ag paths. To see this, recall that assumptions \ref{A1}-\ref{A4} imply the total number of births up to time $t \in \mathbb{R}$ is finite almost surely (see Remark \ref{Remark3}). Thus, since the sum in \eqref{eq179} almost surely has only finitely many non-zero summands, it follows that it is a finite sum of c\`adl\`ag functions.

To prove our claim in \ref{lemma4Pro3}, it remains only to show that
\begin{align} \label{eq180}
t \in \mathbb{R} \mapsto  \sum_{|u|=i} \varphi_{u}^{(k)}(t-S(u)) \mathbf{1}_{ (-\infty,0)}(t-S(u))
\end{align}
\noindent has almost surely c\`adl\`ag paths. It follows from \ref{C4}, Corollary \ref{corollary1} \ref{corollary1Pro6} and properties of the conditional expectation that for any $t \in \mathbb{R}$ there exists $\varepsilon >0$ such that  
\begin{align} 
\mathbb{E} \left[  \sum_{|u|=i} \sup_{|s-t|\leq \varepsilon}|\varphi_{u}^{(k)}(s-S(u))\mathbf{1}_{(-\infty,0)}(s-S(u))| \right] < \infty. 
\end{align}
\noindent In particular, for any $t \in \mathbb{R}$ there exists $\varepsilon >0$ such that  
\begin{align} 
 \sum_{|u|=i} \sup_{|s-t|\leq \varepsilon}|\varphi_{u}^{(k)}(s-S(u))\mathbf{1}_{(-\infty,0)}(s-S(u))|  < \infty, \quad \text{almost surely}. 
\end{align}
\noindent Thus, by the Weierstrass M-test (see for e.g., \cite[Theorem 9.6]{Apostol1974}), for every $t \in \mathbb{R}$, the series 
\begin{align}
\sum_{|u|=i} \varphi_{u}^{(k)}(t-S(u)) \mathbf{1}_{ (-\infty,0)}(t-S(u)) 
\end{align}
\noindent converges uniformly on $[t-\varepsilon, t+\varepsilon]$ almost surely. Thus, it is a straightforward exercise to show that this implies the function in \eqref{eq180} is almost surely c\`adl\`ag.

This concludes our proof of \ref{lemma4Pro3}. 
\end{proof}

\begin{remark} \label{Remark5} 
Suppose that assumptions \ref{A1}-\ref{A7} hold and let $\phi$ be an $h$-dependent descendant characteristic, for some $h \in \mathbb{N}_{0}$, satisfying \ref{C1}-\ref{C2}. By Lemma \ref{lemma4} \ref{lemma4Pro1}, the series 
\begin{align} 
\sum_{|u|=n} \varphi_{u}^{(k)}(t-S(u)) 
\end{align}
\noindent converges absolutely almost surely, for every $n \in \mathbb{N}_{0}$ and $k=0, \dots, h+1$. Then, by \eqref{eq53}, we have that
\begin{align} \label{eq177}
\chi^{(\varphi, h)}(t) =  \sum_{i =0}^{h}\sum_{|u| = i} (\varphi_{u}^{(h+1-i)}(t-S(u)) - \varphi_{u}^{(h-i)}(t-S(u))), \quad \text{for} \quad  t \in \mathbb{R}.
\end{align}
\noindent In particular, $|\chi^{(\varphi, h)}(t)|<\infty$ almost surely. 
\end{remark}

\begin{lemma} \label{lemma16}
Suppose that assumptions \ref{A1}-\ref{A4} hold and let $\varphi$ be a characteristic satisfying \ref{C2}-\ref{C3}. Then, for every $h \in \mathbb{N}_{0}$, the function 
\begin{align} \label{eq103}
t\in \mathbb{R} \mapsto e^{-\alpha t} \int_{[0, \infty)} {\rm Var}[\varphi](t-x) \nu^{(h)}({\rm d} x)
\end{align}
\noindent is directly Riemman integrable, where $\nu^{(h)}$ is defined in \eqref{eq62}.
\end{lemma}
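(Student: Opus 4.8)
The plan is to recognise the stated function as a convolution of a directly Riemann integrable (dRi) function with a \emph{finite} measure, and then to prove the general principle that such a convolution is again dRi. First I would rewrite, for $t \in \mathbb{R}$,
\begin{align*}
e^{-\alpha t}\int_{[0,\infty)}{\rm Var}[\varphi](t-x)\,\nu^{(h)}({\rm d}x) = \int_{[0,\infty)} g(t-x)\,\lambda_{h}({\rm d}x),
\end{align*}
where $g(s) \coloneqq {\rm Var}[\varphi](s)\,e^{-\alpha s}$ and $\lambda_{h}({\rm d}x) \coloneqq e^{-\alpha x}\nu^{(h)}({\rm d}x)$, using $e^{-\alpha t} = e^{-\alpha(t-x)}e^{-\alpha x}$. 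By the computation in \eqref{eq123} restricted to finitely many terms, $\lambda_{h} = \sum_{i=0}^{h}\mu_{\alpha}^{\ast i}$, which is a non-negative \emph{finite} measure on $[0,\infty)$ of total mass $h+1$, since $\mu_{\alpha}$ is a probability measure and hence so is each $\mu_{\alpha}^{\ast i}$. By \ref{C2} the function $g$ is dRi and non-negative, and by \ref{C3} the variance function ${\rm Var}[\varphi]$, hence $g$, is c\`adl\`ag; in particular $g$ is bounded, say $0 \le g \le B$, continuous Lebesgue-almost everywhere, and satisfies $\overline{\sigma}_{g}(\delta) \coloneqq \delta\sum_{n\in\mathbb{Z}}\sup_{s \in [n\delta,(n+1)\delta)} g(s) < \infty$ for every $\delta > 0$.

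Second, I would prove the key sub-statement: if $z \ge 0$ is dRi and $F$ is a finite measure on $\mathbb{R}$, then $w \coloneqq z \ast F$ is dRi. Two ingredients are needed. \emph{(i) A bound on the upper Riemann sums.} For a grid cell $I_{n} = [n\delta,(n+1)\delta)$ and fixed $x$, the shifted cell $\{t-x : t \in I_n\}$ is contained in the union of the two consecutive cells $I_{n-j-1}, I_{n-j}$ with $j = \lfloor x/\delta\rfloor$, so $\sup_{t \in I_{n}} z(t-x) \le \sup_{I_{n-j-1}} z + \sup_{I_{n-j}} z$; integrating against $F$, interchanging sum and integral by Tonelli, and reindexing gives $\overline{\sigma}_{w}(\delta) \le 2\,F(\mathbb{R})\,\overline{\sigma}_{z}(\delta) < \infty$. \emph{(ii) Almost-everywhere continuity of $w$.} If $D$ denotes the Lebesgue-null discontinuity set of $z$, then bounded convergence (using $z \le \sup z < \infty$ and $F$ finite) shows that $w$ is continuous at every $t$ with $F(t-D)=0$; and since $\int_{\mathbb{R}}F(t-D)\,{\rm d}t = F(\mathbb{R})\,{\rm Leb}(D)=0$ by Tonelli, we have $F(t-D)=0$ for Lebesgue-a.e.\ $t$, i.e.\ $w$ is a.e.\ continuous.

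Finally, I would conclude. A non-negative, a.e.\ continuous function with $\overline{\sigma}(\delta)<\infty$ for some $\delta>0$ is dRi: the a.e.\ continuity forces $\overline{\sigma}(\delta)-\underline{\sigma}(\delta)\to 0$ as $\delta \to 0$, with $\overline{\sigma}(\delta_0)<\infty$ supplying the domination in the limit (see \cite[Proposition 4.1, Chapter V]{Soren2003} and \cite[Remark 3.10.5, p.\ 237]{Sidney1992}). Applying this to $w$ shows $z \ast F$ is dRi, and taking $z = g$, $F = \lambda_{h}$ (so that $F(\mathbb{R}) = h+1$) yields the claim. The main obstacle is the sub-statement, and specifically ingredient (i): one must control the oscillation produced by convolving against a measure that need not be absolutely continuous, which is exactly what forces the two-cell covering argument and the resulting factor $2$; ingredient (ii) is then a routine Tonelli and bounded-convergence argument, and the reduction in the first paragraph is where the exponential weight $e^{-\alpha x}$ is crucially used to turn the (possibly non-finite) renewal-type measure $\nu^{(h)}$ into the finite measure $\lambda_h$.
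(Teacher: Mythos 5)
Your proposal is correct and follows essentially the same route as the paper: after factoring $e^{-\alpha t}=e^{-\alpha(t-x)}e^{-\alpha x}$ to reduce to convolving $g(s)={\rm Var}[\varphi](s)e^{-\alpha s}$ with the finite measure $e^{-\alpha x}\nu^{(h)}({\rm d}x)$ of mass $h+1$, the paper also bounds the upper Riemann sums via Tonelli and the same two-cell covering argument (with the same factor $2$), and concludes with a.e.\ continuity and the identical dRi criterion from \cite{Soren2003} and \cite{Sidney1992}. The only cosmetic difference is that you package the argument as a general ``dRi convolved with a finite measure is dRi'' lemma and obtain a.e.\ continuity via the null discontinuity set, whereas the paper argues the c\`adl\`ag property of the convolution directly by dominated convergence.
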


The proof of Lemma \ref{lemma16} is postponed to Appendix \ref{Append1}. We now present some key properties of $\chi^{(\varphi, h)}$.

\begin{lemma} \label{lemma6} 
Suppose that assumptions \ref{A1}-\ref{A7} hold and  let $\varphi$ be an $h$-dependent descendant characteristic, for some $h \in \mathbb{N}_{0}$, satisfying \ref{C1}-\ref{C2}. Then, we have the following:
\begin{enumerate}[label=(\textbf{\roman*})]
\item \label{lemma6Pro1} For every $t \in \mathbb{R}$, $\chi^{(\varphi, h)}(t) \in L^{2}(\Omega, \mathcal{F}, \mathbb{P})$. 
\setcounter{Cond3}{\value{enumi}}
\end{enumerate}

\noindent Assume further that \ref{C4}-\ref{C5} holds. Then, we also have the following:
\begin{enumerate}[label=(\textbf{\roman*})]
\setcounter{enumi}{\value{Cond3}}
\item $\chi^{(\varphi,h)}$ almost surely has c\`adl\`ag paths. \label{lemma6Pro1b}

\item $\chi^{(\varphi,h)}$  satisfies \ref{C1}-\ref{C3}.  \label{lemma6Pro4}

\item \label{lemma6Pro4b} Let $(u_{n})_{n \in \mathbb{N}}$ be any admissible ordering of $\mathcal{I}$ and recall that $\mathcal{I}_{n} = \{u_{1}, \dots, u_{n}\}$, for $n \in \mathbb{N}$. For $n \in \mathbb{N}$ and $t \in \mathbb{R}$, define
\begin{align} \label{eq32}
Y_{n}^{\varphi}(t) \coloneqq \sum_{u \in \mathcal{I}_{n}} \chi^{(\varphi,h)}_{u}(t-S(u)) = \sum_{i=1}^{n} \chi^{(\varphi,h)}_{u_{i}}(t-S(u_{i})). 
\end{align}
\noindent Then, for every $t \in \mathbb{R}$, as $n \rightarrow \infty$, $Y_{n}^{\varphi}(t) \rightarrow  \mathcal{Z}_{t}^{\chi^{(\varphi,h)}}$, in $L^{1}(\Omega, \mathcal{F}, \mathbb{P})$.

\item For every $t \in \mathbb{R}$,  $\mathbb{E}[\mathcal{Z}_{t}^{(\chi^{(\varphi,h)})^{2}}] < \infty$.
\label{lemma6Pro2}

\item For all $t \in \mathbb{R}$ and any admissible order $(u_{n})_{n \in \mathbb{N}}$ of $\mathcal{I}$, as $n \rightarrow \infty$, \label{lemma6Pro3}
\begin{align} 
\sum_{i=n+1}^{\infty} (\chi^{(\varphi,h)}_{u_{i}}(t-S(u_{i})))^{2} \rightarrow 0, \quad \text{in} \quad L^{1}(\Omega, \mathcal{F}, \mathbb{P}).
\end{align}

\item For $t \in \mathbb{R}$, \label{lemma6Pro5}
\begin{align} \label{eq82}
\mathcal{Z}_{t}^{\varphi - \mathbb{E}[\varphi]} \stackrel{a.s.}{=} \mathcal{Z}_{t}^{\chi^{(\varphi,h)}} +  \sum_{0 \leq |u| \leq h-1}  ( \varphi_{u}^{(h-|u|)}(t-S(u)) - \varphi_{u}^{(0)}(t-S(u))).
\end{align}
\noindent (If $h=0$, the sum  $\sum_{0 \leq |u| \leq h-1}$ on the right-hand side of \eqref{eq82} is defined to be $0$.) 
\end{enumerate}
\end{lemma}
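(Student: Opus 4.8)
The plan is to prove the seven assertions in the order stated, each building on the previous. For \ref{lemma6Pro1}, I would use the finite-sum representation of $\chi^{(\varphi,h)}$ in Remark \ref{Remark5} to bound $|\chi^{(\varphi,h)}(t)| \le \sum_{i=0}^{h}\big(\sum_{|u|=i}|\varphi_u^{(h+1-i)}(t-S(u))| + \sum_{|u|=i}|\varphi_u^{(h-i)}(t-S(u))|\big)$; since each inner sum lies in $L^2$ by Lemma \ref{lemma4}\ref{lemma4Pro1}, the dominating function is in $L^2$, and hence so is $\chi^{(\varphi,h)}(t)$. For \ref{lemma6Pro1b}, the same representation exhibits $\chi^{(\varphi,h)}$ as a finite linear combination of the maps $t \mapsto \sum_{|u|=i}\varphi_u^{(k)}(t-S(u))$, each càdlàg a.s.\ by Lemma \ref{lemma4}\ref{lemma4Pro3} (this is where \ref{C4} is used), and a finite combination of càdlàg paths is càdlàg.

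Assertion \ref{lemma6Pro4} is the technical core. I would verify \ref{C3} first: $|\chi^{(\varphi,h)}(s)|^2$ is dominated by a finite combination of the squares $(\sum_{|u|=i}|\varphi_u^{(k)}(s-S(u))|)^2$, which form uniformly integrable families over $|s-t|\le\varepsilon$ by Lemma \ref{lemma4}\ref{lemma4Pro2} (this uses \ref{C5}), and a finite sum of uniformly integrable families dominating $|\chi^{(\varphi,h)}(s)|^2$ is uniformly integrable. With \ref{C3} in hand, ${\rm Var}[\chi^{(\varphi,h)}]$ is càdlàg by \cite[Proposition 4.12]{Kallenberg2002}. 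Next I would note $\chi^{(\varphi,h)}$ is centred: since $\varphi_u^{(k)}(t-S(u)) \stackrel{a.s.}{=} \mathbb{E}[\varphi_u(t-S(u))\mid\mathcal{D}_u^{(k-1)}]$ (Lemma \ref{lemma9}), each difference $\varphi_u^{(h+1-i)}-\varphi_u^{(h-i)}$ has zero mean, and termwise integration (legitimate by the $L^1$ bounds above) gives $\mathbb{E}[\chi^{(\varphi,h)}(t)]=0$, so \ref{C1} holds trivially. Finally, for \ref{C2} I would bound ${\rm Var}[\chi^{(\varphi,h)}](t)=\mathbb{E}[(\chi^{(\varphi,h)}(t))^2]$: conditioning on $\mathcal{A}^{(i-1)}$, the terms $\psi_u := \varphi_u^{(h+1-i)}(t-S(u)) - \varphi_u^{(h-i)}(t-S(u))$ with $|u|=i$ are conditionally centred (both conditional means equal $\varphi_u^{(0)}$) and conditionally independent across distinct $u$ of the same generation, so their cross terms vanish by the conditional-independence argument underlying Corollary \ref{corollary1}\ref{corollary1Pro5}; combining Cauchy--Schwarz over the $h+1$ levels, the bound $\mathbb{E}[\psi_u^2\mid\mathcal{A}^{(i-1)}]\le 8\,{\rm Var}[\varphi](t-S(u))$ from Corollary \ref{corollary1}\ref{corollary1Pro4}, and the many-to-one formula yields ${\rm Var}[\chi^{(\varphi,h)}](t)\le 8(h+1)\int_{[0,\infty)}{\rm Var}[\varphi](t-x)\,\nu^{(h)}({\rm d}x)$. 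Multiplied by $e^{-\alpha t}$ the right-hand side is directly Riemann integrable by Lemma \ref{lemma16}, and since $e^{-\alpha t}{\rm Var}[\chi^{(\varphi,h)}](t)$ is càdlàg, hence locally Riemann integrable, and dominated by it, it is itself directly Riemann integrable (Remark \ref{Remark1}), giving \ref{C2}.

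Parts \ref{lemma6Pro4b}, \ref{lemma6Pro2}, \ref{lemma6Pro3} then follow quickly. Having shown $\chi^{(\varphi,h)}$ is an $h$-dependent descendant characteristic satisfying \ref{C1}--\ref{C3}, Proposition \ref{Proposition3} gives that $\mathcal{Z}_t^{\chi^{(\varphi,h)}}$ converges unconditionally in $L^1$ and a.s.\ over every admissible ordering; as $Y_n^\varphi(t)$ is precisely the partial sum along $(u_n)$, this proves \ref{lemma6Pro4b}. For \ref{lemma6Pro2}, the nonnegative characteristic $(\chi^{(\varphi,h)})^2$ has expectation function ${\rm Var}[\chi^{(\varphi,h)}]$, which satisfies \ref{C1} by the \ref{C2} just proved, so the first-moment formula \eqref{eq109} gives $\mathbb{E}[\mathcal{Z}_t^{(\chi^{(\varphi,h)})^2}]=\int_{[0,\infty)}{\rm Var}[\chi^{(\varphi,h)}](t-x)\,\nu^{(\infty)}({\rm d}x)<\infty$. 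Part \ref{lemma6Pro3} is then immediate: by Tonelli $\sum_{i\ge1}\mathbb{E}[(\chi^{(\varphi,h)}_{u_i}(t-S(u_i)))^2]=\mathbb{E}[\mathcal{Z}_t^{(\chi^{(\varphi,h)})^2}]<\infty$, so the nonnegative tail has expectation equal to the tail of a convergent series, whence it tends to $0$ in $L^1$.

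The delicate step, and the main obstacle, is the identity \ref{lemma6Pro5}, the difficulty being that the relevant series converge only unconditionally (not absolutely, as $|\varphi|$ need not satisfy \ref{C1}), so the natural reindexing $v=uw$ cannot be justified by absolute convergence. My plan is to regroup every series by generation, working in $L^1$ (where grouping of unconditionally convergent series is automatic) and concluding a.s.\ equality at the end since all three quantities are integrable. Grouping $\mathcal{Z}_t^{\chi^{(\varphi,h)}}$ by the generation of $u$ and expanding $\chi^{(\varphi,h)}_u$ gives $\mathcal{Z}_t^{\chi^{(\varphi,h)}}=\sum_{i\ge0}\sum_{j=0}^{h} g_{i+j}^{(h+1-j)}$, where $g_m^{(k)}:=\sum_{|v|=m}(\varphi_v^{(k)}(t-S(v))-\varphi_v^{(k-1)}(t-S(v)))$ and each generational sum is finite a.s.\ by Lemma \ref{lemma4}\ref{lemma4Pro1}. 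The key point is that $\sum_m g_m^{(k)}$ converges (it is the generation grouping of $M^{(k)}(t)$ in \eqref{eq25}, convergent by Lemma \ref{GeneralLemma}\ref{Pro4}), so $g_m^{(k)}\to0$; this lets me re-sum the double series along the anti-diagonals $m=i+j$, the discrepancy between the two partial sums being finitely many $g_m^{(k)}$ with $m\to\infty$, hence negligible. Telescoping the inner sum then gives $\mathcal{Z}_t^{\chi^{(\varphi,h)}}=\sum_{m\ge0}\sum_{|v|=m}(\varphi_v^{(h+1)}(t-S(v))-\varphi_v^{(h-\min(h,m))}(t-S(v)))$, while grouping $\mathcal{Z}_t^{\varphi-\mathbb{E}[\varphi]}$ by generation gives $\sum_{m\ge0}\sum_{|v|=m}(\varphi_v^{(h+1)}-\varphi_v^{(0)})$ (using $\varphi_v^{(h+1)}=\varphi_v$ and $\varphi_v^{(0)}=\mathbb{E}[\varphi]$). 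Subtracting generation by generation, the terms with $m\ge h$ cancel because then $h-\min(h,m)=0$, leaving exactly $\sum_{0\le|v|\le h-1}(\varphi_v^{(h-|v|)}(t-S(v))-\varphi_v^{(0)}(t-S(v)))$, the claimed finite correction (finite a.s.\ by Lemma \ref{lemma4}\ref{lemma4Pro1}). Throughout, the one point demanding care is the justification of the generation regrouping via the vanishing of $g_m^{(k)}$.
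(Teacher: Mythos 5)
Your proposal is correct, and for parts \ref{lemma6Pro1}--\ref{lemma6Pro3} it follows the paper's argument essentially step for step (Lemma \ref{lemma4} for the $L^2$ and c\`adl\`ag properties, uniform integrability via Lemma \ref{lemma4} \ref{lemma4Pro2}, Lemma \ref{lemma16} plus the direct-Riemann-integrability domination remark for \ref{C2}, then Proposition \ref{Proposition3} and \eqref{eq109} for the remaining items). Two points differ in execution rather than substance. First, in \ref{lemma6Pro4} you bound ${\rm Var}[\chi^{(\varphi,h)}](t)$ by treating the increments $\psi_u=\varphi_u^{(h+1-i)}-\varphi_u^{(h-i)}$ as conditionally centred and conditionally orthogonal across $u$ of the same generation; this needs the cross-index variant of Corollary \ref{corollary1} \ref{corollary1Pro5} (two different superscripts $k\neq k'$), which indeed follows from the same independence argument, whereas the paper avoids this by splitting each $\psi_u$ through $\varphi_u^{(0)}$ and applying Minkowski twice (\eqref{eq34}, \eqref{eq52}, \eqref{eq65}); both yield a bound of the form $C_h\int{\rm Var}[\varphi](t-x)\,\nu^{(h)}({\rm d}x)$. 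Second, for \ref{lemma6Pro5} the paper regroups the unconditionally convergent series by the contributing vertex over $|u|\le n$ and telescopes directly in \eqref{eq167}, while you regroup by generation and justify the anti-diagonal re-summation by showing the generational increments $g_m^{(k)}$ tend to $0$, which you correctly extract from the $L^2$-convergence of the martingales $M^{(k)}(t)$ in Lemma \ref{GeneralLemma} \ref{Pro4} applied to \eqref{eq25}. Your version makes the boundary terms of the reindexing (vertices within $h$ generations of the cut-off) explicit, which the paper's \eqref{eq167} glosses over; this is a small but genuine gain in rigour, at the cost of invoking Lemma \ref{GeneralLemma} where the paper only needs unconditional $L^1$ convergence of $\mathcal{Z}_t^{\varphi}$ and $\mathcal{Z}_t^{\mathbb{E}[\varphi]}$.
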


\begin{proof}
We start with the proof of \ref{lemma6Pro1}. By  Lemma \ref{lemma4} \ref{lemma4Pro1} and \eqref{eq177}, it follows that $\mathbb{E}[(\chi^{(\varphi, h)}(t))^{2}] < \infty$. This implies \ref{lemma6Pro1}.  Note that \ref{lemma6Pro1b} follows from Lemma \ref{lemma4} \ref{lemma4Pro3} and \eqref{eq177}.

Next, we prove \ref{lemma6Pro4}. By  Lemma \ref{lemma4} \ref{lemma4Pro1}, \eqref{eq177} and Fubini's theorem, we have that $\mathbb{E}[\chi^{(\varphi, h)}(t)] = 0$, for all $t \in \mathbb{R}$. Then, by \ref{lemma6Pro1}, $\chi^{(\varphi, h)}$ satisfies \ref{C1}. It remains to prove that $\chi^{(\varphi, h)}$ satisfies \ref{C2}-\ref{C3}. By Lemma \ref{lemma4} \ref{lemma4Pro1}, Fubini's theorem and Corollary \ref{corollary1} \ref{corollary1Pro4}-\ref{corollary1Pro5}, we have that, for $i \in \mathbb{N}_{0}$,
\begin{align} \label{eq24}
& \mathbb{E} \left[ \left( \sum_{|u|=i} (\varphi_{u}^{(k)}(t-S(u)) -  \varphi_{u}^{(0)}(t-S(u))) \right)^{2} \right] \nonumber \\
& \quad \quad = \sum_{|u|=i} \sum_{|v|=i}
\mathbb{E} \left[ \mathbb{E}[ (\varphi_{u}^{(k)}(t-S(u)) -  \varphi_{u}^{(0)}(t-S(u)))(\varphi_{v}^{(k)}(t-S(v)) -  \varphi_{v}^{(0)}(t-S(v))) \mid \mathcal{A}^{(i-1)}] \right] \nonumber  \\
& \quad \quad = \mathbb{E} \left[ \sum_{|u|=i}  \mathbb{E} [(\varphi_{u}^{(k)}(t-S(u)) -  \varphi_{u}^{(0)}(t-S(u)) )^{2}\mid \mathcal{A}^{(i-1)}] \right]  \nonumber  \\
& \quad \quad \leq 2\mathbb{E} \left[ \sum_{|u|=i}  {\rm Var}[\varphi](t-S(u)) \right]. 
\end{align}
\noindent Let $\xi^{(i)}({\rm d} x) \coloneqq  \sum_{|u|=i} \delta_{S(u)}({\rm d} x)$ be the point process giving unit mass to each $S(u)$, for $u \in \mathcal{I}$ such that $|u| = i$. Note that the $i$-fold convolution $\mu^{\ast i}$ of $\mu$ is the intensity measure of $\xi^{(i)}$ ($\mu^{\ast 0}$ is the Dirac measure at $0$). Then, for $k =0, \dots, h+1$, by \eqref{eq24},
\begin{align} \label{eq29}
\mathbb{E} \left[ \left( \sum_{|u|=i} (\varphi_{u}^{(k)}(t-S(u)) -  \varphi_{u}^{(0)}(t-S(u))) \right)^{2} \right] &  \leq 2 \mathbb{E} \left[  \int_{[0, \infty)} {\rm Var}[\varphi](t-x) \xi^{(i)}({\rm d} x)  \right] \nonumber \\
& = 2\int_{[0, \infty)} {\rm Var}[\varphi](t-x) \mu^{\ast i}({\rm d} x). 
\end{align}
\noindent On the other hand, by the Minkowski inequality and \eqref{eq29}, 
\begin{align} \label{eq34}
\mathbb{E} \left[ \left( \sum_{i=0}^{h}\sum_{|u|=i} (\varphi_{u}^{(h+1-i)}(t-S(u)) -  \varphi_{u}^{(0)}(t-S(u))) \right)^{2} \right] & \leq 2(h+1)^{2} \sum_{i=0}^{h} \int_{[0, \infty)} {\rm Var}[\varphi](t-x) \mu^{\ast i}({\rm d} x) \nonumber \\
& = 2(h+1)^{2} \int_{[0, \infty)} {\rm Var}[\varphi](t-x) \nu^{(h)}({\rm d} x),
\end{align}
\noindent where $\nu^{(h)}$ is the measure defined in \eqref{eq62}. Similarly, 
\begin{align} \label{eq52}
\mathbb{E} \left[ \left( \sum_{i=0}^{h}\sum_{|u|=i} (\varphi_{u}^{(h-i)}(t-S(u)) -  \varphi_{u}^{(0)}(t-S(u))) \right)^{2} \right] \leq 2h^{2}\int_{[0, \infty)} {\rm Var}[\varphi](t-x) \nu^{(h)}({\rm d} x).
\end{align}
\noindent By \eqref{eq177}, \eqref{eq34}, \eqref{eq52} and the Minkowski inequality,
\begin{align}  \label{eq65}
\mathbb{E}[ (\chi^{(\varphi, h)}(t))^{2}] \leq 8(h+1)^{2} \int_{[0, \infty)} {\rm Var}[\varphi](t-x) \nu^{(h)}({\rm d} x).
\end{align}

By Lemma \ref{lemma4} \ref{lemma4Pro2} and \eqref{eq177}, for any $t \in \mathbb{R}$ there exists $\varepsilon >0$ such that $((\chi^{(\varphi, h)}(s))^{2})_{|s-t|\leq \varepsilon}$ is uniformly integrable. That is, $\chi^{(\varphi, h)}$ satisfies \ref{C3}. Then, by \cite[Proposition 4.12]{Kallenberg2002}, \ref{lemma6Pro1} and \ref{lemma6Pro1b}, the function $t\in \mathbb{R} \mapsto \mathbb{E}[(\chi^{(\varphi, h)}(t))^{2}]$ has c\`adl\`ag paths. Note that $\text{Var}[\chi^{(\varphi, h)}](t) = \mathbb{E}[\chi^{(\varphi, h)}(t)]$. Therefore, by \ref{lemma6Pro1}, \eqref{eq65}, Lemma \ref{lemma16} and (a slightly extended version of) \cite[Remark 3.10.5, p.\ 237]{Sidney1992}, $\chi^{(\varphi, h)}$ satisfies \ref{C2}. This concludes the proof of \ref{lemma6Pro4}.

Note that \ref{lemma6Pro4b} follows by \ref{lemma6Pro4} and Proposition \ref{Proposition3} (applied to the characteristic $\chi^{(\varphi, h)}$). Observe that $(\chi^{(\varphi, h)})^{2}$ satisfies \ref{C1} by \ref{lemma6Pro4}. Then, \ref{lemma6Pro2} follows by \eqref{eq109} in Section \ref{ExpSection}. In particular, \ref{lemma6Pro3} is a direct consequence of \ref{lemma6Pro2}. 

Finally, we prove \ref{lemma6Pro5}. By \ref{lemma6Pro4} and Proposition \ref{Proposition3}, the series $\mathcal{Z}_{t}^{\chi^{(\varphi, h)}}$ converges unconditionally in $L^{1}(\Omega, \mathcal{F}, \mathbb{P})$ for all $t \in \mathbb{R}$. In particular, for any $n \in \mathbb{N}_{0}$, the infinite series
\begin{align}
\sum_{0 \leq |u| \leq n} \chi^{(\varphi, h)}_{u}(t-S(u))
\end{align}
\noindent also converges unconditionally in $L^{1}(\Omega, \mathcal{F}, \mathbb{P})$ and so is well-defined and converges to $\mathcal{Z}_{t}^{\chi^{(\varphi, h)}}$, as $n \rightarrow \infty$.  (Indeed, the well-definedness can also be seen as a consequence of Lemma \ref{lemma4} \ref{lemma4Pro1}.) Consider $u \in \mathcal{I}$ with $0 \leq |u| \leq n$ and let $v \in \mathcal{I}$ be an ancestor of $u$ (i.e., $v \preceq u$) such that $(|u| - h) \vee 0 \leq |v| \leq |u|$. Then, from \eqref{eq53} (or \eqref{eq177}), $\chi^{(\varphi, h)}_{v}(t-S(v))$ contains only one the term involving the vertex $u$: $\varphi_{u}^{(h+1-(|u|-|v|))}(t-S(u)) - \varphi_{u}^{(h-(|u|-|v|))}(t-S(u))$. However, if $v$ is ancestor of $u$ such that $|u| - |v| > h$, then $\chi^{(\varphi, h)}_{v}(t-S(v))$ contains no term involving $u$. Thus,
\begin{align} \label{eq167}
\sum_{0 \leq |u| \leq n} \chi^{(\varphi, h)}_{u}(t-S(u)) & = \sum_{0 \leq |u| \leq n} \sum_{i=0}^{|u| \wedge h} (\varphi_{u}^{(h+1-i)}(t-S(u)) - \varphi_{u}^{(h-i)}(t-S(u))) \nonumber \\
& = \sum_{0 \leq |u| \leq n}  (\varphi_{u}^{(h+1)}(t-S(u)) - \varphi_{u}^{( 0 \vee (h - |u|))}(t-S(u))) \nonumber \\
& = \sum_{0 \leq |u| \leq n}  (\varphi_{u}^{(h+1)}(t-S(u)) - \varphi_{u}^{(0)}(t-S(u))) \nonumber \\ 
& \quad \quad \quad + \sum_{0 \leq |u| \leq h-1}  (\varphi_{u}^{(0)}(t-S(u)) - \varphi_{u}^{(h-|u|)}(t-S(u))).
\end{align}
\noindent Note that Lemma \ref{lemma4} \ref{lemma4Pro1} justifies the manipulations in the preceding chain of equalities. Recall that, for $t \in \mathbb{R}$, $\varphi^{(h+1)}(t)  \stackrel{a.s.}{=} \varphi(t)$ and $\varphi^{(0)}(t)  \stackrel{a.s.}{=} \mathbb{E}[\varphi](t)$. Recall also from our assumptions and Proposition \ref{Proposition3} (see also Remark \ref{Remark4}) that the series  $ \mathcal{Z}_{t}^{\varphi}$ and $\mathcal{Z}_{t}^{\mathbb{E}[\varphi]}$ converge unconditionally in $L^{1}(\Omega, \mathcal{F}, \mathbb{P})$. Then,
\begin{align}
\sum_{0 \leq |u| \leq n} (\varphi_{u}^{(h+1)}(t-S(u)) - \varphi_{u}^{(0)}(t-S(u)))
\end{align}
\noindent converges in $L^{1}(\Omega, \mathcal{F}, \mathbb{P})$ to $\mathcal{Z}_{t}^{\varphi - \mathbb{E}[\varphi]} = \mathcal{Z}_{t}^{\varphi} - \mathcal{Z}_{t}^{\mathbb{E}[\varphi]}$, as $n \rightarrow \infty$. Therefore, \ref{lemma6Pro5} follows from \eqref{eq167} by letting $n \rightarrow \infty$. 
\end{proof}

The section ends with an estimate that will be utilized in our forthcoming section. Suppose that assumptions \ref{A1}-\ref{A7} hold and let $\varphi$ be an $h$-dependent descendant characteristic, for some $h \in \mathbb{N}_{0}$, satisfying \ref{C1}-\ref{C2}. For fixed $h \in \mathbb{N}_{0}$, Define the (random) function,
\begin{align} \label{eq67}
\varpi^{(\varphi, h)}(t) \coloneqq \sum_{i=0}^{h-1} \sum_{|u|=i} (\varphi_{u}^{(h-i)}(t-S(u)) - \varphi_{u}^{(0)}(t-S(u))), \quad \text{for} \quad  t \in \mathbb{R}.
\end{align}
\noindent  (If $h=0$, we define $\varpi^{(\varphi, 0)}(t) \coloneqq 0$.) By Lemma \ref{lemma4} \ref{lemma4Pro1}, $|\varpi^{(\varphi, h)}(t) |<\infty$ almost surely (recall Remark \ref{Remark5}). Moreover,
\begin{align} \label{eq178}
\varpi^{(\varphi, h)}(t) = \sum_{0 \leq |u| \leq h-1} (\varphi_{u}^{(h+1-|u|)}(t-S(u)) - \varphi_{u}^{(0)}(t-S(u))), \quad \text{for} \quad  t \in \mathbb{R}.
\end{align}

\begin{lemma} \label{lemma5}
Suppose that assumptions \ref{A1}-\ref{A7} hold and  let $\varphi$ be an $h$-dependent descendant characteristic, for some $h \in \mathbb{N}_{0}$, satisfying \ref{C1}-\ref{C3}. Then, as $n \rightarrow \infty$, 
\begin{align}
e^{-\frac{\alpha}{2} t}\varpi^{(\varphi, h)}(t)  \rightarrow 0, \quad \text{in} 
\quad L^{2}(\Omega, \mathcal{F}, \mathbb{P}).
\end{align}
\end{lemma}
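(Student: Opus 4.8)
The statement should be read as $t \to \infty$, and the plan is to show the equivalent assertion $e^{-\alpha t}\,\mathbb{E}[(\varpi^{(\varphi,h)}(t))^{2}] \to 0$. The case $h=0$ is trivial since then $\varpi^{(\varphi,0)}\equiv 0$ by definition, so I assume $h\geq 1$ throughout. The starting point is the representation \eqref{eq178},
\begin{align*}
\varpi^{(\varphi, h)}(t) = \sum_{i=0}^{h-1} \sum_{|u|=i} (\varphi_{u}^{(h-i)}(t-S(u)) - \varphi_{u}^{(0)}(t-S(u))),
\end{align*}
which is a finite sum over $i=0,\dots,h-1$ of the per-generation blocks already controlled inside the proof of Lemma \ref{lemma6}.

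The first substantive step is a second-moment estimate. For each fixed $i$ the exponent $k=h-i$ lies in $\{1,\dots,h\}\subseteq\{0,\dots,h+1\}$, so the bound \eqref{eq29} (obtained there from Corollary \ref{corollary1} \ref{corollary1Pro4}-\ref{corollary1Pro5}) applies and gives
\begin{align*}
\mathbb{E}\left[\left(\sum_{|u|=i}(\varphi_{u}^{(h-i)}(t-S(u)) - \varphi_{u}^{(0)}(t-S(u)))\right)^{2}\right] \leq 2\int_{[0,\infty)} {\rm Var}[\varphi](t-x)\,\mu^{\ast i}({\rm d}x).
\end{align*}
Applying Minkowski's inequality across $i=0,\dots,h-1$ exactly as in the passage leading to \eqref{eq34}, I obtain a constant $C_{h}$ depending only on $h$ with
\begin{align*}
\mathbb{E}[(\varpi^{(\varphi,h)}(t))^{2}] \leq C_{h}\int_{[0,\infty)} {\rm Var}[\varphi](t-x)\,\nu^{(h-1)}({\rm d}x),
\end{align*}
where $\nu^{(h-1)}=\sum_{i=0}^{h-1}\mu^{\ast i}$ is the measure from \eqref{eq62}.

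It then remains only to control the right-hand side after multiplication by $e^{-\alpha t}$. By Lemma \ref{lemma16}, applied with $h-1$ in place of $h$, the function $t\mapsto e^{-\alpha t}\int_{[0,\infty)}{\rm Var}[\varphi](t-x)\,\nu^{(h-1)}({\rm d}x)$ is directly Riemann integrable. The crux of the argument is then the elementary but essential observation that a \emph{non-negative} directly Riemann integrable function necessarily tends to zero at infinity: direct Riemann integrability forces the suprema of the function over successive intervals $[n,n+1]$ (or, in the lattice case, the values at the integers) to be summable, and hence to vanish. Combining this with the displayed bound yields $e^{-\alpha t}\mathbb{E}[(\varpi^{(\varphi,h)}(t))^{2}]\leq C_{h}\,e^{-\alpha t}\int_{[0,\infty)}{\rm Var}[\varphi](t-x)\,\nu^{(h-1)}({\rm d}x)\to 0$, which is the claim. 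I expect the only mildly delicate point to be the invocation of this decay-at-infinity property of directly Riemann integrable functions; everything else reduces to reassembling the variance estimates already established for $\chi^{(\varphi,h)}$ in Lemma \ref{lemma6}.
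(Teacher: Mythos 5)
Your proof is correct and follows essentially the same route as the paper's: the paper bounds $\mathbb{E}[(\varpi^{(\varphi,h)}(t))^{2}]$ by $2h^{2}\int_{[0,\infty)}{\rm Var}[\varphi](t-x)\,\nu^{(h)}({\rm d}x)$ via \eqref{eq52} (the same Minkowski-plus-\eqref{eq29} computation you reproduce, with the immaterial difference of $\nu^{(h)}$ versus $\nu^{(h-1)}$), and then invokes Lemma \ref{lemma16} together with the summability of the suprema over unit intervals to conclude that the non-negative directly Riemann integrable bound vanishes at infinity. Your separate treatment of $h=0$ and your explicit remark on why dRi forces decay are fine and match the paper's reasoning.
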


\begin{proof}
By \eqref{eq52} and \eqref{eq67},
\begin{align}  \label{eq85}
e^{-\alpha t}\mathbb{E}[ (\chi^{(\varpi, h)}(t))^{2}] \leq 2h^{2} e^{-\alpha t} \int_{[0, \infty)} {\rm Var}[\varphi](t-x) \nu^{(h)}({\rm d} x).
\end{align}
\noindent Lemma \ref{lemma16} (see also \cite[Section 4, Chapter V]{Soren2003} or the proof of Lemma \ref{lemma16}) implies that
\begin{align}
\sum_{n \in \mathbb{N}_{0}} \sup_{t \in [n, n+1)}  e^{-\alpha t}  \int_{[0, \infty)} {\rm Var}[\varphi](t-x) \nu^{(h)}({\rm d} x) < \infty.
\end{align}
\noindent In particular, we can conclude that
\begin{align} \label{eq89} 
\lim_{n \rightarrow \infty} \sup_{t \in [n, n+1)}  e^{-\alpha t}  \int_{[0, \infty)} {\rm Var}[\varphi](t-x) \nu^{(h)}({\rm d} x) = 0.
\end{align}
\noindent Therefore, our claim follows from \eqref{eq85} and \eqref{eq89}. 
\end{proof}

Lemma \ref{lemma6} \ref{lemma6Pro1}-\ref{lemma6Pro1b} implies that $\chi^{(\varphi,h)}$ is a well-defined real-valued (descendant) characteristic. In particular, $(\chi^{(\varphi,h)})^{2}$ is a non-negative descendant characteristics.

\begin{remark} \label{Remark2}
Suppose that assumptions \ref{A1}-\ref{A7} hold and let $\phi$ be an $h$-dependent descendant characteristic, for some $h \in \mathbb{N}_{0}$, satisfying \ref{C1}-\ref{C2}. Let $f: \mathbb{R} \rightarrow \mathbb{R}$ be a deterministic c\`adl\`ag function, that is, $f$ be a deterministic individual characteristic. Set $\varphi \coloneqq \phi + f$. Then, by \eqref{eq27}, we have that, for every $t \in \mathbb{R}$, $\varphi^{(k)}(t) \stackrel{a.s.}{=}  \phi^{(k)}(t) + f(t)$, for $k \in \mathbb{N}_{0}$, where $\phi^{(k)}$ is defined analogously to $\varphi^{(k)}$ but with $\phi$ replacing $\varphi$. Moreover, $\chi^{(\varphi,h)}(t) \stackrel{a.s.}{=}  \chi^{(\phi,h)}(t)$. In particular, for all $h \in \mathbb{N}_{0}$ and $t \in \mathbb{R}$,  $\chi^{(f,h)}(t) \stackrel{a.s.}{=} 0$. 
\end{remark}

\begin{remark} \label{Remark8}
Suppose that assumptions \ref{A1}-\ref{A7} hold and let $\phi$ be a $0$-dependent descendant characteristic (i.e., an individual characteristic) satisfying \ref{C1}-\ref{C3}. By \eqref{eq53} (or \eqref{eq177}), for $t \in \mathbb{R}$, $\chi^{(\varphi,0)}(t) \stackrel{a.s.}{=} \varphi(t) -\mathbb{E}[\varphi](t)$ and thus, $\mathbb{E}[(\chi^{(\varphi,0)}(t))^{2}] = {\rm Var}[\varphi](t)$. Then, by \ref{C3}, $\chi^{(\varphi,0)}$ has almost surely has c\`adl\`ag paths and moreover, $t \in \mathbb{R} \mapsto \mathbb{E}[(\chi^{(\varphi,0)}(t))^{2}]$ has also c\`adl\`ag paths (recall the discussion following the introduction of assumptions \ref{C1}-\ref{C3}). Therefore, in this setting, it should be plain that the claims in Lemma \ref{lemma6} \ref{lemma6Pro1b}-\ref{lemma6Pro5} hold even without \ref{C4}-\ref{C5} being required; a simple modification of our arguments demonstrates this.
\end{remark}

%%%%%%%%%%%%%%%%%%%%%%%%%%%%%%%%%%%%%%%%%%%%%%%%%%%%%%%%%%%%%%%%%%%%%%%
\section{A central limit theorem for centred characteristics} \label{SecCLTcentred}
%%%%%%%%%%%%%%%%%%%%%%%%%%%%%%%%%%%%%%%%%%%%%%%%%%%%%%%%%%%%%%%%%%%%%%%

Throughout this section, suppose that assumptions \ref{A1}-\ref{A7} hold and let $\varphi$ be an $h$-dependent descendant characteristic, for some $h \in \mathbb{N}_{0}$, satisfying \ref{C1}-\ref{C5}. This section proves a limit theorem for the general branching process counted with centred characteristic $\varphi-\mathbb{E}[\varphi]$. 

\begin{theorem} \label{Theo1}
Suppose that assumptions \ref{A1}-\ref{A7} hold and  let $\varphi$ be an $h$-dependent descendant characteristic, for some $h \in \mathbb{N}_{0}$, satisfying \ref{C1}-\ref{C5}. Then, as $t \rightarrow \infty$, $t \in \mathbb{G}$, 
\begin{align}
 e^{-\frac{\alpha}{2}t} \mathcal{Z}_{t}^{\varphi-\mathbb{E}[\varphi]} \xrightarrow[]{\rm st} \left( \frac{W}{\beta} \int_{\mathbb{G}} \mathbb{E}[(\chi^{(\varphi,h)}(s))^{2}] e^{-\alpha s} \ell( {\rm d} s) \right)^{1/2} \mathcal{N},
\end{align}
\noindent where $\beta \in (0, \infty)$ is defined in \eqref{eq3}, $W$ is the limit of Nerman's martingale and $\mathcal{N}$ is a standard normal random variable independent of $\mathcal{F}$ (and thus of $W$). 
\end{theorem}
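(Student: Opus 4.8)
The plan is to reduce the statement to a stable central limit theorem for $e^{-\frac{\alpha}{2}t}\mathcal{Z}_t^{\chi^{(\varphi,h)}}$, and then to apply a martingale central limit theorem to the martingale $(Y_n^\varphi(t))_n$ from Lemma \ref{lemma6} \ref{lemma6Pro4b}. First, by Lemma \ref{lemma6} \ref{lemma6Pro5} we may write $\mathcal{Z}_t^{\varphi-\mathbb{E}[\varphi]} \stackrel{a.s.}{=} \mathcal{Z}_t^{\chi^{(\varphi,h)}} + \varpi^{(\varphi,h)}(t)$, and Lemma \ref{lemma5} shows that $e^{-\frac{\alpha}{2}t}\varpi^{(\varphi,h)}(t)\to 0$ in $L^2$, hence in probability. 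Since stable convergence is preserved under the addition of a term vanishing in probability, it suffices to prove $e^{-\frac{\alpha}{2}t}\mathcal{Z}_t^{\chi^{(\varphi,h)}}\xrightarrow[]{\rm st}(\tfrac{W}{\beta}\Sigma)^{1/2}\mathcal{N}$, where $\Sigma \coloneqq \int_{\mathbb{G}}\mathbb{E}[(\chi^{(\varphi,h)}(s))^2]e^{-\alpha s}\ell({\rm d}s)$.

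Fix an admissible ordering $(u_n)_n$ and abbreviate $\chi\coloneqq\chi^{(\varphi,h)}$ and $\Delta_i(t)\coloneqq\chi_{u_i}(t-S(u_i))$. The first key observation is that the telescoping construction of $\chi$ in \eqref{eq53} makes $(Y_n^\varphi(t))_n=(\sum_{i\le n}\Delta_i(t))_n$ a martingale with respect to the filtration $(\mathcal{G}_n^{(h)})_n$ of \eqref{eq2}: expanding $\Delta_i(t)$ over the descendants of $u_i$ within $h$ generations and applying Corollary \ref{corollary2} term by term yields $\mathbb{E}[\Delta_i(t)\mid\mathcal{G}_{i-1}^{(h)}]\stackrel{a.s.}{=}0$, while $\Delta_i(t)$ is $\mathcal{G}_i^{(h)}$-measurable by Remark \ref{Remark7}. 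This martingale is $L^2$-bounded, with $\sup_n\mathbb{E}[(Y_n^\varphi(t))^2]=\mathbb{E}[\mathcal{Z}_t^{(\chi)^2}]<\infty$ by Lemma \ref{lemma6} \ref{lemma6Pro2} and orthogonality of the increments, so it converges in $L^2$ and almost surely to its terminal value $\mathcal{Z}_t^{\chi}$ (compatibly with Lemma \ref{lemma6} \ref{lemma6Pro4b}). I then introduce the triangular array $\xi_{t,i}\coloneqq e^{-\frac{\alpha}{2}t}\Delta_i(t)$, whose row sums equal $e^{-\frac{\alpha}{2}t}\mathcal{Z}_t^{\chi}$, and aim to verify the hypotheses of the martingale central limit theorem for arrays over the fixed filtration $(\mathcal{G}_i^{(h)})_i$ in its stable form (e.g.\ Hall and Heyde); note that $\tfrac{W}{\beta}\Sigma$ is measurable with respect to $\bigvee_i\mathcal{G}_i^{(h)}=\mathcal{F}$, which makes the limit a genuine $\mathcal{F}$-mixture.

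The crucial step is the convergence of the predictable quadratic variation. In an admissible ordering only the (finitely many within $h$ generations) already-enumerated ancestors of $u_i$ can reveal descendants of $u_i$, and through the parent they reveal precisely those up to generation $h-1$; hence conditioning $\Delta_i(t)^2$ on $\mathcal{G}_{i-1}^{(h)}$ reduces, by a conditional-independence identity in the spirit of Lemma \ref{lemma9}, to conditioning on $\mathcal{D}_{u_i}^{(h-1)}$. This identifies the conditional second moment exactly as $\mathbb{E}[\Delta_i(t)^2\mid\mathcal{G}_{i-1}^{(h)}]\stackrel{a.s.}{=}\tilde\psi_{u_i}(t-S(u_i))$, where $\tilde\psi(s)\coloneqq\mathbb{E}[(\chi(s))^2\mid\mathcal{A}^{(h-1)}]$ is a non-negative $(h-1)$-dependent descendant characteristic with $\mathbb{E}[\tilde\psi(s)]=\mathbb{E}[(\chi(s))^2]$. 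Consequently the quadratic variation is $\langle Y^\varphi(t)\rangle_\infty=\mathcal{Z}_t^{\tilde\psi}$ up to a single boundary term at the root, and Nerman's law of large numbers applied to $\tilde\psi$ (using \eqref{eq66}, with the direct Riemann integrability of $s\mapsto\mathbb{E}[(\chi(s))^2]e^{-\alpha s}$ supplied by Lemma \ref{lemma6} \ref{lemma6Pro4} and Lemma \ref{lemma16}) gives $e^{-\alpha t}\langle Y^\varphi(t)\rangle_\infty\to\frac{W}{\beta}\int_{\mathbb{G}}\mathbb{E}[(\chi(s))^2]e^{-\alpha s}\ell({\rm d}s)=\frac{W}{\beta}\Sigma$ in probability. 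The appearance of Nerman's limit $W$ here is exactly what turns the Gaussian limit into a mixture.

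It remains to verify the conditional Lindeberg condition and to conclude. Since only second moments are available (assumption \ref{A7}), a Lyapunov bound is unavailable, so I would instead lean on the uniform integrability in \ref{C3}/\ref{C5}: the contributions with $t-S(u_i)$ bounded are of order one and are annihilated by the diverging threshold $\varepsilon e^{\frac{\alpha}{2}t}$, while the remaining contributions are controlled using the $L^1$ tail bound $\sum_{i>n}\Delta_i(t)^2\to0$ of Lemma \ref{lemma6} \ref{lemma6Pro3}; the standard route is to combine tightness of the quadratic variation with negligibility of the maximal scaled increment $\max_i|\xi_{t,i}|\to 0$ in probability. With both conditions in hand, the stable martingale central limit theorem yields $e^{-\frac{\alpha}{2}t}\mathcal{Z}_t^{\chi}\xrightarrow[]{\rm st}(\frac{W}{\beta}\Sigma)^{1/2}\mathcal{N}$ with $\mathcal{N}$ standard normal and independent of $\mathcal{F}$, and together with the first paragraph this proves the theorem; the passage to the limit along $t\in\mathbb{G}$ is handled by considering arbitrary sequences $t_k\to\infty$. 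I expect the main obstacle to be precisely the verification of these two martingale-CLT conditions under a mere second-moment hypothesis, namely the exact identification of the conditional variance through the $h$-generation dependency and the delicate uniform-integrability estimates behind the Lindeberg condition; this is the source of the ``meticulous estimations'' alluded to in the introduction.
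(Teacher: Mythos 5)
Your proposal follows essentially the same route as the paper: the same reduction $\mathcal{Z}_t^{\varphi-\mathbb{E}[\varphi]}\stackrel{a.s.}{=}\mathcal{Z}_t^{\chi^{(\varphi,h)}}+\varpi^{(\varphi,h)}(t)$ with Lemma \ref{lemma5} killing the boundary term, the same martingale with increments $\chi^{(\varphi,h)}_{u_i}(t-S(u_i))$ over the filtration $(\mathcal{G}_i^{(h)})_i$ (this is exactly Lemma \ref{lemma10}, proved via Corollary \ref{corollary2}), and the same stable martingale CLT of Hall and Heyde. The one genuinely different step is the conditional variance: you identify $\mathbb{E}[(\chi^{(\varphi,h)}_{u_i}(t-S(u_i)))^2\mid\mathcal{G}_{i-1}^{(h)}]$ exactly as $\psi^{(h)}_{u_i}(t-S(u_i))$ for $\psi=(\chi^{(\varphi,h)})^2$ — which is indeed what Corollary \ref{LemmaCondExpII} yields for the $h$-dependent characteristic $\psi$ with $k=h$ — and then apply Nerman's law of large numbers to the conditioned characteristic $\psi^{(h)}$; the paper never computes the predictable quadratic variation, but instead replaces it by the raw sum of squares via \cite[Theorem 2.23]{Hall1980} (Proposition \ref{Proposition2}) and applies the LLN to $(\chi^{(\varphi,h)})^2$ itself. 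Your route is arguably more direct, at the cost of re-verifying the LLN hypotheses for $\psi^{(h)}$ (harmless, since $\mathbb{E}[\psi^{(h)}]=\mathbb{E}[\psi]$). Two points remain sketchy: the conditional Lindeberg condition, which you rightly flag as delicate but only gesture at — the paper's mechanism (Proposition \ref{Proposition1}) is to bound the Lindeberg sum in $L^1$ by $e^{-\alpha t}\mathbb{E}[\mathcal{Z}_t^{g^{(s)}}]$ for the truncated characteristic $g^{(s)}=(\chi^{(\varphi,h)})^2\mathbf{1}_{\{|\chi^{(\varphi,h)}|>s\}}$, letting first $t\to\infty$ (key renewal theorem, \eqref{eq66}) and then $s\to\infty$ (dominated convergence) — and the passage to finite martingale arrays, where your appeal to Lemma \ref{lemma6} \ref{lemma6Pro3} is the right ingredient but must be packaged as a diagonal choice of truncation levels $m_n$ along $t_n$, as in Lemma \ref{lemma2}. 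Neither issue is a wrong turn; both are filled exactly as the paper does.
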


Our approach involves approximating the process $e^{-\frac{\alpha}{2}t} \mathcal{Z}_{t}^{\varphi-\mathbb{E}[\varphi]}$ by a martingale. Then, we apply a well-known martingale central limit theorem as stated in \cite[Corollary 3.1 on p.\ 58]{Hall1980}. Recall the definition of $Y^{\varphi}_{n}(t)$ in \eqref{eq32}. %Recall also that the real number $\alpha$ denotes that Malthusian parameter defined in \ref{A4}. 

\begin{lemma} \label{lemma2}
Suppose that assumptions \ref{A1}-\ref{A7} hold and  let $\varphi$ be an $h$-dependent descendant characteristic, for some $h \in \mathbb{N}_{0}$, satisfying \ref{C1}-\ref{C5}. Let $(u_{n})_{n \in \mathbb{N}}$ be any admissible ordering of $\mathcal{I}$ and let $(t_{n})_{n \in \mathbb{N}}$ be an arbitrary increasing sequence in $\mathbb{G}$ such that $\lim_{n \rightarrow \infty} t_{n} = \infty$. Then, there exists an increasing sequence of non-negative integers $(m_{n})_{n \in \mathbb{N}}$ such that, as $n \rightarrow \infty$,
\begin{enumerate}[label=(\textbf{\alph*})]
\item $\displaystyle e^{-\frac{\alpha}{2}t_{n}}(\mathcal{Z}_{t_{n}}^{\chi^{(\varphi,h)}}-Y^{\varphi}_{m_{n}}(t_{n})) \xrightarrow[]{\rm p} 0$, \label{Conv1}

\item $\displaystyle e^{- \alpha t_{n}}\sum_{i=m_{n}+1}^{\infty} (\chi^{(\varphi,h)}_{u_{i}}(t_{n}-S(u_{i})))^{2} \xrightarrow[]{\rm p}0$. \label{Conv3}
\end{enumerate}
\end{lemma}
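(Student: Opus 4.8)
The plan is to obtain both convergences by a diagonal selection of the truncation indices $(m_{n})$, relying entirely on the $L^{1}$-convergence statements already furnished by Lemma \ref{lemma6}. The two quantities to be controlled are the truncation remainder
\begin{align}
\mathcal{Z}_{t}^{\chi^{(\varphi,h)}} - Y^{\varphi}_{m}(t) = \sum_{i=m+1}^{\infty} \chi^{(\varphi,h)}_{u_{i}}(t-S(u_{i}))
\end{align}
appearing in \ref{Conv1}, and the tail of the sum of squares $\sum_{i=m+1}^{\infty} (\chi^{(\varphi,h)}_{u_{i}}(t-S(u_{i})))^{2}$ appearing in \ref{Conv3}. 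Since for each \emph{fixed} $n$ the time $t_{n}$ is a fixed number, the exponential prefactors $e^{-\frac{\alpha}{2}t_{n}}$ and $e^{-\alpha t_{n}}$ are merely positive constants, so the whole matter reduces to making the two remainders small enough in $L^{1}$ by choosing $m_{n}$ large.

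First I would record the two inputs. By Lemma \ref{lemma6} \ref{lemma6Pro4b}, for each fixed $t$ the partial sums $Y^{\varphi}_{m}(t)$ converge in $L^{1}(\Omega,\mathcal{F},\mathbb{P})$ to $\mathcal{Z}_{t}^{\chi^{(\varphi,h)}}$, so the truncation remainder above tends to $0$ in $L^{1}$ as $m\to\infty$. Likewise, Lemma \ref{lemma6} \ref{lemma6Pro3} gives that $\sum_{i=m+1}^{\infty}(\chi^{(\varphi,h)}_{u_{i}}(t-S(u_{i})))^{2}\to 0$ in $L^{1}$ as $m\to\infty$, again for each fixed $t$.

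Next I would construct $(m_{n})$ recursively. Given $m_{n-1}$ (with $m_{0}\coloneqq 0$), because both $L^{1}$ remainders evaluated at the fixed point $t=t_{n}$ vanish as the truncation index grows, I can select an integer $m_{n}>m_{n-1}$ large enough that \emph{simultaneously}
\begin{align}
\mathbb{E}\Bigl[\Bigl|\sum_{i=m_{n}+1}^{\infty} \chi^{(\varphi,h)}_{u_{i}}(t_{n}-S(u_{i}))\Bigr|\Bigr] \leq \frac{1}{n}\,e^{\frac{\alpha}{2}t_{n}}, \qquad \mathbb{E}\Bigl[\sum_{i=m_{n}+1}^{\infty} (\chi^{(\varphi,h)}_{u_{i}}(t_{n}-S(u_{i})))^{2}\Bigr] \leq \frac{1}{n}\,e^{\alpha t_{n}}.
\end{align}
Such an $m_{n}$ exists because the right-hand sides are fixed positive constants once $n$ (and hence $t_{n}$) is fixed; taking $m_{n}>m_{n-1}$ guarantees that the resulting sequence is increasing.

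Finally, Markov's inequality closes the argument: for any $\varepsilon>0$,
\begin{align}
\mathbb{P}\Bigl(e^{-\frac{\alpha}{2}t_{n}}\bigl|\mathcal{Z}_{t_{n}}^{\chi^{(\varphi,h)}} - Y^{\varphi}_{m_{n}}(t_{n})\bigr| > \varepsilon\Bigr) \leq \frac{1}{\varepsilon}\,e^{-\frac{\alpha}{2}t_{n}}\,\mathbb{E}\Bigl[\bigl|\mathcal{Z}_{t_{n}}^{\chi^{(\varphi,h)}} - Y^{\varphi}_{m_{n}}(t_{n})\bigr|\Bigr] \leq \frac{1}{\varepsilon n} \longrightarrow 0,
\end{align}
which yields \ref{Conv1}, and the identical computation with the exponent $e^{-\alpha t_{n}}$ and the non-negative quadratic tail yields \ref{Conv3}. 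The substantive work has all been done upstream in Lemma \ref{lemma6}; the present statement is a soft diagonalization, and the only point demanding any care is that a \emph{single} increasing sequence $(m_{n})$ can meet both truncation requirements at once — which is immediate since, for each $n$, one simply takes $m_{n}$ to exceed the two indices furnished by the $L^{1}$-convergences as well as $m_{n-1}$.
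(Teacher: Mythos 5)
Your proposal is correct and follows essentially the same route as the paper: both invoke Lemma \ref{lemma6} \ref{lemma6Pro4b} and \ref{lemma6Pro3} for the $L^{1}$-convergence of the truncation remainder and of the tail of squares at each fixed $t_{n}$, pick an increasing $(m_{n})$ making the prefactored $L^{1}$-norms summably or vanishingly small (the paper uses $2^{-n}$ where you use $1/n$, an immaterial difference), and conclude by Markov's inequality. No gaps.
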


\begin{proof}
By Lemma \ref{lemma6} \ref{lemma6Pro4b} and \ref{lemma6Pro3}, it is a simple exercise to construct an increasing sequence of non-negative integers $(m_{n})_{n \in \mathbb{N}}$ such that $\mathbb{E}[ |e^{-\frac{\alpha}{2}t_{n}}\mathcal{Z}_{t_{n}}^{\chi^{(\varphi,h)}} - e^{-\frac{\alpha}{2}t_{n}}Y^{\varphi}_{m_{n}}(t_{n})| ] \leq 2^{-n}$ and
\begin{align}
\mathbb{E} \left[ \left| e^{- \alpha t_{n}}\sum_{i=m_{n}+1}^{\infty} (\chi^{(\varphi,h)}_{u_{i}}(t_{n}-S(u_{i})))^{2} \right| \right] \leq 2^{-n}. 
\end{align}
\noindent Therefore, our claim follows by Markov's inequality. 
\end{proof}

Let $(u_{n})_{n \in \mathbb{N}}$ be any admissible ordering of $\mathcal{I}$. Let $(t_{n})_{n \in \mathbb{N}}$ be an arbitrary increasing sequence in $\mathbb{G}$ such that $\lim_{n \rightarrow \infty} t_{n} = \infty$ and let $(m_{n})_{n \in \mathbb{N}}$ be an increasing sequence of non-negative integers such that \ref{Conv1}-\ref{Conv3} in Lemma \ref{lemma2} hold. For each $n \in \mathbb{N}$, we proceed to define a martingale $(M_{n,j}^{\varphi})_{j\in \mathbb{N}_{0}}$. Clearly, the collection of sub-$\sigma$-algebras $(\mathcal{G}_{j}^{(h)})_{j \in  \mathbb{N}_{0}}$ of $\mathcal{F}$ defined in \eqref{eq2} forms a filtration. Define $M_{n,0}^{\varphi} \coloneqq 0$ and for $j \in \mathbb{N}$, define 
\begin{align}
M_{n,j}^{\varphi} & \coloneqq \sum_{i=1}^{j} e^{-\frac{\alpha}{2}t_{n}} \chi^{(\varphi, h)}_{u_{i}}(t_{n}-S(u_{i})).  \label{eq17} 
\end{align}

\begin{lemma} \label{lemma10}
Suppose that assumptions \ref{A1}-\ref{A7} hold and  let $\varphi$ be an $h$-dependent descendant characteristic, for some $h \in \mathbb{N}_{0}$, satisfying \ref{C1}-\ref{C5}. Then, for every $n \in \mathbb{N}$, $(M_{n,j}^{\varphi})_{j\in \mathbb{N}_{0}}$ is a zero-mean square-integrable martingale with respect the filtration $(\mathcal{G}_{j}^{(h)})_{j\in \mathbb{N}_{0}}$.
\end{lemma}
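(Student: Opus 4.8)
The plan is to verify the three defining properties of a square-integrable martingale—square-integrability, adaptedness to the filtration $(\mathcal{G}_j^{(h)})_{j\in\mathbb{N}_0}$, and the martingale identity—for the fixed $n$ and the process defined in \eqref{eq17}. For square-integrability, I would first recall from Lemma \ref{lemma6} \ref{lemma6Pro4} that $\chi^{(\varphi,h)}$ is a characteristic satisfying \ref{C1}-\ref{C2}; applying Lemma \ref{lemma3} to the characteristic $\chi^{(\varphi,h)}$ then yields $\chi^{(\varphi,h)}_{u_i}(t_n-S(u_i))\in L^2(\Omega,\mathcal{F},\mathbb{P})$ for every $i$, so each $M_{n,j}^\varphi$, being a finite sum of such terms scaled by $e^{-\alpha t_n/2}$, lies in $L^2$. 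For adaptedness, I would use \eqref{eq177} to write the summand $\chi^{(\varphi,h)}_{u_i}(t_n-S(u_i))$ as an (absolutely convergent) sum over descendants $w\succeq u_i$ with $|w|-|u_i|\le h$ of terms of the form $\varphi_w^{(h+k+|u_i|-|w|)}(t_n-S(w))$ with $k\in\{0,1\}$; by Remark \ref{Remark7} each such term is $\mathcal{G}_i^{(h)}$-measurable, hence $\mathcal{G}_j^{(h)}$-measurable for $i\le j$, and therefore $M_{n,j}^\varphi$ is $\mathcal{G}_j^{(h)}$-measurable.

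The heart of the argument is the martingale identity, for which it suffices to show $\mathbb{E}[\chi^{(\varphi,h)}_{u_j}(t_n-S(u_j))\mid\mathcal{G}_{j-1}^{(h)}]\stackrel{a.s.}{=}0$ for each $j\ge1$. For the base case $j=1$, since $u_1=\varnothing$, $S(u_1)=0$ and $\mathcal{G}_0^{(h)}$ is trivial, this reduces to $\mathbb{E}[\chi^{(\varphi,h)}(t_n)]=0$, which was already established in the proof of Lemma \ref{lemma6} \ref{lemma6Pro4}. For $j\ge2$, I would expand, via the shift applied to \eqref{eq177},
\begin{align*}
\chi^{(\varphi,h)}_{u_j}(t_n-S(u_j)) = \sum_{d=0}^{h}\sum_{\substack{w\succeq u_j\\ |w|-|u_j|=d}} \left(\varphi_w^{(h+1-d)}(t_n-S(w)) - \varphi_w^{(h-d)}(t_n-S(w))\right),
\end{align*}
and apply Corollary \ref{corollary2} termwise, with its $u_i$ taken to be $u_j$, its $u$ taken to be $w$, and both $k=0$ and $k=1$. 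Since $|u_j|-|w|=-d$, the corollary gives $\mathbb{E}[\varphi_w^{(h+1-d)}(t_n-S(w))\mid\mathcal{G}_{j-1}^{(h)}]\stackrel{a.s.}{=}\varphi_w^{(h-d)}(t_n-S(w))$, while $\varphi_w^{(h-d)}(t_n-S(w))$ is already $\mathcal{G}_{j-1}^{(h)}$-measurable; thus each difference inside the double sum has vanishing conditional expectation. The zero-mean property then follows from the martingale identity together with $M_{n,0}^\varphi=0$.

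The main obstacle I anticipate is the rigorous interchange of the conditional expectation with the inner sum over the descendants $w$, which is an infinite series for each fixed $d$. I would justify this using the absolute and $L^2$ convergence of $\sum_{|w|-|u_j|=d}|\varphi_w^{(k)}(t_n-S(w))|$ guaranteed by Lemma \ref{lemma4} \ref{lemma4Pro1}, combined with the $L^1$-continuity of conditional expectation (equivalently, conditional dominated convergence): the conditional expectation commutes with the limit of the absolutely convergent partial sums, each of which is a finite sum whose conditional expectation vanishes termwise by the calculation above. A secondary point requiring care is the bookkeeping of the superscripts in Corollary \ref{corollary2}, so that the two terms of each difference collapse to the same $\varphi_w^{(h-d)}(t_n-S(w))$ and cancel exactly; getting the index arithmetic right there is what makes the whole increment conditionally centred.
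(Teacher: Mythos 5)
Your proposal is correct and follows essentially the same route as the paper: the same shifted decomposition of $\chi^{(\varphi,h)}_{u_j}(t_n-S(u_j))$ as in \eqref{eq54}, adaptedness via Remark \ref{Remark7}, and the termwise cancellation $\mathbb{E}[\varphi_w^{(h+1-d)}-\varphi_w^{(h-d)}\mid\mathcal{G}_{j-1}^{(h)}]\stackrel{a.s.}{=}0$ via Corollary \ref{corollary2}. The only cosmetic differences are that the paper gets square-integrability and the sum/conditional-expectation interchange directly from Lemma \ref{lemma4}~\ref{lemma4Pro1} together with a Fubini argument (its \eqref{eq73}), where you invoke Lemma \ref{lemma6}~\ref{lemma6Pro4} plus Lemma \ref{lemma3} and conditional dominated convergence — both are valid and rest on the same $L^{2}$ estimates.
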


\begin{proof}
By \eqref{eq177} (or \eqref{eq53}), we observe that, for $i \in \mathbb{N}$,
\begin{align} \label{eq54}
\chi^{(\varphi, h)}_{u_{i}}(t_{n}-S(u_{i})) =  \sum_{l =0}^{h}\sideset{}{^{(l)}}\sum_{u_{i} \preceq u} (\varphi_{u}^{(h+1-l)}(t_{n}-S(u)) - \varphi_{u}^{(h-l)}(t_{n}-S(u))),
\end{align}
\noindent where the sum $\sum_{u_{i} \preceq u}^{(l)}$ is over all the vertices $u \in \mathcal{I}$ such that $u_{i} \preceq u$ and $|u|  -|u_{i}| = l$. 

Then, by Lemma \ref{lemma4} \ref{lemma4Pro1}, \eqref{eq54} and Fubini's theorem, we have that, for every $i \in \mathbb{N}$, $\mathbb{E}[\chi^{(\varphi, h)}_{u_{i}}(t_{n}-S(u_{i}))] = 0$. Thus,  $(M_{n,j}^{\varphi})_{j\in \mathbb{N}_{0}}$ is centred. Moreover, by Lemma \ref{lemma4} \ref{lemma4Pro1} and \eqref{eq54}, $(M_{n,j}^{\varphi})_{j\in \mathbb{N}_{0}}$ is square-integrable. Clearly, for every $j \in \mathbb{N}_{0}$, $M_{n,j}^{\varphi}$ is $\mathcal{G}_{j}^{(h)}$-measurable (recall Remark \ref{Remark7}). All that remains is to prove the martingale property, and for that, it is enough to show that, for $i \in \mathbb{N}_{0}$,
\begin{align} \label{eq72}
\mathbb{E}[\chi^{(\varphi, h)}_{u_{i}}(t_{n}-S(u_{i})) \mid \mathcal{G}_{i}^{(h)}] \stackrel{a.s.}{=} 0.
\end{align}
\noindent Since $\mathcal{G}_{0}^{(h)}$ is the trivial $\sigma$-algebra, \eqref{eq72} is clearly satisfied for $i =0$. Next, we consider $i \in \mathbb{N}$. Suppose that we have already proven that, for $k =0,1$, 
\begin{align} \label{eq73}
\mathbb{E} \left [ \sum_{l =0}^{h} \sideset{}{^{(l)}}\sum_{u_{i} \preceq u} \varphi_{u}^{(h+k-l)}(t_{n}-S(u)) \, \, \Big | \mathcal{G}_{i}^{(h)} \right ] \stackrel{a.s.}{=} \sum_{l =0}^{h} \sideset{}{^{(l)}}\sum_{u_{i} \preceq u} \mathbb{E} [\varphi_{u}^{(h+k-l)}(t_{n}-S(u)) \mid \mathcal{G}_{i}^{(h)}].
\end{align}
\noindent Then, Corollary \ref{corollary2} and \eqref{eq54} imply that 
\begin{align}
\mathbb{E}[\chi^{(\varphi, h)}_{u_{i}}(t_{n}-S(u_{i})) \mid \mathcal{G}_{i}^{(h)}] & \stackrel{a.s.}{=} \sum_{l =0}^{h}\sideset{}{^{(l)}}\sum_{u_{i} \preceq u} \mathbb{E} [\varphi_{u}^{(h+1-l)}(t_{n}-S(u)) - \varphi_{u}^{(h-l)}(t_{n}-S(u)) \mid \mathcal{G}_{i}^{(h)}] \nonumber \\
& \stackrel{a.s.}{=} \sum_{l =0}^{h}\sideset{}{^{(l)}}\sum_{u_{i} \preceq u} (\varphi_{u}^{(h-l)}(t_{n}-S(u)) - \varphi_{u}^{(h-l)}(t_{n}-S(u))) = 0,
\end{align}
\noindent which proves \eqref{eq72} for $i \in \mathbb{N}$. 

We will now prove \eqref{eq73} to conclude. By the triangle inequality,  we that almost surely
\begin{align}
\sum_{l =0}^{h} \sideset{}{^{(l)}}\sum_{u_{i} \preceq u} | \mathbb{E}[\varphi_{u}^{(h+k-l)}(t_{n}-S(u)) \mid \mathcal{G}_{i}^{(h)}] | \leq \sum_{l =0}^{h} \sideset{}{^{(l)}}\sum_{u_{i} \preceq u} \mathbb{E}[|\varphi_{u}^{(h+k-l)}(t_{n}-S(u))| \mid \mathcal{G}_{i}^{(h)} ].
\end{align}
\noindent Then, Lemma \ref{lemma4} \ref{lemma4Pro1} implies that
\begin{align} \label{eq81}
\sum_{l =0}^{h} \sideset{}{^{(l)}}\sum_{u_{i} \preceq u} | \mathbb{E}[\varphi_{u}^{(h+k-l)}(t_{n}-S(u)) \mid \mathcal{G}_{i}^{(h)} ] | \in L^{1}(\Omega, \mathcal{F}, \mathbb{P}). 
\end{align}

For $A \in \mathcal{G}_{i}^{(h)}$, the definition of conditional expectation implies that
\begin{align}
\mathbb{E} \left[ \mathbf{1}_{A} \mathbb{E} \left [ \sum_{l =0}^{h} \sideset{}{^{(l)}}\sum_{u_{i} \preceq u} \varphi_{u}^{(h+k-l)}(t_{n}-S(u)) \, \, \Big | \mathcal{G}_{i}^{(h)} \right ] \right] = \mathbb{E} \left[  \mathbf{1}_{A} \sum_{l =0}^{h} \sideset{}{^{(l)}}\sum_{u_{i} \preceq u} \varphi_{u}^{(h+k-l)}(t_{n}-S(u))  \right]. 
\end{align}
\noindent By Lemma \ref{lemma4} \ref{lemma4Pro1}, \eqref{eq81} and Fubini's theorem,
\begin{align}
\mathbb{E} \left[ \mathbf{1}_{A} \mathbb{E} \left [ \sum_{l =0}^{h} \sideset{}{^{(l)}}\sum_{u_{i} \preceq u} \varphi_{u}^{(h+k-l)}(t_{n}-S(u)) \, \, \Big | \mathcal{G}_{i}^{(h)} \right ] \right]  & = \sum_{l =0}^{h} \sideset{}{^{(l)}}\sum_{u_{i} \preceq u} \mathbb{E} [  \mathbf{1}_{A} \varphi_{u}^{(h+k-l)}(t_{n}-S(u))  ] \nonumber \\
& =  \sum_{l =0}^{h} \sideset{}{^{(l)}}\sum_{u_{i} \preceq u}  \mathbb{E} [  \mathbf{1}_{A}  \mathbb{E}[\varphi_{u}^{(h+k-l)}(t_{n}-S(u)) \mid \mathcal{G}_{i}^{(h)} ]  ] \nonumber \\
& =  \mathbb{E} \left[  \mathbf{1}_{A} \sum_{l =0}^{h} \sideset{}{^{(l)}}\sum_{u_{i} \preceq u}  \mathbb{E}[\varphi_{u}^{(h+k-l)}(t_{n}-S(u)) \mid \mathcal{G}_{i}^{(h)} ]  \right].
\end{align}
\noindent This implies \eqref{eq73}. 
\end{proof}

Before proving Theorem \ref{Theo1}, we establish a series of results specifically verifying that the martingale $(M_{n,j}^{\varphi})_{j\in \mathbb{N}_{0}}$ satisfies the conditions of \cite[Corollary 3.1 on p.\ 58]{Hall1980}. 

\begin{proposition} \label{Proposition1} 
Suppose that assumptions \ref{A1}-\ref{A7} hold and  let $\varphi$ be an $h$-dependent descendant characteristic, for some $h \in \mathbb{N}_{0}$, satisfying \ref{C1}-\ref{C5}. Then, for any $\varepsilon>0$, as $n \rightarrow \infty$,
\begin{align} \label{eq71}
\sum_{i=1}^{m_{n}} \mathbb{E}[(e^{-\frac{\alpha}{2}t_{n}} \chi^{(\varphi, h)}_{u_{i}}(t_{n}-S(u_{i})))^{2} \mathbf{1}_{\{ |e^{-\frac{\alpha}{2}t_{n}} \chi^{(\varphi, h)}_{u_{i}}(t_{n}-S(u_{i}))|> \varepsilon \}} \mid \mathcal{G}_{i-1}^{(h)} ]  \xrightarrow[]{\rm p} 0. 
\end{align}
\end{proposition}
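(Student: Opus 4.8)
The plan is to reduce the convergence in probability to a convergence of expectations, and then to a renewal-theoretic estimate. Each summand in \eqref{eq71} is the conditional expectation of a nonnegative random variable, so the whole sum is nonnegative; hence it suffices to show that its expectation tends to $0$, since $L^1$-convergence of a nonnegative sequence forces convergence in probability. By the tower property, and since $\mathcal{I}_{m_n}=\{u_1,\dots,u_{m_n}\}\subseteq\mathcal{I}$ and all terms are nonnegative, it is enough to prove
\begin{align}
e^{-\alpha t_n}\sum_{u\in\mathcal{I}} \mathbb{E}\Big[(\chi^{(\varphi,h)}_u(t_n-S(u)))^2\,\mathbf{1}_{\{|\chi^{(\varphi,h)}_u(t_n-S(u))|>\varepsilon e^{\alpha t_n/2}\}}\Big]\longrightarrow 0.
\end{align}

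Next I would exploit the branching independence. For each $u$ the shifted characteristic $\chi^{(\varphi,h)}_u=\chi^{(\varphi,h)}\circ\theta_u$ is independent of $S(u)$ and, because $\mathbb{P}\circ\theta_u^{-1}=\mathbb{P}$, has the same law as $\chi^{(\varphi,h)}$. Thus each expectation equals $\mathbb{E}[f_{t_n}(t_n-S(u))]$, where $f_{t_n}(s):=\mathbb{E}[(\chi^{(\varphi,h)}(s))^2\mathbf{1}_{\{|\chi^{(\varphi,h)}(s)|>\varepsilon e^{\alpha t_n/2}\}}]$ is independent of $u$. Summing over generations, using that $\sum_{|u|=i}\delta_{S(u)}$ has intensity $\mu^{\ast i}$ and recalling \eqref{eq123}, namely $e^{-\alpha x}\nu^{(\infty)}(\mathrm{d}x)=\sum_{i\ge0}\mu_\alpha^{\ast i}(\mathrm{d}x)$, I rewrite the quantity as the renewal convolution
\begin{align}
\int_{[0,\infty)} h_{t_n}(t_n-x)\,U_\alpha(\mathrm{d}x),\qquad h_{t_n}(s):=e^{-\alpha s}f_{t_n}(s),\quad U_\alpha(\mathrm{d}x):=\sum_{i\ge0}\mu_\alpha^{\ast i}(\mathrm{d}x).
\end{align}

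Two properties of $h_{t_n}$ are the key. First, it is dominated by the fixed function $g(s):=e^{-\alpha s}\mathbb{E}[(\chi^{(\varphi,h)}(s))^2]$, which is directly Riemann integrable because $\chi^{(\varphi,h)}$ is centred and satisfies \ref{C2} (Lemma \ref{lemma6} \ref{lemma6Pro4}), so that $\mathbb{E}[(\chi^{(\varphi,h)}(s))^2]={\rm Var}[\chi^{(\varphi,h)}](s)$. Second, $\{(\chi^{(\varphi,h)}(s))^2:|s|\le T\}$ is uniformly integrable for every $T$, since \ref{C3} holds for $\chi^{(\varphi,h)}$ (Lemma \ref{lemma6} \ref{lemma6Pro4}) and the compact interval $[-T,T]$ is covered by finitely many of the neighbourhoods furnished by \ref{C3}.

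Finally I would estimate the convolution by splitting at a level $T$, using the standard renewal fact that $\sup_{t}U_\alpha([t,t+1])=:C<\infty$ (Blackwell's theorem together with finiteness on compacts; the lattice analogue holds with $\mathbb{G}=\mathbb{Z}$ and $U_\alpha$ carried by $\mathbb{Z}$). On the central part $\{|t_n-x|\le T\}$ the integral is at most $e^{\alpha T}\,U_\alpha([t_n-T,t_n+T])\,\sup_{|s|\le T}\mathbb{E}[(\chi^{(\varphi,h)}(s))^2\mathbf{1}_{\{|\chi^{(\varphi,h)}(s)|>\varepsilon e^{\alpha t_n/2}\}}]$; the renewal mass is bounded uniformly in $n$, while the supremum tends to $0$ as $t_n\to\infty$ by the uniform integrability above (the threshold $\varepsilon e^{\alpha t_n/2}\to\infty$). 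On the tail $\{|t_n-x|>T\}$ I bound $h_{t_n}\le g$ and group the $x$-range into unit cells, obtaining a bound $C\sum_{|k|>T-1}\sup_{s\in[k,k+1)}g(s)$ that is independent of $n$ and tends to $0$ as $T\to\infty$ by direct Riemann integrability of $g$. Letting first $n\to\infty$ and then $T\to\infty$ gives the claim. The main obstacle is exactly this last step: since $h_{t_n}$ varies with $n$, the ordinary key renewal theorem does not apply, and one must combine the uniform integrability of $\chi^{(\varphi,h)}$ on compacts (for the bulk) with uniform domination by a directly Riemann integrable function and the uniform unit-cell bound on the renewal measure (for the tails).
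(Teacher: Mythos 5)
Your argument is correct and reaches the same renewal-theoretic core as the paper's proof, but it handles the $n$-dependence of the truncation threshold in a genuinely different way. The paper fixes a level $s>0$, introduces the truncated characteristic $g^{(s)}(t)\coloneqq(\chi^{(\varphi,h)}(t))^{2}\mathbf{1}_{\{|\chi^{(\varphi,h)}(t)|>s\}}$, notes that once $\varepsilon e^{\alpha t_{n}/2}\geq s$ the (expectation of the) sum in \eqref{eq71} is dominated by $e^{-\alpha t_{n}}\mathbb{E}[\mathcal{Z}^{g^{(s)}}_{t_{n}}]$, applies the ordinary key renewal theorem \eqref{eq66} to the \emph{fixed} characteristic $g^{(s)}$ (after checking, by adapting Lemma \ref{lemma6} \ref{lemma6Pro4}, that $g^{(s)}$ satisfies \ref{C1}), and only then lets $s\to\infty$ via dominated convergence. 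This monotone-truncation trick entirely sidesteps the obstacle you identify at the end, namely that the function being convolved against the renewal measure varies with $n$. You instead prove a uniform key renewal estimate by hand: splitting at a level $T$, controlling the bulk with the uniform integrability of $(\chi^{(\varphi,h)}(s))^{2}$ on compacts (correctly extracted from \ref{C3} for $\chi^{(\varphi,h)}$, i.e.\ Lemma \ref{lemma6} \ref{lemma6Pro4}, via a finite subcover) together with the unit-cell bound $\sup_{t}U_{\alpha}([t,t+1])<\infty$, and controlling the tails by domination with the directly Riemann integrable function $e^{-\alpha s}\mathbb{E}[(\chi^{(\varphi,h)}(s))^{2}]$. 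Both routes rest on the same inputs (Lemma \ref{lemma6}, the mean representation \eqref{eq109}, and the renewal measure asymptotics); the paper's version is shorter because it reuses \eqref{eq66} as a black box and needs no uniform-in-$n$ analysis, while yours is more self-contained and makes explicit where the local uniform integrability enters. Your preliminary reductions (Markov's inequality for a non-negative sequence, the tower property, enlarging the sum from $\mathcal{I}_{m_{n}}$ to $\mathcal{I}$ by non-negativity, and the independence of $\chi^{(\varphi,h)}_{u}$ from $S(u)$ together with $\mathbb{P}\circ\theta_{u}^{-1}=\mathbb{P}$) are all sound and match what the paper uses implicitly.
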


\begin{proof}
For $s >0$, define the non-negative characteristic $g^{(s)}(t) \coloneqq (\chi^{(\varphi,h)}(t))^{2} \mathbf{1}_{\{ |\chi^{(\varphi,h)}(t)| > s \}}$, for $t \in \mathbb{R}$. Since $g^{(s)}(t) \leq (\chi^{(\varphi,h)}(t))^{2}$, Lemma \ref{lemma6} \ref{lemma6Pro2} implies that $\mathbb{E}[\mathcal{Z}^{g^{(s)}}_{t}] < \infty$, for all $t \in \mathbb{R}$. By  adapting the proof of Lemma \ref{lemma6} \ref{lemma6Pro4} (using \eqref{eq65}, \ref{C4}-\ref{C5}, Lemma \ref{lemma4} and Lemma \ref{lemma16}), we deduce that the characteristic $g^{(s)}$ satisfies \ref{C1}. Then, under our assumptions \ref{A1}-\ref{A7}, \eqref{eq66} implies that, for any $\varepsilon>0$,
\begin{align} \label{eq69}
&\limsup_{n \rightarrow \infty} \sum_{i=1}^{m_{n}} \mathbb{E}[(e^{-\frac{\alpha}{2}t_{n}} \chi^{(\varphi, h)}_{u_{i}}(t_{n}-S(u_{i})))^{2} \mathbf{1}_{\{ |e^{-\frac{\alpha}{2}t_{n}} \chi^{(\varphi, h)}_{u_{i}}(t_{n}-S(u_{i}))|> \varepsilon \}} \mid \mathcal{G}_{i-1}^{(h)} ] \nonumber \\
& \quad \quad \quad \quad = \limsup_{n \rightarrow \infty} e^{-\alpha t_{n}} \sum_{i=1}^{m_{n}} \mathbb{E}[ g^{(e^{\frac{\alpha}{2} t_{n}})}_{u_{i}}(t-S(u_{i})) ]   \nonumber \\
& \quad \quad \quad \quad  \leq  \liminf_{s \rightarrow \infty} \limsup_{n \rightarrow \infty} e^{-\alpha t_{n}}\mathbb{E}[\mathcal{Z}^{g^{(s)}}_{t_{n}}] \nonumber \\
& \quad \quad \quad \quad  = \liminf_{s \rightarrow \infty} \frac{1}{\beta} \int_{\mathbb{G}} \mathbb{E}[g^{(s)}(x)] e^{-\alpha x} \ell({\rm d} x) = 0,
\end{align}
\noindent where the last equality follows by the dominated convergence theorem and Lemma \ref{lemma6} \ref{lemma6Pro1}.
\end{proof}

\begin{proposition} \label{Proposition2} 
Suppose that assumptions \ref{A1}-\ref{A7} hold and  let $\varphi$ be an $h$-dependent descendant characteristic, for some $h \in \mathbb{N}_{0}$, satisfying \ref{C1}-\ref{C5}. Then, as $n \rightarrow \infty$, 
\begin{align}
\sum_{i=1}^{m_{n}} \mathbb{E}[(e^{-\frac{\alpha}{2}t_{n}} \chi^{(\varphi, h)}_{u_{i}}(t_{n}-S(u_{i})))^{2} \mid \mathcal{G}_{i-1}^{(h)} ]  \xrightarrow[]{\rm p} \frac{W}{\beta} \int_{\mathbb{G}} \mathbb{E}[(\chi^{(\varphi, h)}(s))^{2}]  e^{-\alpha s} \ell({\rm d} s).
\end{align}
\end{proposition}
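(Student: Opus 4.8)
The plan is to recognise the conditional second moment in the statement as the value of a \emph{non-negative} descendant characteristic counted along the process, and then to invoke Nerman's law of large numbers. Throughout, write $\psi \coloneqq (\chi^{(\varphi,h)})^{2}$, which by Lemma \ref{lemma6} \ref{lemma6Pro1} and \ref{lemma6Pro2} is a non-negative $h$-dependent descendant characteristic satisfying \ref{C1}, with $\mathbb{E}[\mathcal{Z}^{\psi}_{t}]<\infty$ for all $t\in\mathbb{R}$. Recall from \eqref{eq27} its conditional version $\psi^{(h)}(t)=\mathbb{E}[\psi(t)\mid\mathcal{A}^{(h-1)}]$, so that $\mathbb{E}[\psi^{(h)}(s)]=\mathbb{E}[\psi(s)]=\mathbb{E}[(\chi^{(\varphi,h)}(s))^{2}]$.

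The first and most structural step I would carry out is to establish the identity
\begin{align}
\mathbb{E}[(\chi^{(\varphi,h)}_{u_{i}}(t_{n}-S(u_{i})))^{2}\mid\mathcal{G}_{i-1}^{(h)}]\stackrel{a.s.}{=}\psi^{(h)}_{u_{i}}(t_{n}-S(u_{i})), \quad i \in \mathbb{N}.
\end{align}
The point here---and the essential difference from the individual-characteristic case $h=0$ of \cite{Iksanov2021}, where the conditional second moment is the \emph{deterministic} function $\mathrm{Var}[\varphi](t_{n}-S(u_{i}))$---is that $\mathcal{G}_{i-1}^{(h)}$ already resolves the subtree below $u_{i}$ down to generation $h-1$. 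Indeed, in an admissible ordering the parent of $u_{i}$ lies in $\mathcal{I}_{i-1}$, so $\mathcal{I}_{u_{i}}^{(h-1)}\subseteq\mathcal{I}_{i-1}^{(h)}$, while no strict ancestor of $u_{i}$ reaches generation $h$ below $u_{i}$; in fact $\mathcal{I}_{i-1}^{(h)}\cap u_{i}\mathcal{I}=\mathcal{I}_{u_{i}}^{(h-1)}$. Hence, setting $\mathcal{J}_{u_{i}}\coloneqq\mathcal{I}_{i-1}^{(h)}\setminus\mathcal{I}_{u_{i}}^{(h-1)}$, one has $\mathcal{P}_{u_{i}}\subseteq\mathcal{J}_{u_{i}}$, $\mathcal{J}_{u_{i}}\cap u_{i}\mathcal{I}=\emptyset$, and $\mathcal{G}_{i-1}^{(h)}=\mathcal{C}_{u_{i}}^{(h-1)}$ in the notation of Lemma \ref{lemma9}. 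Applying Lemma \ref{lemma9} with $k=h$ (its proof carries over verbatim to the non-negative characteristic $\psi$ satisfying only \ref{C1}, since conditional expectations of non-negative variables are always well defined), and using that $S(u_{i})$ is $\mathcal{G}_{i-1}^{(h)}$-measurable, yields the displayed identity because $\psi_{u_i}=(\chi^{(\varphi,h)}_{u_i})^2$.

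Next I would reduce the truncated sum to the full series. By the identity, the sum in the statement equals $e^{-\alpha t_{n}}\sum_{i=1}^{m_{n}}\psi^{(h)}_{u_{i}}(t_{n}-S(u_{i}))$. Since $\psi^{(h)}\ge0$ satisfies \ref{C1}, Proposition \ref{Proposition3} (applied to $\psi^{(h)}$) guarantees that $\mathcal{Z}^{\psi^{(h)}}_{t}$ converges unconditionally over the admissible ordering, and all summands are non-negative, so
\begin{align}
0\le e^{-\alpha t_{n}}\mathcal{Z}^{\psi^{(h)}}_{t_{n}}-e^{-\alpha t_{n}}\sum_{i=1}^{m_{n}}\psi^{(h)}_{u_{i}}(t_{n}-S(u_{i}))=e^{-\alpha t_{n}}\sum_{i>m_{n}}\psi^{(h)}_{u_{i}}(t_{n}-S(u_{i})).
\end{align}
Taking expectations, the tower property gives $\mathbb{E}[\psi^{(h)}_{u_{i}}(t_{n}-S(u_{i}))]=\mathbb{E}[(\chi^{(\varphi,h)}_{u_{i}}(t_{n}-S(u_{i})))^{2}]$, so by monotone convergence the expectation of the rightmost sum equals $e^{-\alpha t_{n}}\mathbb{E}[\sum_{i>m_{n}}(\chi^{(\varphi,h)}_{u_{i}}(t_{n}-S(u_{i})))^{2}]$, which is at most $2^{-n}$ by the choice of $(m_{n})$ made in the proof of Lemma \ref{lemma2}. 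Markov's inequality then forces this non-negative tail to $0$ in probability.

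Finally, it remains to prove that $e^{-\alpha t}\mathcal{Z}^{\psi^{(h)}}_{t}\to\frac{W}{\beta}\int_{\mathbb{G}}\mathbb{E}[(\chi^{(\varphi,h)}(s))^{2}]e^{-\alpha s}\ell(\mathrm{d}s)$ almost surely as $t\to\infty$, $t\in\mathbb{G}$. This is Nerman's law of large numbers (\cite[Theorem 6.1]{JagersN1984}, \cite[Theorem 3.2]{Dimitris2000}) applied to the non-negative descendant characteristic $\psi^{(h)}$, whose mean obeys ``$t\mapsto\mathbb{E}[\psi^{(h)}(t)]e^{-\alpha t}$ directly Riemann integrable'' because $\chi^{(\varphi,h)}$ satisfies \ref{C2} (Lemma \ref{lemma6} \ref{lemma6Pro4}); the remaining regularity hypotheses are inherited from the properties of $\chi^{(\varphi,h)}$ collected in Lemma \ref{lemma6}. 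Combining this with the previous paragraph proves the proposition. I expect this last step to be the main obstacle: precisely because for $h\ge1$ the conditional second moment is a \emph{genuinely random} characteristic $\psi^{(h)}$ rather than the deterministic $\mathrm{Var}[\varphi]$, one must run the law of large numbers for a random descendant characteristic and carefully verify that $\psi^{(h)}$ meets its assumptions. Note that the naive alternative of splitting off the deterministic mean and handling the centred remainder $\mathcal{Z}^{\psi^{(h)}-\mathbb{E}[\psi^{(h)}]}$ separately is not viable, as controlling that term would demand fourth-moment bounds on $\chi^{(\varphi,h)}$ that are unavailable under our second-moment hypotheses.
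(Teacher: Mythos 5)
Your argument is correct, but it takes a genuinely different route from the paper's. The paper never computes the conditional second moments explicitly: it first shows, via \cite[Theorem 2.23]{Hall1980} together with the conditional Lindeberg condition of Proposition \ref{Proposition1} and a tightness bound coming from \eqref{eq66} and Markov's inequality, that $\sum_{i\le m_n}\mathbb{E}[(e^{-\frac{\alpha}{2}t_n}\chi^{(\varphi,h)}_{u_i}(t_n-S(u_i)))^2\mid\mathcal{G}_{i-1}^{(h)}]$ differs from the sum of the squares themselves by an $o_{\mathbb{P}}(1)$ term, and then applies \cite[Theorem 6.1]{JagersN1984} to the non-negative characteristic $(\chi^{(\varphi,h)})^2$. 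You instead identify each conditional second moment exactly, using that $\mathcal{G}_{i-1}^{(h)}=\mathcal{C}_{u_i}^{(h-1)}$ for the choice $\mathcal{J}_{u_i}=\mathcal{I}_{i-1}^{(h)}\setminus\mathcal{I}_{u_i}^{(h-1)}$ (this is precisely Corollary \ref{LemmaCondExpII} with $k=h$), as $\psi^{(h)}_{u_i}(t_n-S(u_i))$ with $\psi=(\chi^{(\varphi,h)})^2$, and then run the law of large numbers directly on the non-negative characteristic $\psi^{(h)}$, whose mean function coincides with that of $\psi$. Both routes are sound and land on the same limit; yours is more transparent about \emph{why} the conditional variances stabilise (they are themselves a CMJ process counted with a random characteristic) and dispenses with Hall--Heyde's Theorem 2.23 for this step, at the cost of having to re-justify Lemma \ref{lemma9} and the convergence of $\mathcal{Z}^{\psi^{(h)}}_t$ for a characteristic that satisfies only \ref{C1} and not \ref{C2} --- which, as you note, is harmless since non-negativity makes all conditional expectations well defined and gives $\mathbb{E}[\mathcal{Z}^{\psi^{(h)}}_t]<\infty$ directly from \eqref{eq109}.

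Two small repairs are needed. First, your displayed identity fails at $i=1$ when $h\ge 1$: $\mathcal{G}_0^{(h)}$ is trivial, so $\mathbb{E}[\psi_{u_1}(t_n)\mid\mathcal{G}_0^{(h)}]=\mathbb{E}[\psi(t_n)]=\psi^{(0)}(t_n)$ rather than $\psi^{(h)}(t_n)$; the discrepancy is a single term whose $L^1$-norm after multiplication by $e^{-\alpha t_n}$ is at most $2e^{-\alpha t_n}\mathbb{E}[(\chi^{(\varphi,h)}(t_n))^2]\to 0$ by Lemma \ref{lemma6} \ref{lemma6Pro4}, so it does not affect the limit, but it should be flagged. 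Second, \cite[Theorem 6.1]{JagersN1984} gives convergence in probability, not almost sure convergence (this is all that \eqref{eq61} claims and all you need); your final step should be stated accordingly.
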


\begin{proof}
First, we show that, as $n \rightarrow \infty$, 
\begin{align} \label{eq58} 
\sum_{i=1}^{m_{n}} (e^{-\frac{\alpha}{2}t_{n}} \chi^{(\varphi, h)}_{u_{i}}(t_{n}-S(u_{i})))^{2} - \sum_{i=1}^{m_{n}} \mathbb{E}[(e^{-\frac{\alpha}{2}t_{n}} \chi^{(\varphi, h)}_{u_{i}}(t_{n}-S(u_{i})))^{2} \mid \mathcal{G}_{i-1}^{(h)} ]  \xrightarrow[]{\rm p} 0. 
\end{align}
\noindent We observe from Lemma \ref{lemma6} \ref{lemma6Pro2} that
\begin{align} \label{eq56}
\mathbb{E} \left[ \sum_{i=1}^{m_{n}} \mathbb{E}[(e^{-\frac{\alpha}{2}t_{n}} \chi^{(\varphi, h)}_{u_{i}}(t_{n}-S(u_{i})))^{2} \mid \mathcal{G}_{i-1}^{(h)} ] \right] & = \sum_{i=1}^{m_{n}} \mathbb{E}[(e^{-\frac{\alpha}{2}t_{n}} \chi^{(\varphi, h)}_{u_{i}}(t_{n}-S(u_{i})))^{2}] \nonumber \\
& \leq e^{-\alpha t_{n}}\mathbb{E}[\mathcal{Z}_{t_{n}}^{(\chi^{(\varphi, h)})^{2}}]  < \infty.
\end{align}
\noindent On the other hand, by Lemma \ref{lemma6} \ref{lemma6Pro4}, $ (\chi^{(\varphi, h)})^{2}$ satisfies \ref{C1}. Therefore, under assumptions \ref{A1}-\ref{A7}, \eqref{eq66} implies that
\begin{align} \label{eq92} 
\limsup_{n \rightarrow \infty} e^{-\alpha t_{n}}\mathbb{E}[\mathcal{Z}_{t_{n}}^{(\chi^{(\varphi, h)})^{2}}] =  \frac{1}{\beta} \int_{\mathbb{G}} \mathbb{E}[(\chi^{(\varphi, h)}(s))^{2}] e^{-\alpha s}  \ell({\rm d} s) < \infty.  
\end{align}
\noindent Then, \eqref{eq56} and \eqref{eq92} imply that
\begin{align} \label{eq57}
\limsup_{n \rightarrow \infty}\mathbb{E} \left[ \sum_{i=1}^{m_{n}} \mathbb{E}[(e^{-\frac{\alpha}{2}t_{n}} \chi^{(\varphi, h)}_{u_{i}}(t_{n}-S(u_{i})))^{2} \mid \mathcal{G}_{i-1}^{(h)} ] \right]  < \infty. 
\end{align}
\noindent In particular, \eqref{eq57} and the Markov inequality imply that, as $\lambda \rightarrow \infty$,
\begin{align} \label{eq59}
\sup_{n \in \mathbb{N}} \mathbb{P} \left( \sum_{i=1}^{m_{n}} \mathbb{E}[(e^{-\frac{\alpha}{2}t_{n}} \chi^{(\varphi, h)}_{u_{i}}(t_{n}-S(u_{i})))^{2} \mid \mathcal{G}_{i-1}^{(h)} ]  > \lambda  \right) \rightarrow 0.
\end{align}
\noindent Then, \eqref{eq58} follows from Proposition \ref{Proposition1}, \eqref{eq59} and \cite[Theorem 2.23, Chapter 2]{Hall1980}. 

Next, it follows from our choice of the sequence $(m_{n})_{n \in \mathbb{N}}$ and Lemma \ref{lemma2} \ref{Conv3} that, as $n \rightarrow \infty$,
\begin{align} \label{eq60}
\sum_{i=1}^{\infty} (e^{-\frac{\alpha}{2}t_{n}} \chi^{(\varphi, h)}_{u_{i}}(t_{n}-S(u_{i})))^{2} - \sum_{i=1}^{m_{n}} (e^{-\frac{\alpha}{2}t_{n}} \chi^{(\varphi, h)}_{u_{i}}(t_{n}-S(u_{i})))^{2} \xrightarrow[]{\rm p} 0. 
\end{align}
\noindent Recall that $(\chi^{(\varphi, h)})^{2}$ satisfies \ref{C1}. Therefore, by \cite[Theorem 6.1]{JagersN1984}, as $n \rightarrow \infty$,
\begin{align} \label{eq61}
\sum_{i=1}^{\infty} (e^{-\frac{\alpha}{2}t_{n}} \chi^{(\varphi, h)}_{u_{i}}(t_{n}-S(u_{i})))^{2}= e^{-\alpha t_{n}} \mathcal{Z}_{t_{n}}^{(\chi^{(\varphi, h)})^{2}}  \xrightarrow[]{\rm p} \frac{W}{\beta} \int_{\mathbb{G}}  \mathbb{E}[(\chi^{(\varphi, h)}(s))^{2}] e^{-\alpha s} \ell({\rm d} s).
\end{align}
\noindent Finally, our claim follows from Slutsky’s theorem \cite[Theorem 3.1 in Chapter I]{Billingsley1999} by combining \eqref{eq58}, \eqref{eq60} and \eqref{eq61}. To obtain \eqref{eq61} in the lattice-case, one can verify that \cite[Theorem 6.1]{JagersN1984} remains valid with the necessary modifications {\sl mutatis mutandis} (see also \cite[Theorem 5.1]{Dimitris2000}).
\end{proof}

We have now all the ingredients to prove Theorem \ref{Theo1}. 

\begin{proof}[Proof of Theorem \ref{Theo1}]
Let $(t_{n})_{n \in \mathbb{N}}$ be an arbitrary increasing sequence in $\mathbb{G}$ such that $\lim_{n \rightarrow \infty} t_{n} = \infty$. By Lemma \ref{lemma2}, we know that there exists an increasing sequence of non-negative integers $(m_{n})_{n \in \mathbb{N}}$ such that \ref{Conv1} and \ref{Conv3} hold. By Propositions \ref{Proposition1} and \ref{Proposition2}, the zero-mean, square-integrable martingale array $(M_{n,j}^{\varphi}, 0 \leq j \leq m_{n}, n \in \mathbb{N})$ satisfies the conditions of the martingale central limit theorem \cite[Corollary 3.1 on p.\ 58]{Hall1980} and thus,
\begin{align} \label{eq18} 
M_{n,m_{n}}^{\varphi} \xrightarrow[]{\rm st} \left( \frac{W}{\beta} \int_{\mathbb{G}}  \mathbb{E}[(\chi^{(\varphi, h)}(s))^{2}] e^{-\alpha s} \ell({\rm d} s) \right)^{1/2} \mathcal{N}, \quad \text{as} \quad n \rightarrow \infty,
\end{align}
\noindent where $\mathcal{N}$ is a standard normal random variable.

Observe that, by  Lemma \ref{lemma6} \ref{lemma6Pro5} and \eqref{eq178}, 
\begin{align}
|e^{-\frac{\alpha}{2}t_{n}} \mathcal{Z}_{t_{n}}^{\varphi-\mathbb{E}[\varphi]} - M_{n,m_{n}}^{\varphi}| \leq  e^{-\frac{\alpha}{2}t_{n}}| \mathcal{Z}_{t_{n}}^{\chi^{(\varphi,h)}}-Y^{\varphi}_{m_{n}}(t_{n})| + e^{-\frac{\alpha}{2}t_{n}} |\varpi^{(\varphi,h)}(t_{n})|
\end{align}
\noindent Then, it follows from Lemma \ref{lemma2} \ref{Conv1} and Lemma \ref{lemma5} that
\begin{align} \label{eq63} 
e^{-\frac{\alpha}{2}t_{n}} \mathcal{Z}_{t_{n}}^{\varphi-\mathbb{E}[\varphi]} - M_{n,m_{n}}^{\varphi} \xrightarrow[]{\rm p} 0, \quad \text{as} \quad n \rightarrow \infty.
\end{align}
\noindent Therefore, Slutsky’s theorem \cite[Theorem I.3.1]{Billingsley1999}, \eqref{eq18} and \eqref{eq63} imply that, as $n \rightarrow \infty$,
\begin{align} 
e^{-\frac{\alpha}{2}t_{n}} \mathcal{Z}_{t_{n}}^{\varphi-\mathbb{E}[\varphi]}  \xrightarrow[]{\rm st} \left( \frac{W}{\beta} \int_{\mathbb{G}}  \mathbb{E}[(\chi^{(\varphi, h)}(s))^{2}] e^{-\alpha s} \ell({\rm d} s) \right)^{1/2} \mathcal{N}.
\end{align}

Finally, we justify that indeed the convergence \eqref{eq18} is stable and that limiting random variable $\mathcal{N}$ is independent of $\mathcal{F}$. It follows from the proof of the preceding \cite[Lemma 3.1, cf. Eq.\ (3.15)]{Hall1980}, that is, for any $B \in \mathcal{F}$, we have $\mathbb{E}[ e^{{\rm i} z M_{n,m_{n}}^{\varphi}} \mathbf{1}_{B}] = \mathbb{E}[e^{-W \frac{z^{2}}{2}} \mathbf{1}_{B}]$, as $n \rightarrow \infty$, for every $z \in \mathbb{R}$. The latter is equivalent, by a standard approximation argument, to say that for any $\mathcal{F}$-measurable random variable $\mathcal{Y}$ and $z,z^{\prime} \in \mathbb{R}$,
\begin{align}
\mathbb{E}\left[ e^{{\rm i} z M_{n,m_{n}}^{\varphi}} e^{{\rm i} z^{\prime} \mathcal{Y}} \right] \rightarrow \mathbb{E}\left[e^{-W \frac{z^{2}}{2}} e^{{\rm i} z^{\prime} \mathcal{Y}} \right] = \mathbb{E}\left[e^{{\rm i} z \sqrt{W} \mathcal{N}} e^{{\rm i} z^{\prime} \mathcal{Y}} \right], \quad \text{as} \quad n \rightarrow \infty,
\end{align}
\noindent for a standard normal variable $\mathcal{N}$ independent of $(W, \mathcal{Y})$. This implies the stable convergence by \cite[Proposition 1]{Aldous1978}.
\end{proof}

%%%%%%%%%%%%%%%%%%%%%%%%%%%%%%%%%%%%%%%%%%%%%%%%%%%%%%%%%%%%%%%%%%%%%%%
\section{Proof of our main result} \label{ProofMainTheandCoro}
%%%%%%%%%%%%%%%%%%%%%%%%%%%%%%%%%%%%%%%%%%%%%%%%%%%%%%%%%%%%%%%%%%%%%%%

This section is dedicated to the proof of our main result, Theorem \ref{Theo2}. We first introduce essential characteristics in Sections \ref{SecMatrixCharac} and \ref{DeterCharac}, and show that they fulfil conditions \ref{C1}-\ref{C5}. Following this, Section \ref{ProofofMainTheoI} provides the proof of Theorem \ref{Theo2}. Lastly, Section \ref{ProofofCorollaryI} contains the proof of Corollary \ref{corollary5}.

Throughout this section, we always assume that \ref{A1}-\ref{A8} are satisfied.

%%%%%%%%%%%%%%%%%%%%%%%%%%%%%%%%%%%%%%%%%%%%%%%%%%%%%%%%%%%%%%%%%%%%%%%
\subsection{Some special individual matrix-valued characteristics} \label{SecMatrixCharac}
%%%%%%%%%%%%%%%%%%%%%%%%%%%%%%%%%%%%%%%%%%%%%%%%%%%%%%%%%%%%%%%%%%%%%%%
For any $s \in \mathbb{R}$ and $\gamma \in \mathbb{C}$ we define the following lower triangular $k \times k$ matrix:

\begin{align}
\exp (\gamma, s, k) \coloneqq e^{\gamma s} \times \left(\begin{array}{ccccc}
1 & 0 & 0 & \cdots & 0  \\
s & 1 & 0 & \cdots & 0 \\
s^{2} & 2 s & 1 & \cdots & 0 \\
\vdots & \vdots & \vdots & \ddots & \vdots \\
s^{k-1} &  \binom{k-1}{1} s^{k-2} & \binom{k-1}{2} s^{k-3} & \cdots & 1
\end{array}\right).
\end{align}

\noindent The $(i, j)$-th entry of the matrix is $e^{\gamma s}\binom{i-1}{j-1} s^{i-j}$, for $i, j=1, \dots, k$, where $\binom{i-1}{j-1}=0$ for $j>i$. Denote by $\| \cdot \|$ the operator norm and by $\| \cdot \|_{\mathrm{HS}}$ the Hilbert-Schmidt norm. Then, for every $\delta>0$, we have the following bound (see \cite[(4.17)]{Iksanov2021}), 
\begin{align} \label{eq75}
\| \exp (\gamma, s, k) \| \leq \|  \exp (\gamma, s, k)\|_{\mathrm{HS}} \leq C^{\prime}(1+|s|)^{k-1} e^{\mathrm{Re}(\gamma) s} \leq C e^{\mathrm{Re}(\gamma) s + \delta |s|},
\end{align}
\noindent for some constant $C^{\prime} >0$ depending on $k$ only and another constant $C>0$ depending on $k$ and $\delta$. Note that, for any $s, t \in \mathbb{R}$ and $\gamma \in \mathbb{C}$,
\begin{align} \label{eq112}
\exp (\gamma, s, k) \cdot \exp (\gamma, t, k)=\exp (\gamma, s+t, k).
\end{align}
For a vector $a$, we write $a^{\top}$ for its transpose. Further, we write $\mathrm{e}_{1}, \mathrm{e}_{2}, \dots$ for the canonical base vectors in Euclidean space. For notational simplicity, we omit explicit dimension specifications, relying on context for clarity. (Formally, all Euclidean spaces can be embedded in a suitable infinite-dimensional space, such as $\ell^{2}$). Then, for instance,
\begin{align} \label{eq113}
\exp (\gamma, s, k) \cdot \mathrm{e}_{1}=e^{\gamma s}\left(\begin{array}{c}
1 \\
s \\
s^{2} \\
\vdots \\
s^{k-1}
\end{array}\right).
\end{align}

Recall from Section \ref{AssumpCMJ} that $\Lambda$ denotes the set of $\lambda \in \mathbb{C}$ satisfying $\hat{\mu}(\lambda)=1$ such that ${\rm Re}(\lambda) > \alpha /2$.  Furthermore, $\partial \Lambda$ denotes the subset of $\lambda \in \mathbb{C}$ satisfying $\hat{\mu}(\lambda)-1=0$ and ${\rm Re}(\lambda) = \alpha /2$. Recall that in the lattice case, $\Lambda$ denotes the set of roots such that ${\rm Re}(\lambda)>\frac{\alpha}{2}$ and $\mathrm{Im}(\lambda) \in (-\pi, \pi]$, while $\partial \Lambda$ denotes the set of roots on ${\rm Re}(\lambda)=\frac{\alpha}{2}$ such that $\mathrm{Im}(\lambda) \in (-\pi, \pi]$. Here, $\alpha >0$ is the Malthusian parameter defined in \ref{A4}. Recall also that we have assumed the set $\Lambda_{\geq} = \Lambda \cup \partial\Lambda$ of roots to be finite, that is, \ref{A8}. For a root $\lambda \in \Lambda_{\geq}$ of multiplicity $k \coloneqq k(\lambda) \in \mathbb{N}$ and $n \in \mathbb{N}_{0}$, we denote by $Z_{n}(\lambda, k)$ the following random matrix:
\begin{align} \label{eq149}
Z_{n}(\lambda, k) \coloneqq \sum_{|u|=n} \exp (\lambda,-S(u), k).
\end{align}
\noindent We also set $Z_{n}(\lambda) \coloneqq Z_{n}(\lambda,1)$. %In particular, by \eqref{eq74}, \ref{A7} and \eqref{eq31}, $\hat{\mu}(\vartheta) = \mathbb{E}[Z_{1}(\vartheta)]<\infty$, where $\vartheta \in (0, \alpha/2)$ is as in \ref{A7}. Indeed, 
Since ${\rm Re}(\lambda) \geq \frac{\alpha}{2}$, by \eqref{eq75} and \ref{A7}, $\mathbb{E}[\|Z_{1}(\lambda,k) \|^{2}] < \infty$. By \cite[(5.4)]{Iksanov2021}, we also have that
\begin{align} \label{eq76}
\mathbb{E}[Z_{1}(\lambda, k)] = \mathbb{E} \left[ \sum_{i=1}^{N} \exp (\lambda,-X_{i}, k) \right] = I_{k},
\end{align}
\noindent where $I_{k}$ is the $k \times k$ identity matrix. Define the random matrix on the probability space $(\Omega_{\varnothing}, \mathcal{F}_{\varnothing}, P_{\varnothing})$, 
\begin{align} \label{eq105}
Y \coloneqq Z_{1}(\lambda, k) -I_{k} = \int_{[0, \infty)} \exp(\lambda,-x, k) \xi({\rm d} x) - I_{k}.
\end{align}
\noindent By \eqref{eq76}, $\mathbb{E}[Y] = \mathbb{E}[Y_{\varnothing}]$, where $Y_{\varnothing} = Y \circ \pi_{\varnothing}$, is the $k \times k$ zero matrix. By \eqref{eq75} and \ref{A7}, $\mathbb{E}[\|Y \|^{2}] < \infty$. Recall that $\pi_{u}: \Omega \rightarrow  \Omega_{u}$ is the projection onto the life space of individual $u \in \mathcal{I}$. Then, $Y_{u} = Y \circ \pi_{u}$. 

We define two (matrix-valued individual) characteristics $\phi_{\lambda}$ and $\chi_{\lambda}$. For $\lambda \in \Lambda_{\geq}$ of multiplicity $k \in \mathbb{N}$, and $t \in \mathbb{R}$, we set
\begin{align} \label{eq77}
\phi_{\lambda}(t) \coloneqq \sum_{i=1}^{N} \mathbf{1}_{[0, X_{i})}(t) \exp (\lambda,t-X_{i}, k).
\end{align}
\noindent For $\lambda \in \Lambda$ of multiplicity $k \in \mathbb{N}$, and $t \in \mathbb{R}$, we set
\begin{align} \label{eq78}
\chi_{\lambda}(t) \coloneqq \mathbf{1}_{(-\infty, 0)}(t) \exp (\lambda,t, k) Y.
\end{align}
\noindent For $u \in \mathcal{I}$, $\phi_{\lambda,u} \coloneqq \phi_{\lambda} \circ \pi_{u}$ and $\chi_{\lambda,u} \coloneqq \chi_{\lambda} \circ \pi_{u}$.

In Section \ref{CMJclassic}, we defined characteristics as (random) real-valued c\`adl\`ag functions. Here, we employ a straightforward extension to complex-valued characteristic functions by considering their real and imaginary components separately. 

Recall the definition of the measure $\nu^{(h)}$, for $h \in \mathbb{N}_{0}$, given in \eqref{eq62}. 

\begin{lemma} \label{lemma7}
Suppose that assumptions \ref{A1}-\ref{A8} hold. Then,
\begin{enumerate}[label=(\textbf{\roman*})]
\item For $\lambda \in \Lambda_{\geq}$ of multiplicity $k \in \mathbb{N}$ and fix vectors $a,b \in \mathbb{R}^{k}$, the characteristic $a^{\top} \phi_{\lambda} b$ has c\`adl\`ag paths and satisfies \ref{C1}-\ref{C3}. Moreover, for all $h \in \mathbb{N}_{0}$ and $\varepsilon >0$, \label{lemma7Pro1}
\begin{align} \label{eq101}
\mathbb{E} \left[ \sum_{i=0}^{h} \sum_{|u|=i} \sup_{|s-t|\leq \varepsilon}|a^{\top}\phi_{\lambda, u}(s-S(u))b| \mathbf{1}_{(-\infty,0)}(s-S(u)) \right] < \infty
\end{align}
\noindent and 
\begin{align} \label{eq142}
\left(\sum_{i=0}^{h} \left( \sum_{|u|=i}|a^{\top}\phi_{\lambda, u}(s-S(u))b|  \right)^{2}\right)_{|s-t|\leq \varepsilon} \quad \text{is uniformly integrable}.
\end{align}

\item For $\lambda \in \Lambda$ of multiplicity $k \in \mathbb{N}$ and fix vectors $a,b \in \mathbb{R}^{k}$, the characteristic $a^{\top} \chi_{\lambda} b$ has c\`adl\`ag paths and satisfies \ref{C1}-\ref{C3}. Moreover, for all $h \in \mathbb{N}_{0}$ and $\varepsilon >0$,  \label{lemma7Pro2}
\begin{align} \label{eq102}
\mathbb{E} \left[ \sum_{i=0}^{h} \sum_{|u|=i} \sup_{|s-t|\leq \varepsilon}|a^{\top}\chi_{\lambda, u}(s-S(u))b| \mathbf{1}_{(-\infty,0)}(s-S(u)) \right] < \infty
\end{align}
\noindent and 
\begin{align} \label{eq143}
\left(\sum_{i=0}^{h} \left( \sum_{|u|=i}|a^{\top}\chi_{\lambda, u}(s-S(u))b|  \right)^{2}\right)_{|s-t|\leq \varepsilon} \quad \text{is uniformly integrable}.
\end{align}
\end{enumerate}
\end{lemma}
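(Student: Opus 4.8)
The plan is to reduce both parts to the sufficient conditions collected in Lemma~\ref{lemma12}, the only analytic input being the matrix estimate \eqref{eq75}: for every $\delta>0$ there is $C=C(k,\delta)$ with $\|\exp(\lambda,s,k)\|\le C e^{\mathrm{Re}(\lambda)s+\delta|s|}$. Since $|a^{\top}Mb|\le\|a\|\,\|b\|\,\|M\|$, it suffices to control the operator norms $\|\phi_{\lambda}(t)\|$ and $\|\chi_{\lambda}(t)\|$ as functions of the point process $\xi$, and to dominate the relevant suprema by the square-integrable variables $\hat{\xi}(\vartheta)$ (for $\phi_{\lambda}$, by \ref{A7}) and $\|Y\|$ (for $\chi_{\lambda}$, by \eqref{eq75} and \ref{A7}). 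Two structural features organize the argument: $\phi_{\lambda}$ is supported on $[0,\infty)$ whereas $\chi_{\lambda}$ is supported on $(-\infty,0)$, and $\mathbb{E}[Y]=0$ by \eqref{eq76}.

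For part \ref{lemma7Pro1}, fix $\delta\in(0,\alpha/2-\vartheta)$. On the support of the $i$-th summand of \eqref{eq77} one has $s=t-X_{i}\in[-X_{i},0)$, so \eqref{eq75} gives $\|\mathbf{1}_{[0,X_{i})}(t)\exp(\lambda,t-X_{i},k)\|\le Ce^{(\mathrm{Re}(\lambda)-\delta)(t-X_{i})}$. For any $\delta_{1}\in[\vartheta,\alpha/2)$, the exponent of $e^{-\delta_{1}t}e^{(\mathrm{Re}(\lambda)-\delta)(t-X_{i})}$ is affine in $t$ and at the endpoints $t=0,X_{i}$ equals $-(\mathrm{Re}(\lambda)-\delta)X_{i}$ and $-\delta_{1}X_{i}$, both $\le-\vartheta X_{i}$; hence
\[
\sup_{t\ge0}e^{-\delta_{1}t}\|\phi_{\lambda}(t)\|\le C\sum_{i=1}^{N}e^{-\vartheta X_{i}}=C\hat{\xi}(\vartheta)\in L^{2}(\Omega,\mathcal{F},\mathbb{P}).
\]
Because $\phi_{\lambda}$ vanishes on $(-\infty,0)$, this single bound verifies the hypotheses of Lemma~\ref{lemma12}\ref{lemma12Pro3}, \ref{lemma12Pro4} and \ref{lemma12Pro1} (using $L^{2}(\mathbb{P})\subseteq L^{1}(\mathbb{P})$ and taking $\delta_{2}>\alpha$), giving \ref{C1}--\ref{C3} and \eqref{eq142}; the conclusion \eqref{eq101} is immediate since $\phi_{\lambda,u}(s-S(u))\mathbf{1}_{(-\infty,0)}(s-S(u))\equiv0$. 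Finally $a^{\top}\phi_{\lambda}b$ is c\`adl\`ag: each summand is, and the same endpoint estimate shows the series converges locally uniformly (Weierstrass $M$-test, using that $\xi$ is a.s.\ finite on compacts and $\hat{\xi}(\mathrm{Re}(\lambda)-\delta)<\infty$ a.s.).

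For part \ref{lemma7Pro2}, on $t<0$ the estimate \eqref{eq75} yields $\|\chi_{\lambda}(t)\|\le C\|Y\|e^{(\mathrm{Re}(\lambda)-\delta)t}$, so $\sup_{t<0}e^{-\delta_{2}t}\|\chi_{\lambda}(t)\|\le C\|Y\|$ whenever $\delta_{2}\le\mathrm{Re}(\lambda)-\delta$. As $\lambda\in\Lambda$ satisfies $\mathrm{Re}(\lambda)>\alpha/2$ strictly, one may pick $\delta_{2}\in(\alpha/2,\mathrm{Re}(\lambda)-\delta]$ for Lemma~\ref{lemma12}\ref{lemma12Pro4} and $\delta_{2}\in[\vartheta,\mathrm{Re}(\lambda)-\delta]$ for \ref{lemma12Pro2} and \ref{lemma12Pro1}, obtaining \ref{C2}--\ref{C3}, \eqref{eq102} and \eqref{eq143}. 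The remaining condition \ref{C1} is not reachable by domination, since it would require $\delta_{2}>\alpha$ on the half-line $t<0$ where $\chi_{\lambda}$ lives; instead I use that $\chi_{\lambda}$ is centred: by \eqref{eq76}, $\mathbb{E}[a^{\top}\chi_{\lambda}(t)b]=\mathbf{1}_{(-\infty,0)}(t)\,a^{\top}\exp(\lambda,t,k)\mathbb{E}[Y]\,b=0$, so $t\mapsto\mathbb{E}[a^{\top}\chi_{\lambda}b](t)e^{-\alpha t}\equiv0$ is directly Riemann integrable, while $a^{\top}\chi_{\lambda}(t)b\in L^{1}$ for each $t$ follows from $\|Y\|\in L^{2}\subseteq L^{1}$. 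The c\`adl\`ag property is clear from the explicit single-term form of \eqref{eq78}.

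The main obstacle is the simultaneous bookkeeping of $\delta,\delta_{1},\delta_{2}$ so that each estimate lands in the admissible parameter window of the correct part of Lemma~\ref{lemma12} while staying dominated by an $L^{2}$-variable constructed from \ref{A7}. The genuinely delicate point is \ref{C1} for $\chi_{\lambda}$: the decay rate $\mathrm{Re}(\lambda)\le\alpha$ is incompatible with the threshold $\delta_{2}>\alpha$ demanded on $(-\infty,0)$, which is precisely why the centring identity $\mathbb{E}[Y]=0$ is indispensable, and why $\chi_{\lambda}$ is introduced only for $\lambda\in\Lambda$ (so that the window $(\alpha/2,\mathrm{Re}(\lambda)-\delta]$ needed for \ref{C2}--\ref{C3} is non-empty) rather than on the critical line $\partial\Lambda$.
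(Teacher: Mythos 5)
Your proof is correct, and for the genuinely new content of the lemma --- the bounds \eqref{eq101}--\eqref{eq142} and \eqref{eq102}--\eqref{eq143} --- it follows essentially the same route as the paper: dominate $|a^{\top}\phi_{\lambda}(t)b|$ by $C\,\mathbf{1}_{[0,\infty)}(t)\,e^{\vartheta t}\hat{\xi}(\vartheta)$ and $|a^{\top}\chi_{\lambda}(t)b|$ by $C\,\mathbf{1}_{(-\infty,0)}(t)\,\|Y\|e^{\gamma t}$ with $\gamma\in(\alpha/2,\mathrm{Re}(\lambda))$ via \eqref{eq75} (these are exactly the paper's \eqref{eq144} and \eqref{eq148}; your affine-endpoint computation is just a mild repackaging of the choice $\delta=\mathrm{Re}(\lambda)-\vartheta$ there), and then feed these into Lemma~\ref{lemma12}~\ref{lemma12Pro2} and \ref{lemma12Pro1}. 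Where you diverge is that the paper disposes of the c\`adl\`ag property and of \ref{C1}--\ref{C3} in one stroke by citing \cite[Lemma 5.1]{Iksanov2021}, whereas you reprove these from scratch: Lemma~\ref{lemma12}~\ref{lemma12Pro3}--\ref{lemma12Pro4} for $\phi_{\lambda}$, a Weierstrass $M$-test for its c\`adl\`ag paths, and --- the one place where a real extra idea is needed --- the centring identity $\mathbb{E}[Y]=0$ from \eqref{eq76} to get \ref{C1} for $\chi_{\lambda}$, since the domination route genuinely fails there ($\mathrm{Re}(\lambda)\le\alpha$ is incompatible with the threshold $\delta_{2}>\alpha$ required on the negative half-line). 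That last observation is correct and well spotted; it makes your argument self-contained at the cost of redoing work the paper outsources, and both versions land on the same conclusions.
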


\begin{proof}
As shown in \cite[Lemma 5.1]{Iksanov2021}, the characteristics $a^{\top} \phi_{\lambda} b$ and $a^{\top} \chi_{\lambda} b$ have c\`adl\`ag paths and satisfy \ref{C1}-\ref{C3}. We observe that \cite[assumption (A3)]{Iksanov2021} is implied by our assumption \ref{A7} (see \cite[remark 2.1]{Iksanov2021}). Thus, we are left to prove that they also satisfy \eqref{eq101}-\eqref{eq142} and \eqref{eq102}-\eqref{eq143}, respectively. Without of loss of generality, we may assume that $|a|, |b| \leq 1$. 

First, we prove \eqref{eq101}-\eqref{eq142}. Let $\vartheta \in (0, \alpha/2)$ be as defined in \ref{A7}. Recall that ${\rm Re}(\lambda) \geq \alpha/2$ because $\lambda \in \Lambda_{\geq}$. By \eqref{eq75} (with $\delta = {\rm Re}(\lambda) - \vartheta$), there exists a constant $C_{1}>0$ such that, for $t \in \mathbb{R}$,
\begin{align} \label{eq144}
|a^{\top}\phi_{\lambda}(t)b| \leq \sum_{i=1}^{N} \mathbf{1}_{[0, X_{i})}(t) |a^{\top}\exp(\lambda, t-X_{i}, k)b| \leq C_{1} \mathbf{1}_{[0, \infty)}(t) \sum_{i=1}^{N} e^{\vartheta (t-X_{i}) }.
\end{align}
\noindent Recall that \ref{A7} implies that $\mathbb{E} [ (\sum_{i=1}^{N} e^{-\vartheta X_{i}} )^{2} ] < \infty$. Note that, under our assumptions \ref{A1}-\ref{A7}, \eqref{eq144} implies that the characteristic $|x^{\top}\phi_{\lambda}y|$ satisfies the conditions of Lemma \ref{lemma12}  \ref{lemma12Pro2} (with $\delta_{2}= \vartheta$) and \ref{lemma12Pro1} (with $\delta_{1} = \delta_{2} = \vartheta$). This implies \eqref{eq101}-\eqref{eq142}.

Next, we prove \eqref{eq102}-\eqref{eq143}. Recall that ${\rm Re}(\lambda) > \alpha/2$ because $\lambda \in \Lambda$. In particular, there exists $\gamma \in (\alpha/2, {\rm Re}(\lambda))$. Moreover, by \eqref{eq75} (with $\delta = {\rm Re}(\lambda) - \gamma$), there exists a constant $C_{2}>0$ such that, for $t \in \mathbb{R}$,
\begin{align} \label{eq148}
|a^{\top}\chi_{\lambda}(t)b| \leq \mathbf{1}_{(-\infty, 0)}(t) |a^{\top}\exp(\lambda, t, k)	Yb| \leq C_{2} \mathbf{1}_{(-\infty, 0)}(t) \|Y \| e^{\gamma t},
\end{align}
\noindent where $Y$ is defined in \eqref{eq105}. Recall that, by \eqref{eq75} and \ref{A7}, $\mathbb{E}[\|Y \|^{2}] < \infty$. Note that, under our assumptions \ref{A1}-\ref{A7}, \eqref{eq148} implies that the characteristic $|x^{\top}\phi_{\lambda}y|$ satisfies the conditions of Lemma \ref{lemma12}  \ref{lemma12Pro2} (with $\delta_{2}= \gamma$) and \ref{lemma12Pro1} (with $\delta_{1} = \delta_{2} = \gamma$), noting that $\mathrm{Re}(\lambda) \leq \alpha$. This implies \eqref{eq102}-\eqref{eq143}.  
\end{proof}

%%%%%%%%%%%%%%%%%%%%%%%%%%%%%%%%%%%%%%%%%%%%%%%%%%%%%%%%%%%%%%%%%%%%%%%
\subsection{Deterministic individual characteristics} \label{DeterCharac}
%%%%%%%%%%%%%%%%%%%%%%%%%%%%%%%%%%%%%%%%%%%%%%%%%%%%%%%%%%%%%%%%%%%%%%%

Let $f: \mathbb{R} \rightarrow \mathbb{R}$ be a deterministic c\`adl\`ag function (that is, $f$ is a deterministic individual characteristic). Suppose that $t \in \mathbb{R} \mapsto f(t) e^{-\alpha t}$ is directly Riemann integrable and define the individual characteristic,
\begin{align} \label{eq88}
\chi_{f}(t) \coloneqq f \ast \xi \ast \nu^{(\infty)}(t)- f \ast \mu \ast \nu^{(\infty)}(t)=m^{f} \ast \xi(t)-m^{f} \ast \mu(t), \quad \text{for} \quad t \in \mathbb{R},
\end{align}
\noindent where $\nu^{(\infty)}$ is defined in \eqref{eq62} and $\ast$ denotes Lebesgue-Stieltjes convolution. Assumptions \ref{A1}-\ref{A7} imply, by \cite[Lemma 6.4 (a)]{Iksanov2021}, that $\chi_{f}$ is well-defined and has almost surely c\`adl\`ag paths. For $u \in \mathcal{I}$, $\chi_{f,u} \coloneqq \chi_{f} \circ \pi_{u}$.

\begin{lemma} \label{lemma8}
Suppose that assumptions \ref{A1}-\ref{A7} hold. Let $f: \mathbb{R} \rightarrow \mathbb{R}$ be a deterministic c\`adl\`ag function such that $t \in \mathbb{R} \mapsto f(t) e^{-\alpha t}$ is directly Riemann integrable. Suppose further that $t \in \mathbb{R} \mapsto m_{t}^{f} e^{-\frac{\alpha}{2}t}(1+t^{2})$ is bounded. Then,  
\begin{enumerate}[label=(\textbf{\roman*})]
\item The characteristic  $\chi_{f}$ satisfies \ref{C1}-\ref{C3}. Moreover, for all $h \in \mathbb{N}_{0}$ and $\varepsilon>0$, \label{lemma8Pro1}
\begin{align} \label{eq104}
\mathbb{E} \left[ \sum_{i=0}^{h} \sum_{|u|=i} \sup_{|s-t|\leq \varepsilon} |\chi_{f,u}(s-S(u))| \mathbf{1}_{(-\infty,0)}(s-S(u)) \right] < \infty
\end{align}
\noindent and 
\begin{align}  \label{eq154}
\left(\sum_{i=0}^{h} \left( \sum_{|u|=i} |\chi_{f,u}(s-S(u))| \right)^{2}\right)_{|s-t|\leq \varepsilon} \quad \text{is uniformly integrable}.
\end{align}

\item For every $t \in \mathbb{R}$, the series $\mathcal{Z}_{t}^{\chi_{f}}$ converges unconditionally in $L^{1}(\Omega, \mathcal{F}, \mathbb{P})$ and almost surely over every admissible ordering of $\mathcal{I}$. Moreover, for all $t \in \mathbb{R}$, $\mathbb{E}[\chi_{f}(t)] = 0$ and  \label{lemma8Pro2}
\begin{align}
\mathcal{Z}_{t}^{f} - m^{f}_{t} \stackrel{a.s.}{=}  \mathcal{Z}_{t}^{\chi_{f}}. 
\end{align}

\item For $n \in \mathbb{N}_{0}$ and $t \in \mathbb{R}$, \label{lemma8Pro3}
\begin{align}
\sum_{0\leq |u| \leq n} \chi_{f, u}(t - S(u)) \stackrel{a.s.}{=}  \sum_{0\leq |u| \leq n} f(t - S(u)) - f \ast \nu^{(\infty)}(t) + \sum_{|u| = n+1} f \ast \nu^{(\infty)}(t - S(u)). 
\end{align}
\end{enumerate}
\end{lemma}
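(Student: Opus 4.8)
Lemma 8 проверяет три утверждения о характеристике $\chi_f$. Позвольте мне разобрать каждую часть.

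The plan is to establish the three parts in the order \ref{lemma8Pro3}, \ref{lemma8Pro1}, \ref{lemma8Pro2}, since the generation-wise identity \ref{lemma8Pro3} is purely algebraic and drives the limiting argument for \ref{lemma8Pro2}. For \ref{lemma8Pro3} the key observation is the renewal identity $m^{f}\ast\mu=m^{f}-f$, which follows from $m^{f}=f\ast\nu^{(\infty)}$ and $\nu^{(\infty)}\ast\mu=\nu^{(\infty)}-\delta_{0}$ (the latter being immediate from \eqref{eq62}). Substituting into \eqref{eq88} rewrites the characteristic as $\chi_{f}=f+m^{f}\ast\xi-m^{f}$, and since $\xi_{u}$ records the birth times $X_{u,j}$ of the children of $u$ with $S(uj)=S(u)+X_{u,j}$, one gets
\[
\chi_{f,u}(t-S(u))=f(t-S(u))+\sum_{v:\,pr(v)=u}m^{f}(t-S(v))-m^{f}(t-S(u)).
\]
Summing over $0\le|u|\le n$, the middle term reindexes to $\sum_{1\le|v|\le n+1}m^{f}(t-S(v))$ (each such $v$ has a unique parent in generations $0,\dots,n$) and telescopes against $\sum_{0\le|u|\le n}m^{f}(t-S(u))$, leaving $\sum_{|v|=n+1}m^{f}(t-S(v))-m^{f}(t)$; recalling $m^{f}=f\ast\nu^{(\infty)}$ yields exactly \ref{lemma8Pro3}. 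All rearrangements are legitimate because each generation sum converges absolutely, by Lemma \ref{lemma4} \ref{lemma4Pro1} applied to $\chi_{f}$ and to $f$.

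For \ref{lemma8Pro1} I would first record the pointwise bound. The hypothesis gives $|m^{f}_{t}|\le Ce^{\alpha t/2}/(1+t^{2})\le Ce^{\alpha t/2}$, whence $|m^{f}\ast\xi(t)|\le Ce^{\alpha t/2}\hat{\xi}(\alpha/2)$ and $|m^{f}\ast\mu(t)|\le Ce^{\alpha t/2}\hat{\mu}(\alpha/2)$, so that $\sup_{t}e^{-\alpha t/2}|\chi_{f}(t)|\le CQ$ with $Q:=\hat{\xi}(\alpha/2)+\hat{\mu}(\alpha/2)\in L^{2}(\Omega,\mathcal{F},\mathbb{P})$ by \eqref{eq152} (taking $n=1$, $\theta=\alpha/2$) and Cauchy--Schwarz. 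Conditions \ref{C1}--\ref{C3} then hold as in \cite[Lemma 6.4]{Iksanov2021} (our \ref{A7} implies \cite[assumption (A3)]{Iksanov2021} and the hypotheses on $f$ coincide); the analytic core of \ref{C2}, namely $\int_{\mathbb{R}}\mathrm{Var}[\chi_{f}](t)e^{-\alpha t}{\rm d}t<\infty$, I would verify directly by bounding $\mathrm{Var}[\chi_{f}](t)\le\mathbb{E}[(\sum_{i}|m^{f}(t-X_{i})|)^{2}]$ and using the elementary identity $\int_{\mathbb{R}}\frac{{\rm d}t}{(1+(t-a)^{2})(1+(t-b)^{2})}=\frac{2\pi}{4+(a-b)^{2}}\le\frac{\pi}{2}$, which gives the bound $\tfrac{\pi C^{2}}{2}\mathbb{E}[\hat{\xi}(\alpha/2)^{2}]$. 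The new conditions are \eqref{eq104} and \eqref{eq154}: the former follows from Lemma \ref{lemma12} \ref{lemma12Pro2} with $\delta_{2}=\vartheta$, since $\sup_{t<0}e^{-\vartheta t}|\chi_{f}(t)|\le CQ\in L^{1}(\Omega,\mathcal{F},\mathbb{P})$; the latter from Lemma \ref{lemma12} \ref{lemma12Pro1} with $\delta_{1}=\delta_{2}=\alpha/2\in[\vartheta,\alpha]$, since $\sup_{t}e^{-\alpha t/2}|\chi_{f}(t)|\le CQ\in L^{2}(\Omega,\mathcal{F},\mathbb{P})$.

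For \ref{lemma8Pro2}, the identity $\mathbb{E}[\chi_{f}(t)]=0$ is immediate from $\mathbb{E}[m^{f}\ast\xi(t)]=m^{f}\ast\mu(t)$ (intensity $\mathbb{E}[\xi]=\mu$ and Fubini, justified by the $L^{1}$ bound above), and the unconditional $L^{1}$ and almost sure convergence of $\mathcal{Z}^{\chi_{f}}_{t}$ (and of $\mathcal{Z}^{f}_{t}$) over every admissible ordering follows from \ref{lemma8Pro1} and Proposition \ref{Proposition3}. The equality $\mathcal{Z}^{f}_{t}-m^{f}_{t}=\mathcal{Z}^{\chi_{f}}_{t}$ is then obtained by letting $n\to\infty$ in \ref{lemma8Pro3} along generation truncations; the $f$-sum tends to $\mathcal{Z}^{f}_{t}$, the constant is $-m^{f}_{t}$, and everything reduces to showing the remainder $\sum_{|v|=n+1}m^{f}(t-S(v))\to0$. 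This is the main obstacle: a crude absolute bound is hopeless, since $\mathbb{E}\sum_{|v|=n+1}|m^{f}(t-S(v))|$ grows like $e^{\alpha(n+1)\beta/2}/n^{2}\to\infty$, so one must exploit the sign cancellation in $m^{f}$ and argue through the second moment, decomposing ordered pairs in generation $n+1$ by their coalescence generation. The diagonal contribution is controlled by
\[
\int_{[0,\infty)}m^{f}(t-x)^{2}\,\mu^{\ast(n+1)}({\rm d}x)\le C^{2}e^{\alpha t}\int_{[0,\infty)}\frac{\mu_{\alpha}^{\ast(n+1)}({\rm d}x)}{(1+(t-x)^{2})^{2}},
\]
which tends to $0$ because $\mu_{\alpha}$ is a probability measure whose $(n+1)$-fold convolution escapes to $+\infty$ (its mean is $(n+1)\beta$), where the kernel is $O(n^{-4})$; the off-diagonal terms are handled by the same second-moment renewal machinery underlying \eqref{eq152} and \eqref{eq66}, which crucially uses \ref{A7} and, for the geometric gap that defeats the factor $\hat{\mu}(\alpha/2)^{n}$, the assumption \ref{A1} that $\mu(\{0\})<1$. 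Since $\chi_{f}$ is an individual characteristic, this estimate is identical to the one in \cite[Lemma 6.4]{Iksanov2021}, which I would invoke to conclude.
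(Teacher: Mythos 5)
Your proposal is correct and follows essentially the same route as the paper: conditions \ref{C1}--\ref{C3} are delegated to \cite[Lemma 6.4]{Iksanov2021}, the new conditions \eqref{eq104} and \eqref{eq154} are obtained from the pointwise bound $\sup_{t}e^{-\alpha t/2}|\chi_{f}(t)|\leq C(\hat{\xi}(\alpha/2)+\hat{\mu}(\alpha/2))\in L^{2}(\Omega,\mathcal{F},\mathbb{P})$ together with Lemma \ref{lemma12} \ref{lemma12Pro2}--\ref{lemma12Pro1} (your choice $\delta_{2}=\vartheta$ versus the paper's $\delta_{2}=\alpha/2$ is immaterial), and parts \ref{lemma8Pro2}--\ref{lemma8Pro3} rest on Proposition \ref{Proposition3} and \cite[Lemma 6.4 (c)]{Iksanov2021}. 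The only difference is that you rederive the telescoping identity \ref{lemma8Pro3} and discuss the vanishing of the remainder $\sum_{|v|=n+1}m^{f}(t-S(v))$ explicitly where the paper simply cites the corresponding steps of \cite[Lemma 6.4 (c)]{Iksanov2021}; both of these added details are correct and consistent with the cited argument.
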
 

\begin{proof}
First, we prove \ref{lemma8Pro1}. By \cite[Lemma 6.4]{Iksanov2021}, the characteristic $\chi_{f}$ satisfies \ref{C1}-\ref{C3}. Recall that \cite[assumption (A3)]{Iksanov2021} is implied by our assumption \ref{A7} (see \cite[remark 2.1]{Iksanov2021}). Thus, we are left to prove that it also satisfies \eqref{eq104} and \eqref{eq154}. 

By \ref{A7}, \eqref{eq31}, \eqref{eq88} and our assumptions on $m^{f}$, note that, for $s \in \mathbb{R}$, $|\chi_{f}(s)| \leq C  e^{\frac{\alpha}{2}s} (\hat{\xi}(\alpha/2) + \hat{\mu}(\alpha/2) )$, where $C = \sup_{s \in \mathbb{R}} |m_{s}^{f}| e^{-\frac{\alpha}{2}s}(1+s^{2})$. Then, \eqref{eq104} and \eqref{eq154} follows directly from Lemma \ref{lemma12} \ref{lemma12Pro2}-\ref{lemma12Pro1}. Specifically, under assumptions \ref{A1}-\ref{A7} and the preceding remarks, we observe that the characteristic $\chi_{f}$ satisfies the conditions of Lemma \ref{lemma12} \ref{lemma12Pro2} (with $\delta_{2} = \alpha/2$) and \ref{lemma12Pro1} (with $\delta_{1} = \delta_{2} = \alpha/2$). This concludes with the proof of \ref{lemma8Pro1}. 

The first claim in \ref{lemma8Pro2} follows from  \ref{lemma8Pro1} and Proposition \ref{Proposition3}. The second part of \ref{lemma8Pro2} was proved in \cite[Lemma 6.4 (c)]{Iksanov2021}. Finally, \ref{lemma8Pro3} was established within the proof of \cite[Lemma 6.4 (c)]{Iksanov2021}. 
\end{proof}

%%%%%%%%%%%%%%%%%%%%%%%%%%%%%%%%%%%%%%%%%%%%%%%%%%%%%%%%%%%%%%%%%%%%%%%
\subsection{Proof of Theorem \ref{Theo2}} \label{ProofofMainTheoI}
%%%%%%%%%%%%%%%%%%%%%%%%%%%%%%%%%%%%%%%%%%%%%%%%%%%%%%%%%%%%%%%%%%%%%%%

This section proves our main Theorem \ref{Theo2}. We use Theorem \ref{Theo1} to prove a limit theorem for not centred $h$-dependent descendant characteristic $\varphi$, when $m_{t}^{\mathbb{E}[\varphi]} = \mathbb{E}[\mathcal{Z}_{t}^{\mathbb{E}[\varphi]}]$ grows slowly as $|t| \rightarrow \infty$, and then reduce the general case to this. Recall the definition of the characteristic $\chi^{(\varphi,h)}$ given in \eqref{eq53}. 
	
\begin{theorem} \label{Theo3}
Suppose that assumptions \ref{A1}-\ref{A7} hold and  let $\varphi$ be an $h$-dependent descendant characteristic, for some $h \in \mathbb{N}_{0}$, satisfying \ref{C1}-\ref{C5}. Suppose further that $t \in \mathbb{R} \mapsto m_{t}^{\mathbb{E}[\varphi]} e^{-\frac{\alpha}{2}t}(1+t^{2})$ is bounded. Then, as $t \rightarrow \infty$, $t \in \mathbb{G}$,
\begin{align} \label{eq110}
 e^{-\frac{\alpha}{2}t} \mathcal{Z}_{t}^{\varphi}  \xrightarrow[]{\rm st} \sigma_{\varphi} \left( \frac{W}{\beta} \right)^{1/2} \mathcal{N},
\end{align}
\noindent where $\beta \in (0, \infty)$ is defined in \eqref{eq3}, $W$ is the limit of Nerman's martingale, $\mathcal{N}$ is a standard normal random variable independent of $\mathcal{F}$ (and thus of $W$) and
\begin{align}
\sigma_{\varphi}^{2} \coloneqq  \int_{\mathbb{G}}  \mathbb{E}[(\chi^{(\varphi+m^{\mathbb{E}[\varphi]}\ast \xi,h)}(s))^{2}] e^{-\alpha s} \ell( {\rm d} s).
\end{align}
\noindent Moreover, if $\sigma_{\varphi}^{2}=0$ then
\begin{align} \label{eq171} 
t \in \mathbb{R} \mapsto \sum_{0\leq |u| \leq h-1}\varphi_{u}^{(h-|u|)}(t-S(u))  + \sum_{|u| = h} m^{\mathbb{E}[\varphi]}_{t - S(u)}
\end{align}
\noindent is a version of the process $\mathcal{Z}^{\varphi}$. 
(If $h=0$, the sum $\sum_{0\leq |u| \leq h-1}$ in \eqref{eq171} is defined to be $0$.)
\end{theorem}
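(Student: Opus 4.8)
The plan is to reduce the statement to the centred central limit theorem, Theorem \ref{Theo1}, by absorbing the mean contribution into a single auxiliary characteristic. Set $f \coloneqq \mathbb{E}[\varphi]$, which is a deterministic c\`adl\`ag individual characteristic (by \ref{C3} and the discussion following it), whose associated $m^{f}_{t}=m^{\mathbb{E}[\varphi]}_{t}$ satisfies precisely the growth hypothesis demanded by Lemma \ref{lemma8}, and for which $t\mapsto f(t)e^{-\alpha t}$ is directly Riemann integrable by \ref{C1}. Introduce the characteristic $\chi_{f}$ of \eqref{eq88} and define $\psi \coloneqq \varphi + \chi_{f}$. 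Since $\chi_{f}$ is a $0$-dependent characteristic, $\psi$ is again an $h$-dependent descendant characteristic; by Lemma \ref{lemma8} \ref{lemma8Pro1} together with Remarks \ref{Remark1} and \ref{Remark6}, $\psi$ inherits \ref{C1}-\ref{C5}, so Theorem \ref{Theo1} applies to it. Because $\mathbb{E}[\chi_{f}]\equiv 0$, we have $\mathbb{E}[\psi]=\mathbb{E}[\varphi]$, and combining $\mathcal{Z}^{\chi_{f}}_{t}=\mathcal{Z}^{\mathbb{E}[\varphi]}_{t}-m^{\mathbb{E}[\varphi]}_{t}$ (Lemma \ref{lemma8} \ref{lemma8Pro2}) with the linearity of $\mathcal{Z}^{\bullet}_{t}$ yields the key identity
\[
\mathcal{Z}^{\varphi}_{t} \stackrel{a.s.}{=} \mathcal{Z}^{\psi-\mathbb{E}[\psi]}_{t} + m^{\mathbb{E}[\varphi]}_{t}.
\]

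First I would apply Theorem \ref{Theo1} to $\psi$, obtaining stable convergence of $e^{-\frac{\alpha}{2}t}\mathcal{Z}^{\psi-\mathbb{E}[\psi]}_{t}$ with limiting variance $\frac{W}{\beta}\int_{\mathbb{G}}\mathbb{E}[(\chi^{(\psi,h)}(s))^{2}]e^{-\alpha s}\ell(\mathrm{d}s)$. To identify this with $\frac{W}{\beta}\sigma_{\varphi}^{2}$ I would invoke Remark \ref{Remark2}: since $\chi_{f}=m^{\mathbb{E}[\varphi]}\ast\xi-m^{\mathbb{E}[\varphi]}\ast\mu$ differs from $m^{\mathbb{E}[\varphi]}\ast\xi$ only by the deterministic term $m^{\mathbb{E}[\varphi]}\ast\mu$, the operator $\chi^{(\cdot,h)}$ is unchanged under this shift, so $\chi^{(\psi,h)}=\chi^{(\varphi+m^{\mathbb{E}[\varphi]}\ast\xi,h)}$ and the limiting variance equals $\frac{W}{\beta}\sigma_{\varphi}^{2}$. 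The standing growth hypothesis forces $e^{-\frac{\alpha}{2}t}m^{\mathbb{E}[\varphi]}_{t}=O\big((1+t^{2})^{-1}\big)\to 0$, so by Slutsky's theorem for stable convergence (as used in the proof of Theorem \ref{Theo1}) the deterministic remainder is negligible and $e^{-\frac{\alpha}{2}t}\mathcal{Z}^{\varphi}_{t}\xrightarrow[]{\rm st}\sigma_{\varphi}(W/\beta)^{1/2}\mathcal{N}$, which is \eqref{eq110}.

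For the final claim, suppose $\sigma_{\varphi}^{2}=0$. Then $\int_{\mathbb{G}}\mathbb{E}[(\chi^{(\psi,h)}(s))^{2}]e^{-\alpha s}\ell(\mathrm{d}s)=0$ forces $\mathbb{E}[(\chi^{(\psi,h)}(s))^{2}]=0$ for $\ell$-a.e.\ $s$; since $s\mapsto\mathbb{E}[(\chi^{(\psi,h)}(s))^{2}]$ is c\`adl\`ag (Lemma \ref{lemma6} \ref{lemma6Pro4}), it vanishes for all $s$, whence $\chi^{(\psi,h)}\equiv 0$ almost surely and $\mathcal{Z}^{\chi^{(\psi,h)}}_{t}=0$. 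Feeding this into the expansion of Lemma \ref{lemma6} \ref{lemma6Pro5} applied to $\psi$ leaves $\mathcal{Z}^{\psi-\mathbb{E}[\psi]}_{t}\stackrel{a.s.}{=}\sum_{0\le|u|\le h-1}\big(\psi^{(h-|u|)}_{u}(t-S(u))-\psi^{(0)}_{u}(t-S(u))\big)$. I would then read off $\psi^{(k)}$ from \eqref{eq27}: as $m^{\mathbb{E}[\varphi]}\ast\xi$ is $\mathcal{A}^{(0)}$-measurable, one gets $\chi_{f}^{(0)}=0$ and $\chi_{f}^{(k)}=\chi_{f}$ for $k\ge 1$, so $\psi^{(0)}=\mathbb{E}[\varphi]$ while $\psi^{(k)}=\varphi^{(k)}+\chi_{f}$ for $k\ge 1$, making each summand equal to $\varphi^{(h-|u|)}_{u}(t-S(u))-\mathbb{E}[\varphi](t-S(u))+\chi_{f,u}(t-S(u))$. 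Adding back $m^{\mathbb{E}[\varphi]}_{t}$ and rewriting $\sum_{0\le|u|\le h-1}\chi_{f,u}(t-S(u))$ through Lemma \ref{lemma8} \ref{lemma8Pro3} (with $n=h-1$ and $m^{\mathbb{E}[\varphi]}_{t}=\mathbb{E}[\varphi]\ast\nu^{(\infty)}(t)$ from \eqref{eq109}), the $\mathbb{E}[\varphi]$-terms and the $m^{\mathbb{E}[\varphi]}_{t}$-term cancel in pairs, leaving exactly $\sum_{0\le|u|\le h-1}\varphi^{(h-|u|)}_{u}(t-S(u))+\sum_{|u|=h}m^{\mathbb{E}[\varphi]}_{t-S(u)}$, the asserted version of $\mathcal{Z}^{\varphi}$ (the case $h=0$ giving simply $m^{\mathbb{E}[\varphi]}_{t}$).

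I expect the main obstacle to be the bookkeeping of the last step: correctly tracking which conditioning level each $\psi^{(k)}$ collapses to, ensuring the telescoping cancellation is exact, and cleanly upgrading the $\ell$-a.e.\ vanishing of $\mathbb{E}[(\chi^{(\psi,h)})^{2}]$ to identical vanishing of the process $\chi^{(\psi,h)}$. The genuinely substantive content, by contrast, is fully supplied by Theorem \ref{Theo1} and by the verification (via Lemma \ref{lemma8} and Remarks \ref{Remark1}, \ref{Remark6}) that $\psi$ satisfies \ref{C1}-\ref{C5}; once these are in place the remainder is a reduction together with routine algebra.
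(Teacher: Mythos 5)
Your proposal is correct and follows essentially the same route as the paper's own proof: the auxiliary characteristic $\psi-\mathbb{E}[\psi]=\varphi-\mathbb{E}[\varphi]+\chi_{\mathbb{E}[\varphi]}$ is exactly the paper's centred characteristic, and you invoke the same ingredients (Theorem \ref{Theo1}, Lemma \ref{lemma8} with Remarks \ref{Remark1} and \ref{Remark6} for \ref{C1}-\ref{C5}, Remark \ref{Remark2} to identify $\sigma_{\varphi}^{2}$, and Lemma \ref{lemma6} \ref{lemma6Pro5} plus Lemma \ref{lemma8} \ref{lemma8Pro3} for the degenerate case). The only cosmetic difference is that you upgrade the $\ell$-a.e.\ vanishing through the c\`adl\`ag variance function rather than through the a.s.\ c\`adl\`ag paths of $\chi^{(\psi,h)}$ itself; both are valid.
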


\begin{proof}
By Lemma \ref{lemma8} (with $f = \mathbb{E}[\varphi]$), we can write,  for $t \in \mathbb{R}$,
\begin{align}
\mathcal{Z}_{t}^{\varphi}  =\mathcal{Z}_{t}^{\varphi-\mathbb{E}[\varphi]}+\mathcal{Z}_{t}^{\mathbb{E}[\varphi]} = \mathcal{Z}_{t}^{\varphi-\mathbb{E}[\varphi]}+\mathcal{Z}_{t}^{\chi \mathbb{E}[\varphi]}+m_{t}^{\mathbb{E}[\varphi]}= \mathcal{Z}_{t}^{\varphi-\mathbb{E}[\varphi]+\chi_{\mathbb{E}[\varphi]}}+m_{t}^{\mathbb{E}[\varphi]}, \quad \text{almost surely}.
\end{align}
\noindent By our assumptions, $e^{-\alpha t / 2} m_{t}^{\mathbb{E}[\varphi]} \rightarrow 0$, as $t \rightarrow \infty$, $t \in \mathbb{G}$. Therefore, it remains to prove that $e^{-\frac{\alpha}{2} t} \mathcal{Z}_{t}^{\varphi-\mathbb{E}[\varphi]+\chi_{\mathbb{E}[\varphi]}}$ converges to the stated limit. To this end, we apply Theorem \ref{Theo1} to the branching process counted with characteristic $\varphi-\mathbb{E}[\varphi]+\chi_{\mathbb{E}[\varphi]}$. Recall from \eqref{eq88} (with $f = \mathbb{E}[\varphi]$), that $\chi_{\mathbb{E}[\varphi]} = m^{\mathbb{E}[\varphi]}\ast \xi-\mathbb{E}[m^{\mathbb{E}[\varphi]}\ast \xi]$. By \ref{C1} and Lemma \ref{lemma8}, $\mathbb{E}[\varphi-\mathbb{E}[\varphi]+\chi_{\mathbb{E}[\varphi]}] =0$. Observe that $\chi_{\mathbb{E}[\varphi]}$ is an individual characteristic, and in particular, an $h$-dependent descendant characteristic. Similarly, $\mathbb{E}[\varphi]$ is an $h$-dependent descendant characteristic. Consequently, $\varphi-\mathbb{E}[\varphi]+\chi_{\mathbb{E}[\varphi]}$ is an $h$-dependent descendant characteristic. According to Lemma \ref{lemma8} \ref{lemma8Pro1}, $\chi_{\mathbb{E}[\varphi]}$ satisfies \ref{C1}-\ref{C5}.  The same holds for $\mathbb{E}[\varphi]$ and $\varphi$. Then, Remarks \ref{Remark1} and \ref{Remark6} imply that $\varphi-\mathbb{E}[\varphi]+\chi_{\mathbb{E}[\varphi]}$ is an $h$-dependent descendant characteristic satisfying \ref{C1}-\ref{C5}. Since $\varphi-\mathbb{E}[\varphi]+\chi_{\mathbb{E}[\varphi]} = \varphi+m^{\mathbb{E}[\varphi]}\ast \xi-\mathbb{E}[\varphi+m^{\mathbb{E}[\varphi]}\ast \xi]$, Remark \ref{Remark2} implies that $\chi^{(\varphi-\mathbb{E}[\varphi]+\chi_{\mathbb{E}[\varphi]},h)} = \chi^{(\varphi+m^{\mathbb{E}[\varphi]}\ast \xi,h)}$. Therefore, Theorem \ref{Theo1} implies \eqref{eq110}.  

If $\sigma_{\varphi}^{2}=0$, then $\chi^{(\varphi+m^{\mathbb{E}[\varphi]}\ast \xi,h)}(s) \stackrel{a.s.}{=} 0$ for $\ell$-almost every $s \in \mathbb{G}$. On the other hand, Lemma \ref{lemma6} \ref{lemma6Pro1b} implies that that $\chi^{(\varphi+m^{\mathbb{E}[\varphi]}\ast \xi,h)}$ has almost surely c\`adl\`ag paths, which in turn implies that, except on a $\mathbb{P}$-null set, $\chi^{(\varphi+m^{\mathbb{E}[\varphi]}\ast \xi,h)}(s) = 0$ for every $s \in \mathbb{G}$. Therefore, by Lemma \ref{lemma6} \ref{lemma6Pro5} and Lemma \ref{lemma8} \ref{lemma8Pro2} (with $f = \mathbb{E}[\varphi]$), for every fixed $t \in \mathbb{G}$,
\begin{align} \label{eq168}
\mathcal{Z}_{t}^{\varphi} & \stackrel{a.s.}{=} \mathcal{Z}_{t}^{\mathbb{E}[\varphi]} - \mathcal{Z}_{t}^{\chi_{\mathbb{E}[\varphi]}} + \mathcal{Z}_{t}^{\chi^{(\varphi+m^{\mathbb{E}[\varphi]}\ast \xi,h)}} \nonumber \\
& \quad \quad \quad  +  \sum_{0 \leq |u| \leq h-1}  ( \varphi_{u}^{(h-|u|)}(t-S(u)) - \varphi_{u}^{(0)}(t-S(u))) \nonumber \\
& \quad \quad \quad  + \sum_{0 \leq |u| \leq h-1}  ( \chi_{\mathbb{E}[\varphi], u}^{(h-|u|)}(t-S(u)) - \chi_{\mathbb{E}[\varphi], u}^{(0)}(t-S(u)))
\nonumber \\ 
& \stackrel{a.s.}{=} m_{t}^{\mathbb{E}[\varphi]} +  \sum_{0 \leq |u| \leq h-1}  ( \varphi_{u}^{(h-|u|)}(t-S(u)) - \varphi_{u}^{(0)}(t-S(u))) \nonumber \\
& \quad \quad \quad  + \sum_{0 \leq |u| \leq h-1}  ( \chi_{\mathbb{E}[\varphi], u}^{(h-|u|)}(t-S(u)) - \chi_{\mathbb{E}[\varphi], u}^{(0)}(t-S(u))).
\end{align}
\noindent Lemma \ref{lemma4} \ref{lemma4Pro1} justifies the manipulations in the preceding chain of equalities. From \eqref{eq27} and \eqref{eq88} (where $\varphi$ is replaced by $\chi_{\mathbb{E}[\varphi]}$), we observe that $\chi_{\mathbb{E}[\varphi]}^{(0)}(t) \stackrel{a.s.}{=} \mathbb{E}[\chi_{\mathbb{E}[\varphi]}(t)] =  0$, while $\chi_{\mathbb{E}[\varphi]}^{(h-i)}(t) \stackrel{a.s.}{=} \chi_{\mathbb{E}[\varphi]}(t)$, for $i=1, \dots, h$ (if $h \geq 1$). Then, 
\begin{align}  \label{eq169}
\sum_{0 \leq |u| \leq h-1}  ( \chi_{\mathbb{E}[\varphi], u}^{(h-|u|)}(t-S(u)) - \chi_{\mathbb{E}[\varphi], u}^{(0)}(t-S(u))) \stackrel{a.s.}{=} \sum_{0 \leq |u| \leq h-1}  \chi_{\mathbb{E}[\varphi], u}(t-S(u)). 
\end{align}
\noindent Recall from \eqref{eq27} that $\varphi^{(0)}(t) \stackrel{a.s.}{=} \mathbb{E}[\varphi](t)$, while from \eqref{eq109}, $m_{t}^{\mathbb{E}[\varphi]} = \mathbb{E}[\varphi] \ast \nu^{(\infty)}(t)$. Consequently, Lemma \ref{lemma8} \ref{lemma8Pro3}, \eqref{eq168} and \eqref{eq169} imply that
\begin{align}
\mathcal{Z}_{t}^{\varphi} & \stackrel{a.s.}{=} m_{t}^{\mathbb{E}[\varphi]}  +  \sum_{0\leq |u| \leq h-1} \varphi^{(0)}_{u}(t - S(u)) - \mathbb{E}[\varphi] \ast \nu^{(\infty)}(t) + \sum_{|u| = h} \mathbb{E}[\varphi] \ast \nu^{(\infty)}(t - S(u)) \nonumber \\
& \quad \quad \quad  + \sum_{0 \leq |u| \leq h-1}  ( \varphi_{u}^{(h-|u|)}(t-S(u)) - \varphi_{u}^{(0)}(t-S(u))) \nonumber \\
 & \stackrel{a.s.}{=} \sum_{0\leq |u| \leq h-1}\varphi_{u}^{(h-|u|)}(t-S(u))  + \sum_{|u| = h} m^{\mathbb{E}[\varphi]}_{t - S(u)}.
\end{align}
\noindent Again, Lemma \ref{lemma4} \ref{lemma4Pro1} justifies the manipulations in the preceding chain of equalities. This concludes our proof.
\end{proof}

\begin{remark}
If $\varphi$ is a $0$-dependent descendant characteristic (i.e., an individual characteristic), then within the framework of Theorem \ref{Theo3},
\begin{align}
\sigma^{2}_{\varphi} = \int_{\mathbb{G}} \mathrm{Var}(\varphi(s) + m^{\mathbb{E}[\varphi]}\ast \xi(s)) e^{-\alpha s}  \ell({\rm d} s).
\end{align}
\noindent (Recall also Remark \ref{Remark8}). Moreover, if $\sigma^{2}_{\varphi} =0$, then from \eqref{eq171} it follows that $t \in \mathbb{R} \mapsto m_{t}^{\mathbb{E}[\varphi]}$ is a version of $\mathcal{Z}^{\varphi}$. This can be compared with \cite[Theorem 6.6]{Iksanov2021}.  
\end{remark}

We now prove Theorem \ref{Theo2}.

\begin{proof}[Proof of Theorem \ref{Theo2}]
Let $\varphi$ be a random characteristic satisfying \ref{C1}-\ref{C5} and \ref{E1}, for some constant $a_{\lambda, i} \in \mathbb{C}$ and a function $r: \mathbb{G} \rightarrow \mathbb{C}$ satisfying $|r(t)| \leq C e^{\alpha t / 2} /(1+t^{2})$, for all $t \in \mathbb{G}$, and some finite constant $C \geq 0$. For any $\lambda \in \Lambda_{\geq}$ of multiplicity $k(\lambda) \in \mathbb{N}$, we put $\vec{a}_{\lambda} \coloneqq \sum_{i=1}^{k(\lambda)} a_{\lambda, i-1} \mathrm{e}_{i}$. Recall the definition of the functions $\phi_{\lambda}$ and $\chi_{\lambda}$ in \eqref{eq77} and \eqref{eq78}, respectively. Consider the following (individual) characteristics. For $t \in \mathbb{R}$,
\begin{align}
\psi_{\Lambda}(t) = \sum_{\lambda \in \Lambda} \vec{a}_{\lambda}^{\top}(\phi_{\lambda}(t) + \chi_{\lambda}(t)) \mathrm{e}_{1},
\end{align} 
\noindent  
\begin{align}
\psi_{\partial \Lambda}(t) = \sum_{\lambda \in \partial \Lambda} \vec{a}_{\lambda}^{\top} \mathbb{E}[\phi_{\lambda}(t)] \mathrm{e}_{1},
\end{align} 
\noindent and 
\begin{align}
\phi_{\partial \Lambda}(t) =  \sum_{\lambda \in \partial \Lambda} \vec{a}_{\lambda}^{\top}\phi_{\lambda}(t) \mathrm{e}_{1} - \sum_{ \lambda \in \partial  \Lambda} \mathbb{E}[\vec{a}_{\lambda}^{\top}\phi_{\lambda}(t) \mathrm{e}_{1}]. 
\end{align}
\noindent These characteristics $\psi_{\Lambda}$, $\psi_{\partial \Lambda}$ and  $\phi_{\partial \Lambda}$ are in fact real-valued. This is noted in the proof of \cite[Theorem 2.15, p.\ 1588]{Iksanov2021}. Indeed, this follows from \cite[Remark 2.16]{Iksanov2021}, which implies that  $\overline{\vec{a}_{\lambda}} = \vec{a}_{\overline{\lambda}}$, $\overline{\phi_{\lambda}} = \phi_{\overline{\lambda}}$ and $\overline{\chi_{\lambda}} = \chi_{\overline{\lambda}}$.

Recall the definition of $H_{\Lambda}$ and $H_{\partial \Lambda}$ in \eqref{eq84} and \eqref{eq86}, respectively. Recall also that $H(t) \coloneqq H_{\Lambda}(t) + H_{\partial \Lambda}(t)$, for $t \in \mathbb{R}$. By \eqref{eq78}, $\mathbb{E}[\chi_{\lambda}(t)] = 0$, for all $\lambda \in \Lambda$ and $t \in \mathbb{R}$. Moreover, by \cite[(6.23)]{Iksanov2021},
\begin{align} \label{eq114}
m_{t}^{\mathbb{E}[\psi_{\Lambda}]} = \mathbf{1}_{[0,\infty)}(t)\sum_{\lambda \in \Lambda} \vec{a}_{\lambda}^{\top} \exp(\lambda, t, k(\lambda)) \mathrm{e}_{1} = \mathbf{1}_{[0,\infty)}(t) \mathbb{E}[H_{\Lambda}(t)],  \quad \text{for} \quad t \in \mathbb{R}. 
\end{align}
\noindent On the other hand, by \cite[(6.24)]{Iksanov2021}
\begin{align} \label{eq115}
m_{t}^{\mathbb{E}[\psi_{\partial \Lambda}]} = \mathbf{1}_{[0,\infty)}(t)\sum_{\lambda \in \partial  \Lambda} \vec{a}_{\lambda}^{\top} \exp(\lambda, t, k(\lambda)) \mathrm{e}_{1} = \mathbf{1}_{[0,\infty)}(t) H_{\partial \Lambda}(t),  \quad \text{for} \quad t \in \mathbb{R}. 
\end{align}

Next, by \eqref{eq87}, \eqref{eq112}, \eqref{eq113} \eqref{eq77}, \eqref{eq78} and \eqref{eq114}, we observe that, for $t \in \mathbb{R}$,
\begin{align} \label{eq116}
\psi_{\Lambda}(t) & =  \sum_{i=1}^{N} \sum_{\lambda \in \Lambda} \vec{a}_{\lambda}^{\top} \mathbf{1}_{(-\infty, X_{i})}(t) \exp(\lambda, t-X_{i}, k(\lambda)) \mathrm{e}_{1} - \mathbf{1}_{(-\infty, 0)}(t)  \sum_{\lambda \in \Lambda} \vec{a}_{\lambda}^{\top}\exp (\lambda,t, k(\lambda)) \mathrm{e}_{1} \nonumber \\
& = \sum_{i=1}^{N} \left( \sum_{\lambda \in \Lambda} \vec{a}_{\lambda}^{\top} \exp(\lambda, t-X_{i}, k(\lambda)) \mathrm{e}_{1} -m_{t-X_{i}}^{\mathbb{E}[\psi_{\Lambda}]} \right) - \mathbf{1}_{(-\infty, 0)}(t) \sum_{\lambda \in \Lambda} \vec{a}_{\lambda}^{\top}\exp (\lambda,t, k(\lambda)) \mathrm{e}_{1} 
\nonumber \\
& = \sum_{i=1}^{N}  \sum_{\lambda \in \Lambda} e^{\lambda (t-X_{i})} \sum_{i=0}^{k(\lambda)-1} a_{\lambda, i} (t-X_{i})^{i} - m^{\mathbb{E}[\psi_{\Lambda}]} \ast \xi(t) - \mathbf{1}_{(-\infty, 0)}(t) \sum_{\lambda \in \Lambda}  e^{\lambda t} \sum_{i=0}^{k(\lambda)-1} a_{\lambda, i} t^{i} \nonumber \\
% & = ( \mathbb{E}[H_{\Lambda}]- m^{\mathbb{E}[\psi_{\Lambda}]}) \ast \xi(t) - \mathbf{1}_{(-\infty, 0)}(t) \mathbb{E}[H_{\Lambda}(t)] \nonumber \\
& =  (\mathbf{1}_{(-\infty, 0)} \mathbb{E}[H_{\Lambda}])\ast \xi(t) - \mathbf{1}_{(-\infty, 0)}(t) \mathbb{E}[H_{\Lambda}(t)].
\end{align}
\noindent Recall that $\mathbb{E}[H_{\Lambda}]$ represents the function $t \in \mathbb{R} \mapsto \mathbb{E}[H_{\Lambda}(t)]$. Similarly, by \eqref{eq77} and \eqref{eq115}, we have that, for $t \in \mathbb{R}$,
\begin{align} \label{eq117}
\psi_{\partial \Lambda}(t) + \phi_{\partial \Lambda}(t)& =  \sum_{i=1}^{N} \sum_{\lambda \in \partial \Lambda} \vec{a}_{\lambda}^{\top} \mathbf{1}_{[0, X_{i})}(t) \exp(\lambda, t-X_{i}, k(\lambda)) \mathrm{e}_{1}\nonumber \\
& = \sum_{i=1}^{N} \mathbf{1}_{[0, \infty)}(t)  \sum_{\lambda \in \partial \Lambda} \vec{a}_{\lambda}^{\top}  \exp(\lambda, t-X_{i}, k(\lambda)) \mathrm{e}_{1} - m^{\mathbb{E}[\psi_{\partial \Lambda}]} \ast \xi(t) \nonumber \\
% & = \mathbf{1}_{[0, \infty)}(t) H_{\partial \Lambda}\ast \xi(t)  - m^{\mathbb{E}[\psi_{\partial \Lambda}]} \ast \xi(t) \nonumber \\
& = \mathbf{1}_{[0, \infty)}(t) H_{\partial \Lambda}\ast \xi(t)  - (\mathbf{1}_{[0,\infty)} H_{\partial \Lambda}) \ast \xi(t).
\end{align}

Let $\varrho(t) \coloneqq \varphi(t) - \psi_{\Lambda}(t) - \psi_{\partial \Lambda}(t)$, for $t \in \mathbb{R}$. By \cite[(6.28)]{Iksanov2021}, the following decomposition, which remains valid for descendant characteristics, is established:
\begin{align} \label{eq170}
\mathcal{Z}^{\varphi}_{t} = H_{\Lambda}(t) + H_{\partial \Lambda}(t) + \mathcal{Z}^{\varrho - \phi_{\partial \Lambda}}_{t} + \mathcal{Z}^{\chi}_{t}, \quad \text{for} \quad t \in \mathbb{R}_{+},
\end{align}
\noindent where $\chi$ is an individual characteristic defined in \cite[(6.26)]{Iksanov2021}. Therefore, it is sufficient to prove the limit theorem for $\mathcal{Z}^{\varrho - \phi_{\partial \Lambda}}_{t}$ and $\mathcal{Z}^{\chi}_{t}$, in order to establish Theorem \ref{Theo2}.

%We start with $\mathcal{Z}^{\varrho - \phi_{\partial \Lambda}}_{t}$. 
By Lemma \ref{lemma7}, Remark \ref{Remark1} and Remark \ref{Remark6}, the characteristic $\varrho - \phi_{\partial \Lambda}$ has c\`adl\`ag paths and satisfies \ref{C1}-\ref{C5}. Note that $\mathbb{E}[\phi_{\partial \Lambda}(t)] = 0$, for $t \in \mathbb{R}$ and that, by \ref{E1}, \eqref{eq114} and \eqref{eq115},
\begin{align} \label{eq118}
m_{t}^{\mathbb{E}[\varrho - \phi_{\partial \Lambda}]} & =  m_{t}^{\mathbb{E}[\varrho]}   =  m_{t}^{\mathbb{E}[\varphi]} - m_{t}^{\mathbb{E}[\psi_{\Lambda}]} - m_{t}^{\mathbb{E}[\psi_{\partial \Lambda}]} \nonumber \\
& =  m_{t}^{\mathbb{E}[\varphi]} - \mathbf{1}_{[0, \infty)}(t)(\mathbb{E}[H_{\Lambda}(t)] + H_{\partial \Lambda}(t))  = r(t), \quad \text{for} \quad t \in \mathbb{G} 
\end{align}
\noindent Then, under our assumptions, we can apply Theorem \ref{Theo3} to conclude that, as $t \rightarrow \infty$, $t \in \mathbb{G}$,
\begin{align} \label{eq111}
e^{-\frac{\alpha}{2}t} \mathcal{Z}^{\varrho - \phi_{\partial \Lambda}}_{t} \xrightarrow[]{\rm st} \sigma \left( \frac{W}{\beta} \right)^{1/2} \mathcal{N},
\end{align}
\noindent where $\mathcal{N}$ is a standard normal random variable independent of $\mathcal{F}$ and
\begin{align}
\sigma^{2} \coloneqq  \int_{\mathbb{G}}  \mathbb{E}[(\chi^{(\varrho - \phi_{\partial \Lambda}+r\ast \xi,h)}(s))^{2}]  e^{-\alpha s} \ell({\rm d} s).
\end{align}

The function $\chi^{(\varrho - \phi_{\partial \Lambda}+r\ast \xi,h)}$ can be simplified. By \eqref{eq116}, \eqref{eq117} and \eqref{eq118}, 
\begin{align}
\varrho - \phi_{\partial \Lambda}+r\ast \xi(t) & = \varphi(t) - (\mathbb{E}[H_{\Lambda}] -m^{\mathbb{E}[\varphi]})\ast \xi(t) - \mathbf{1}_{[0, \infty)}(t) H_{\partial \Lambda} \ast \xi(t) + \mathbf{1}_{(-\infty, 0)}(t) \mathbb{E}[H_{\Lambda}(t)],
\end{align}
\noindent for $t \in \mathbb{G}$. Observe that $\mathbf{1}_{(-\infty, 0)}(t) \mathbb{E}[H_{\Lambda}(t)]$ is deterministic. Thus, Remark \ref{Remark2} implies that $\chi^{(\varrho - \phi_{\partial \Lambda}+r\ast \xi,h)} = \chi^{(\varphi + (m^{\mathbb{E}[\varphi]} - \mathbb{E}[H_{\Lambda}])\ast \xi - \mathbf{1}_{[0, \infty)} \cdot H_{\partial \Lambda}\ast \xi,h)}$. 

This proves Theorem \ref{Theo2} under the assumption that $\rho_{i} =0$ for all $i \in \mathbb{N}_{0}$ (that is, $\max\{i \in \mathbb{N}_{0}: \rho_{i} >0\} =0$). Recall that $\rho_{i}$ is defined in \eqref{eq120}. Indeed, as explained in the proof of \cite[Theorem 2.15, p.\ 1587]{Iksanov2021}, in this case, we have that $\sum_{i=1}^{N}\vec{a}_{\lambda}^{\top}  \exp(\lambda, t-X_{i}, k(\lambda)) \mathrm{e}_{1}$ is almost surely constant for any $\lambda \in \partial \Lambda$ and $t \in \mathbb{G}$. This follows from \cite[(6.27)]{Iksanov2021} and the definition of $(R_{\lambda,i})_{i=0}^{k(\lambda)-1}$ in \eqref{eq119} and $\rho_{i}$ for $i \in \mathbb{N}_{0}$. Then, by the definition of $\chi$ in \cite[(6.26)]{Iksanov2021}, $\mathcal{Z}^{\chi}_{t} = 0$ almost surely for $t \in \mathbb{R}$. In particular, we also have that $H_{\partial \Lambda} \ast \xi(t)$ is almost surely constant for any $t \in \mathbb{G}$ and Remark \ref{Remark2} implies that $\chi^{(\varrho - \phi_{\partial \Lambda}+r\ast \xi,h)} = \chi^{(\varphi + (m^{\mathbb{E}[\varphi]} - \mathbb{E}[H_{\Lambda}])\ast \xi,h)}$, which establishes \eqref{eq121}.

Indeed, if $\sigma^{2}>0$, then \eqref{eq170} and \eqref{eq111} imply \eqref{eq165}. On the other hand, recall that $\varrho \coloneqq \varphi - \psi_{\Lambda} - \psi_{\partial \Lambda}$. Since $\psi_{\Lambda}$, $\psi_{\partial \Lambda}$ and $\phi_{\partial \Lambda}$ are $0$-dependent descendant characteristics (i.e., an individual characteristics), it follows that $\psi_{\Lambda}^{(i)}(t) \stackrel{a.s.}{=} \psi_{\Lambda}(t)$, $\psi_{\partial \Lambda}^{(i)}(t) \stackrel{a.s.}{=} \psi_{\partial \Lambda}(t)$ and $\phi_{\partial \Lambda}^{(i)}(t) \stackrel{a.s.}{=} \phi_{\partial \Lambda}(t)$, for $i = 1,\dots, h$ (if $h \geq 1$); see \eqref{eq27}. Then, if $\sigma^{2}=0$, Theorem \ref{Theo3} and \eqref{eq118} imply that, for all $t \in \mathbb{G}$,
\begin{align} \label{eq131}
\mathcal{Z}^{\varrho - \phi_{\partial \Lambda}}_{t} \stackrel{a.s.}{=} \sum_{0\leq |u| \leq h-1} (\varphi_{u}^{(h-|u|)}(t-S(u)) - \phi_{u}(t-S(u)))  + \sum_{|u| = h} r(t - S(u)),
\end{align}
\noindent where $\phi \coloneqq \psi_{\Lambda} + \psi_{\partial \Lambda} + \phi_{\partial \Lambda}$.
\noindent This and \eqref{eq170} establish Theorem \ref{Theo2} \ref{Theo2Claim1}.

We now prove Theorem \ref{Theo2} assuming  $\rho_{i} > 0$ for some $i \in \mathbb{N}_{0}$. From \eqref{eq111}, we have that $t^{-\frac{1}{2}}e^{-\frac{\alpha}{2}t} \mathcal{Z}^{\varrho - \phi_{\partial \Lambda}}_{t} \xrightarrow[]{\rm p} 0$, as $t \rightarrow \infty$, $t \in \mathbb{G}$. Let $n \coloneqq \max\{i \in \mathbb{N}_{0}: \rho_{i} >0\}$. Clearly $n \in \mathbb{N}$. Since $\chi$ is the characteristic defined in \cite[(6.26)]{Iksanov2021}, the argument in the proof of \cite[Theorem 2.15, p.\ 1588]{Iksanov2021} directly applies and thus, 
\begin{align}
\left(\frac{\rho_{n}^{2}t^{2n+1}}{2n+1}e^{\alpha t} \right)^{-\frac{1}{2}} \mathcal{Z}^{\chi}_{t} \xrightarrow[]{\rm st} \left( \frac{W}{\beta} \right)^{1/2} \mathcal{N}, \quad \text{as} \quad t \rightarrow \infty, \, \, t \in \mathbb{G},
\end{align}
\noindent where $\mathcal{N}$ is a standard normal random variable independent of $\mathcal{F}$.

This finishes the proof of Theorem \ref{Theo2}.
\end{proof}

%%%%%%%%%%%%%%%%%%%%%%%%%%%%%%%%%%%%%%%%%%%%%%%%%%%%%%%%%%%%%%%%%%%%%%%
\subsection{Proof of Corollary \ref{corollary5}} \label{ProofofCorollaryI}
%%%%%%%%%%%%%%%%%%%%%%%%%%%%%%%%%%%%%%%%%%%%%%%%%%%%%%%%%%%%%%%%%%%%%%%

This section presents the proof of Corollary \ref{corollary5}, which is a consequence of Theorem \ref{Theo2}. We will specifically derive sufficient conditions under which \ref{E1} is satisfied. Recall that assumption \ref{E1} concerns the asymptotic expansion of the mean $m_{t}^{\mathbb{E}[\varphi]}$ of a supercritical general branching process.
Recall the probability measure $\mu_{\alpha}$ on $\mathbb{R}$ defined in \eqref{eq153} and the definition of a spread-out probability measure given before Corollary \ref{corollary5}.

\begin{lemma} \label{lemma13}
Suppose that assumptions \ref{A1}-\ref{A4} hold and that $\mu_{\alpha}$ is spread-out such that there exists a Stone's decomposition of its associated renewal measure $e^{-\alpha x} \nu^{(\infty)}({\rm d}x)$ that satisfies \ref{S1}. Furthermore, suppose that
\begin{align} \label{eq160}
\beta \coloneqq \int_{[0,\infty)} x e^{-\alpha x} \mu({\rm d}x) \in (0,\infty).
\end{align}
\noindent Let $\varphi$ be a non-negative characteristic that vanishes on the negative half-line (i.e., $\varphi(t)= 0$, for $t<0$) such that $t \in \mathbb{R} \mapsto \mathbb{E}[\varphi](t)$ is c\`adl\`ag, $\varphi(t) \in L^{1}(\Omega, \mathcal{F}, \mathbb{P})$, and 
\begin{align} \label{eq124}
\mathbb{E}[\varphi(t)] \leq C e^{(\alpha-\delta) t} \mathbf{1}_{[0,\infty)}(t), \quad \text{for every} \, \, t \in \mathbb{R},
\end{align}
\noindent for some $\delta \in (\varepsilon, \alpha)$ and finite constant $C > 0$, where $\varepsilon \in (\alpha/2, \alpha)$ is as in \ref{S1}. Then, 
\begin{align} \label{eq161}
m_{t}^{\mathbb{E}[\varphi]} = \mathbf{1}_{[0,\infty)}(t) e^{\alpha t} \beta^{-1} \int_{0}^{\infty} \mathbb{E}[\varphi(x)] e^{-\alpha x} {\rm d} x + r(t), 
\end{align}
\noindent where $r: \mathbb{R} \rightarrow \mathbb{R}$ is a function  satisfying $|r(t)| \leq C^{\prime} e^{(\alpha-\varepsilon)t} \mathbf{1}_{[0,\infty)}(t)$  for all $t \in \mathbb{R}$ and some finite constant $C^{\prime} > 0$.
\end{lemma}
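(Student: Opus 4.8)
The plan is to reduce everything to a renewal-type convolution and then exploit the Stone decomposition supplied by \ref{S1}. Write $F(t) \coloneqq \mathbb{E}[\varphi](t)$ and $\tilde F(t) \coloneqq F(t) e^{-\alpha t}$. Since $F$ is non-negative, vanishes on $(-\infty,0)$, is c\`adl\`ag, and satisfies $F(t) \le C e^{(\alpha-\delta)t}\mathbf{1}_{[0,\infty)}(t)$, the function $\tilde F$ is c\`adl\`ag and dominated by the directly Riemann integrable function $t \mapsto C e^{-\delta t}\mathbf{1}_{[0,\infty)}(t)$; hence, by (a slightly extended version of) \cite[Remark 3.10.5]{Sidney1992}, $\tilde F$ is directly Riemann integrable, so $|\mathbb{E}[\varphi]| = F$ satisfies \ref{C1}. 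Consequently \eqref{eq109} applies and gives, for $t \in \mathbb{R}$, $m_{t}^{\mathbb{E}[\varphi]} = F \ast \nu^{(\infty)}(t) = \int_{[0,\infty)} F(t-x)\,\nu^{(\infty)}({\rm d}x)$. For $t<0$ every summand vanishes, so $m_{t}^{\mathbb{E}[\varphi]}=0$ and the claimed identity holds trivially with $r(t)=0$; I therefore fix $t\ge 0$ from now on.

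Multiplying by $e^{-\alpha t}$ and using \eqref{eq123}, I would rewrite the mean as a convolution of $\tilde F$ against the renewal measure $U_{\alpha}({\rm d}x)\coloneqq e^{-\alpha x}\nu^{(\infty)}({\rm d}x)=\sum_{i\ge 0}\mu_{\alpha}^{\ast i}({\rm d}x)$ of $\mu_{\alpha}$, namely $e^{-\alpha t}m_{t}^{\mathbb{E}[\varphi]} = \int_{[0,t]}\tilde F(t-x)\,U_{\alpha}({\rm d}x)$, the range being $[0,t]$ because $\tilde F(t-x)=0$ for $x>t$. Inserting the Stone decomposition of \ref{S1}, $U_{\alpha}({\rm d}x)=(\beta^{-1}+g(x))\,{\rm d}x+\nu^{(\infty)}_{\alpha,2}({\rm d}x)$ with $|g(x)|\le C e^{-\varepsilon x}$, splits this into three pieces:
\begin{align*}
e^{-\alpha t}m_{t}^{\mathbb{E}[\varphi]} = \beta^{-1}\int_{0}^{t} \tilde F(y)\,{\rm d}y + \int_{0}^{t} \tilde F(t-x)\,g(x)\,{\rm d}x + \int_{[0,t]}\tilde F(t-x)\,\nu^{(\infty)}_{\alpha,2}({\rm d}x),
\end{align*}
where in the first integral I substituted $y=t-x$. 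The leading piece is the source of the main term: $\beta^{-1}\int_{0}^{t}\tilde F = \beta^{-1}\int_{0}^{\infty} \mathbb{E}[\varphi(y)]e^{-\alpha y}\,{\rm d}y - \beta^{-1}\int_{t}^{\infty} \tilde F(y)\,{\rm d}y$, and since $\tilde F(y)\le C e^{-\delta y}$ the tail is $O(e^{-\delta t})$.

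It then remains to show that each of the three remainder contributions is $O(e^{-\varepsilon t})$; multiplying back by $e^{\alpha t}$ then yields $|r(t)|\le C' e^{(\alpha-\varepsilon)t}$ for a generic constant $C'$ absorbing the various bounds. The tail of the leading piece is bounded by $\beta^{-1}C\delta^{-1}e^{-\delta t}\le C' e^{-\varepsilon t}$ because $\delta>\varepsilon$. For the $g$-term I would use $\tilde F(t-x)\le C e^{-\delta(t-x)}$ together with $|g(x)|\le C e^{-\varepsilon x}$ to obtain $C^{2} e^{-\delta t}\int_{0}^{t} e^{(\delta-\varepsilon)x}\,{\rm d}x \le \tfrac{C^{2}}{\delta-\varepsilon}\,e^{-\varepsilon t}$. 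The genuinely delicate piece is the third one, since \ref{S1} furnishes only the exponential moment $\int_{[0,\infty)} e^{\varepsilon x}\nu^{(\infty)}_{\alpha,2}({\rm d}x)<\infty$ with $\varepsilon<\delta$, so $e^{\delta x}$ need not be $\nu^{(\infty)}_{\alpha,2}$-integrable and the naive bound $e^{-\delta(t-x)}=e^{-\delta t}e^{\delta x}$ fails; the trick is to discard the surplus decay by writing $\tilde F(t-x)\le C e^{-\delta(t-x)}\le C e^{-\varepsilon(t-x)}$ for $x\in[0,t]$ (valid since $t-x\ge 0$ and $\delta>\varepsilon$), whence $\int_{[0,t]}\tilde F(t-x)\,\nu^{(\infty)}_{\alpha,2}({\rm d}x)\le C e^{-\varepsilon t}\int_{[0,\infty)}e^{\varepsilon x}\,\nu^{(\infty)}_{\alpha,2}({\rm d}x)=O(e^{-\varepsilon t})$. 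Collecting the three bounds gives the stated remainder estimate; this third term, matching the decay to the available exponential moment, is the main obstacle to watch in the write-up.
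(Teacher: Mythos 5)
Your proposal is correct and follows essentially the same route as the paper: both start from the representation \eqref{eq109}, pass to the renewal measure of $\mu_{\alpha}$ via \eqref{eq123}, insert the Stone decomposition from \ref{S1}, and bound the three remainder pieces, including the same key step of relaxing $e^{-\delta(t-x)}$ to $e^{-\varepsilon(t-x)}$ on $[0,t]$ so that the available exponential moment of $\nu^{(\infty)}_{\alpha,2}$ suffices. No gaps.
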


\begin{proof}
By \eqref{eq124}, one can readily deduce from \cite[Remark 3.10.4 on page 236]{Resnick2014} (or \cite[Proposition 2.6 (a)]{Iksanov2021}) that $t \in \mathbb{R} \mapsto \mathbb{E}[\varphi](t)e^{-\alpha t}$ is directly Riemann integrable. Recall that $\varphi$ is non-negative and that $\varphi(t) = 0$ for $t < 0$. Then, by \eqref{eq123} and \eqref{eq109}, we know that, for $t \in \mathbb{R}$, 
\begin{align} \label{eq159} 
m_{t}^{\mathbb{E}[\varphi]} & = \mathbf{1}_{[0,\infty)}(t) e^{\alpha t} \int_{[0,\infty)} \mathbb{E}[\varphi](t-x) e^{-\alpha(t-x)} \sum_{i=0}^{\infty} \mu_{\alpha}^{\ast i}({\rm d}x) \nonumber \\
& = \mathbf{1}_{[0,\infty)}(t) e^{\alpha t} \int_{[0,t]} \mathbb{E}[\varphi](t-x) e^{-\alpha(t-x)} e^{-\alpha x} \nu^{(\infty)}({\rm d}x). 
\end{align}
\noindent Since $\mu_{\alpha}$ is spread-out with associated renewal measure $e^{-\alpha x} \nu^{(\infty)}({\rm d}x)$ having a Stone's decomposition $e^{-\alpha t} \nu^{(\infty)}({\rm d}x) = \nu^{(\infty)}_{\alpha, 1}({\rm d}x) + \nu^{(\infty)}_{\alpha, 2}({\rm d}x)$ satisfying  \ref{S1}, it follows from \eqref{eq159} that, for $t \in \mathbb{R}$, 
\begin{align} \label{eq127} 
m_{t}^{\mathbb{E}[\varphi]} & = \mathbf{1}_{[0,\infty)}(t) e^{\alpha t} \left( \int_{[0,t]} \mathbb{E}[\varphi](t-x) e^{-\alpha(t-x)} \nu^{(\infty)}_{\alpha, 1}({\rm d}x) +  \int_{[0,t]} \mathbb{E}[\varphi](t-x) e^{-\alpha(t-x)} \nu^{(\infty)}_{\alpha, 2}({\rm d}x) \right) \nonumber \\
& = \mathbf{1}_{[0,\infty)}(t) e^{\alpha t} \left( \int_{0}^{t} \mathbb{E}[\varphi](t-x) e^{-\alpha(t-x)} f^{(\infty)}_{\alpha, 1}(x) {\rm d}x +  \int_{[0,t]} \mathbb{E}[\varphi](t-x) e^{-\alpha(t-x)} \nu^{(\infty)}_{\alpha, 2}({\rm d}x) \right)\nonumber \\
& = \mathbf{1}_{[0,\infty)}(t) e^{\alpha t} \beta^{-1} \int_{0}^{\infty} \mathbb{E}[\varphi(x)] e^{-\alpha x} {\rm d}x +  r(t),
\end{align}
\noindent where 
\begin{align}
r(t) & \coloneqq \mathbf{1}_{[0,\infty)}(t) e^{\alpha t} \Big( \int_{[0,t]} \mathbb{E}[\varphi](t-x) e^{-\alpha(t-x)} \nu^{(\infty)}_{\alpha, 2}({\rm d}x)  \nonumber \\
& \quad \quad \quad \quad + \int_{0}^{t}  \mathbb{E}[\varphi](x) e^{-\alpha x} g(t-x) {\rm d}x - \beta^{-1} \int_{t}^{\infty} \mathbb{E}[\varphi](x) e^{-\alpha x} {\rm d}x  \Big).
\end{align}
\noindent On the other hand, by \ref{S1} and \eqref{eq124}, there exists a finite constant $C_{1}>0$ such that, for $t \in \mathbb{R}$,
\begin{align}
|r(t)| \leq  \mathbf{1}_{[0,\infty)}(t) C_{1}  e^{\alpha t} \left( \int_{[0,t]} e^{-\delta (t-x)}\nu^{(\infty)}_{\alpha, 2}({\rm d}x) + \int_{0}^{t} e^{-\delta x}  e^{-\varepsilon (t-x)}  {\rm d}x + \int_{t}^{\infty} e^{-\delta x}  {\rm d}x  \right).
\end{align}
\noindent Recall that $\delta \in (\varepsilon, \alpha)$. Then, 
\begin{align} \label{eq128} 
|r(t)| & \leq  \mathbf{1}_{[0,\infty)}(t) C_{1} e^{\alpha t}  \left( \int_{[0,t]} e^{-\varepsilon (t-x)}\nu^{(\infty)}_{\alpha, 2}({\rm d}x) + e^{-\varepsilon t} \int_{0}^{t} e^{-(\delta-\varepsilon) x}  {\rm d}x + \delta^{-1} e^{-\delta x}   \right)
\nonumber \\
& \leq  \mathbf{1}_{[0,\infty)}(t) C_{1} e^{(\alpha -\varepsilon) t}  \left( \int_{[0,\infty]} e^{\varepsilon x}\nu^{(\infty)}_{\alpha, 2}({\rm d}x) +  (\delta-\varepsilon)^{-1} + \delta^{-1}\right).
\end{align}
\noindent Therefore, \eqref{eq161} follows from \eqref{eq127}, \eqref{eq128} and \ref{S1}. This concludes our proof. 
\end{proof}

\begin{proof}[Proof of Corollary \ref{corollary5}]
To apply Theorem \ref{Theo2}, we only need to verify that $\varphi$ satisfies \ref{C1}, \ref{C4} and \ref{E1}. Recall our assumption that $\varphi$ is non-negative and vanishes on the negative half-line. Then, \ref{C4} is clearly satisfied.

We continue by showing that $\varphi$ satisfies \ref{C1}. By \ref{C3}, $\varphi(t) \in L^{1}(\Omega, \mathcal{F}, \mathbb{P})$ for every $t \in \mathbb{R}$ and $t \in \mathbb{R} \mapsto \mathbb{E}[\varphi](t)$ is c\`adl\`ag; see the discussion after \ref{C3}. Then, by using \eqref{eq129}, one can readily deduce from \cite[Remark 3.10.4 on page 236]{Resnick2014} (or \cite[Proposition 2.6 (a)]{Iksanov2021}) that $t \in \mathbb{R} \mapsto \mathbb{E}[\varphi](t)e^{-\alpha t}$ is directly Riemann integrable. Therefore, $\varphi$ satisfies \ref{C1}. 

Recall that \ref{A1}-\ref{A7} (see also \eqref{eq3}) implies \eqref{eq160}. Then, \ref{S1} and Lemma \ref{lemma13} implies that $\varphi$ also satisfies \ref{E1}. Therefore, our claim follows from Theorem \ref{Theo2}, with $a_{\alpha} \coloneqq \beta^{-1} \int_{0}^{\infty} \mathbb{E}[\varphi(x)] e^{-\alpha x} {\rm d} x$ and $H(t) = H_{\Lambda}(t) = a_{\alpha} e^{\alpha t} W$, for $t \in \mathbb{R}$ (note that $n =-1$ and thus, $\rho_{-1}=0$). 
\end{proof}

\begin{appendices}
%%%%%%%%%%%%%%%%%%%%%%%%%%%%%%%%%%%%%%%%%%%%%%%%%%%%%%%%%%%%%%%%%%%%%%%
\section{Proof of Lemmas \ref{lemma12} and \ref{lemma16}} \label{Append1}
%%%%%%%%%%%%%%%%%%%%%%%%%%%%%%%%%%%%%%%%%%%%%%%%%%%%%%%%%%%%%%%%%%%%%%%

\begin{proof}[Proof of Lemma \ref{lemma12}]
Observe that, for $\delta_{1}, \delta_{2} \in \mathbb{R}_{+}$ such that $\delta_{1} \leq \delta_{2}$, $u \in \mathcal{I}$ and $t \in \mathbb{R}$, 
\begin{align} 
|\varphi_{u}(t-S(u))| & \leq C_{u}^{\varphi} \left( e^{\delta_{1} (t-S(u))}\mathbf{1}_{[0, \infty)}(t-S(u)) + e^{\delta_{2}(t-S(u))}  \mathbf{1}_{(-\infty, 0)}(t-S(u)) \right) \label{eq145}  \\
& \leq  C_{u}^{\varphi} e^{\delta_{2}t} e^{-\delta_{1} S(u)},  \label{eq132}
\end{align}
\noindent where $C_{u}^{\varphi} \coloneqq \sup_{t\in \mathbb{R}} e^{-(\delta_{1} t \wedge \delta_{2} t)} |\varphi_{u}(t)|$.

First, we prove \ref{lemma12Pro3}. Consider, $\delta_{1} \in [0, \alpha)$ and $\delta_{2} \in  (\alpha, \infty)$ such that $\sup_{t\in \mathbb{R}} e^{-(\delta_{1} t \wedge \delta_{2} t)} |\varphi(t)| \in L^{1}(\Omega, \mathcal{F}, \mathbb{P})$. In particular, $\varphi(t) \in L^{1}(\Omega, \mathcal{F}, \mathbb{P})$, for every $t \in \mathbb{R}$. For $t \in \mathbb{R}$, define $\varphi^{\ast}(t) \coloneqq \sup_{|s-t|\leq 1} |\varphi(s)|$.  By \eqref{eq145} (with $u = \varnothing$), we have that for $t \in \mathbb{R}$,
\begin{align} 
\mathbb{E}[\varphi^{\ast}(t)] \leq  \mathbb{E}[C^{\varphi}_{\varnothing}]  \left( e^{\delta_{1} (t+1)}\mathbf{1}_{[-1, \infty)}(t) +  e^{\delta_{2}(t+1)}  \mathbf{1}_{(-\infty, 1)}(t) \right).
\end{align}
\noindent Then, 
\begin{align}
\int_{-\infty}^{\infty} \mathbb{E}[\varphi^{\ast}(t)] e^{-\alpha t} {\rm d}t & \leq  \mathbb{E}[C_{\varnothing}^{\varphi}]  \left( e^{\delta_{1}} \int_{-1}^{\infty} e^{-(\alpha - \delta_{1}) t} {\rm d}t  + e^{\delta_{2}}  \int_{-\infty}^{1} e^{-(\alpha - \delta_{2}) t} {\rm d}t  \right)  \nonumber \\
& = \mathbb{E}[C_{\varnothing}^{\varphi}]  \left( \frac{e^{\alpha}}{\alpha -\delta_{1}} -\frac{e^{-\alpha +2\delta_{2}}}{\alpha -\delta_{2}} \right)  <\infty.
\end{align}
\noindent Then, \ref{lemma12Pro3} follows from \cite[Proposition 2.6 (b)]{Iksanov2021} (a careful inspection reveals that the proof of \cite[Proposition 2.6 (b)]{Iksanov2021} extends to descendant characteristic).

We now proceed to prove \ref{lemma12Pro4}. Consider, $\delta_{1} \in [0, \alpha/2)$ and $\delta_{2} \in  (\alpha/2, \infty)$ such that $\sup_{t\in \mathbb{R}} e^{-(\delta_{1} t \wedge \delta_{2} t)} |\varphi(t)| \in L^{2}(\Omega, \mathcal{F}, \mathbb{P})$. In particular, $\varphi(t) \in L^{2}(\Omega, \mathcal{F}, \mathbb{P})$, for every $t \in \mathbb{R}$. Recall that $(x+y)^{2} \leq 2 (x^{2}+y^{2})$, for $x,y \in \mathbb{R}$. By \eqref{eq145} (with $u = \varnothing$), we have that for $t \in \mathbb{R}$,
\begin{align} 
\mathbb{E}[(\varphi^{\ast}(t))^{2}] \leq 2 \mathbb{E}[(C^{\varphi}_{\varnothing})^{2}]  \left(   e^{2\delta_{1} (t+1)}\mathbf{1}_{[-1, \infty)}(t) + e^{2\delta_{2}(t+1)}  \mathbf{1}_{(-\infty, 1)}(t) \right).
\end{align}
\noindent Then, 
\begin{align}
\int_{-\infty}^{\infty} \mathbb{E}[(\varphi^{\ast}(t))^{2}] e^{-\alpha t} {\rm d}t & \leq 2 \mathbb{E}[(C_{\varnothing}^{\varphi})^{2}]   \left( e^{2\delta_{1}} \int_{-1}^{\infty} e^{-(\alpha - 2\delta_{1}) t} {\rm d}t  + e^{2\delta_{2}}  \int_{-\infty}^{1} e^{-(\alpha - 2\delta_{2}) t} {\rm d}t  \right)  \nonumber \\
& = 2 \mathbb{E}[(C_{\varnothing}^{\varphi})^{2}] \left( \frac{e^{\alpha}}{\alpha -2\delta_{1}} -\frac{e^{-\alpha +4\delta_{2}}}{\alpha -2\delta_{2}} \right)  <\infty.
\end{align}
\noindent Then, \ref{lemma12Pro4} follows from \cite[Proposition 2.6 (c)]{Iksanov2021}.

Next, we prove \ref{lemma12Pro2}. Consider $\delta_{2} \in  [\vartheta, \infty)$ such that $\sup_{t\in (-\infty, 0)} e^{-  \delta_{2} t} |\varphi(t)| \in L^{1}(\Omega, \mathcal{F}, \mathbb{P})$. Observe that, for $u \in \mathcal{I}$ and $t \in \mathbb{R}$, 
\begin{align} 
|\varphi_{u}(t-S(u))\mathbf{1}_{(-\infty, 0)}(t-S(u))|  \leq  \tilde{C}_{u}^{\varphi} e^{\delta_{2}t} e^{-\delta_{2} S(u)},  
\end{align}
\noindent where $\tilde{C}_{u}^{\varphi} \coloneqq \sup_{t\in (-\infty, 0)} e^{-\delta_{2} t} |\varphi_{u}(t)|$. Fix $\varepsilon >0$. Then, for $t \in \mathbb{R}$, we have that
\begin{align} \label{eq157}
\sum_{i=0}^{h}  \sum_{|u| =i} \sup_{|s-t|\leq \varepsilon}|\varphi_{u}(s-S(u))\mathbf{1}_{(-\infty, 0)}(t-S(u))|  \leq  e^{\delta_{2} (t+\varepsilon)} \sum_{i=0}^{h}\sum_{|u| =i} \tilde{C}_{u}^{\varphi} e^{-\delta_{2} S(u)}.
\end{align}
\noindent Let $(\mathcal{A}^{(n)})_{n \geq -1}$ be the collection $\sigma$-algebras defined in \eqref{eq135}. Observe that, $S(u)$ is $\mathcal{A}^{(|u|-1)}$-measurable, while $\tilde{C}_{u}^{\varphi}$ is independent of $\mathcal{A}^{(|u|-1)}$. Then, by \cite[Theorem 2 (f), Section 7, Chapter II]{Shiryaev1996} and \eqref{eq156},
\begin{align}  
\mathbb{E}\left[ \sum_{|u| =i} \tilde{C}_{u}^{\varphi} e^{-\delta_{2} S(u)}   \right] & = \mathbb{E}\left[ \sum_{|u| =i} \mathbb{E}\left[  \tilde{C}_{u}^{\varphi} e^{-\delta_{2} S(u)}    \mid \mathcal{A}^{(i-1)} \right] \right]  = \mathbb{E}[\tilde{C}_{u}^{\varphi}] \mathbb{E}\left[ \sum_{|u| =i} e^{-\delta_{2} S(u)}  \right] < \infty,
\end{align}
\noindent for $i=0,  \dots, h$. Therefore, the random variable on the right-hand side of \eqref{eq157} is in $L^{1}(\Omega, \mathcal{F}, \mathbb{P})$, which implies \eqref{eq158}. This concludes the proof of \ref{lemma12Pro2}.

Finally, we prove \ref{lemma12Pro1}. Fix $\varepsilon >0$. Consider $\delta_{1} \in [\vartheta, \alpha]$ and $\delta_{2} \in  [\delta_{1}, \infty)$ such that $\sup_{t\in \mathbb{R}} e^{-(\delta_{1} t \wedge \delta_{2} t)} |\varphi(t)| \in L^{2}(\Omega, \mathcal{F}, \mathbb{P})$. Then, by \eqref{eq132}, for $s,t \in \mathbb{R}$ such that $|s-t| \leq \varepsilon$, we have that
\begin{align} \label{eq146}
\sum_{i=0}^{h} \left( \sum_{|u| =i} |\varphi_{u}(s-S(u))| \right)^{2} \leq  e^{2 \delta_{2} (t+\varepsilon)} \sum_{i=0}^{h} \left(\sum_{|u| =i} C_{u}^{\varphi} e^{-\delta_{1} S(u)}  \right)^{2}.
\end{align}
\noindent  Recall that, $S(u)$ is $\mathcal{A}^{(|u|-1)}$-measurable, while $C_{u}^{\varphi}$ is independent of $\mathcal{A}^{(|u|-1)}$. Then, by \cite[Theorem 2 (f), Section 7, Chapter II]{Shiryaev1996},
\begin{align}  \label{eq147}
\mathbb{E}\left[ \sum_{|u| =i} (C_{u}^{\varphi})^{2} e^{-2\delta_{1} S(u)}   \right] & = \mathbb{E}\left[ \sum_{|u| =i} \mathbb{E}\left[  (C_{u}^{\varphi})^{2} e^{-2\delta_{1} S(u)}    \mid \mathcal{A}^{(i-1)} \right] \right]  = \mathbb{E}[(C_{\varnothing}^{\varphi})^{2}] \mathbb{E}\left[ \sum_{|u| =i} e^{-2\delta_{1} S(u)}  \right],
\end{align}
\noindent for $i=0,  \dots, h$. Observe also that $C_{u}^{\varphi}$ and $C_{v}^{\varphi}$ are independent for $u, v \in \mathcal{I}$ such that $u \neq v$ and $|u| = |v|$. Similarly to \eqref{eq147}, we have that
\begin{align}  \label{eq151}
\mathbb{E}\left[ \sum_{|u| =i}  \sum_{|v| =i} \mathbf{1}_{\{u \neq v \}}  C_{u}^{\varphi} C_{v}^{\varphi} e^{-\delta_{1} (S(u)+ S(v))}  \right]  = (\mathbb{E}[C_{\varnothing}^{\varphi}])^{2} \mathbb{E}\left[ \sum_{|u| =i}  \sum_{|v| =i} \mathbf{1}_{\{u \neq v\}}  e^{-\delta_{1} (S(u)+ S(v))}  \right],
\end{align}
\noindent for $i=0,  \dots, h$. Then, by \eqref{eq152}, \eqref{eq147} and \eqref{eq151}, 
\begin{align} \label{eq150}
\mathbb{E}\left[ \left(\sum_{|u| =i} C_{u}^{\varphi} e^{-\delta_{1} S(u)}  \right)^{2} \right] \leq (\mathbb{E}[(C_{\varnothing}^{\varphi})^{2}]  + (\mathbb{E}[C_{\varnothing}^{\varphi}])^{2}) \mathbb{E}\left[ \left(\sum_{|u| =i}  e^{-\delta_{1} S(u)}  \right)^{2}\right] < \infty,
\end{align}
\noindent for $i=0,  \dots, h$. Consequently, the random variable on the right-hand side of \eqref{eq146} is in $L^{1}(\Omega, \mathcal{F}, \mathbb{P})$, which implies \eqref{eq155}; see for e.g., \cite[(2), p.\ 183]{Resnick2014}. This concludes the proof of \ref{lemma12Pro1}.
\end{proof}

\begin{proof}[Proof of Lemma \ref{lemma16}]
Let $\varphi$ be a characteristic satisfying \ref{C2}-\ref{C3}. Note that
\begin{align} \label{eq106}
\sum_{n \in \mathbb{Z}} \sup_{t \in [n, n+1)}  e^{-\alpha t}  \int_{[0, \infty)} {\rm Var}[\varphi](t-x) \nu^{(h)}({\rm d} x) \leq  \int_{[0, \infty)}  \sum_{n \in \mathbb{Z}} e^{-\alpha n} \sup_{t \in [n, n+1)} {\rm Var}[\varphi](t-x) \nu^{(h)}({\rm d} x).
\end{align}
\noindent On the other hand, for $x \in [0, \infty)$,
\begin{align}
\sum_{n \in \mathbb{Z}} e^{-\alpha n} \sup_{t \in [n, n+1)} {\rm Var}[\varphi](t-x) & = e^{-\alpha x}  \sum_{n \in \mathbb{Z}} e^{-\alpha (n-x)} \sup_{t \in [n-x, n-x+1)} {\rm Var}[\varphi](t) \nonumber \\
& = e^{-\alpha x}  \sum_{n \in \mathbb{Z}} e^{-\alpha (n-x+\lfloor x \rfloor)} \sup_{t \in [n-x+\lfloor x \rfloor, n-x+\lfloor x \rfloor+1)} {\rm Var}[\varphi](t).
\end{align}
\noindent To see the second equality, simply note that the sum includes all integers. Since $\lfloor x \rfloor \leq x \leq \lfloor x \rfloor +1$, it follows that
\begin{align} \label{eq141}
\sum_{n \in \mathbb{Z}} e^{-\alpha n} \sup_{t \in [n, n+1)} {\rm Var}[\varphi](t-x) & \leq  e^{-\alpha (x-1)}  \sum_{n \in \mathbb{Z}} e^{-\alpha n} \sup_{t \in [n-1, n+1)} {\rm Var}[\varphi](t) \nonumber \\
& \leq 2 e^{-\alpha (x-1)}  \sum_{n \in \mathbb{Z}} e^{-\alpha n} \sup_{t \in [n, n+1)} {\rm Var}[\varphi](t).
\end{align}
\noindent Note that, by \ref{A4} and \eqref{eq62}, $\int_{[0, \infty)} e^{-\alpha x}\nu^{(h)}({\rm d} x) = h+1$. Then, by \eqref{eq106}, \eqref{eq141} and \ref{C2},
\begin{align}\label{eq91}
\sum_{n \in \mathbb{Z}} e^{-\alpha n} \sup_{t \in [n, n+1)}  \int_{[0, \infty)} {\rm Var}[\varphi](t-x) \nu^{(h)}({\rm d} x) & \leq 2 e^{\alpha} \int_{[0, \infty)} e^{-\alpha x}\nu^{(h)}({\rm d} x) \cdot \sum_{n \in \mathbb{Z}} e^{-\alpha n} \sup_{t \in [n, n+1)} {\rm Var}[\varphi](t) \nonumber \\
& \leq 2 (h+1) e^{2\alpha}  \sum_{n \in \mathbb{Z}}\sup_{t \in [n, n+1)}  e^{-\alpha t} {\rm Var}[\varphi](t) < \infty. 
\end{align}

Recall that since the characteristic $\varphi$ satisfies \ref{C3}, its variance function ${\rm Var}[\varphi]$ is c\`adl\`ag. Thus, the dominated convergence theorem implies that the function in \eqref{eq103} is c\`adl\`ag. Therefore, our claim follows from \eqref{eq91} and, for example, (a slightly extended version of) \cite[Remark 3.10.4, p.\ 236]{Sidney1992} (or \cite[Proposition 4.1, Chapter V]{Soren2003}). 
\end{proof}

%%%%%%%%%%%%%%%%%%%%%%%%%%%%%%%%%%%%%%%%%%%%%%%%%%%%%%%%%%%%%%%%%%%%%%%
\section{Uniform integrability} \label{Append2}
%%%%%%%%%%%%%%%%%%%%%%%%%%%%%%%%%%%%%%%%%%%%%%%%%%%%%%%%%%%%%%%%%%%%%%%

This section provides a technical result on uniform integrability that is used in the proof of Lemma \ref{lemma4} and, subsequently, our main theorem. Although likely a known result, we could not locate a reference.

\begin{lemma} \label{lemma14}
Let $(\mathcal{X}_{t})_{t \in T}$ be a family of real-valued random variables indexed by $T$, defined on the probability space $(\Omega, \mathcal{B}, \mathbb{P})$. Let $\mathcal{G}$ be a sub-$\sigma$-algebra of $\mathcal{F}$. Suppose that $(\mathcal{X}_{t})_{t \in T}$ is uniformly integrable, then the family $(\mathbb{E}[\mathcal{X}_{t} \mid \mathcal{G}])_{t \in T}$ is uniformly integrable. 
\end{lemma}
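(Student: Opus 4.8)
The plan is to show that the conditional expectations inherit uniform integrability from the original family via a direct verification of the standard $\varepsilon$--$\delta$ characterization of uniform integrability, together with the conditional version of Jensen's inequality. Recall that a family $(\mathcal{Y}_t)_{t\in T}$ is uniformly integrable if and only if two conditions hold: first, $\sup_{t\in T}\mathbb{E}[|\mathcal{Y}_t|]<\infty$, and second, for every $\varepsilon>0$ there exists $\delta>0$ such that for all $A\in\mathcal{F}$ with $\mathbb{P}(A)<\delta$ one has $\sup_{t\in T}\mathbb{E}[|\mathcal{Y}_t|\mathbf{1}_A]<\varepsilon$. I would verify both of these for $\mathcal{Y}_t=\mathbb{E}[\mathcal{X}_t\mid\mathcal{G}]$, using the corresponding properties of $(\mathcal{X}_t)_{t\in T}$.

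First I would establish the uniform $L^1$ bound. By the conditional Jensen inequality, $|\mathbb{E}[\mathcal{X}_t\mid\mathcal{G}]|\leq\mathbb{E}[|\mathcal{X}_t|\mid\mathcal{G}]$ almost surely, and taking expectations gives $\mathbb{E}[|\mathbb{E}[\mathcal{X}_t\mid\mathcal{G}]|]\leq\mathbb{E}[|\mathcal{X}_t|]\leq\sup_{s\in T}\mathbb{E}[|\mathcal{X}_s|]<\infty$, where finiteness of the supremum follows from the uniform integrability of $(\mathcal{X}_t)_{t\in T}$. For the second condition, fix $\varepsilon>0$ and choose $\delta>0$ from the uniform integrability of $(\mathcal{X}_t)_{t\in T}$ so that $\mathbb{P}(A)<\delta$ implies $\sup_{t\in T}\mathbb{E}[|\mathcal{X}_t|\mathbf{1}_A]<\varepsilon$. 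The key observation is that I may restrict attention to sets $A\in\mathcal{G}$: for such $A$, pulling the indicator inside the conditional expectation is legitimate, and then
\begin{align}
\mathbb{E}[|\mathbb{E}[\mathcal{X}_t\mid\mathcal{G}]|\mathbf{1}_A]\leq\mathbb{E}[\mathbb{E}[|\mathcal{X}_t|\mid\mathcal{G}]\mathbf{1}_A]=\mathbb{E}[|\mathcal{X}_t|\mathbf{1}_A]<\varepsilon,
\end{align}
using that $\mathbf{1}_A$ is $\mathcal{G}$-measurable together with the tower property. Since $\mathbb{E}[\mathcal{X}_t\mid\mathcal{G}]$ is $\mathcal{G}$-measurable, the sets $A$ that matter for testing its uniform integrability can indeed be taken in $\mathcal{G}$, which is precisely what makes this step work.

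The main obstacle, and the point requiring care, is justifying that it suffices to test the second uniform integrability condition against $\mathcal{G}$-measurable sets $A$ rather than arbitrary $A\in\mathcal{F}$. The clean way to handle this is to avoid the set-based formulation entirely and instead invoke the equivalent de la Vallée-Poussin criterion, or more simply to use the truncation form: a family $(\mathcal{Y}_t)$ is uniformly integrable if and only if $\lim_{c\to\infty}\sup_{t\in T}\mathbb{E}[|\mathcal{Y}_t|\mathbf{1}_{\{|\mathcal{Y}_t|>c\}}]=0$. Taking $A=\{|\mathbb{E}[\mathcal{X}_t\mid\mathcal{G}]|>c\}$, this event is $\mathcal{G}$-measurable, so the computation above applies directly: $\mathbb{E}[|\mathbb{E}[\mathcal{X}_t\mid\mathcal{G}]|\mathbf{1}_A]\leq\mathbb{E}[|\mathcal{X}_t|\mathbf{1}_A]$, and since $\mathbb{P}(A)\leq c^{-1}\sup_{s}\mathbb{E}[|\mathcal{X}_s|]\to0$ uniformly in $t$ as $c\to\infty$, the uniform $L^1$ bound forces the right-hand side to vanish uniformly by the original uniform integrability. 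I would present the argument in this truncation form, since it sidesteps any subtlety about restricting the test sets and makes the $\mathcal{G}$-measurability of the relevant event completely transparent.
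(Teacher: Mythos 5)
Your proposal is correct and follows essentially the same route as the paper: both arguments reduce to the truncation criterion, use conditional Jensen to bound $|\mathbb{E}[\mathcal{X}_t\mid\mathcal{G}]|$ by $\mathbb{E}[|\mathcal{X}_t|\mid\mathcal{G}]$, exploit the $\mathcal{G}$-measurability of the event $\{|\mathbb{E}[\mathcal{X}_t\mid\mathcal{G}]|>c\}$ to pass back to $\mathbb{E}[|\mathcal{X}_t|\mathbf{1}_A]$, and control $\mathbb{P}(A)$ uniformly via Markov's inequality and the uniform $L^1$ bound. The paper simply fixes $a=b/\delta$ rather than letting $c\to\infty$, which is a cosmetic difference.
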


\begin{proof}
Since $(\mathcal{X}_{t})_{t \in T}$ is uniformly integrable, we know from \cite[Theorem 6.5.1]{Resnick2014} that there exists a  finite constant $b>0$ such that
$\sup_{t \in T} \mathbb{E}[|\mathcal{X}_{t}|] \leq b$. Moreover, for all $\varepsilon>0$, there exists $\delta = \delta(\varepsilon)>0$ such that for all $A \in \mathcal{B}$, $\sup_{t \in T} \mathbb{E}[|\mathcal{X}_{t}| \mathbf{1}_{A}] < \varepsilon$ if $\mathbb{P}(A) < \delta$. 

Fix $\varepsilon >0$ and consider $\delta >0$ as before. Set $a \coloneq b / \delta$. By the triangle inequality, for all $t \in T$, we have that $|\mathbb{E}[\mathcal{X}_{t} \mid \mathcal{G}]| \leq \mathbb{E}[|\mathcal{X}_{t}| \mid \mathcal{G}]$ almost surely. In particular, $\sup_{t \in T} \mathbb{E}[|\mathbb{E}[\mathcal{X}_{t} \mid \mathcal{G}]|] < \infty$ and by Markov's inequality, for any $t \in T$,
\begin{align} \label{eq163}
\mathbb{P}(|\mathbb{E}[\mathcal{X}_{t} \mid \mathcal{G}]| > a) \leq a^{-1} \mathbb{E}[|\mathcal{X}_{t}|] \leq b/a = \delta. 
\end{align}
\noindent Recall that $\mathbb{E}[\mathcal{X}_{t} \mid \mathcal{G}]$ and $\mathbb{E}[|\mathcal{X}_{t}| \mid \mathcal{G}]$ are $\mathcal{G}$-measurable. In particular, $\{ |\mathbb{E}[\mathcal{X}_{t} \mid \mathcal{G}]| > a\}$ is also $\mathcal{G}$-measurable. Therefore, by the definition of conditional expectation, \eqref{eq163} and the uniformly integrability of $(\mathcal{X}_{t})_{t \in T}$,
\begin{align}
\sup_{t \in T}\mathbb{E} \left[ |\mathbb{E}[\mathcal{X}_{t} \mid \mathcal{G}]| \mathbf{1}_{\{|\mathbb{E}[\mathcal{X}_{t} \mid \mathcal{G}]| > a \}}   \right] \leq \sup_{t \in T}\mathbb{E} \left[ |\mathcal{X}_{t}| \mathbf{1}_{\{|\mathbb{E}[\mathcal{X}_{t} \mid \mathcal{G}]| > a \}}   \right] < \varepsilon,
\end{align}
\noindent which implies our claim.
\end{proof}
\end{appendices}

\paragraph{Acknowledgements.}
We thank Takis Konstantopoulos for reading an earlier version of this manuscript and providing comments that improved its presentation. GB also thanks Svante Janson for discussions held during the early stages of this work.

%\bibliography{Gab}
%\bibliographystyle{Myplain5}

\providecommand{\bysame}{\leavevmode\hbox to3em{\hrulefill}\thinspace}
\providecommand{\MR}{\relax\ifhmode\unskip\space\fi MR }
% \MRhref is called by the amsart/book/proc definition of \MR.
\providecommand{\MRhref}[2]{%
  \href{http://www.ams.org/mathscinet-getitem?mr=#1}{#2}
}
\providecommand{\href}[2]{#2}

\end{document}